\numberwithin{equation}{section}
\newcounter{TmpEnumi}
\def\today{\number\day\space\ifcase\month\or   January\or February\or
   March\or April\or May\or June\or   July\or August\or September\or
   October\or November\or December\fi\   \number\year}
\theoremstyle{definition}
\newtheorem{thm}{Theorem}[section]
\newtheorem{lem}[thm]{Lemma}
\newtheorem{prp}[thm]{Proposition}
\newtheorem{dfn}[thm]{Definition}
\newtheorem{cor}[thm]{Corollary}
\newtheorem{rmk}[thm]{Remark}
\newtheorem{ntn}[thm]{Notation}
\newtheorem{exa}[thm]{Example}
\newtheorem{pbm}[thm]{Problem}
\newcommand{\beq}{\begin{equation}}
\newcommand{\eeq}{\end{equation}}
\newcommand{\beqr}{\begin{eqnarray*}}
\newcommand{\eeqr}{\end{eqnarray*}}
\newcommand{\bal}{\begin{align*}}
\newcommand{\eal}{\end{align*}}
\newcommand{\bei}{\begin{itemize}}
\newcommand{\eei}{\end{itemize}}
\newcommand{\limi}[1]{\lim_{{#1} \to \infty}}
\newcommand{\af}{\alpha}
\newcommand{\bt}{\beta}
\newcommand{\gm}{\gamma}
\newcommand{\dt}{\delta}
\newcommand{\ep}{\varepsilon}
\newcommand{\zt}{\zeta}
\newcommand{\et}{\eta}
\newcommand{\ch}{\chi}
\newcommand{\io}{\iota}
\newcommand{\ld}{\lambda}
\newcommand{\sm}{\sigma}
\newcommand{\kp}{\kappa}
\newcommand{\ph}{\varphi}
\newcommand{\ps}{\psi}
\newcommand{\rh}{\rho}
\newcommand{\om}{\omega}
\newcommand{\ta}{\tau}
\newcommand{\Gm}{\Gamma}
\newcommand{\Z}{{\mathbb{Z}}}
\newcommand{\C}{{\mathbb{C}}}
\newcommand{\N}{{\mathbb{Z}}_{> 0}}
\newcommand{\Nz}{{\mathbb{Z}}_{\geq 0}}
\newcommand{\aGA}{\af \colon G \to \Aut (A)}
\newcommand{\GAa}{(G, A, \af)}
\newcommand{\GBb}{(G, B, \bt)}
\newcommand{\CGAa}{C^* (G, A, \af)}
\newcommand{\ga}{$G$-algebra}
\newcommand{\id}{{\mathrm{id}}}
\newcommand{\sa}{{\mathrm{sa}}}
\newcommand{\spec}{{\mathrm{sp}}}
\newcommand{\card}{{\mathrm{card}}}
\newcommand{\Aut}{{\mathrm{Aut}}}
\newcommand{\Ad}{{\mathrm{Ad}}}
\newcommand{\dirlim}{\varinjlim}
\newcommand{\andeqn}{\,\,\,\,\,\, {\mbox{and}} \,\,\,\,\,\,}
\newcommand{\ts}[1]{{\textstyle{#1}}}
\newcommand{\ds}[1]{{\displaystyle{#1}}}
\newcommand{\sssum}[2]{{\ts{ {\ds{\sum}}_{#1}^{#2} }}}
\newcommand{\wolog}{without loss of generality}
\newcommand{\ifo}{if and only if}
\newcommand{\ca}{C*-algebra}
\newcommand{\uca}{unital C*-algebra}
\newcommand{\hm}{homomorphism}
\newcommand{\fd}{finite dimensional}
\newcommand{\hsa}{hereditary subalgebra}
\newcommand{\pj}{projection}
\newcommand{\mops}{mutually orthogonal \pj s}
\newcommand{\mvnt}{Murray-von Neumann equivalent}
\newcommand{\ct}{continuous}
\newcommand{\cfn}{continuous function}
\newcommand{\eqv}{equivariant}
\newcommand{\ehm}{equivariant homomorphism}
\newcommand{\sj}{semiprojective}
\newcommand{\eqsj}{equivariantly semiprojective}
\newcommand{\eqrsj}{equivariantly conditionally semiprojective}
\newcommand{\Lem}[1]{Lemma~\ref{#1}}
\newcommand{\Def}[1]{Definition~\ref{#1}}
\newcommand{\OA}[1]{{\mathcal{O}}_{#1}}
\newcommand{\OI}{\OA{\infty}}
\title[Equivariant semiprojectivity]{Equivariant semiprojectivity}   
\author{N.~Christopher Phillips}
\date{20~December 2011}
\address{Department of Mathematics, University  of Oregon,
       Eugene OR 97403-1222, USA,
    and
       Research Institute for Mathematical Sciences,
  Kyoto University,
  Kitashirakawa-Oiwakecho, Sakyo-ku, Kyoto
  606-8502, Japan.}
\email[]{ncp@darkwing.uoregon.edu}
\subjclass[2000]{Primary 46L55.}
\thanks{This material is based upon work supported by the
  US National Science Foundation under Grants
  DMS-0701076 and DMS-1101742.
  It was also partially supported by the Centre de Recerca
  Matem\`{a}tica (Barcelona) through a research visit conducted
  during 2011.}
\begin{document}

\begin{abstract}
We define equivariant semiprojectivity for C*-algebras
equipped with actions of compact groups.
We prove that the following examples are equivariantly semiprojective:
\begin{itemize}
\item
Arbitrary finite dimensional C*-algebras
with arbitrary actions of compact groups.
\item
The Cuntz algebras~${\mathcal{O}}_d$
and extended Cuntz algebras~$E_d,$ for finite~$d,$
with quasifree actions of compact groups.
\item
The Cuntz algebra~${\mathcal{O}}_{\infty}$
with any quasifree action of a finite group.
\end{itemize}
For actions of finite groups,
we prove that equivariant semiprojectivity
is equivalent to
a form of equivariant stability of generators and relations.
We also prove that if $G$ is finite,
then $C^* (G)$ is graded semiprojective.
\end{abstract}

\maketitle

\indent
Semiprojectivity has become recognized as the ``right'' way
to formulate many approximation results
in \ca s.
The standard reference is Loring's book~\cite{Lr}.
The formal definition and its basic properties are in
Chapter~14 of~\cite{Lr},
but much of the book is really about
variations on semiprojectivity.
Also see the more recent survey article~\cite{Bl7}.
There has been considerable work since then.

In this paper,
we introduce an equivariant version of semiprojectivity
for \ca s with actions of compact groups.
(The definition makes sense for actions of arbitrary groups,
but seems likely to be interesting only when the group is compact.)
The motivation for the definition and our choice of results
lies in applications which will be presented elsewhere.
We prove that arbitrary actions of compact groups
on \fd\  \ca s are \eqsj,
that quasifree actions of compact groups
on the Cuntz algebras~${\mathcal{O}}_d$
and the extended Cuntz algebras~$E_d,$ for finite~$d,$
are \eqsj,
and that quasifree actions of finite groups
on~${\mathcal{O}}_{\infty}$
are \eqsj.
We also give, for finite group actions,
an equivalent condition for equivariant semiprojectivity
in terms of equivariant stability of generators and relations.

In a separate paper~\cite{PST},
we prove the following results relating
equivariant semiprojectivity
and ordinary semiprojectivity.
If $G$ is finite
and $\GAa$ is equivariantly semiprojective,
then $\CGAa$ is semiprojective.
If $G$ is compact and second countable,
$A$~is separable,
and $\GAa$ is equivariantly semiprojective,
then $A$ is semiprojective.
Examples show that finiteness of~$G$ is necessary
in the first statement,
and that neither result has a converse.

We do not address equivariant semiprojectivity of actions
on Cuntz-Krieger algebras,
on $C ([0, 1]) \otimes M_n,$ $C (S^1) \otimes M_n,$
or dimension drop intervals
(except for a result for $C (S^1)$
which comes out of our work on quasifree
actions; see Remark~\ref{R-CS1}),
or on $C^* (F_n).$
We presume that suitable actions
on these algebras are \eqsj,
but we leave investigation of them for future work.

We also presume that there are interesting and useful
equivariant analogs of
weak stability of relations (Definition 4.1.1 of~\cite{Lr}),
weak semiprojectivity (Definition 4.1.3 of~\cite{Lr}),
projectivity (Definition 10.1.1 of~\cite{Lr}),
and liftability of relations (Definition 8.1.1 of~\cite{Lr}).
Again, we do not treat them.
(Equivariant projectivity will be discussed in~\cite{PST}.)

Finally, we point out work in the commutative case.
It is well known that $C (X)$ is semiprojective in the
category of commutative \ca s
\ifo\  $X$ is an absolute neighborhood retract.
Equivariant absolute neighborhood retracts have
a significant literature;
as just three examples, we refer to the papers \cite{Jw},
\cite{AE}, and~\cite{An}.
(I am grateful to Adam P. W. S{\o}rensen for calling my attention
to the existence of this work.)

This paper is organized as follows.
Section~\ref{Sec:Basic}
contains the definition of equivariant semiprojectivity,
some related definitions, and the proofs of some basic results.

Section~\ref{Sec:FdIsESj} contains the proof
that any action of a compact group
on a \fd\  \ca\  %
is \eqsj.
As far as we can tell,
traditional functional calculus methods
(a staple of~\cite{Lr})
are of little use here.
We use instead an iterative method for showing that
approximate homomorphisms from compact groups
are close to true \hm s.
For a compact group~$G,$
we also prove that equivariant semiprojectivity is preserved
when tensoring with any \fd\  \ca\  with any action of~$G.$

In Section~\ref{Sec:QFCuntz},
we prove that quasifree actions
of compact groups on the Cuntz algebra ${\mathcal{O}}_d$
and the extended Cuntz algebras~$E_d,$
for $d$ finite, are \eqsj.
We use an iterative method similar to that
used for actions of \fd\  \ca s,
but this time applied to cocycles.
Section~\ref{Sec:QFOI} extends the result to quasifree actions
on~${\mathcal{O}}_{\infty},$
but only for finite groups.
The method is that of Blackadar~\cite{Bl7},
but a considerable amount of work needs to be done to set this up.
We do not know whether the result extends
to quasifree actions of general compact groups
on~${\mathcal{O}}_{\infty}.$

In Section~\ref{Sec:QqSR},
we show that the universal \ca\  given by a bounded finite
equivariant set of generators and relations
is \eqsj\  \ifo\  the relations are equivariantly stable.
This is the result which enables most of the current applications
of equivariant semiprojectivity.
It is important for these applications
that an approximate representation is only required
to be approximately equivariant.
We give one application here:
we show that in the Rokhlin and tracial Rokhlin properties
for an action of a finite group,
one can require that the Rokhlin projections
be exactly permuted by the group.

Section~\ref{Sec:GdSj}
contains a proof that for a finite group~$G,$
the algebra $C^* (G),$
with its natural $G$-grading,
is graded semiprojective.
This result uses the same machinery as the proof
that actions on \fd\  \ca s are \eqsj.
We do not go further in this direction,
but this result suggests that there is a much more
general theory,
perhaps of equivariant semiprojectivity for actions
of \fd\  quantum groups.

I am grateful to
Bruce Blackadar, Ilijas Farah,
Adam P.\  W.\  S{\o}rensen, and Hannes Thiel
for valuable discussions.
I also thank the Research Institute for Mathematical Sciences
of Kyoto University for its support through a visiting professorship.

\section{Definitions and basic results}\label{Sec:Basic}

The following definition is the analog of
Definition 14.1.3 of~\cite{Lr}.

\begin{dfn}\label{D:EqSj}
Let $G$ be a topological group,
and let $\GAa$ be a unital \ga.
We say that $\GAa$ is {\emph{equivariantly semiprojective}}
if whenever $(G, C, \gm)$ is a unital \ga,
$J_0 \subset J_1 \subset \cdots$ are $G$-invariant ideals in~$C,$
$J = {\overline{\bigcup_{n = 0}^{\infty} J_n}},$
\[
\kp \colon C \to C / J,
\,\,\,\,\,\,
\kp_n \colon C \to C / J_n,
\andeqn
\pi_n \colon C / J_n \to C / J
\]
are the quotient maps,
and $\ph \colon A \to C / J$
is a unital equivariant \hm,
then there exist~$n$
and a unital equivariant \hm\  $\ps \colon A \to C / J_n$
such that $\pi_n \circ \ps = \ph.$

When no confusion can arise, we say that $A$ is \eqsj,
or that $\af$ is \eqsj.
\end{dfn}

Here is the diagram:
\[
\xymatrix{
& C \ar@{-->}[d]^{\kp_n} \ar@/^2pc/[dd]^{\kp} \\
& C / J_n \ar@{-->}[d]^{\pi_n} \\
A \ar[r]_-{\ph} \ar@{-->}[ru]^{\ps} & C / J.
}
\]
The solid arrows are given,
and $n$ and $\ps$ are supposed to exist which make the
diagram commute.

We suppose that \Def{D:EqSj} is probably only
interesting when $G$ is compact.
Blackadar has shown that,
in the nonunital category,
the trivial action of~$\Z$ on~$\C$ is not \eqsj~\cite{BlX}.
This is equivalent to saying the the trivial action of $\Z$
on $\C \oplus \C$ is not \eqsj\  in the sense defined here.
In the unital category, the trivial action of
any group on~$\C$ is \eqsj\  for trivial reasons,
but there are no other known examples of \eqsj\  actions of
noncompact groups.

We will also need the following form of
equivariant semiprojectivity for \hm s.
Our definition is {\emph{not}} an analog of
the definition of semiprojectivity for \hm s given
before Lemma 14.1.5 of~\cite{Lr}.
Rather, it is related to the second step
in the idea of two step lifting
as in Definition 8.1.6 of~\cite{Lr},
with the caveat that lifting as there corresponds to
projectivity rather than semiprojectivity of a \ca.
It is the equivariant version of a special case of
conditional semiprojectivity as in Definition~5.11 of~\cite{ELP}.

\begin{dfn}\label{D:EqsjForHom}
Let $G$ be a topological group,
let $\GAa$ and $\GBb$ be unital \ga s,
and let $\om \colon A \to B$ be a unital equivariant \hm.
We say that $\om$ is {\emph{equivariantly conditionally semiprojective}}
if whenever $(G, C, \gm)$ is a unital \ga,
$J_0 \subset J_1 \subset \cdots$ are $G$-invariant ideals in~$C,$
$J = {\overline{\bigcup_{n = 0}^{\infty} J_n}},$
\[
\kp \colon C \to C / J,
\,\,\,\,\,\,
\kp_n \colon C \to C / J_n,
\andeqn
\pi_n \colon C / J_n \to C / J
\]
are the quotient maps,
and $\ld \colon A \to C$
and $\ph \colon B \to C / J$
are unital equivariant \hm s
such that $\kp \circ \ld = \ph \circ \om,$
then there exist~$n$
and a unital equivariant \hm\  $\ps \colon B \to C / J_n$
such that
\[
\pi_n \circ \ps = \ph
\andeqn
\kp_n \circ \ld = \ps \circ \om.
\]
\end{dfn}

Here is the diagram:
\[
\xymatrix{
& & C \ar@{-->}[d]^{\kp_n} \ar@/^2pc/[dd]^{\kp} \\
& & C / J_n \ar@{-->}[d]^{\pi_n} \\
A \ar[r]_{\om} \ar[rruu]^{\ld}
    & B \ar[r]_-{\ph} \ar@{-->}[ru]^{\ps} & C / J.
}
\]
The part of the diagram with the solid arrows is assumed to commute,
and $n$ and $\ps$ are supposed to exist which make the
whole diagram commute.

\begin{rmk}\label{R:EqSj}
\begin{enumerate}
\item\label{R:EqSj:U}
\Def{D:EqSj} is stated for the category of unital \ga s.
Without the group,
a unital \ca\  is semiprojective in the unital category
\ifo\  it is semiprojective in the nonunital category.
(See Lemma 14.1.6 of~\cite{Lr}.)
The same is surely true here,
and should be essentially immediate from what we do,
but we don't need it and do not give a proof.
\item\label{R:EqSj:Lift}
In the situations of \Def{D:EqSj} and \Def{D:EqsjForHom},
we say that $\ps$ {\emph{equivariantly lifts}}~$\ph.$
\item\label{R:EqSj:Nt}
In proofs, we will adopt the standard notation
$\pi_{n, m} \colon C / J_m \to C / J_n,$
for $m, n \in \N$ with $n \geq m,$
for the maps between the different quotients
implicit in \Def{D:EqSj} and \Def{D:EqsjForHom}.
Thus $\pi_n \circ \pi_{n, m} = \pi_m$
and $\pi_{n, m} \circ \pi_{m, l} = \pi_{n, l}$
for suitable choices of indices.
We further let $\gm^{(n)} \colon G \to \Aut (C / J_n)$
and $\gm^{(\infty)} \colon G \to \Aut (C / J)$
be the induced actions on the quotients.
\end{enumerate}
\end{rmk}

\begin{lem}\label{L:SjTrtve}
Let $G$ be a topological group,
let $\GBb$ be a unital \ga,
let $A \subset B$ be a unital $G$-invariant subalgebra,
and let $\om \colon A \to B$ be the inclusion.
If $A$ is \eqsj\  and $\om$ is \eqrsj, then $B$ is \eqsj.
\end{lem}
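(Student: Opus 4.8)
The plan is a two-step lifting argument: use equivariant semiprojectivity of~$A$ to lift along the composite $\ph \circ \om$, and then use equivariant conditional semiprojectivity of~$\om$ to upgrade this to a lift of~$\ph$ itself. Suppose we are given the data required by \Def{D:EqSj} for~$B$: a unital \ga\ $\GCc$, an increasing sequence $J_0 \subset J_1 \subset \cdots$ of $G$-invariant ideals with $J = {\overline{\bigcup_{n = 0}^{\infty} J_n}}$, the quotient maps $\kp$, the $\kp_n$, and the $\pi_n$ (together with the maps $\pi_{n, m}$ and the induced actions $\gm^{(n)}$, $\gm^{(\infty)}$ of Remark~\ref{R:EqSj}), and a unital equivariant \hm\ $\ph \colon B \to C / J$. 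We must produce an index~$n$ and a unital equivariant \hm\ $\ps \colon B \to C / J_n$ with $\pi_n \circ \ps = \ph$. First I would apply equivariant semiprojectivity of~$A$ to the unital equivariant \hm\ $\ph \circ \om \colon A \to C / J$; this gives an index~$m$ and a unital equivariant \hm\ $\rh \colon A \to C / J_m$ with $\pi_m \circ \rh = \ph \circ \om$.

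Next I would apply equivariant conditional semiprojectivity of~$\om$ to a shifted, truncated version of the original system. Take the ambient \ga\ to be $\bigl( G,\, C / J_m,\, \gm^{(m)} \bigr)$, equipped with the $G$-invariant ideals $J_{m + k} / J_m$ for $k \in \Nz$; their closed union is $J / J_m$. The canonical equivariant isomorphisms $(C / J_m) / (J_{m + k} / J_m) \cong C / J_{m + k}$ and $(C / J_m) / (J / J_m) \cong C / J$ identify the associated quotient maps with $\pi_{m + k, \, m}$ and with~$\pi_m$. Under these identifications, $\rh$ is a unital equivariant lift of $\ph \circ \om$ into the ambient algebra, with the compatibility required in \Def{D:EqsjForHom} being exactly the identity $\pi_m \circ \rh = \ph \circ \om$ already established. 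Equivariant conditional semiprojectivity of~$\om$ (applied to this system, which is legitimate since \Def{D:EqsjForHom} quantifies over all unital \ga s in the role of the ambient algebra, in particular over $C / J_m$) then yields an index $n \geq m$ and a unital equivariant \hm\ $\ps \colon B \to C / J_n$ with $\pi_n \circ \ps = \ph$, together with the identity $\pi_{n, m} \circ \rh = \ps \circ \om$, which we do not need. This $\ps$ is the desired lift, so $B$ is \eqsj.

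I do not expect a genuine obstacle: the argument is essentially a diagram chase. The only point requiring care is the bookkeeping in the second step --- checking that the shifted system $\bigl( G,\, C / J_m,\, \gm^{(m)} \bigr)$ with the ideals $J_{m + k} / J_m$ really satisfies the hypotheses of \Def{D:EqsjForHom}, and that the identifications of its quotients with $C / J_{m + k}$ and $C / J$ are $G$-equivariant and compatible with~$\kp$, the $\kp_n$, and the $\pi_n$. All of this is routine.
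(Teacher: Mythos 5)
Your proposal is correct and follows essentially the same two-step argument as the paper: first lift $\ph \circ \om$ using equivariant semiprojectivity of~$A$, then apply equivariant conditional semiprojectivity of~$\om$ to the truncated system of ideals $J_{m+k}/J_m$ in $C/J_m$. The bookkeeping you flag as the only delicate point is exactly what the paper's proof also (more tersely) relies on, so there is nothing to add.
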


\begin{proof}
Let the notation be as in Definition~\ref{D:EqSj}
and Remark~\ref{R:EqSj}(\ref{R:EqSj:Nt}).
Suppose $\ph \colon B \to C / J$ is an equivariant unital \hm.
Then equivariant semiprojectivity of~$A$
implies that there are $n_0$ and an equivariant
unital \hm\  $\ld \colon A \to C / J_{n_0}$
such that $\pi_{n_0} \circ \ld = \ph |_A.$
Now apply equivariant conditional semiprojectivity of~$\om,$
with $C / J_{n_0}$ in place of $C$
and the ideals $J_n / J_0,$ for $n \geq n_0,$
in place of the ideals $J_n.$
We obtain $n \geq n_0$ and an equivariant unital \hm\  %
$\ps \colon B \to C / J_n$
such that $\pi_n \circ \ps = \ph$
(and also $\pi_{n, n_0} \circ \ld = \ps |_A$).
\end{proof}

\begin{ntn}\label{N:FixPt}
Let $\GAa$ be a \ga.
We denote by $A^G$ the fixed point algebra
\[
A^G = \{ a \in A \colon {\mbox{$\af_g (a) = a$ for all $g \in G$}} \}.
\]
In case of ambiguity of the action, we write $A^{\af}.$

Further, if $\GBb$ is another \ga\  and $\ph \colon A \to B$
is an equivariant \hm,
then $\ph$ induces a \hm\  from $A^G$ to $B^G,$
which we denote by $\ph^G.$
\end{ntn}

We need the following two easy lemmas.

\begin{lem}\label{L:FixedPtQuot}
Let $G$ be a compact group,
and let $(G, C, \gm)$ be a \ga.
Let $J \subset C$ be a $G$-invariant ideal.
Then the obvious map $\rh \colon C^G / J^G \to C / J$
is injective and has range exactly $(C / J)^G.$
\end{lem}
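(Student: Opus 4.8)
The plan is to construct the inverse of $\rh$ on the fixed point algebra by averaging over $G$, and to obtain injectivity essentially for free.

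First I would record the bookkeeping. One has $J^G = J \cap C^G$, so $J^G$ is a closed two-sided ideal of the C*-subalgebra $C^G$, and the composite of the inclusion $C^G \hookrightarrow C$ with the quotient map $\kp \colon C \to C / J$ has kernel exactly $C^G \cap J = J^G$. Thus this composite factors through an injective \hm\ $\rh \colon C^G / J^G \to C / J$, which is the ``obvious map'' of the statement. Since $\kp$ is equivariant, it carries $C^G$ into the fixed point algebra of the induced action $\gm^{(\infty)}$ on $C / J$, so the range of $\rh$ is contained in $(C / J)^G$. Hence injectivity holds with no further work, and it remains only to prove that $\rh$ maps onto $(C / J)^G$.

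For surjectivity I would use compactness of $G$. Let $E \colon C \to C$ be given by $E (c) = \int_G \gm_g (c) \, dg$ with respect to normalized Haar measure; the integrand $g \mapsto \gm_g (c)$ is norm continuous on the compact group $G$, so this is a well defined Bochner integral. Bi-invariance of Haar measure gives $E (C) \subseteq C^G$ and $E |_{C^G} = \id$, so $E$ is a conditional expectation of $C$ onto $C^G$; applying the same construction to $\gm^{(\infty)}$ produces a conditional expectation $\overline{E} \colon C / J \to (C / J)^G$. The key point is that $\kp$ intertwines these expectations: because $\kp$ is bounded, linear, and equivariant,
\[
\kp (E (c)) = \int_G \kp (\gm_g (c)) \, dg = \int_G \gm^{(\infty)}_g (\kp (c)) \, dg = \overline{E} (\kp (c))
\]
for all $c \in C$. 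Given $\overline{c} \in (C / J)^G$, choose $c \in C$ with $\kp (c) = \overline{c}$; then $E (c) \in C^G$ and $\kp (E (c)) = \overline{E} (\overline{c}) = \overline{c}$, so $\overline{c}$ lies in the range of $\rh$. (Equivalently, one can avoid lifts: $\kp (C^G) = \kp (E (C)) = \overline{E} (\kp (C)) = \overline{E} (C / J) = (C / J)^G$.)

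I do not expect a genuine obstacle here. The only substantive observation is that the averaging projection is compatible with passing to a quotient by a $G$-invariant ideal; once $E$ and $\overline{E}$ are in hand, both the identity $J^G = J \cap C^G$ and the surjectivity statement are routine, and everything uses only standard facts about conditional expectations for compact group actions.
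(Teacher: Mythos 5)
Your proof is correct and uses essentially the same argument as the paper: injectivity from $J \cap C^G = J^G$, and surjectivity by averaging a lift over Haar measure. The paper simply computes $\pi\bigl(\int_G \gm_g (c) \, d \mu (g)\bigr) = x$ directly rather than packaging the averaging as a pair of intertwined conditional expectations, but the content is identical.
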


\begin{proof}
Injectivity is immediate from the relation $J \cap C^G = J^G.$
It is obvious that $\rh ( C^G / J^G ) \subset (C / J)^G.$
For the reverse inclusion,
let $x \in (C / J)^G.$
Let $\pi \colon C \to C / J$ be the quotient map.
Choose $c \in C$ such that $\pi (c) = x.$
Let $\mu$ be Haar measure on~$G,$
normalized so that $\mu (G) = 1.$
Set
\[
a = \int_G \gm_g (c) \, d \mu (g).
\]
Then $a \in C^G$ and $\pi (a) = x.$
Therefore $a + J^G \in C^G / J^G$ and $\rh (a + J^G) = x.$
\end{proof}

\begin{lem}\label{L-FixedPtLim}
Let $G$ be a compact group,
and let $(G, A, \af)$ be a \ga.
Let $A_0 \subset A_1 \subset \cdots$
be an increasing sequence of $G$-invariant
subalgebras of $A$ such that
${\overline{\bigcup_{n = 0}^{\infty} A_n}} = A.$
Then $A^G = {\overline{\bigcup_{n = 0}^{\infty} A_n^G}}.$
\end{lem}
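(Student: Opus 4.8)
The plan is to prove the two inclusions separately, with the real content in the nontrivial one. The inclusion ${\overline{\bigcup_{n} A_n^G}} \subset A^G$ is immediate: each $A_n^G$ is contained in $A^G,$ which is a closed subspace of~$A,$ so it contains the closure of the union. For the reverse inclusion, the key tool is averaging over~$G$ against normalized Haar measure~$\mu,$ exactly as in the proof of \Lem{L:FixedPtQuot}. Write $E \colon A \to A^G$ for the conditional expectation $E(a) = \int_G \af_g(a)\, d\mu(g);$ this is a norm-decreasing linear map (indeed a completely positive contraction), it fixes $A^G$ pointwise, and because each $A_n$ is $G$-invariant it restricts to a map $E \colon A_n \to A_n^G.$

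So first I would fix $a \in A^G$ and $\ep > 0.$ Since ${\overline{\bigcup_n A_n}} = A,$ choose $n$ and $b \in A_n$ with $\|a - b\| < \ep.$ Then $E(b) \in A_n^G,$ and since $E(a) = a,$ we get $\|a - E(b)\| = \|E(a - b)\| \le \|a - b\| < \ep.$ Hence $a$ lies in the closure of $\bigcup_n A_n^G,$ which proves $A^G \subset {\overline{\bigcup_n A_n^G}}$ and completes the argument.

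The only point requiring a word of care — and the nearest thing to an obstacle — is the assertion that $E$ maps $A_n$ into $A_n^G$ rather than merely into $A^G.$ This follows because $A_n$ is closed and $G$-invariant: for $b \in A_n$ the integrand $g \mapsto \af_g(b)$ takes values in~$A_n,$ it is norm-continuous by continuity of the action (using compactness of~$G$), so the vector-valued integral $E(b)$ is a norm limit of Riemann sums lying in~$A_n,$ hence $E(b) \in A_n;$ and $E(b) \in A^G$ always, so $E(b) \in A_n^G.$ One could alternatively phrase this via the fact that a $G$-invariant closed subalgebra is the range of a conditional expectation compatible with~$E,$ but the Riemann-sum observation is the most direct route and needs no extra machinery.
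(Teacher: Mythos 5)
Your proof is correct and is essentially the same as the paper's: both prove the easy inclusion by noting $A^G$ is closed, and both handle the reverse inclusion by approximating $a \in A^G$ by some $x \in A_n$ and then averaging over $G$ against normalized Haar measure to land in $A_n^G$ without increasing the distance to~$a.$ Your extra remark justifying that the average of an element of $A_n$ stays in $A_n$ (via Riemann sums in the closed invariant subalgebra) is a detail the paper leaves implicit, but it is the same argument.
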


\begin{proof}
It is clear that
${\overline{\bigcup_{n = 0}^{\infty} A_n^G}} \subset A^G.$
For the reverse inclusion,
let $a \in A^G$ and let $\ep > 0.$
Choose $n$ and $x \in A_n$ such that $\| x - a \| < \ep.$
Let $\mu$ be Haar measure on~$G,$
normalized such that $\mu (G) = 1.$
Then $b = \int_G \gm_g (x) \, d \mu (g)$ is in $A_n^G$
and satisfies $\| b - a \| < \ep.$
\end{proof}

Now we are ready to prove equivariant semiprojectivity of
some \ga s.

\begin{lem}\label{L-TrOnSbgp}
Let $G$ be a compact group,
let $N \subset G$ be a closed normal subgroup,
and let $\rh \colon G \to G / N$ be the quotient map.
Let $A$ be a \uca,
and let $\af \colon G / N \to \Aut (A)$
be an \eqsj\  action of $G / N$ on~$A.$
Then $(G, \, A, \, \af \circ \rh)$ is \eqsj.
\end{lem}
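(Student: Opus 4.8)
The plan is to reduce the $G$-equivariant lifting problem for the action $\af \circ \rh$ to the $G/N$-equivariant lifting problem for $\af$, which we may solve by hypothesis. The point is that everything in sight carries a $G$-action which factors through $\rh$, so the relevant algebras of $G/N$-equivariant data and of $G$-equivariant data coincide, and a $G$-equivariant homomorphism into a $G$-algebra whose action factors through $N$ is the same thing as a $G/N$-equivariant homomorphism.

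First I would set up the lifting data as in \Def{D:EqSj}: a unital $G$-algebra $(G, C, \gm)$, an increasing sequence of $G$-invariant ideals $J_0 \subset J_1 \subset \cdots$ with $J = \overline{\bigcup_n J_n}$, the quotient maps $\kp_n$, $\pi_n$, and a unital $G$-equivariant homomorphism $\ph \colon A \to C/J$, where $A$ carries the action $\af \circ \rh$. The key observation is that $\ph$ being $G$-equivariant for $\af \circ \rh$ on the domain forces $\ph$ to factor through the fixed-point behavior of $N$ on the relevant part of the range: precisely, the image $\ph(A)$ lies in the $G$-algebra $C/J$, but the restriction of $\gm^{(\infty)} \circ \rh$-compatibility... more cleanly, I would instead pass to the fixed-point subalgebras for the $N$-action. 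Let $D = C^{\gm|_N}$ be the fixed point algebra of the restricted action of $N$ on $C$; since $N$ is normal, $D$ is $\gm$-invariant, and since $\gm|_N$ acts trivially on $D$, the $G$-action on $D$ descends to a $G/N$-action $\overline{\gm} \colon G/N \to \Aut(D)$. Likewise each $J_n^{\gm|_N} = J_n \cap D$ is a $G/N$-invariant ideal in $D$, and by \Lem{L-FixedPtLim} applied to the $N$-action we have $\overline{\bigcup_n (J_n \cap D)} = J \cap D = J^{\gm|_N}$. By \Lem{L:FixedPtQuot} (applied to the compact group $N$), $D / (J_n \cap D) = C^{\gm|_N} / J_n^{\gm|_N}$ is exactly $(C/J_n)^{\gm^{(n)}|_N}$, and similarly for $J$. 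Now since $A$ carries the trivial $N$-action (as $\af \circ \rh$ kills $N$), the $G$-equivariant homomorphism $\ph \colon A \to C/J$ actually maps into $(C/J)^{\gm^{(\infty)}|_N} = D/(J\cap D)$, and as a map into this $G/N$-algebra it is $G/N$-equivariant.

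At this point I would invoke equivariant semiprojectivity of $\af$: with the unital $G/N$-algebra $(G/N, D, \overline{\gm})$, the $G/N$-invariant ideals $J_n \cap D$, and the $G/N$-equivariant homomorphism $\ph \colon A \to D/(J \cap D)$, \Def{D:EqSj} produces an $n$ and a unital $G/N$-equivariant homomorphism $\ps_0 \colon A \to D/(J_n \cap D)$ lifting $\ph$. Composing with the inclusion $D/(J_n \cap D) = (C/J_n)^{\gm^{(n)}|_N} \hookrightarrow C/J_n$ gives a unital homomorphism $\ps \colon A \to C/J_n$ with $\pi_n \circ \ps = \ph$, and $\ps$ is $G$-equivariant for $\af \circ \rh$ because its image lands in the $N$-fixed points (so $N$ acts trivially there, matching the trivial $N$-action on $A$) and it is already $G/N$-equivariant. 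This completes the lift.

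The main obstacle is bookkeeping rather than conceptual: one must be careful that the various fixed-point algebras, quotients, and the maps $\rh \colon C^G/J^G \to C/J$ of \Lem{L:FixedPtQuot} fit together compatibly — in particular that the identification $D/(J_n \cap D) \cong (C/J_n)^{\gm^{(n)}|_N}$ is natural in $n$, so that a $G/N$-equivariant homomorphism lifting $\ph$ at level $D$ really does yield a $G$-equivariant homomorphism lifting $\ph$ at level $C$. Everything needed is contained in Lemmas~\ref{L:FixedPtQuot} and~\ref{L-FixedPtLim}, so no new technique is required.
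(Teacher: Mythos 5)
Your proposal is correct and follows essentially the same route as the paper: pass to the $N$-fixed-point algebra $C^N$ with its induced $G/N$-action, use Lemma~\ref{L-FixedPtLim} to see that $J^N = {\overline{\bigcup_n J_n^N}}$ and Lemma~\ref{L:FixedPtQuot} to identify $C^N / J_n^N$ with $(C / J_n)^N$, apply equivariant semiprojectivity of the $G/N$-action there, and compose the resulting lift with the inclusion back into $C / J_n$. The only difference is notational ($D = C^N$ and $J_n \cap D$ in place of $J_n^N$), so no further comment is needed.
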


\begin{proof}
We claim that
there is an action ${\overline{\gm}} \colon G / N \to \Aut (C^N)$
such that for $g \in G$ and $c \in C^N$
we have ${\overline{\gm}}_{g N} (c) = \gm_g (c).$
One only needs to check that ${\overline{\gm}}$ is well defined,
which is easy.

Let the notation be as in Definition~\ref{D:EqSj}
and Remark~\ref{R:EqSj}(\ref{R:EqSj:Nt}).
Then $\ph (A) \subset (C / J)^N,$
which by \Lem{L:FixedPtQuot} is the same as $C^N / J^N.$
Let $\ph_0 \colon A \to C^N / J^N$ be the corestriction.
\Lem{L-FixedPtLim} implies
$J^N = {\overline{\bigcup_{n = 0}^{\infty} J_n^N}},$
so semiprojectivity of $(G / N, \, A, \, \af)$ provides $n$
and a unital $G / N$-\eqv\  \hm\  $\ps_0 \colon A \to C^N / J_n^N$
which lifts~$\ph_0.$
We take $\ps$ to be the following composition,
in which the middle map comes from \Lem{L:FixedPtQuot}
and the last map is the inclusion:
\[
A \stackrel{\ps_0}{\longrightarrow} C^N / J_n^N
  \longrightarrow (C / J_n)^N
  \longrightarrow C / J_n.
\]
Then $\ps$ is $G$-equivariant and lifts~$\ph.$
\end{proof}

\begin{cor}\label{C-TrIsSj}
Let $G$ be a compact group,
let $A$ be a \uca,
and let $\io \colon G \to \Aut (A)$ be the trivial action of $G$ on~$A.$
If $A$ is semiprojective,
then $(G, A, \io)$ is \eqsj.
\end{cor}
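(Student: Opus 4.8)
The statement is the special case of \Lem{L-TrOnSbgp} obtained by taking $N = G$. Indeed, if $N = G$, then $G / N$ is the trivial group, $\rh \colon G \to G / N$ is the trivial quotient map, and an action of the trivial group on a \ca\  is nothing but the \ca\  itself, while equivariant semiprojectivity for the trivial group is exactly ordinary semiprojectivity (in the unital category). Thus the plan is simply to invoke \Lem{L-TrOnSbgp} with $N = G$: the hypothesis that $A$ is semiprojective becomes the hypothesis that the (trivial, hence $G/N$-equivariant) action of $G/N$ on $A$ is \eqsj, and the conclusion is that the trivial action $\io = \af \circ \rh$ of $G$ on $A$ is \eqsj.

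The one point needing a word of care is the matching of definitions at the extremes. When $G / N$ is trivial, \Def{D:EqSj} for $G/N$-algebras reads: for every \uca\  $C$, every increasing sequence of ideals $J_0 \subset J_1 \subset \cdots$ with $J = \overline{\bigcup_n J_n}$, and every unital \hm\  $\ph \colon A \to C / J$, there are $n$ and a unital \hm\  $\ps \colon A \to C / J_n$ with $\pi_n \circ \ps = \ph$. This is precisely semiprojectivity of $A$ in the unital category, which (by Lemma 14.1.6 of~\cite{Lr}, as recalled in Remark~\ref{R:EqSj}(\ref{R:EqSj:U})) is equivalent to semiprojectivity of $A$. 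So the hypothesis ``$A$ is semiprojective'' supplies exactly what \Lem{L-TrOnSbgp} wants.

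There is really no obstacle here; the corollary is a formal consequence. If one preferred not to route through the equivalence between unital and nonunital semiprojectivity, one could instead observe directly that the proof of \Lem{L-TrOnSbgp} specializes: with $N = G$ one has $C^N = C^G$ and $J^N = J^G$, the auxiliary action $\overline{\gm}$ of the trivial group $G/N$ on $C^G$ is vacuous, and the argument reduces to: $\ph(A) \subset (C/J)^G = C^G / J^G$ by \Lem{L:FixedPtQuot}; $J^G = \overline{\bigcup_n J_n^G}$ by \Lem{L-FixedPtLim}; semiprojectivity of $A$ lifts the corestriction $\ph_0 \colon A \to C^G / J^G$ to some $\ps_0 \colon A \to C^G / J_n^G$; and composing with the identifications $C^G / J_n^G \cong (C/J_n)^G \hookrightarrow C/J_n$ gives the desired equivariant lift $\ps$. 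Either way the corollary follows immediately.
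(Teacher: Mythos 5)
Your proof is correct and is essentially identical to the paper's: the paper's entire proof is to apply \Lem{L-TrOnSbgp} with $N = G$, exactly as you propose. Your extra remarks on reconciling unital semiprojectivity for the trivial group with ordinary semiprojectivity, and the alternative direct specialization of the lemma's proof, are sound but not needed beyond the one-line invocation.
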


\begin{proof}
In \Lem{L-TrOnSbgp},
take $N = G.$
\end{proof}

\begin{cor}\label{C:SjIncId}
Let $G$ be a compact group,
and let $\GAa$ be a unital \ga.
Then $A$ is \eqsj\  \ifo\  the inclusion of $\C \cdot 1$ in $A$
is \eqrsj\  in the sense of \Def{D:EqsjForHom}.
\end{cor}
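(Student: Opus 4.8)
The plan is to deduce both implications from the lemmas already in hand. For the forward direction, suppose $A$ is \eqsj, and let $\om \colon \C \cdot 1 \to A$ be the inclusion. We must verify \Def{D:EqsjForHom}. So suppose $(G, C, \gm)$ is a unital \ga, we have $G$-invariant ideals $J_0 \subset J_1 \subset \cdots$ with union dense in $J$, quotient maps $\kp$, $\kp_n$, $\pi_n$ as in the definition, and unital equivariant \hm s $\ld \colon \C \cdot 1 \to C$ and $\ph \colon A \to C / J$ with $\kp \circ \ld = \ph \circ \om$. Since $\ld$ and $\ph$ are unital, $\ld(1) = 1_C$ and $\ph(1) = 1_{C/J}$, so the compatibility condition $\kp \circ \ld = \ph \circ \om$ is automatic (both sides send $1$ to $1_{C/J}$). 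Now equivariant semiprojectivity of $A$, applied directly to $\ph \colon A \to C/J$, produces $n$ and a unital equivariant \hm\  $\ps \colon A \to C / J_n$ with $\pi_n \circ \ps = \ph$. Since $\ps$ is unital, $\ps(1) = 1_{C/J_n} = \kp_n(1_C) = \kp_n(\ld(1))$, so $\kp_n \circ \ld = \ps \circ \om$ as well. Thus $\ps$ equivariantly lifts $\ph$ in the sense required, and $\om$ is \eqrsj.

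For the reverse direction, suppose the inclusion $\om \colon \C \cdot 1 \to A$ is \eqrsj. We want to apply \Lem{L:SjTrtve} with $B = A$ and the unital $G$-invariant subalgebra $\C \cdot 1 \subset A$. That lemma requires two hypotheses: that $\C \cdot 1$ is \eqsj\  as a \ga\  (with the restricted action, which is necessarily trivial since $G$ fixes the identity), and that the inclusion $\om$ is \eqrsj. The second is our hypothesis. The first follows from \Cor{C-TrIsSj}: the trivial action of any compact group on $\C$ is \eqsj, because $\C$ is semiprojective in the (unital) category of \ca s — the identity relation $e = e^* = e^2 = 1$ lifts trivially along any quotient. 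Hence \Lem{L:SjTrtve} yields that $B = A$ is \eqsj.

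The content here is essentially bookkeeping: matching up the two definitions and invoking the transitivity lemma. The one point that deserves a word is that in \Def{D:EqsjForHom} the source of $\ld$ is $\C \cdot 1$, on which everything is forced (a unital \hm\  out of $\C$ is determined, the action is trivial, and all the equations involving $\om$ reduce to statements about where $1$ goes); this is why \eqrsj ity of $\om$ is precisely a ``conditional'' strengthening that carries no information at the level of $\C$ but exactly the information needed to extend a lift from $\C \cdot 1$ to all of $A$. I do not anticipate a genuine obstacle; the only care needed is to confirm that the reindexed ideal system in the proof of \Lem{L:SjTrtve} still has dense union, which is immediate, and that is already handled inside that lemma's proof.
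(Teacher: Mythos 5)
Your proof is correct and follows essentially the same route as the paper: the nontrivial direction is exactly the paper's one-line argument (Corollary~\ref{C-TrIsSj} gives equivariant semiprojectivity of $\C \cdot 1$, then Lemma~\ref{L:SjTrtve} applies), and the forward direction, which the paper leaves implicit, is the routine unitality bookkeeping you spell out.
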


\begin{proof}
The subalgebra $\C \cdot 1$ is \eqsj\  by Corollary~\ref{C-TrIsSj},
so we may apply Lemma~\ref{L:SjTrtve}.
\end{proof}

\begin{prp}\label{P:RSjDSum}
Let $G$ be a compact group,
and let $\big( \big( G, A_k, \af^{(k)} \big) \big)_{k = 1}^m$
be a finite collection of \eqsj\  unital \ga s.
Suppose that $l \in \{0, 1, \ldots, m - 1 \}.$
Set $A = \left( \bigoplus_{k = 1}^l A_k \right) \oplus \C$ and
set $B = \bigoplus_{k = 1}^m A_k,$
with the obvious direct sum actions $\af \colon G \to \Aut (A)$
(with $G$ acting trivially on~$\C$)
and $\bt \colon G \to \Aut (B).$
Define $\om \colon A \to B$ by
\[
\om (a_1, a_2, \ldots, a_l, \ld)
 = \big( a_1, \, a_2, \, \ldots, \, a_l, \, \ld \cdot 1_{A_{l + 1}}, \,
       \ld \cdot 1_{A_{l + 2}}, \, \ldots, \, \ld \cdot 1_{A_{m}} \big)
\]
for
\[
a_1 \in A_1,
\,\,\,\,\,\,\
a_2 \in A_2,
\,\,\,\,\,\,\
\ldots,
\,\,\,\,\,\,\
a_l \in A_l,
\andeqn
\ld \in \C.
\]
Then $\om$ is \eqrsj.
\end{prp}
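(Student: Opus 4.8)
The plan is to factor $\om$ as a composite of two unital equivariant homomorphisms, each of which is visibly a finite direct sum of maps already known to be \eqrsj, and then to invoke two permanence properties of the class of \eqrsj\ maps: closure under composition, and closure under finite direct sums. First I would record these.

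\emph{Composition.} If $\om_1 \colon A \to B$ and $\om_2 \colon B \to D$ are \eqrsj, so is $\om_2 \circ \om_1$. Given a lifting problem for $\om_2 \circ \om_1$ with data $\ld \colon A \to C$ and $\ph \colon D \to C / J$, one first applies equivariant conditional semiprojectivity of $\om_1$ to the pair $\ld$, $\ph \circ \om_2$, obtaining $n_1$ and $\ps_1 \colon B \to C / J_{n_1}$ with $\pi_{n_1} \circ \ps_1 = \ph \circ \om_2$ and $\kp_{n_1} \circ \ld = \ps_1 \circ \om_1$; one then applies equivariant conditional semiprojectivity of $\om_2$ with $C$ replaced by $C / J_{n_1}$ and the ideals $J_n / J_{n_1}$ (for $n \geq n_1$), using the pair $\ps_1$, $\ph$, and assembles the output using the relations among the maps $\pi_{n, m}$ and $\kp_{n, m}$. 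This is a routine diagram chase.

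\emph{Direct sums.} If $\om_k \colon A_k \to B_k$ is \eqrsj\ for $k = 1, \dots, r$, then so is $\bigoplus_{k = 1}^r \om_k$. Given a lifting problem with data $\ld \colon \bigoplus_k A_k \to C$ and $\ph \colon \bigoplus_k B_k \to C / J$, set $q_k = \ld (1_{A_k})$; these are mutually orthogonal $G$-invariant projections in $C$ with sum $1$, and (since the $\om_k$ are unital) $\kp (q_k) = \ph (1_{B_k})$. The corner $q_k C q_k$ is a unital \ga\ whose quotients by the $G$-invariant ideals $q_k J_n q_k$ are naturally identified with the corners $\kp_n (q_k) (C / J_n) \kp_n (q_k)$, compatibly with the quotient maps, and similarly for $J$; so one may run equivariant conditional semiprojectivity of each $\om_k$ inside that corner, with $\ld |_{A_k}$ and $\ph |_{B_k}$, obtaining $n_k$ and a unital \ehm\ $\ps_k$ into $\kp_{n_k} (q_k) (C / J_{n_k}) \kp_{n_k} (q_k)$. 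Put $n = \max_k n_k$ and push each $\ps_k$ down to $C / J_n$; their ranges then lie in the mutually orthogonal corners cut by the projections $\kp_n (q_k)$, so $\bigoplus_k \ps_k$ is a well-defined unital \ehm\ $\bigoplus_k B_k \to C / J_n$, and one checks directly that it solves the original problem.

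\emph{The factorization.} Set $A_0 = \bigoplus_{k = 1}^l A_k$ (this factor is absent when $l = 0$), and factor $\om = \mu \circ \nu$ through
\[
\nu \colon A_0 \oplus \C \longrightarrow A_0 \oplus \C^{m - l} ,
\qquad
\mu \colon A_0 \oplus \C^{m - l} \longrightarrow \bigoplus_{k = 1}^m A_k ,
\]
where $\nu = \id_{A_0} \oplus \Dt$ with $\Dt \colon \C \to \C^{m - l}$ the diagonal (equivalently, the inclusion of the unit of $\C^{m-l}$), and $\mu = \id_{A_0} \oplus \big( \bigoplus_{k = l + 1}^m \io_k \big)$ with $\io_k \colon \C \to A_k$ the inclusion of the unit. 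Then $\om = \mu \circ \nu$ by inspection. The map $\id_{A_0}$ is \eqrsj\ for trivial reasons; each $\io_k$ is \eqrsj\ by Corollary~\ref{C:SjIncId}, since $A_k$ is \eqsj; and $\Dt$ is \eqrsj\ by Corollary~\ref{C:SjIncId} together with Corollary~\ref{C-TrIsSj}, because the \fd\ \ca\ $\C^{m - l}$ with trivial $G$-action is semiprojective. By the direct sum permanence property, $\nu$ and $\mu$ are \eqrsj, and by the composition property so is $\om = \mu \circ \nu$.

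I expect the main work to be the direct sum permanence property --- not because it is deep, but because of the bookkeeping: one must verify that cutting $\ld$ and $\ph$ down to the corners by the $q_k$ and the $\kp(q_k)$ produces a genuine instance of the lifting problem for each $\om_k$ (in particular that these corners, with the obvious ideals, carry the \ga\ structure and quotient maps demanded by \Def{D:EqsjForHom}), and that reassembling the individual lifts at the common index $n$ yields a homomorphism rather than merely a linear map. The composition property and the verification that $\om = \mu \circ \nu$ are comparatively mechanical. A more direct but more index-heavy alternative avoids the factorization: lift the orthogonal decomposition $\ph(1_{A_{l+1}}) + \cdots + \ph(1_{A_m})$ of $\kp(\ld(1_{\C}))$ using Lemmas~\ref{L:FixedPtQuot} and~\ref{L-FixedPtLim} and semiprojectivity of $\C^{m - l}$, and then apply equivariant semiprojectivity of each $A_k$ separately in the resulting corners.
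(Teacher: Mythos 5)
Your argument is correct, and when unwound it performs essentially the same construction as the paper, but it is packaged quite differently, so a comparison is worth making. Both of your permanence claims do hold with the proofs you sketch: the composition claim is the same two-stage diagram chase as in the proof of \Lem{L:SjTrtve}, and the direct-sum claim works because the $q_k = \ld (1_{A_k})$ are mutually orthogonal $G$-invariant projections summing to~$1,$ the corners $q_k C q_k$ with the ideals $q_k J_n q_k = J_n \cap q_k C q_k$ satisfy ${\overline{\bigcup_{n} q_k J_n q_k}} = q_k J q_k,$ and their quotients identify equivariantly with the corners of $C / J_n$ cut by $\kp_n (q_k).$ (One small remark: the direction of Corollary~\ref{C:SjIncId} you invoke, that $A_k$ \eqsj\ implies the unital inclusion of $\C$ is \eqrsj, is the direction whose proof is immediate from \Def{D:EqsjForHom}, since a unital equivariant map out of $\C \cdot 1$ carries no data and the compatibility conditions are automatic; the printed proof of that corollary only addresses the converse.) The paper's own proof is precisely the ``more direct but more index-heavy alternative'' you mention at the end: it lifts the commutative finite dimensional subalgebra of $B$ generated by $1_{A_{l+1}}, \ldots, 1_{A_m}$ into the corner $q C q$ with $q = 1 - \sum_{k \leq l} \ld (1_{A_k}),$ using Corollary~\ref{C-TrIsSj}, then applies equivariant semiprojectivity of each $A_k$ in the corner cut by the lifted projection, and assembles, taking $\kp_n \circ \ld$ on the first $l$ summands. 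What your route buys is the isolation of two reusable permanence properties of \eqrsj\ maps (closure under composition and under finite direct sums) which the paper never states; the cost is that all the corner bookkeeping must be carried through a general lemma rather than done once in situ. Either organization is acceptable; the paper's is shorter, yours is more modular.
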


\begin{proof}
Let the notation be as in Definition~\ref{D:EqsjForHom}
and Remark~\ref{R:EqSj}(\ref{R:EqSj:Nt}).
For $k = 1, 2, \ldots, l$
let $e_k \in A$ be the identity of the summand $A_k \subset A,$
and for $k = 1, 2, \ldots, m$
let $f_k \in B$ be the identity of the summand $A_k \subset B.$
Set $q = 1 - \sum_{k = 1}^l \mu (e_k).$
Let $P \subset B$ be the subalgebra generated by
$f_{l + 1}, f_{l + 2}, \ldots, f_m.$
Then $P$ is semiprojective and $G$ acts trivially on it.
Therefore Corollary~\ref{C-TrIsSj} provides $n_0$ and a
unital equivariant \hm\  %
$\ps_0 \colon P \to q C q / q J_{n_0} q$
such that $\pi_{n_0} \circ \ps_0 = \ph |_P.$
For $k = l + 1, \, l + 2, \, \ldots, \, m,$
set $p_k = \ps_0 (e_k).$
Use equivariant semiprojectivity of~$A_k,$
with $p_k (C / J_{n_0}) p_k$ in place of~$C$
and with $p_k (J_n / J_{n_0}) p_k$ in place of $J_n$
(for $n \geq n_0$) to find $n_k \geq n_0$ and a
unital equivariant lifting
\[
\ps_k \colon A_k
   \to \pi_{n_k, n_0} (p_k) (C / J_{n_k}) \pi_{n_k, n_0} (p_k)
\]
of $\ph |_{A_k}.$
Define $n = \max (n_1, n_2, \ldots, n_m),$
and define $\ps \colon A \to C / J_n$ by
\[
\ps (a_1, a_2, \ldots, a_m)
 = (\kp_n \circ \mu) (a_1, a_2, \ldots, a_l)
       + \sum_{k = l + 1}^m \pi_{n, n_k} ( \ps_k (a_k)).
\]
Then $\ps$ is an equivariant lifting of~$\ph.$
\end{proof}

\begin{cor}\label{P:SjDSum}
Let $G$ be a compact group,
and let $\big( \big( G, A_k, \af^{(k)} \big) \big)_{k = 1}^m$
be a finite collection of \eqsj\  unital \ga s.
Then $A = \bigoplus_{k = 1}^m A_k,$
with the direct sum action $\af \colon G \to \Aut (A),$
is \eqsj.
\end{cor}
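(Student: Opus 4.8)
The plan is to reduce this to Proposition~\ref{P:RSjDSum} together with Lemma~\ref{L:SjTrtve}. The key observation is that the direct sum $A = \bigoplus_{k=1}^m A_k$ is not itself of the form covered by Proposition~\ref{P:RSjDSum}, but it contains a natural \eqsj\  subalgebra together with an \eqrsj\  inclusion into it. Specifically, take $l = m - 1$ in Proposition~\ref{P:RSjDSum}, so that the source algebra there is $A' = \left( \bigoplus_{k=1}^{m-1} A_k \right) \oplus \C$ and the target is exactly $B = \bigoplus_{k=1}^m A_k$, with the map $\om' \colon A' \to B$ sending the last coordinate $\ld \in \C$ to $\ld \cdot 1_{A_m}$. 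Proposition~\ref{P:RSjDSum} tells us $\om'$ is \eqrsj.

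First I would set up an induction on $m$. The base case $m = 1$ is trivial, and $m = 2$ can also serve as a base case if one prefers. For the inductive step, assume $\bigoplus_{k=1}^{m-1} A_k$ is \eqsj\  (with the direct sum action); then $A' = \left( \bigoplus_{k=1}^{m-1} A_k \right) \oplus \C$ is \eqsj\  as well, since $\C$ with the trivial $G$-action is \eqsj\  (for instance by Corollary~\ref{C-TrIsSj}, as $\C$ is semiprojective) and direct sums of two \eqsj\  algebras\ldots{}but wait, that is exactly the statement we are proving. To avoid circularity I would instead invoke Proposition~\ref{P:RSjDSum} once more in the degenerate direction: taking $m$ to be $m-1$ and $l = m-1$ there shows that $\bigoplus_{k=1}^{m-1} A_k$ maps \eqrsj{}ly into\ldots{} hmm, that is not quite it either. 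The cleanest route: prove by induction that $\left( \bigoplus_{k=1}^{l} A_k \right) \oplus \C$ is \eqsj\  for all $l$, and simultaneously deduce the corollary. Actually the simplest honest argument is: apply Proposition~\ref{P:RSjDSum} with $l = m-1$, so $\om' \colon \left(\bigoplus_{k=1}^{m-1} A_k\right) \oplus \C \to \bigoplus_{k=1}^{m} A_k$ is \eqrsj; by Lemma~\ref{L:SjTrtve} it suffices to know the source $\left(\bigoplus_{k=1}^{m-1} A_k\right) \oplus \C$ is \eqsj. Now $\C$ is \eqsj, and by induction on the number of summands (the inductive hypothesis being the full corollary statement applied to the $m$-element family $A_1, \ldots, A_{m-1}, \C$, which has $m$ summands but one fewer ``genuine'' summand) we would be going in a circle again.

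The resolution, and the step I expect to require the most care, is to recognize that Proposition~\ref{P:RSjDSum} with $l$ ranging over $0, 1, \ldots, m-1$ already gives an iterated chain. Concretely, for each $j$ with $1 \le j \le m$, let $B_j = \bigoplus_{k=1}^{j} A_k$ and let $B_j' = B_{j-1} \oplus \C$. Proposition~\ref{P:RSjDSum} (applied to the family $A_1, \ldots, A_j$ with parameter $l = j-1$) says the canonical inclusion-type map $B_j' \to B_j$ is \eqrsj. If $B_{j-1}$ is \eqsj, then $B_j' = B_{j-1} \oplus \C$ is \eqsj\  because $\C$ (trivial action) is \eqsj\  by Corollary~\ref{C-TrIsSj} --- and here one uses Proposition~\ref{P:RSjDSum} yet again with $m$ replaced by $2$, $l = 1$, $A_1 = B_{j-1}$: no, $\C$ is a genuine summand there. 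Let me state it properly: $B_{j-1} \oplus \C$ being \eqsj\  follows from Lemma~\ref{L:SjTrtve} applied to the \eqrsj\  map $B_{j-1} \oplus \C \to B_{j-1} \oplus \C$ given by Proposition~\ref{P:RSjDSum} with $m=2$, $l=1$, first summand $B_{j-1}$, second summand $\C$, together with \eqsj{}ity of $B_{j-1} \oplus \C$\ldots{} which again is what we want. I would therefore organize the final proof as a single simultaneous induction on $m$ proving: ($\ast_m$) for every \eqsj\  family $A_1, \ldots, A_m$, both $\bigoplus_{k=1}^m A_k$ and $\left(\bigoplus_{k=1}^{m} A_k\right) \oplus \C$ are \eqsj. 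The induction step uses Proposition~\ref{P:RSjDSum} (with parameter $l=m-1$) and Lemma~\ref{L:SjTrtve} to pass from $(\ast_{m-1})$ --- which supplies \eqsj{}ity of the source $\left(\bigoplus_{k=1}^{m-1} A_k\right)\oplus\C$ --- to \eqsj{}ity of $\bigoplus_{k=1}^m A_k$; and then one more application of Proposition~\ref{P:RSjDSum}, now with the $(m+1)$-element family obtained by appending $\C$ and taking $l=m$, together with Lemma~\ref{L:SjTrtve}, upgrades this to \eqsj{}ity of $\left(\bigoplus_{k=1}^m A_k\right)\oplus\C$, closing the induction. The main obstacle is purely bookkeeping: arranging the statement $(\ast_m)$ so that the two applications of Proposition~\ref{P:RSjDSum} and Lemma~\ref{L:SjTrtve} feed each other without circularity, which forces the auxiliary ``$\oplus\,\C$'' clause into the inductive hypothesis.
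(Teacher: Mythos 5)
Your overall strategy (Proposition~\ref{P:RSjDSum} plus Lemma~\ref{L:SjTrtve}) is the right one, but the induction you finally settle on does not close. In your last step you apply Proposition~\ref{P:RSjDSum} to the $(m+1)$-element family $A_1, \ldots, A_m, \C$ with $l = m$. With that choice the source of $\om$ is $\big( \bigoplus_{k = 1}^{m} A_k \big) \oplus \C$ and the target is $\bigoplus_{k = 1}^{m + 1} A_k = \big( \bigoplus_{k = 1}^{m} A_k \big) \oplus \C$ as well, and $\om$ is the identity map. Lemma~\ref{L:SjTrtve} then requires the \emph{source} to be \eqsj\  before it yields anything about the target, i.e.\ it presupposes exactly the second clause of $(\ast_m)$ that you are trying to establish. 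The same circularity afflicts the second clause of the base case $(\ast_1)$, which you dismiss as trivial but which is not. The step is repairable: with the appended-$\C$ family take $l = m - 1$ instead, so the source becomes $\big( \bigoplus_{k = 1}^{m - 1} A_k \big) \oplus \C$, which \emph{is} covered by $(\ast_{m - 1})$, and $\om$ sends $\ld$ to $(\ld \cdot 1_{A_m}, \ld)$; then Lemma~\ref{L:SjTrtve} closes the induction. But as written the proof has a genuine gap.

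You also overlooked the instantiation that makes all of this unnecessary. Taking $l = 0$ in Proposition~\ref{P:RSjDSum} gives that the unital inclusion $\C \to \bigoplus_{k = 1}^{m} A_k$, $\ld \mapsto (\ld \cdot 1_{A_1}, \ldots, \ld \cdot 1_{A_m})$, is \eqrsj. Since $\C$ with the trivial action is \eqsj\  (Corollary~\ref{C-TrIsSj}), Lemma~\ref{L:SjTrtve} --- packaged as Corollary~\ref{C:SjIncId} --- immediately gives that $\bigoplus_{k = 1}^{m} A_k$ is \eqsj, with no induction and no auxiliary ``$\oplus\, \C$'' clause. That is the paper's proof, and it is the reason the proposition was stated with the flexible parameter $l$ and the extra $\C$ summand in the first place.
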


\begin{proof}
Proposition~\ref{P:RSjDSum} (with $l = 0$) implies that the
unital inclusion of $\C$ in $A$ is \eqrsj,
so Corollary~\ref{C:SjIncId} implies that $A$ is \eqsj.
\end{proof}

We can use traditional methods to give an example
of a nontrivial action which is equivariantly semiprojective.
This result will be superseded
in Theorem~\ref{T:FDEqSj} below,
using more complicated methods,
so the proof here will be sketchy.

\begin{prp}\label{P:TransSj}
Let $G$ be a finite cyclic group.
Let $G$ act on $C (G)$ by the translation action,
$\ta_g (a) (h) = a (g^{-1} h)$ for $g, h \in G$ and $a \in C (G).$
Then $(G, C (G), \ta)$ is \eqsj.
\end{prp}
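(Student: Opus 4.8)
The plan is to realize $(G, C(G), \ta)$ as a crossed-product-style algebra and exploit that the translation action of a finite cyclic group on itself is, up to isomorphism, the regular representation. Concretely, write $G = \Zqn$, let $u$ be the canonical unitary generating $C(G) \cong \C^n$ as the algebra generated by a single unitary $u$ with $u^n = 1$, where $\ta_g$ sends $u$ to $\ld^g u$ for a primitive $n$-th root of unity $\ld$. Thus $C(G)$ with the translation action is the universal unital \ga\ on one unitary generator $u$ subject to the relations $u^n = 1$ and $\ta_g(u) = \ld^g u$ for $g \in G$. Semiprojectivity then reduces to lifting a single unitary that is permuted (up to scalars) correctly by the group.

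First I would set up the lifting problem as in \Def{D:EqSj}: given $(G, C, \gm)$, the increasing ideals $J_n$, the limit $J$, and a unital equivariant \hm\ $\ph \colon C(G) \to C/J$, let $v = \ph(u) \in C/J$, a unitary with $v^n = 1$ and $\gm^{(\infty)}_g(v) = \ld^g v$. I would first lift $v$ to some self-adjoint or normal element of $C/J_{n_0}$ by ordinary semiprojectivity of $C(S^1)$ (or just of the finite-dimensional algebra $\C^n$; both are standard), obtaining a unitary $w_0 \in C/J_{n_0}$ with $\pi_{n_0}(w_0) = v$, after adjusting by functional calculus so that $w_0^n = 1$ exactly (the spectrum of the lift can be pushed onto the $n$-th roots of unity for large $n$ since $v^n = 1$). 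The nontrivial part is equivariance: $w_0$ need not satisfy $\gm^{(n_0)}_g(w_0) = \ld^g w_0$. However, $\pi_{n_0}$ intertwines $\gm^{(n_0)}$ and $\gm^{(\infty)}$, so the "defect" unitary $z_g = \ld^{-g}\gm^{(n_0)}_g(w_0) w_0^*$ maps to $1$ under $\pi_{n_0}$; it lies close to $1$ in $C/J_{n}$ for $n$ large, and is a $1$-cocycle for the action on the finite spectral projections of $w_0$.

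The key step is then to correct $w_0$ to a genuinely equivariant unitary. Because $G$ is finite, I can average: replace $w_0$ by
\[
w = \frac{1}{n} \sum_{g \in G} \ld^{-g}\, \gm^{(n)}_{g}(w_0)
\]
pushed into a large enough quotient $C/J_n$ where the $z_g$ are close enough to $1$ that this average is invertible; polar decomposition (continuous functional calculus on $w^*w$, which is close to $1$) then produces a unitary, still lifting $v$, now satisfying $\gm^{(n)}_g(w) = \ld^g w$ exactly, and a further functional calculus adjustment restores $w^n = 1$ while preserving equivariance (since the functional calculus is applied to the invariant combination). Defining $\ps \colon C(G) \to C/J_n$ by $\ps(u) = w$ gives the required equivariant lift. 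The main obstacle is making the averaging-and-polar-decomposition argument honest: one must check that the averaged element is invertible in some $C/J_n$ — this uses that the cocycle $z_g$ converges to $1$ as $n \to \infty$, which in turn follows because $\kp$ factors through the $\kp_n$ and $J = \overline{\bigcup J_n}$ — and that the functional-calculus corrections at the end can be done $G$-equivariantly and still land in a single quotient $C/J_n$. Since the paper flags this as a sketch superseded by Theorem~\ref{T:FDEqSj}, I would keep these estimates brief and lean on \Lem{L:FixedPtQuot} and \Lem{L-FixedPtLim} to control the fixed-point algebras where the corrected elements live.
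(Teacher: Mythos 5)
Your proposal is correct and follows essentially the same route as the paper: lift $\ph(u)$ non-equivariantly, average over the group against the character $\ld^{-g}$ to get an element transforming correctly, take the unitary part via polar decomposition, and finish with a functional-calculus correction (a locally constant retraction onto the $d$-th roots of unity, which commutes with multiplication by elements of $G$) to restore $u^d=1$ without breaking equivariance. The only point to make honest is the one you already flag: the spectrum of the polar-decomposition unitary must avoid the midpoints $e^{\pi i/d}G$ so that the retraction is continuous, which the paper arranges by choosing $\ep$ small enough at the outset.
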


\begin{proof}
Let the notation be as in Definition~\ref{D:EqSj}
and Remark~\ref{R:EqSj}(\ref{R:EqSj:Nt}).

Take
$G = \Z / d \Z
   = \big\{ 1, \, e^{2 \pi i / d}, \,  e^{4 \pi i / d},
      \,  \ldots, \,
        e^{2 (d - 1) \pi i / d} \big\}
   \subset S^1.$
Let $u$ be the inclusion of $G$ in~$S^1,$
which we regard as a unitary in $C (G).$
Then $u$ generates $C (G)$
and $\ta_{\ld} (u) = \ld^{-1} u$ for $\ld \in G.$
Therefore it suffices to find $n$ and a unitary $z \in C / J_n$
such that $\pi_n (z) = \ph (u),$
$\spec (z) \subset G,$
and
$\gm^{(n)}_{\ld} (z) = \ld^{-1} z$ for all $\ld \in G.$

Since $C (G)$ is semiprojective (in the nonequivariant sense),
there are $n_0$ and a unitary $v_0 \in C / J_{n_0}$
such that $\pi_n (v_0) = \ph (u)$ and $v_0^d = 1.$
Moreover, for all $\ld \in G,$
we have
\[
\limi{n}
 \big\| \pi_{n, n_0} \big( \gm^{(n_0)}_{\ld} (v_0) \big)
        - \ld^{-1} v_0 \big\|
= 0.
\]

Choose $\ep > 0$ such that
$\ep < \tfrac{1}{2} \big| 1 - e^{\pi i / d} \big|,$
and such that
whenever $B$ is a \uca\  and $b \in B$ satisfies $\| b - 1 \| < \ep,$
then
$\big\| b (b^* b)^{-1/2} - 1 \big\|
 < \tfrac{1}{2} \big| 1 - e^{\pi i / d} \big|.$
Choose $n$ so large that $v = \pi_{n, n_0} (v_0)$ satisfies
$\big\| \gm^{(n)}_{\ld} (v) - \ld^{-1} v \big\| < \ep$
for all $\ld \in G.$
Define $a \in C / J_n$ by
\[
a = \frac{1}{d} \sum_{\ld \in G} \ld \gm^{(n)}_{\ld} (v).
\]
Then one checks that
$\gm^{(n)}_{\ld} (a) = \ld^{-1} a$ for all $\ld \in G$
and that $\| a - v \| < \ep < 1,$
so $a$ is invertible.
Set $w = a (a^* a)^{-1/2},$
and check that $\gm^{(n)}_{\ld} (w) = \ld^{-1} w$ for all $\ld \in G.$
A calculation,
using the choice of~$\ep,$
shows that
$\| w - v \| < \tfrac{1}{2} \big| 1 - e^{\pi i / d} \big|.$
So $e^{\pi i / d} G \cap \spec (w) = \varnothing.$
Let $f \colon S^1 \setminus e^{\pi i / d} G \to S^1$
be the function determined by
$g (e^{i t}) = e^{2 \pi i k / d}$
when $t \in \big( \tfrac{2 k - 1}{d}, \, \tfrac{2 k + 1}{d} \big).$
Then
$f (\ld \zt) = \ld f (\zt)$ for all $\ld \in G$
and $\zt \in S^1 \setminus e^{\pi i / d} G,$
and $f$ is \ct\  on $\spec (w).$
Define $z = f (w).$
The verification that $z$ satisfies the required conditions
is a calculation.
\end{proof}

\section{Equivariant semiprojectivity
 of finite dimensional C*-algebras}\label{Sec:FdIsESj}

\indent
The main result of this section is that actions
of compact groups on \fd\  \ca s are \eqsj.

The main technical tool is a method
for replacing approximate homomorphisms to
unitary groups by nearby exact homomorphisms,
in such a way as to preserve properties such as being
equivariant.
(In Section~\ref{Sec:GdSj},
we will also need to preserve the property of being graded.)
The method used here has been discovered twice before,
in Theorem~3.8 of~\cite{GKR2}
(most of the work is in Section~4 of~\cite{GKR1},
but the result in~\cite{GKR1} uses the wrong metric on the groups)
and in Theorem~1 of~\cite{Kz}.
It is not clear from either of these proofs that the
additional properties we need are preserved.
We will instead follow the proofs of Theorem~5.13
and Proposition~5.14 of~\cite{AGG}.
(We are grateful to Ilijas Farah for pointing out these references.)

\begin{ntn}\label{N:UGp}
For a unital \ca~$A,$
we let $U (A)$ denote the unitary group of~$A.$
\end{ntn}

The following lemmas give an estimate
whose proof is omitted in~\cite{AGG}.
We will need this estimate again,
in the proof of Lemma~\ref{L:1StepCcy} below.
(We don't get quite the same estimate as implied in~\cite{AGG}.)

\begin{lem}\label{L-IntegralEst}
Let $\Gm$ be a compact group with normalized Haar measure~$\mu.$
Let $A$ be a \uca.
Suppose $r \in \big[ 0, \tfrac{1}{2} \big],$
and let $u \colon \Gm \to U (A)$ be a \cfn\  such that
$\| u (g) - 1 \| \leq r$ for all $g \in G.$
Then
\[
\left\| \int_{\Gm} u (g)  \, d \mu (g)
    - \exp \left( \int_{\Gm} \log (u (g) ) \, d \mu (g) \right) \right\|
   \leq \frac{5 r^2}{2 (1 - 2 r)}
\]
and
\[
\left\| \int_{\Gm} u (g)  \, d \mu (g)  \right\| \leq 1.
\]
\end{lem}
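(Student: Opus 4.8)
The plan is to reduce everything to a power-series estimate for the logarithm and exponential functions. Write $u(g) = 1 + x(g)$ where $x(g) = u(g) - 1 \in A$, so that $\|x(g)\| \leq r \leq \tfrac12$ for all $g \in \Gm$. First I would record that for $\|x\| \leq r \leq \tfrac12$ the series $\log(1 + x) = \sum_{k \geq 1} \frac{(-1)^{k+1}}{k} x^k$ converges, and
\[
\| \log(1 + x) - x \| \leq \sum_{k \geq 2} \frac{r^k}{k} \leq \frac{1}{2} \sum_{k \geq 2} r^k = \frac{r^2}{2 (1 - r)} \leq r^2
\]
(using $r \leq \tfrac12$ in the last step), and more crudely $\| \log(1+x) \| \leq \sum_{k \geq 1} r^k = \frac{r}{1-r} \leq 2r$. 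Integrating over $\Gm$ against $\mu$, the function $g \mapsto \log(u(g))$ is continuous, $\big\| \int_\Gm \log(u(g)) \, d\mu(g) \big\| \leq 2r$, and $\big\| \int_\Gm \log(u(g)) \, d\mu(g) - \int_\Gm x(g) \, d\mu(g) \big\| \leq r^2$.

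Next I would compare $\exp$ of this integral with $1 + \int_\Gm x(g)\, d\mu(g)$. Put $y = \int_\Gm \log(u(g))\, d\mu(g)$, so $\|y\| \leq 2r \leq 1$. From $\exp(y) = 1 + y + \sum_{k \geq 2} \frac{y^k}{k!}$ we get $\| \exp(y) - 1 - y \| \leq \sum_{k \geq 2} \frac{\|y\|^k}{k!} \leq \|y\|^2 \sum_{k \geq 2} \frac{1}{k!} = (e - 2)\|y\|^2 \leq (e-2)(2r)^2 = 4(e-2) r^2$. Combining the two displayed estimates via the triangle inequality,
\[
\left\| \int_\Gm u(g)\, d\mu(g) - \exp(y) \right\|
= \left\| \Big(1 + \int_\Gm x(g)\, d\mu(g)\Big) - \exp(y) \right\|
\leq r^2 + 4(e-2) r^2 .
\]
Now $1 + 4(e-2) = 4e - 7 \approx 3.873 \leq 5$, so the right-hand side is at most $5 r^2 \leq \frac{5 r^2}{2(1 - 2r)}$ since $2(1 - 2r) \leq 1$ for $r \in [0, \tfrac12]$; actually to match the stated bound exactly I would be a little more careful and keep the factor $\frac{1}{1 - r}$ from the logarithm estimate rather than bounding it by $1$, which pushes the constant past $5r^2$ toward $\frac{5r^2}{2(1-2r)}$. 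The second inequality, $\big\| \int_\Gm u(g)\, d\mu(g) \big\| \leq 1$, is immediate: the integrand has norm $\|u(g)\| = 1$ everywhere (each $u(g)$ is unitary), so the Bochner integral has norm at most $\int_\Gm 1 \, d\mu(g) = 1$.

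The only mildly delicate point — the ``main obstacle,'' such as it is — is pinning down the numerical constant so that it comes out as $\frac{5 r^2}{2(1-2r)}$ rather than some other $O(r^2)$ expression; this is why the excerpt notes ``we don't get quite the same estimate as implied in~\cite{AGG}.'' The cleanest route is to not throw away the denominators: keep $\frac{r^2}{2(1-r)}$ from $\|\log(1+x) - x\|$, use $\|y\| \leq \frac{r}{1-r}$ rather than $\|y\| \leq 2r$ in the exponential estimate, and then check that the resulting sum of two fractions is dominated by $\frac{5r^2}{2(1-2r)}$ for all $r \in [0, \tfrac12]$ — an elementary one-variable inequality, verified by clearing denominators and comparing polynomials. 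Everything else is routine: convergence of the two standard series on the relevant norm balls, continuity of the integrands (so that the Bochner integrals make sense), and the triangle inequality.
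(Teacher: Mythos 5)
Your overall strategy is exactly the paper's: power-series bounds for $\log$ and $\exp,$ integrate, and combine with the triangle inequality; the second inequality is handled identically. But the final numerical step of your main argument is wrong. You claim $5 r^2 \leq \frac{5 r^2}{2 (1 - 2 r)}$ because ``$2 (1 - 2 r) \leq 1$ for $r \in [0, \tfrac{1}{2}]$''; in fact $2 (1 - 2 r) \leq 1$ only when $r \geq \tfrac{1}{4},$ and for small $r$ the target bound is $\frac{5 r^2}{2 (1 - 2 r)} \approx \tfrac{5}{2} r^2,$ which is \emph{smaller} than your achieved bound $(4 e - 7) r^2 \approx 3.87 r^2.$ So the inequality you need fails precisely in the regime $r < \tfrac{1}{4},$ and your parenthetical explanation (that keeping the $\tfrac{1}{1 - r}$ factors ``pushes the constant past $5 r^2$ toward $\frac{5 r^2}{2 (1 - 2 r)}$'') has the direction backwards: you need a \emph{sharper} intermediate bound, not a larger target.

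The repair you sketch in your last paragraph is the right one, and it is what the paper does: keep the geometric-tail denominators throughout, namely $\| \log (u) - (u - 1) \| \leq \frac{\| u - 1 \|^2}{2 (1 - \| u - 1 \|)}$ and $\| \exp (a) - (1 + a) \| \leq \frac{\| a \|^2}{2 (1 - \| a \|)}.$ Integrating the first gives $\frac{r^2}{2 (1 - r)},$ and the bound $\big\| \int_{\Gm} \log (u (g)) \, d \mu (g) \big\| \leq 2 r$ fed into the second gives $\frac{(2 r)^2}{2 (1 - 2 r)}$; their sum is at most $\frac{r^2}{2 (1 - 2 r)} + \frac{4 r^2}{2 (1 - 2 r)} = \frac{5 r^2}{2 (1 - 2 r)}$ since $1 - r \geq 1 - 2 r.$ This is a two-line computation, not just ``clearing denominators and comparing polynomials,'' so you should carry it out rather than defer it; as written, your proof establishes only $(4 e - 7) r^2,$ which does not imply the stated estimate.
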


\begin{proof}
The second statement is obvious.

For the first, we require the following estimates
(compare with Lemma 5.15 of~\cite{AGG}):
for $u \in U (A)$ with $\| u - 1 \| < 1,$
we have
\begin{equation}\label{Eq:LogEst}
\big\| \log (u) - (u - 1) \big\|
  \leq \frac{\| u - 1 \|^2}{2 \big(1 - \| u - 1 \|  \big)},
\end{equation}
and for $a \in A$ with $\| a \| < 1,$
we have
\begin{equation}\label{Eq:ExpEst}
\big\| \exp (a) - (1 + a) \big\|
  \leq \frac{\| a \|^2}{2 \big(1 - \| a \|  \big)}.
\end{equation}
Both are obtained from power series:
\[
\big\| \log (u) - (u - 1) \big\|
  \leq \sum_{n = 2}^{\infty} \frac{\| u - 1 \|^n}{n}
  \leq \frac{1}{2} \sum_{n = 2}^{\infty} \| u - 1 \|^n
\]
and
\[
\big\| \exp (a) - (1 + a) \big\|
  \leq \sum_{n = 2}^{\infty} \frac{\| a \|^n}{n!}
  \leq \frac{1}{2} \sum_{n = 2}^{\infty} \| a \|^n.
\]

Apply~(\ref{Eq:LogEst}) to the condition
$\| u (g) - 1 \| \leq r$ and integrate, getting
\begin{equation}\label{Eq:IntLogEst}
\left\| \int_{\Gm} \log (u (g)) \, d \mu (g)
  - \int_{\Gm} u (g)  \, d \mu (g) - 1 \right\|
   \leq \frac{r^2}{2 (1 - r)}.
\end{equation}
Since $r \leq \frac{1}{2},$
we also get
\[
\left\| \int_{\Gm} \log (u (g)) \, d \mu (g)  \right\|
 \leq \frac{r^2}{2 (1 - r)} + \int_{\Gm} \| u (g) - 1 \| \, d \mu (g)
 \leq 2 r.
\]
We therefore get, integrating and using~(\ref{Eq:ExpEst}),
\[
\left\| \exp \left( \int_{\Gm} \log (u (g) ) \, d \mu (g) \right)
   - \int_{\Gm} \log (u (g) ) \, d \mu (g) - 1 \right\|
 \leq  \frac{(2 r)^2}{2 (1 - 2 r)}.
\]
Combining this estimate with~(\ref{Eq:IntLogEst})
gives
\[
\left\| \exp \left( \int_{\Gm} \log (u (g) ) \, d \mu (g) \right)
   - \int_{\Gm} u (g)  \, d \mu (g) \right\|
 \leq \frac{r^2}{2 (1 - r)} + \frac{(2 r)^2}{2 (1 - 2 r)}
 \leq \frac{5 r^2}{2 (1 - 2 r)},
\]
as desired.
\end{proof}

\begin{lem}\label{L:OneStep}
Let $\Gm$ be a compact group with normalized Haar measure~$\mu.$
Let $A$ be a \uca.
Suppose $r \in \big[ 0, \tfrac{1}{5} \big],$
and let $\rh \colon \Gm \to U (A)$
be a \cfn\  such that for all $g, h \in \Gm$ we have
\[
\| \rh (g h) - \rh (g) \rh (h) \| \leq r.
\]
For $g \in \Gm$ define
\[
\sm (g)
 = \exp \left( \int_{\Gm}
          \log \Big( \rh (k)^* \rh (k g) \rh (g)^* \Big) \, d \mu (k)
       \right) \rh (g).
\]
Then $\sm$ is a \cfn\  from $\Gm$ to $U (A)$ which satisfies
\[
\| \sm (g h) - \sm (g) \sm (h) \| \leq 17 r^2
\andeqn
\| \sm (g) - \rh (g) \| \leq 2 r
\]
for all $g, h \in \Gm.$
\end{lem}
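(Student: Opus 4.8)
The plan is to treat $\sm$ as a ``corrected'' version of $\rh$, obtained by multiplying $\rh(g)$ on the left by the unitary
\[
v(g) = \exp \left( \int_{\Gm} \log \Big( \rh (k)^* \rh (k g) \rh (g)^* \Big) \, d \mu (k) \right),
\]
and to estimate everything in terms of the defect $r$. First I would check that $v$ is well defined: for each fixed $g$, the integrand $k \mapsto \rh(k)^* \rh(kg) \rh(g)^*$ is a continuous function into $U(A)$ which is within $r$ of $1$ (this is exactly the hypothesis $\|\rh(kg) - \rh(k)\rh(g)\| \le r$, after multiplying by the unitaries $\rh(k)^*$ on the left and $\rh(g)^*$ on the right, which are isometries), so its logarithm is defined and norm at most, roughly, $2r$ by the power-series estimate~(\ref{Eq:LogEst}) used in Lemma~\ref{L-IntegralEst}; hence the integral makes sense and $v(g)$ is a genuine unitary. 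Continuity of $g \mapsto v(g)$, hence of $\sm$, follows from continuity of $\rh$ and dominated convergence. The bound $\| \sm(g) - \rh(g) \| \le 2r$ is then immediate: $\sm(g) - \rh(g) = (v(g) - 1)\rh(g)$, and $\| v(g) - 1 \| \le 2r$ follows from~(\ref{Eq:ExpEst}) applied to $a = \int_\Gm \log(\cdots)\,d\mu(k)$ together with the bound $\|a\| \le 2r$; more precisely one gets $\|v(g)-1\| \le 2r$ after absorbing the quadratic correction term into the linear one, using $r \le \tfrac15$.

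The substantive part is the multiplicativity defect $\| \sm(gh) - \sm(g)\sm(h) \| \le 17 r^2$. The key observation — this is the whole point of the construction — is that the average $\int_\Gm \log\big(\rh(k)^*\rh(kg)\rh(g)^*\big)\,d\mu(k)$ is an ``additive cocycle up to second order'': if we temporarily pretend $\log$ turns products into sums and that $\rh$ is multiplicative, the integrand telescopes and the map $g \mapsto \int_\Gm \log(\cdots)\,d\mu(k)$ becomes a coboundary built from the left-invariant averaging, so that $v(gh)$ and $v(g)\,\rh(g)v(h)\rh(g)^*$ agree. To make this precise I would proceed as follows. Using Lemma~\ref{L-IntegralEst} (with $\Gm$ and the function $k \mapsto \rh(k)^*\rh(kg)\rh(g)^*$, whose distance from $1$ is at most $r \le \tfrac15 \le \tfrac12$), replace $v(g)$ by $\int_\Gm \rh(k)^*\rh(kg)\rh(g)^*\,d\mu(k)$ at the cost of an error $\le \tfrac{5r^2}{2(1-2r)} \le \tfrac{25}{6}r^2$ (bounding with $r \le \tfrac15$). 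So up to a controlled quadratic error, $v(g)$ is replaced by the averaged element $w(g) := \int_\Gm \rh(k)^*\rh(kg)\rh(g)^*\,d\mu(k)$, and it suffices to show $w(gh)$ and $w(g)\rh(g)w(h)\rh(g)^*$ (which is what shows up when one compares $\sm(gh)$ with $\sm(g)\sm(h)$) differ by $O(r^2)$, and to track how this combines with the exp/average error.

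The heart of the computation is therefore a direct estimate. Writing out $\sm(g)\sm(h) = v(g)\rh(g)v(h)\rh(h)$ and $\sm(gh) = v(gh)\rh(gh)$, and using $\|\rh(gh) - \rh(g)\rh(h)\| \le r$ to replace $\rh(gh)$ by $\rh(g)\rh(h)$ at cost $r$ (times $\|v(gh)\| = 1$), the problem reduces to bounding $\| v(gh) - v(g)\,\rh(g)v(h)\rh(g)^* \|$, and for this one passes to the averaged elements $w$ via Lemma~\ref{L-IntegralEst} and computes
\[
w(gh) - w(g)\,\rh(g)w(h)\rh(g)^* = \int_\Gm \rh(k)^*\big[\rh(kgh)\rh(gh)^* - \rh(kg)\rh(g)^*\cdot\rh(g)\rh(gh)^*\cdot\text{(terms)}\big]\,d\mu(k),
\]
where the integrand is manipulated by repeatedly inserting and removing $\rh(g)\rh(h)$ versus $\rh(gh)$ and $\rh(k)\rh(g)$ versus $\rh(kg)$, each swap costing $r$; by the left-invariance of $\mu$ one of the two resulting averages is a reindexing of the other, so the leading linear terms cancel and only products of two defects survive. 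Collecting the constants — the $\le \tfrac{25}{6}r^2$ from each of the (at most two) applications of Lemma~\ref{L-IntegralEst}, the $O(r^2)$ from the cocycle cancellation in the averaged picture, plus the $r \cdot r$ contributions from replacing $\rh(gh)$ by $\rh(g)\rh(h)$ inside already-$O(r)$ expressions — gives a total bounded by $17r^2$ once $r \le \tfrac15$ is used to absorb higher-order terms. The main obstacle is precisely this bookkeeping: arranging the telescoping so that all genuinely linear-in-$r$ contributions cancel (this is where the left-invariance of Haar measure and the specific form $\rh(k)^*\rh(kg)\rh(g)^*$ of the integrand are essential), and then shepherding the numerous $O(r^2)$ error terms so their constants sum to at most $17$. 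There is no conceptual difficulty beyond that; it is a careful but routine estimate following the model of Theorem~5.13 and Proposition~5.14 of~\cite{AGG}.
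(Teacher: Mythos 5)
Your overall strategy --- average over the group, use invariance of Haar measure to cancel linear terms, and control the gap between the exponential of the averaged logarithm and the average itself via Lemma~\ref{L-IntegralEst} --- is the right one, and your treatment of well-definedness, continuity, and the bound $\| \sm (g) - \rh (g) \| \leq 2 r$ is fine. The multiplicativity estimate, however, fails at a concrete step. You first replace $\rh (g h)$ by $\rh (g) \rh (h)$ ``at cost $r$'' via the triangle inequality and then aim to bound $\| v (g h) - v (g) \rh (g) v (h) \rh (g)^* \|$ by $O (r^2).$ That quantity is genuinely of order $r,$ not $r^2$: from $\sm (g h) = v (g h) \rh (g h)$ and $\sm (g) \sm (h) = v (g) \rh (g) v (h) \rh (h)$ one gets the identity
\[
v (g h) - v (g) \rh (g) v (h) \rh (g)^*
  = \big[ \sm (g h) - \sm (g) \sm (h) \big] \rh (g h)^*
    + v (g) \rh (g) v (h) \big[ \rh (h) \rh (g h)^* - \rh (g)^* \big],
\]
and the last bracket has norm exactly $\| \rh (g h) - \rh (g) \rh (h) \|,$ which can be as large as~$r.$ The same problem afflicts the averaged version: with $w (g) = \int_{\Gm} \rh (k)^* \rh (k g) \rh (g)^* \, d \mu (k)$ and $\sm_0 (g) = w (g) \rh (g),$ one computes $w (g h) - w (g) \rh (g) w (h) \rh (g)^* = [ \sm_0 (g h) - \sm_0 (g) \sm_0 (h) ] \rh (g h)^* - \sm_0 (g) \sm_0 (h) ( \rh (g) \rh (h) - \rh (g h) )^*,$ whose second term is linear in~$r.$ Once you have paid the cost $r$ through the triangle inequality, no reindexing of the integral can recover it; the cancellation of the linear defect has to happen as an exact identity before any norms are taken.

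The correct organization (this is what the paper does, quoting~\cite{AGG} for the key step) is to compare $\sm$ not with a conjugated product of the correction unitaries but with $\sm_0 (g) = \int_{\Gm} \rh (k)^* \rh (k g) \, d \mu (k)$ directly: Lemma~\ref{L-IntegralEst} gives $\| \sm (g) - \sm_0 (g) \| \leq 5 r^2$ for every $g,$ and the linear cancellation lives entirely inside the proof that $\| \sm_0 (g h) - \sm_0 (g) \sm_0 (h) \| \leq 2 r^2.$ For the latter, set $\dt (k, g) = \rh (k)^* \rh (k g) - \rh (g),$ so $\| \dt (k, g) \| \leq r,$ and use the exact factorization $\rh (k)^* \rh (k g h) = ( \rh (g) + \dt (k, g) ) ( \rh (h) + \dt (k g, h) ).$ Integrating over $k$ and using bi-invariance of $\mu$ to see that $\int_{\Gm} \dt (k g, h) \, d \mu (k) = \int_{\Gm} \dt (k, h) \, d \mu (k),$ one finds that $\sm_0 (g h)$ and $\sm_0 (g) \sm_0 (h)$ have identical constant and linear parts, the difference being $\int_{\Gm} \dt (k, g) \, d \mu (k) \int_{\Gm} \dt (k, h) \, d \mu (k) - \int_{\Gm} \dt (k, g) \dt (k g, h) \, d \mu (k),$ of norm at most $2 r^2.$ The triangle inequality then gives $\| \sm (g h) - \sm (g) \sm (h) \| \leq 3 \cdot 5 r^2 + 2 r^2 = 17 r^2.$ Your sketch as written cannot reach this bound without this reorganization.
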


\begin{proof}
For $g \in \Gm,$
define
\[
\sm_0 (g) = \int_{\Gm} \rh (k)^* \rh (k g) \, d \mu (k).
\]
The first part of the proof of Proposition~5.14 of~\cite{AGG}
shows that for $g, h \in \Gm,$ we have
\[
\| \sm_0 (g h) - \sm_0 (g) \sm_0 (h) \| \leq 2 r^2
\andeqn
\| \sm_0 (g) - \rh (g) \| \leq r.
\]
The rest of the proof in~\cite{AGG} uses a Lie algebra valued
logarithm, called ``$\ln$'' there.
We replace statements in~\cite{AGG} involving the Lie algebra
of the codomain with the use of the logarithm
coming from holomorphic functional calculus.
Rewriting
\[
\sm_0 (g) =
  \left( \int_{\Gm} \rh (k)^* \rh (k g) \rh (g)^* \, d \mu (k)
       \right) \rh (g)
\]
and applying \Lem{L-IntegralEst},
we get
\[
\| \sm (g) - \sm_0 (g) \| \leq \frac{5 r^2}{2 (1 - 2 r)}
    \leq 5 r^2
    \leq r
\]
for all $g \in \Gm.$
This implies $\| \sm (g) - \rh (g) \| \leq 2 r$ for all $g \in \Gm,$
which is the second of the required estimates.
Clearly $\| \sm (g) \| \leq 1$ for all $g \in \Gm.$
\Lem{L-IntegralEst} implies $\| \sm_0 (g) \| \leq 1$
for all $g \in \Gm.$
Therefore
\begin{align*}
\lefteqn{\| \sm (g h) - \sm (g) \sm (h) \|}
  \\
& \leq \| \sm (g h) - \sm (g h) \|
           +  \| \sm (g) - \sm (g) \|
           +  \| \sm (h) - \sm (h) \|
           +  \| \sm_0 (g h) - \sm_0 (g) \sm_0 (h) \|
   \\
& \leq 5 r^2 + 5 r^2 + 5 r^2 + 2 r^2
  = 17 r^2.
\end{align*}
This is the first of the required estimates.
\end{proof}

\begin{lem}\label{L:A7}
Let $\Gm$ be a compact group with normalized Haar measure~$\mu.$
Let $A$ and $B$ be \uca s,
and let $\kp \colon A \to B$ be a unital \hm.
Suppose $0 \leq r < \frac{1}{17},$
and let $\rh_0 \colon \Gm \to U (A)$
be a \ct\  map such that for all $g, h \in \Gm,$
we have
\[
\| \rh_0 (g h) - \rh_0 (g) \rh_0 (h) \| \leq r
\andeqn
(\kp \circ \rh_0) (g h) = (\kp \circ \rh_0) (g) (\kp \circ \rh_0) (h).
\]
Inductively define functions $\rh_m \colon \Gm \to A$
by (following \Lem{L:OneStep})
\[
\rh_{m + 1} (g)
 = \exp \left( \int_{\Gm}
       \log \Big( \rh_m (k)^* \rh_m (k g) \rh_m (g)^* \Big) \, d \mu (k)
       \right) \rh_m (g)
\]
for $g \in \Gm.$
Then for every $m \in \N$
the function $\rh_m$ is a well defined \cfn\  %
from $\Gm$ to $U (A)$ such that $\kp \circ \rh_m = \kp \circ \rh_0.$
Moreover, the functions $\rh_m$ converge uniformly to
a \ct\  \hm\  $\rh \colon \Gm \to U (A)$ such that
\[
\sup_{g \in \Gm} \| \rh (g) - \rh_0 (g) \| \leq \frac{2 r}{1 - 17 r}
\andeqn
\kp \circ \rh = \kp \circ \rh_0.
\]
\end{lem}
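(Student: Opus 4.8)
The plan is to iterate \Lem{L:OneStep} and check that the error bounds contract geometrically. First I would verify by induction that each $\rh_m$ is a well-defined \cfn\ into $U(A)$ and that the approximate-multiplicativity defect $r_m := \sup_{g, h} \| \rh_m (g h) - \rh_m (g) \rh_m (h) \|$ satisfies $r_m \le 17^{2^m - 1} r^{2^m}$, or more crudely $r_{m+1} \le 17 r_m^2$ with $r_0 \le r$. The base case is the hypothesis on $\rh_0$. For the inductive step, note that the formula defining $\rh_{m+1}$ from $\rh_m$ is exactly the formula for $\sm$ from $\rh$ in \Lem{L:OneStep}; so as long as $r_m \le \tfrac15$ (which follows from $r < \tfrac{1}{17}$ and the contraction, since $17 r < 1$ forces $17^{2^m-1} r^{2^m} = \tfrac{1}{17}(17 r)^{2^m} \le \tfrac{1}{17} \cdot 17 r < \tfrac15$), the logarithm inside is defined and \Lem{L:OneStep} applies directly, giving $r_{m+1} \le 17 r_m^2$ and $\| \rh_{m+1} (g) - \rh_m (g) \| \le 2 r_m$ for all $g$. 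One must also check that $\rh_{m+1} (k)^* \rh_{m+1} (kg) \rh_{m+1} (g)^*$ lies within distance $<1$ of $1$ so the holomorphic logarithm makes sense: this is immediate from $\| \rh_{m+1}(gh) - \rh_{m+1}(g)\rh_{m+1}(h) \| \le r_{m+1} \le \tfrac15 < 1$ after multiplying by the unitary $\rh_{m+1}(g)^*$ on the right and $\rh_{m+1}(k)^*$ on the left.

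Next I would establish uniform convergence. Since $\| \rh_{m+1} - \rh_m \|_\infty \le 2 r_m$ and $r_m \le \tfrac{1}{17} (17 r)^{2^m}$, the series $\sum_m \| \rh_{m+1} - \rh_m \|_\infty$ is dominated by $\tfrac{2}{17} \sum_m (17 r)^{2^m}$, which converges (super-geometrically). Hence $\rh_m$ converges uniformly to a \cfn\ $\rh \colon \Gm \to A$; since $U(A)$ is closed in $A$ and the convergence is in norm, $\rh$ takes values in $U(A)$. Letting $m \to \infty$ in $r_{m+1} \le 17 r_m^2$ shows $r_m \to 0$, and by uniform convergence $\sup_{g,h} \| \rh(gh) - \rh(g)\rh(h) \| = \lim_m r_m = 0$, so $\rh$ is a \ct\ \hm. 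For the distance estimate, $\| \rh(g) - \rh_0(g) \| \le \sum_{m=0}^\infty \| \rh_{m+1}(g) - \rh_m(g) \| \le \sum_{m=0}^\infty 2 r_m \le 2 \sum_{m=0}^\infty (17 r)^m r = \tfrac{2r}{1 - 17r}$, using $r_m \le 17^m r^{m+1} = r (17 r)^m$ (a weaker but sufficient bound that also follows from $r_{m+1} \le 17 r_m^2 \le 17 r \cdot r_m$).

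Finally, the assertion $\kp \circ \rh = \kp \circ \rh_0$. I would show by induction that $\kp \circ \rh_m = \kp \circ \rh_0$ for all $m$. Given $\kp \circ \rh_m = \kp \circ \rh_0$, apply $\kp$ to the defining formula for $\rh_{m+1}$; since $\kp$ is a unital \hm\ it commutes with the holomorphic functional calculus ($\log$ and $\exp$) and with the Haar integral (the integrand is a \ct\ $A$-valued function and $\kp$ is bounded linear), so
\[
(\kp \circ \rh_{m+1})(g)
 = \exp\!\left( \int_\Gm \log\!\Big( (\kp\rh_m)(k)^* (\kp\rh_m)(kg) (\kp\rh_m)(g)^* \Big) d\mu(k) \right) (\kp \rh_m)(g).
\]
But $\kp \circ \rh_m = \kp \circ \rh_0$ is a genuine \hm\ by hypothesis, so each $(\kp\rh_m)(k)^* (\kp\rh_m)(kg) (\kp\rh_m)(g)^* = 1$, the integral vanishes, $\exp(0) = 1$, and the right side is $(\kp\rh_m)(g) = (\kp\rh_0)(g)$. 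Passing to the uniform limit gives $\kp \circ \rh = \kp \circ \rh_0$.

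The main obstacle I anticipate is purely bookkeeping: pinning down a clean recursive bound on $r_m$ that simultaneously (i) stays below $\tfrac15$ so \Lem{L:OneStep} keeps applying, (ii) is summable for the convergence argument, and (iii) yields exactly the constant $\tfrac{2r}{1-17r}$. The cleanest route is to carry both the sharp bound $r_{m+1} \le 17 r_m^2$ and the derived linear bound $r_m \le r(17r)^m$; the quadratic bound drives (i) and (ii), while the linear bound gives (iii) on the nose. No functional-calculus subtleties arise beyond the standard fact that a unital \hm\ intertwines holomorphic functional calculus, and that bounded linear maps commute with Haar integration of \ct\ Banach-space-valued functions.
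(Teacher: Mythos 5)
Your proposal is correct and follows essentially the same route as the paper: iterate Lemma~\ref{L:OneStep}, prove by induction the bound $r_m \le r(17r)^m$ (the paper derives this directly, you derive it from $r_{m+1}\le 17 r_m^2 \le 17r\cdot r_m$, which is the same computation), sum $2r(17r)^m$ to get $\tfrac{2r}{1-17r}$, and preserve $\kp\circ\rh_m=\kp\circ\rh_0$ by noting that $\kp$ intertwines $\log$, $\exp$, and the Haar integral, so the correction factor maps to $\exp(0)=1$. No gaps.
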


\begin{proof}
We claim that for all $m \in \Nz,$
the function $\rh_m$ is well defined,
\ct, take values in $U (A),$
and satisfies $\kp \circ \rh_m = \kp \circ \rh_0,$
and that for $g, h \in \Gm$ we have
\begin{equation}\label{Eq:Better}
\| \rh_m (g h) - \rh_m (g) \rh_m (h) \| \leq r (17 r)^m
\end{equation}
and
\begin{equation}\label{Eq:Conv}
\| \rh_m (g) - \rh_{m - 1} (g) \| \leq 2 r (17 r)^{m - 1}.
\end{equation}

The proof of the claim is by induction on~$m.$
The case $m = 1$ is \Lem{L:OneStep} and $r \leq \tfrac{1}{5}.$
Assume the result is known for~$m.$
Since the estimates (\ref{Eq:Better}) and~(\ref{Eq:Conv})
hold for~$m,$
and by \Lem{L:OneStep} and because $r (17 r)^m < r \leq \tfrac{1}{5},$
the function $\rh_{m + 1}$ is well defined,
\ct, take values in $U (A),$
and for $g, h \in \Gm$ we have
\[
\| \rh_{m + 1} (g) - \rh_m (g) \|
  \leq 2 r (17 r)^m
\]
and, also using $17 r < 1$ at the last step,
\[
\| \rh_{m + 1} (g h) - \rh_{m + 1} (g) \rh_{m + 1} (h) \|
   \leq 17 \big( r (17 r)^m \big)^2
   = r (17 r)^{m + 1} (17 r)^m
   < r (17 r)^{m + 1}.
\]
It remains to prove that
$\kp \circ \rh_{m + 1} = \kp \circ \rh_0.$
Let $g \in \Gm.$
Using $\kp \circ \rh_m = \kp \circ \rh_0$ at the second step
and the fact that $\kp \circ \rh_0$ is a \hm\  at the
last step, we get
\begin{align*}
\lefteqn{\kp \left( \exp \left( \int_{\Gm}
      \log \Big( \rh_m (k)^* \rh_m (k g) \rh_m (g)^* \Big) \, d \mu (k)
       \right) \right)}
       \\
& \hspace*{2em} {\mbox{}}
  = \exp \left( \int_{\Gm}
         \log \Big( (\kp \circ \rh_m) (k)^* (\kp \circ \rh_m) (k g)
                   (\kp \circ \rh_m) (g)^* \Big) \, d \mu (k)
       \right)
   \\
& \hspace*{2em} {\mbox{}}
  = \exp \left( \int_{\Gm}
         \log \Big( (\kp \circ \rh_0) (k)^* (\kp \circ \rh_0) (k g)
                   (\kp \circ \rh_0) (g)^* \Big) \, d \mu (k)
       \right)
  = 1.
\end{align*}
Therefore
$\kp (\rh_{m + 1} (g)) = \kp (\rh_{m} (g)) = \kp (\rh_0 (g)).$
This completes the induction, and proves the claim.

The estimate~(\ref{Eq:Conv}) implies that there is a
\cfn\  $\rh \colon \Gm \to U (A)$ such that $\rh_m \to \rh$
uniformly,
and in fact for $g \in \Gm$ we have
\[
\| \rh (g) - \rh_0 (g) \|
  \leq \sum_{m = 1}^{\infty} 2 r (17 r)^{m - 1}
  = \frac{2 r}{1 - 17 r}.
\]
The estimate~(\ref{Eq:Better}) and convergence imply that
$\rh$ is a \hm.
Continuity of~$\kp$ implies that $\kp \circ \rh = \kp \circ \rh_0.$
\end{proof}

The following proposition is a variant of the fact that two
close \hm s from a \fd\  \ca\  are unitarily equivalent.

\begin{prp}\label{P:CloseImpUE}
Let $\Gm$ be a compact group,
let $A$ and $B$ be \uca s,
and let $\kp \colon A \to B$ be a unital \hm.
Let $\rh, \sm \colon \Gm \to U (A)$
be two \ct\  \hm s
such that
\[
\| \rh (g) - \sm (g) \| < 1
\andeqn
\kp \circ \rh (g) = \kp \circ \sm (g)
\]
for all $g \in \Gm.$
Then there exists a unitary $u \in A$
such that $u \rh (g) u^* = \sm (g)$ for all $g \in \Gm,$
and such that $\kp (u) = 1.$
\end{prp}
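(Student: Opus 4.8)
The plan is to run the classical averaging argument that produces an intertwiner between two close representations of a compact group, and then to observe that the resulting unitary automatically maps to~$1$ under~$\kp.$ Finite dimensionality of~$A$ plays no role; the proposition is really a statement about unital \ca s.

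First I would let $\mu$ be normalized Haar measure on~$\Gm$ and set
\[
a = \int_{\Gm} \sm (g) \rh (g)^* \, d \mu (g) \in A,
\]
which is well defined since $g \mapsto \sm (g) \rh (g)^*$ is \ct\  and $\Gm$ is compact. Because $\rh$ and $\sm$ are \hm s and $\mu$ is translation invariant, for $h \in \Gm$ one computes
\[
\sm (h) a \rh (h)^*
 = \int_{\Gm} \sm (h g) \rh (h g)^* \, d \mu (g)
 = a,
\]
so that $\sm (h) a = a \rh (h)$ for all $h \in \Gm.$ Next, since $\big\| \sm (g) \rh (g)^* - 1 \big\| = \| \sm (g) - \rh (g) \| < 1$ for every~$g$ and $g \mapsto \| \sm (g) - \rh (g) \|$ is \ct\  on the compact set~$\Gm,$ we get $\| a - 1 \| < 1,$ hence $a$ is invertible in~$A.$ Finally, using $\kp \circ \rh = \kp \circ \sm,$
\[
\kp (a)
 = \int_{\Gm} \kp (\sm (g)) \kp (\rh (g))^* \, d \mu (g)
 = \int_{\Gm} \kp (\rh (g)) \kp (\rh (g))^* \, d \mu (g)
 = 1.
\]

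Then I would pass to the unitary part $u = a (a^* a)^{-1/2}$ of~$a.$ From $\sm (h) a = a \rh (h)$ and unitarity of $\sm (h)$ we obtain $a^* a = \rh (h)^* a^* a \rh (h),$ i.e.\ $a^* a$ commutes with $\rh (h)$ for every~$h,$ and hence so does $(a^* a)^{-1/2}.$ Therefore $u$ is a unitary in~$A$ and
\[
u \rh (h) u^*
 = a \rh (h) (a^* a)^{-1/2} u^*
 = \sm (h) a (a^* a)^{-1/2} u^*
 = \sm (h) u u^*
 = \sm (h)
\]
for all $h \in \Gm.$ Since $\kp (a) = 1$ we have $\kp (a^* a) = 1,$ and as $\kp$ is a \hm\  it intertwines the \ct\  functional calculus of the positive invertible element $a^* a,$ whence $\kp \big( (a^* a)^{-1/2} \big) = 1^{-1/2} = 1$ and $\kp (u) = \kp (a) \kp \big( (a^* a)^{-1/2} \big) = 1,$ as required.

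There is no serious obstacle here: this is a routine equivariant variant of the fact quoted before the statement. The one computation to be done carefully is the translation-invariance step yielding the intertwining relation $\sm (h) a = a \rh (h),$ and the one point to verify is that $\kp$ sends $(a^* a)^{-1/2}$ to~$1,$ which holds because $\kp$ commutes with \ct\  functional calculus on self-adjoint elements and $\kp (a^* a) = 1.$ (Everything else — invertibility of~$a$ from $\| a - 1 \| < 1,$ and unitarity of $u = a (a^* a)^{-1/2}$ — is standard.)
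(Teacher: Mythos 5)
Your proof is correct and follows essentially the same route as the paper: average $\sm(g)\rh(g)^*$ (the paper averages $\sm(h)^*\rh(h)$ and uses a change of variables instead of left invariance, a purely cosmetic difference) to get an intertwiner $a$ with $\| a - 1 \| < 1,$ check that $a^* a$ commutes with the $\rh (g),$ take $u = a (a^* a)^{-1/2},$ and observe $\kp (a) = 1$ forces $\kp (u) = 1.$ No issues.
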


\begin{proof}
Let $\mu$ be normalized Haar measure on~$\Gm.$
Define
\[
a = \int_{\Gm} \sm (h)^* \rh (h) \, d \mu (h).
\]
For $g \in \Gm$ we get, changing variables at the second step,
\begin{equation}\label{Eq:CjA}
a \rh (g)
 = \int_{\Gm} \sm (h)^* \rh (h g) \, d \mu (h)
 = \int_{\Gm} \sm \big( h g^{-1} \big)^* \rh (h) \, d \mu (h)
 = \sm (g) a.
\end{equation}
Since $\| \sm (h)^* \rh (h) - 1 \| < 1$ for all $h \in \Gm,$
we have $\| a - 1 \| < 1.$
Therefore
$u = a (a^* a)^{- 1/2}$ is a well defined unitary in~$A.$
Taking adjoints in~(\ref{Eq:CjA}),
we get $a^* \sm (g) = \rh (g) a^*$
for all $g \in \Gm,$
so $a^* a$ commutes with $\rh (g).$
Thus $(a^* a)^{- 1/2}$ commutes with $\rh (g).$
Applying~(\ref{Eq:CjA}) again,
we get $u \rh (g) = \sm (g) u$ for all $g \in \Gm.$

The hypotheses imply that $\kp (a) = 1,$
so also $\kp (u) = 1.$
\end{proof}

\begin{thm}\label{T:FDEqSj}
Let $\af \colon G \to \Aut (A)$ be an action of a compact group~$G$
on a \fd\  \ca~$A.$
Then $(G, A, \af)$ is equivariantly semiprojective.
\end{thm}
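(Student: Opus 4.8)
The plan is to reduce the statement to a purely group-theoretic statement about approximate homomorphisms $G \to U(B)$ for a unital C*-algebra $B$, and then feed that into the iterative machinery already developed (Lemmas~\ref{L-IntegralEst}--\ref{L:A7} and Proposition~\ref{P:CloseImpUE}). First I would fix the data of \Def{D:EqSj}: a unital \ga\  $(G,C,\gm)$, $G$-invariant ideals $J_0 \subset J_1 \subset \cdots$ with $J = \overline{\bigcup_n J_n}$, and a unital equivariant \hm\  $\ph \colon A \to C/J$. Write $A \cong \bigoplus_{k=1}^m M_{d_k}$. Forgetting the group for the moment, ordinary semiprojectivity of a \fd\  \ca\  gives $n_0$ and a (non-equivariant) unital \hm\  $\ph_0 \colon A \to C/J_{n_0}$ lifting $\ph$; concretely one lifts a system of matrix units. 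The issue is that $\ph_0$ need not be equivariant. However, the composite $g \mapsto \gm^{(n_0)}_g \circ \ph_0 \circ \af_{g}^{-1}$ is a family of \hm s $A \to C/J_{n_0}$, each of which becomes $\ph$ after applying $\pi_{n_0}$, so after increasing $n_0$ they are uniformly close to $\ph_0$. By the \fd\  rigidity (the \emph{non}equivariant analog of Proposition~\ref{P:CloseImpUE}), each is implemented by a unitary, and this gives a \ct\  map $\rh_0 \colon G \to U(B)$, where $B = \overline{\ph_0(A)}' \cap C/J_{n_0}$ is the relative commutant (a unital \ca), such that $\gm^{(n_0)}_g \circ \ph_0 = \Ad(\rh_0(g)) \circ \ph_0 \circ \af_g$ for all $g$. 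One checks that $\rh_0$ is then an \emph{approximate cocycle/\hm}: the cocycle identity $\rh_0(gh) = \rh_0(g)\,\big(\gm^{(n_0)}_g\text{-twist of }\rh_0(h)\big)$ holds up to a small error controlled by how well $\ph_0$ lifts $\ph$, and (crucially) it holds \emph{exactly} after applying $\pi_{n_0}$, since $\pi_{n_0}\circ\ph_0 = \ph$ is equivariant. Passing down the tower $C/J_n$ shrinks the error to any prescribed $r$.

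Next I would straighten $\rh_0$ to an exact cocycle. If the action $\gm^{(n_0)}$ were trivial on $B$ this is exactly \Lem{L:A7}, with $\Gm = G$, $\kp = \pi_{n_0}|_B \colon B \to \pi_{n_0}(B) \subset C/J$, and the hypothesis $(\kp\circ\rh_0)(gh) = (\kp\circ\rh_0)(g)(\kp\circ\rh_0)(h)$ supplied by equivariance of $\ph$. In general one has a genuine cocycle, and I would either (a) replace the integrals $\int_\Gm \log(\rh_m(k)^*\rh_m(kg)\rh_m(g)^*)\,d\mu(k)$ by their twisted analogs with the $\gm^{(n_0)}$-action inserted in the right spots---the estimates in Lemmas~\ref{L-IntegralEst}--\ref{L:A7} are insensitive to this since the relevant automorphisms are isometric---or (b) dilate the action: form $C^e = C \rtimes_{\gm} G$ or use the crossed product trick to absorb the twist, then apply \Lem{L:A7} in the untwisted setting. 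Either way one obtains $n \geq n_0$ and a \ct\  exact cocycle $\rh \colon G \to U(C/J_n)$ with $\pi_{n,n_0}(\rh_0)$ close to $\rh$ and $\pi_n\circ\rh$ recovering the (trivial, since $\pi_{n_0}\circ\ph_0$ is equivariant) cocycle implementing $\ph$; that is $\pi_n(\rh(g)) \in (C/J)$ commutes with $\ph(A)$ and in fact $\pi_n\circ\rh$ is the constant $1$ after the straightening. Then define $\ps \colon A \to C/J_n$ by $\ps = \Ad(\rh(\cdot)^{-1})$-correction of $\pi_{n,n_0}\circ\ph_0$; concretely, $\ps(a) = w^*\,\pi_{n,n_0}(\ph_0(a))\,w$ for a suitable unitary $w$ coming from an equivariant version of Proposition~\ref{P:CloseImpUE}, chosen so that $\Ad(w)$ conjugates the cocycle $\pi_{n,n_0}(\rh_0)$ to the trivial one. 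One verifies $\ps$ is a unital \hm, that $\gm^{(n)}_g \circ \ps = \ps \circ \af_g$ (this is where exactness of the straightened cocycle is used), and that $\pi_n\circ\ps = \ph$ (because $\pi_n(w) = 1$, arranged as in Proposition~\ref{P:CloseImpUE}, and $\pi_{n_0}\circ\ph_0 = \ph$).

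The main obstacle I anticipate is \emph{bookkeeping the twist}: making the iterative straightening of \Lem{L:A7} go through for cocycles rather than homomorphisms, while simultaneously keeping track of the auxiliary algebra $B$ (the relative commutant, which itself carries the induced $G$-action $\gm^{(n_0)}$) and ensuring that every approximation is taken far enough down the tower that the errors at stage $n$ are below the thresholds ($\tfrac15$, $\tfrac1{17}$, $1$) demanded by Lemmas~\ref{L:OneStep}, \ref{L:A7} and Proposition~\ref{P:CloseImpUE}. A secondary technical point is that $\ph_0(A)' \cap C/J_{n_0}$ need not map onto $\ph(A)' \cap C/J$, so one must be careful to run \Lem{L:A7} inside $C/J_n$ itself (not the relative commutant) with $\kp = \pi_n$, using that $\rh_0$ takes values in unitaries that \emph{do} commute with $\ph_0(A)$ only approximately; this is handled by first conjugating $\ph_0$ to make $\ph_0(A)$ exactly $\gm^{(n_0)}$-invariant up to the cocycle, which is the content of the first paragraph. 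Once the cocycle is exact and $\pi_n$-trivial, the construction of $\ps$ and the verification of its properties are routine.
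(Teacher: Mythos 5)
Your overall strategy --- repair equivariance by straightening a cocycle over $G$, rather than repair multiplicativity over the unitary group of $A$ --- is genuinely different from the paper's, but as written it has a gap at the straightening step. The relation you extract from finite dimensional rigidity is $\Ad (\rh_0 (g)) \circ \ph_0 = \gm^{(n_0)}_g \circ \ph_0 \circ \af_g^{-1},$ which holds exactly (note, incidentally, that such an intertwiner cannot lie in the relative commutant $\ph_0 (A)' \cap C / J_{n_0}$ unless the two \hm s coincide, so $\rh_0$ does not take values in $U (B)$ as you first assert), while the cocycle identity for $\rh_0$ holds only approximately. When you run the twisted analog of \Lem{L:OneStep} and \Lem{L:A7} to replace $\rh_0$ by an exact cocycle $\rh,$ the correction factors $\exp \big( \int_G \log ( \cdots ) \, d \mu (k) \big)$ are built from the cocycle defects at the pairs $(k, g)$; each such defect lies in the commutant of the range of a \emph{different} \hm\  ($\gm_{k g}^{(n)} \circ \ph_1 \circ \af_{k g}^{-1}$ for varying $k$), and the integral mixes these, so the correction need not commute with $\ph_1 (A).$ Consequently the exact intertwining relation is destroyed: one only retains $\Ad (\rh (g)) \circ \ph_1 \approx \gm^{(n)}_g \circ \ph_1 \circ \af_g^{-1}.$ After trivializing $\rh$ by a unitary $w$ with $\pi_n (w) = 1$ and setting $\ps = \Ad (w^*) \circ \ph_1,$ the map $\ps$ is therefore only approximately \eqv, with an error of the same order as the one you started with, so the construction does not terminate. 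The honest obstruction is the defect $D (g, h) = \rh_0 (g h)^* \gm^{(n)}_g (\rh_0 (h)) \rh_0 (g),$ which \emph{does} lie in $U \big( \ph_1 (A)' \cap C / J_n \big),$ is close to $1,$ and is trivial mod $J$; making $\rh_0$ into an exact cocycle while preserving the intertwining is precisely the problem of trivializing this approximate $2$-cocycle for a twisted action on the commutant. That is a second-order analog of Lemmas \ref{L:OneStep}--\ref{L:A7} which neither the paper nor your proposal supplies, and the estimates are not insensitive to it. (The cocycle route does work in Section~\ref{Sec:QFCuntz} for the Cuntz algebras, but only because there the cocycle $w (g) = \ph (\af^{\rh}_g (r_1))^* \gm_g (\ph (r_1))$ is \emph{exact} from the outset, the matrix-unit subalgebra having already been lifted equivariantly; the \fd\  case has no analogous exact starting point.)

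The paper sidesteps all of this by exchanging the roles of the two groups. It first averages over $G$ and polar-decomposes to obtain a map $\rh_0 \colon U (A) \to U (C / J_n)$ that is \emph{exactly} \eqv\  and exactly lifts $\ph$ but is only approximately multiplicative, and then applies \Lem{L:A7} with $\Gm = U (A),$ which is compact because $A$ is \fd. That iteration manifestly preserves equivariance, since it consists of functional calculus and integration over $U (A)$ against Haar measure, both of which commute with the $G$-action; Proposition~\ref{P:CloseImpUE}, applied to the compact group $U (A),$ then produces the conjugating unitary, and equivariance of $\ps$ follows because $U (A)$ spans $A.$ If you wish to keep the cocycle route, you must either prove the $2$-cocycle stability lemma indicated above or arrange from the start that all corrections take place inside $U \big( \ph_1 (A)' \cap C / J_n \big).$
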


\begin{proof}
Set $\ep_0 = \frac{1}{6 \cdot 34},$
and choose $\ep > 0$ such that $\ep \leq \ep_0$ and
such that whenever $A$ is a \uca,
$u \in U (A),$
and $a \in A$ satisfies $\| a - u \| < \ep,$
then we have
$\big\| a (a^* a)^{-1/2} - u \big\| < \ep_0.$

Let the notation be as in Definition~\ref{D:EqSj}
and Remark~\ref{R:EqSj}(\ref{R:EqSj:Nt}).
Let $\ph \colon A \to C / J$ be a unital \ehm.
Since \fd\  \ca s are \sj,
there exist $n_0$ and a unital \hm\  %
(not necessarily \eqv) $\ps_0 \colon A \to C / J_{n_0}$
which lifts~$\ph.$

For $n \geq n_0,$
define $f_n \colon G \times U (A) \to [0, \infty)$
by
\[
f_n (g, x)
 = \big\| \pi_{n, n_0} \big( \ps_0 ( \af_g (x))
                        - \gm_g^{(n_0)} (\ps_0 (x)) \big) \big\|
\]
for $g \in G$ and $x \in U (A).$
The functions $f_n$ are \ct\  and satisfy
\[
f_{n_0} \geq f_{n_0 + 1} \geq f_{n_0 + 2} \geq \cdots.
\]
Using $J = {\overline{\bigcup_{n = 1}^{\infty} J_n}}$
at the first step
and equivariance of $\ph$ at the second step,
for $g \in G$ and $x \in U (A)$ we have
\[
\limi{n} f_n (g, x)
  = \| \ph (\af_g (x)) - \gm_g ( \ph (x)) \|
  = 0.
\]
Since $G \times U (A)$ is compact,
Dini's Theorem (Proposition~11 in Chapter~9 of~\cite{Ry})
implies that $f_n \to 0$ uniformly.
Therefore there exists $n \geq n_0$ such that
for all $g \in G$ and $x \in U (A),$
we have
\[
\big\|
 \pi_{n, n_0} \big( \ps_0 ( \af_g (x))
                        - \gm_g^{(n_0)} (\ps_0 (x)) \big) \big\|
  < \ep.
\]
Set $\ps_1 = \pi_{n, n_0} \circ \ps.$
Then this estimate becomes
\begin{equation}\label{Eq:TEqSj-U}
\big\| \ps_1 ( \af_g (x)) - \gm_g^{(n)} (\ps_1 (x)) \big\|
  < \ep.
\end{equation}
for every $g \in G$ and $x \in U (A).$

Let $\nu$ be normalized Haar measure on~$G.$
For $x \in A$ define
\[
T (x) = \int_G \big( \gm_h^{(n)} \circ \ps_1 \circ \af_h^{-1} \big) (x)
        \, d \nu (h).
\]
Then for $g \in G$ we have
\begin{align*}
\gm_g^{(n)} (T (x))
& = \int_G \big( \gm_{g h}^{(n)} \circ \ps_1 \circ \af_h^{-1} \big) (x)
        \, d \nu (h)
         \\
& = \int_G
       \big( \gm_h^{(n)} \circ \ps_1 \circ \af_{g^{-1} h}^{-1} \big) (x)
        \, d \nu (h)
  = T ( \af_g (x) ).
\end{align*}
So $T$ is equivariant.
Also, since $\pi_n \circ \ps_1 = \ph$ is \eqv,
we have $\pi_n (T (x)) = \ph (x)$ for all $x \in A.$
It follows from~(\ref{Eq:TEqSj-U})
that $\| T (x) - \ps_1 (x) \| < \ep$ for all $x \in U (A).$
Since $\ep < 1,$
we may define $\rh_0 \colon U (A) \to U (C / J_n)$ by
\[
\rh_0 (x) = T (x) \big( T (x)^* T (x) \big)^{-1/2}
\]
for $x \in U (A).$
Then
\[
\gm_g^{(n)} (\rh_0 (x)) = \rh_0 (\af_g (x) )
\andeqn
\pi_n ( \rh_0 (x)) = \ph (x)
\]
for all $g \in G$ and $x \in U (A).$
By the choice of~$\ep,$
we have
$\| \rh_0 (x) - T (x) \| < \ep_0,$
whence
\[
\| \rh_0 (x) - \ps_1 (x) \|
 \leq \| \rh_0 (x) - T (x) \| + \| T (x) - \ps_1 (x) \|
 < \ep_0 + \ep
 \leq 2 \ep_0.
\]
Let $x, y \in U (A).$
Since
\[
\ps_1 (x), \, \ps_1 (y) \in U (C / J_n)
\andeqn
\ps_1 (x y) = \ps_1 (x) \ps_1 (y),
\]
it follows that
\[
\| \rh_0 (x y) - \rh_0 (x) \rh_0 (y) \| < 6 \ep_0.
\]

Let $\mu$ be normalized Haar measure on the
compact group $U (A).$
Inductively define functions $\rh_m \colon \Gm \to U (C / J_m)$
by (following \Lem{L:A7})
\[
\rh_{m + 1} (x)
 = \rh_m (x) \exp \left( \int_{U (A)}
      \log \Big( \rh_m (x)^* \rh_m (x y) \rh_m (y)^* \Big) \, d \mu (y)
       \right)
\]
for $x \in U (A).$
Since $6 \ep_0 = \frac{1}{34} < \frac{1}{17},$
Lemma~\ref{L:A7} implies that each function $\rh_m$
is a well defined \cfn\  from $U (A)$ to $U (C / J_n)$
and that $\rh (x) = \limi{m} \rh_m (x)$
defines a \ct\  \hm\  from $U (A)$ to $U (C / J_n)$
satisfying
\[
\| \rh (x) - \rh_0 (x) \|
 \leq \frac{2 \cdot 6 \ep_0}{1 - 17 \cdot 6 \ep_0}
 = \frac{2}{17}
\andeqn
\pi_n ( \rh (x)) = \ph (x)
\]
for all $x \in U (A).$
Since \hm s respect functional calculus,
an induction argument shows that
\[
\gm_g^{(n)} (\rh_m (x)) = \rh_m (\af_g (x) )
\]
for all $m \in \Nz,$ $g \in G,$ and $x \in U (A).$
Therefore also
\begin{equation}\label{Eq:TEqSj-Eq}
\gm_g^{(n)} (\rh (x)) = \rh (\af_g (x) )
\end{equation}
for all $g \in G$ and $x \in U (A).$

For $x \in U (A)$ we have
\[
\| \rh (x) - \ps_1 (x) \|
  \leq \| \rh (x) - \rh_0 (x) \| + \| \rh_0 (x) - \ps_1 (x) \|
  < \tfrac{2}{17} + 6 \ep_0
  < 1.
\]
Since $\pi_n ( \rh (x)) = \ph (x) = \pi_n ( \ps_1 (x))$
for $x \in U (A),$
and since $U (A)$ is compact,
Proposition~\ref{P:CloseImpUE}
provides a unitary $w \in C / J_n$
such that $\pi_n (w) = 1$ and such that
$w \ps_1 (x) w^* = \rh (x)$ for all $x \in U (A).$
Define a \hm\  $\ps \colon A \to C / J_n$
by $\ps (a) = w \ps_1 (x) w^*$ for $a \in A.$
Then $\ps$ lifts $\ph$ because $\pi_n (w) = 1.$
Furthermore, $\ps$ is equivariant by~(\ref{Eq:TEqSj-Eq})
and because $U (A)$ spans~$A.$
\end{proof}

As an immediate application,
one can require that the projections in the definitions of
the Rokhlin and tracial Rokhlin properties for finite groups
be exactly orthogonal and exactly permuted by the group action,
rather than merely being approximately permuted by the group action.
We postpone the proof until after discussion equivariant stability
of relations.
See Proposition~\ref{P-RkP}
and Proposition~\ref{P-TRP}.

We can now show that tensoring with \fd\  \ga s
preserves equivariant semiprojectivity.
The proof is essentially due to
Adam P.\  W.\  S{\o}rensen, and Hannes Thiel
and we are grateful to them for their permission to include it here.
We begin with a lemma.

\begin{lem}\label{L-SurjComm}
Let $A_1$ and $A_2$ be \uca s,
and let $\ph \colon A_1 \to A_2$ be a surjective \hm.
Let $F$ be a \fd\  \ca,
and let $\ld_1 \colon F \to A_1$ be a unital \hm.
Set $\ld_2 = \ph \circ \ld_1.$
For $s = 1, 2$ define
\[
B_s = \big\{ a \in A_s \colon
    {\mbox{$a$ commutes with $\ld (x)$ for all $x \in F$}} \big\}.
\]
Then $\ph |_{B_1}$ is a surjective \hm\  from $B_1$ to~$B_2.$
\end{lem}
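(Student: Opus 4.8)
Lemma~\ref{L-SurjComm} asks us to show that a surjective homomorphism $\ph\colon A_1\to A_2$ restricts to a surjection between the relative commutants $B_1$ and $B_2$ of the images of a finite-dimensional subalgebra.

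The plan is to exploit the structure of a finite-dimensional C*-algebra as a direct sum of matrix algebras, so that the relative commutant of $\ld_s(F)$ is again expressible in terms of "corners" cut down by matrix units. Concretely, write $F=\bigoplus_{i=1}^{k} M_{d_i}$ and let $(e^{(i)}_{jl})$ be the standard matrix units of the $i$-th block. The key observation is that an element $a\in A_s$ commutes with all of $\ld_s(F)$ if and only if it has the form $a=\sum_{i}\sum_{j,l}\ld_s(e^{(i)}_{jl})\,b_i\,\ld_s(e^{(i)}_{lj})$ with each $b_i$ lying in the corner $\ld_s(e^{(i)}_{11})A_s\ld_s(e^{(i)}_{11})$ and commuting with that corner's copy of the unit — more precisely, a standard computation shows $B_s$ is isomorphic to $\bigoplus_i \big(\ld_s(e^{(i)}_{11})A_s\ld_s(e^{(i)}_{11})\big)$ via $a\mapsto (\ld_s(e^{(i)}_{11})\,a\,\ld_s(e^{(i)}_{11}))_i$, with inverse given by the "spreading out" formula above. (This uses that $\ld_s$ is unital, so that $\sum_i\sum_j \ld_s(e^{(i)}_{jj})=1$.)

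With that reduction in hand, the proof proceeds as follows. First I would fix a decomposition $F=\bigoplus_{i=1}^k M_{d_i}$ with matrix units $e^{(i)}_{jl}$, and record the isomorphisms $B_s\cong\bigoplus_i p^{(i)}_s A_s p^{(i)}_s$ where $p^{(i)}_s=\ld_s(e^{(i)}_{11})$. Next I would check that $\ph$ carries $p^{(i)}_1$ to $p^{(i)}_2$ (immediate since $\ld_2=\ph\circ\ld_1$), hence restricts to a homomorphism $p^{(i)}_1 A_1 p^{(i)}_1\to p^{(i)}_2 A_2 p^{(i)}_2$, and that this restriction is \emph{surjective}: given $y\in p^{(i)}_2 A_2 p^{(i)}_2$, lift it to some $a\in A_1$ by surjectivity of $\ph$, and replace $a$ by $p^{(i)}_1 a p^{(i)}_1$, which still maps to $p^{(i)}_2 y p^{(i)}_2=y$. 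Finally I would transport this back through the isomorphisms: given $b\in B_2$, write $b=\sum_i\sum_{j,l}\ld_2(e^{(i)}_{jl})\,y_i\,\ld_2(e^{(i)}_{lj})$ with $y_i\in p^{(i)}_2 A_2 p^{(i)}_2$, choose preimages $x_i\in p^{(i)}_1 A_1 p^{(i)}_1$ with $\ph(x_i)=y_i$, set $a=\sum_i\sum_{j,l}\ld_1(e^{(i)}_{jl})\,x_i\,\ld_1(e^{(i)}_{lj})$, and verify directly that $a\in B_1$ and $\ph(a)=b$.

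The routine verifications here are the commutation computations — checking that elements of the stated form do commute with $\ld_s(F)$, that every element of $B_s$ has that form, and that $\ph$ respects all the bookkeeping — and I would state these without grinding through every index manipulation. The only place requiring the slightest care is making sure the cut-down lifts $p^{(i)}_1 a p^{(i)}_1$ actually land in the corner $p^{(i)}_1 A_1 p^{(i)}_1$ and not just in $A_1$; this is automatic because $p^{(i)}_1$ is a projection and $\ph(p^{(i)}_1)=p^{(i)}_2$. Since the matrix-unit structure does all the work, there is no genuine obstacle — the one conceptual point is simply to recognize that the relative commutant of a unital finite-dimensional subalgebra is a direct sum of corners, after which surjectivity of $\ph|_{B_1}$ follows from surjectivity of $\ph$ on each corner.
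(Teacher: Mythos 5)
Your argument is correct and is essentially the paper's proof in different packaging: the composite of your two steps (cut a lift $a$ of $b$ down to the corners $p_1^{(i)}\,a\,p_1^{(i)}$ and then spread out along the matrix units) is exactly the paper's averaging map $E_1 (a) = \sum_{l} \sum_{k} \ld_1 \big( e_{k, 1}^{(l)} \big)\, a \,\ld_1 \big( e_{1, k}^{(l)} \big),$ which the paper shows lands in $B_1,$ fixes $B_1$ pointwise, and satisfies $E_2 \circ \ph = \ph \circ E_1.$ The only cosmetic point is that your ``spreading out'' formula needs only the single sum $\sum_{j} \ld_s \big( e_{j, 1}^{(i)} \big)\, b_i\, \ld_s \big( e_{1, j}^{(i)} \big)$ rather than a double sum over $j$ and $l$ (the terms with $l \neq 1$ vanish because $b_i$ lies in the $(1,1)$-corner), so nothing is actually wrong.
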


\begin{proof}
There are $n, \, r (1), \, r (2), \, \ldots, \, r (n) \in \N$
such that $F = \bigoplus_{l = 1}^n F_k$ and $F_l \cong M_{r (l)}$
for $l = 1, 2, \ldots, n.$
Let $\big( e_{j, k}^{(l)} \big)_{j, k = 1}^{r (l)}$
be a system of matrix units for~$F_l.$

For $s = 1, 2$ define
$E_s \colon A_s \to A_s$ by
\begin{equation}\label{Eq:L-SurjComm}
E_s (a) = \sum_{l = 1}^n \sum_{k = 1}^{r (l)}
  \ld_s \big( e_{k, 1}^{(l)} \big) a \ld_s \big( e_{1, k}^{(l)} \big)
\end{equation}
for $a \in A_s.$
We claim that $E_s (a)$ commutes with $\ld_s (x)$
for $a \in A_s,$ $x \in F,$ and $s = 1, 2.$
It suffices to take $x = e_{i, j}^{(m)}.$
In the product $E_a (a) \ld_s (x),$
the terms coming from~(\ref{Eq:L-SurjComm})
with $l \neq m$ vanish,
leaving
\[
E_s (a) \ld_s (x)
 = \sum_{k = 1}^{r (m)}
  \ld_s \big( e_{k, 1}^{(m)} \big) a
      \ld_s \big( e_{1, k}^{(m)} e_{i, j}^{(m)} \big)
 = \ld_s \big( e_{i, 1}^{(m)} \big) a
      \ld_s \big( e_{1, j}^{(m)} \big).
\]
Similarly, we also get
$\ld_s (x) E_s (a)
 = \ld_s \big( e_{i, 1}^{(m)} \big) a \ld_s \big( e_{1, j}^{(m)} \big),$
proving the claim.

The relation
\[
\sum_{l = 1}^n \sum_{k = 1}^{r (l)} \ld_s \big( e_{k, k}^{(l)} \big)
 = \ld_s (1)
 = 1
\]
implies that for $s = 1, 2$ and $a \in B_s,$
we have $E_s (a) = a.$
It is clear that $E_2 \circ \ph = \ph \circ E_1.$

Now let $b \in B_2.$
Choose $a \in A_1$ such that $\ph (a) = b.$
Then $E_1 (a) \in B_1,$
and $\ph (E_1 (a)) = E_2 (b) = b.$
This completes the proof.
\end{proof}

\begin{thm}\label{T-TensPrdF}
Let $G$ be a compact group,
let $A$ be a \uca,
and let $F$ be a \fd\  \ca.
Let $\af \colon G \to \Aut (A)$
be an equivariantly semiprojective action,
and let $\bt \colon G \to \Aut (F)$ be any action.
Then $\bt \otimes \af \colon G \to \Aut (F \otimes A)$
is equivariantly semiprojective.
\end{thm}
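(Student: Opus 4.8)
The plan is to reduce to equivariant semiprojectivity of $A$ by first lifting the copy $F \otimes 1$ of $F$, and then doing the real work inside the relative commutant of that lift. Let the notation be as in \Def{D:EqSj} and Remark~\ref{R:EqSj}(\ref{R:EqSj:Nt}), and let $\ph \colon F \otimes A \to C / J$ be a unital \ehm. Since $\bt \otimes \af$ restricts on $F \otimes 1$ to a copy of $\bt,$ Theorem~\ref{T:FDEqSj} applies to $(G, F, \bt)$ and produces $n_0$ and a unital \ehm\  $\ld_0 \colon F \to C / J_{n_0}$ with $\pi_{n_0} \circ \ld_0 = \ph |_{F \otimes 1}.$ For $n \geq n_0$ put $\ld_0^{(n)} = \pi_{n, n_0} \circ \ld_0,$ let $D_n \subset C / J_n$ be the relative commutant of $\ld_0^{(n)} (F),$ let $D_\infty \subset C / J$ be the relative commutant of $\ph (F \otimes 1),$ and set $D = D_{n_0}.$ Each of $D_n, D_\infty, D$ is a $G$-invariant unital subalgebra, since $\ld_0^{(n)}$ and $\ph$ are \eqv.

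The key technical point is that $D,$ equipped with the $G$-invariant ideals $K_n = D \cap (J_n / J_{n_0})$ for $n \geq n_0$ (re-indexed to start at $0$ as in the proof of \Lem{L:SjTrtve}), is a legitimate instance of the setup in \Def{D:EqSj}. Applying \Lem{L-SurjComm} to the surjections $\pi_{n, n_0} \colon C / J_{n_0} \to C / J_n$ and $\pi_{n_0} \colon C / J_{n_0} \to C / J$ (with the \fd\  algebra $F$ and the \hm\  $\ld_0$) shows that $\pi_{n, n_0}$ restricts to a surjection $D \to D_n$ and $\pi_{n_0}$ restricts to a surjection $D \to D_\infty$; hence $D / K_n \cong D_n$ and, with $K = D \cap (J / J_{n_0}),$ $D / K \cong D_\infty.$ One still has to verify $K = \overline{\bigcup_{n \geq n_0} K_n}$: for this I would use the bounded idempotent $E \colon C / J_{n_0} \to D$ built from a system of matrix units of $F$ in the proof of \Lem{L-SurjComm}; it restricts to the identity on $D$ and carries each $J_n / J_{n_0}$ into itself, so applying $E$ to elements of $\bigcup_n (J_n / J_{n_0})$ approximating a given element of $K$ finishes the inclusion.

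Now, because $1 \otimes A$ commutes with $F \otimes 1$ in $F \otimes A,$ the map $\ph$ carries $1 \otimes A$ into $D_\infty = D / K,$ and $\bt \otimes \af$ restricts on $1 \otimes A$ to a copy of $\af$; so equivariant semiprojectivity of $(G, A, \af),$ applied with $D$ in place of $C$ and the ideals $K_n,$ supplies $n \geq n_0$ and a unital \ehm\  $\mu \colon A \to D / K_n \cong D_n \subset C / J_n$ lifting $\ph |_{1 \otimes A}.$ Finally, $\ld_0^{(n)} (F)$ and $\mu (A)$ are commuting unital subalgebras of $C / J_n$ and $F$ is \fd\  (hence $F \otimes A$ carries the unique C*-norm and a pair of commuting unital \hm s of $F$ and $A$ induces one of $F \otimes A$), so there is a unique unital \hm\  $\ps \colon F \otimes A \to C / J_n$ with $\ps (x \otimes a) = \ld_0^{(n)} (x) \mu (a).$ Checking on simple tensors and using linearity and continuity, $\ps$ is \eqv\  (since $\ld_0^{(n)}$ and $\mu$ are and $\bt \otimes \af$ acts diagonally on simple tensors) and $\pi_n \circ \ps = \ph$ (since $\ld_0^{(n)}$ lifts $\ph |_{F \otimes 1}$ and $\mu$ lifts $\ph |_{1 \otimes A}$), so $\ps$ is the required equivariant lift.

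I expect the main obstacle to be the second paragraph: organizing the relative commutant $D$ of the lift of $F$ so that \Def{D:EqSj} genuinely applies to it, i.e. getting the identifications $D / K_n \cong D_n$ (exactly the role of \Lem{L-SurjComm}) and checking that $\overline{\bigcup_n K_n} = K.$ Everything after that, namely splicing the commuting \hm s of $F$ and $A$ into one of $F \otimes A$ and checking equivariance and the lifting property, is routine.
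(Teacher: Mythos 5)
Your proof is correct and is essentially the paper's argument: the paper packages the two-step lifting through Lemma~\ref{L:SjTrtve} (by proving that $x \mapsto x \otimes 1$ is equivariantly conditionally semiprojective), but the substance --- pass to the relative commutant $D$ of the lifted copy of $F$, use Lemma~\ref{L-SurjComm} to identify its quotients with the relative commutants downstream, apply equivariant semiprojectivity of $A$ inside $D$, and multiply the two commuting homomorphisms together --- is identical. You are in fact slightly more careful than the paper on one point, since the paper asserts $I = \overline{\bigcup_{n} I_n}$ without proof; your argument via the idempotent $E$ from Lemma~\ref{L-SurjComm} is the right way to fill that in.
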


\begin{proof}
Define $\om \colon F \to F \otimes A$ by $\om (x) = x \otimes 1.$
By Theorem~\ref{T:FDEqSj} and Lemma~\ref{L:SjTrtve},
it suffices to prove that
$\om$ is equivariantly conditionally semiprojective.
Let the notation be as in \Def{D:EqsjForHom},
except with $F$ in place of~$A$ and $F \otimes A$ in place of~$B.$
We have the diagram
\[
\xymatrix{
& & C \ar@{-->}[d]^{\kp_n} \ar@/^2pc/[dd]^{\kp} \\
& & C / J_n \ar@{-->}[d]^{\pi_n} \\
F \ar[r]_(0.4){\om} \ar[rruu]^{\ld}
    & F \otimes A \ar[r]_-{\ph} \ar@{-->}[ru]^{\ps} & C / J,
}
\]
in which the solid arrows correspond to given
equivariant unital \hm s.
We must find $n$ and an equivariant unital \hm\  $\ps$
which make the whole diagram commute.

Define
\[
D = \big\{ c \in C \colon
  {\mbox{$c$ commutes with $\ld (x)$ for all $x \in F$}} \big\},
\]
which is a $G$-invariant subalgebra of~$C.$
Define $I_n = J_n \cap D$ for $n \in \Nz,$
and set $I = J \cap D.$
Then $I, I_0, I_1, \ldots$ are $G$-invariant ideals in~$D,$
and $I = {\overline{\bigcup_{n = 0}^{\infty} I_n}}.$
Moreover, Lemma~\ref{L-SurjComm}
implies that
\[
D / I_n = \big\{ c \in C / J_n \colon
  {\mbox{$c$ commutes with $(\kp_n \circ \ld) (x)$
     for all $x \in F$}} \big\}
\]
for $n \in \Nz,$
and that
\begin{equation}\label{Eq:T-TensPrdF-1}
D / I = \big\{ c \in C / J \colon
  {\mbox{$c$ commutes with $(\kp \circ \ld) (x)$
     for all $x \in F$}} \big\}.
\end{equation}

Define an equivariant \hm\  $\ph_0 \colon A \to C / J$
by $\ph_0 (a) = \ph (1 \otimes a)$ for $a \in A.$
By~(\ref{Eq:T-TensPrdF-1}), the range of~$\ph_0$
is contained in $D / I.$
Since $A$ is \eqsj,
there are $n$ and
a unital equivariant \hm\  $\ps_0 \colon A \to D / I_n \subset C / J_n$
such that $\pi_n \circ \ps_0 = \ph_0.$

By construction,
the ranges of $\ps_0$ and $\kp_n \circ \ld$ commute,
so there is a unital \hm\  $\ps_0 \colon A \to C / J_n,$
necessarily equivariant,
such that
\[
\ps (x \otimes a) = (\kp_n \circ \ld) (x) \ps_0 (a)
\]
for all $x \in F$ and $a \in A.$
Using $\pi_n \circ \kp_n \circ \ld = \ph \circ \om,$
we get $\pi_n \circ \ps = \ph.$
\end{proof}

\section{Quasifree actions on Cuntz algebras}\label{Sec:QFCuntz}

\indent
The purpose of this section is to prove that quasifree actions
of compact groups on the Cuntz algebras~$\OA{d}$
and the extended Cuntz algebras~$E_d,$
for $d$ finite,
are \eqsj.
We begin by defining and introducing notation for quasifree actions.

\begin{ntn}\label{N:Cuntz}
Let $d \in \N.$
(We allow $d = 1,$ in which case $\OA{d} = C (S^1).$)
We write $s_1, s_2, \ldots, s_d$ for the standard generators
of the Cuntz algebra~$\OA{d}.$
That is, we take $\OA{d}$ to be generated by
elements $s_1, s_2, \ldots, s_d$
satisfying the
relations $s_j^* s_j = 1$ for $j = 1, 2, \ldots, d$
and $\sum_{j = 1}^d s_j s_j^* = 1.$
\end{ntn}

\begin{ntn}\label{N:ExtCuntz}
Let $d \in \N.$
We recall the extended Cuntz algebra~$E_d.$
It is the universal \uca\  generated by $d$~isometries
with orthogonal range \pj s which are not required to add up to~$1.$
(For $d = 1,$ we get the Toeplitz algebra,
the \ca\  of the unilateral shift,
which here is called~$r_1.$)
We call these isometries $r_1, r_2, \ldots, r_d,$
so that the relations are $r_j^* r_j = 1$ for $j = 1, 2, \ldots, d$
and $r_j r_j^* r_k r_k^* = 0$
for $j, k = 1, 2, \ldots, d$ with $j \neq k.$
When $d$ must be specified,
we write $r_1^{(d)}, r_2^{(d)}, \ldots, r_d^{(d)}.$
We further let $\et \colon E_d \to \OA{d}$ be the quotient map,
defined, following Notation~\ref{N:Cuntz},
by $\et (r_j) = s_j$ for $j = 1, 2, \ldots, d.$
\end{ntn}

\begin{ntn}\label{N-SLambda}
For $\ld = (\ld_1, \ld_2, \ldots, \ld_d) \in \C^d,$
we further define $s_{\ld} \in \OA{d}$ and $r_{\ld} \in E_d$
by
\[
s_{\ld} = \sum_{j = 1}^d \ld_j s_j
\andeqn
r_{\ld} = \sum_{j = 1}^d \ld_j r_j.
\]
\end{ntn}

\begin{ntn}\label{N-MdToOd}
Let $d \in \N.$
We let $( e_{j, k} )_{j, k = 1}^n$ be the standard system
of matrix units in~$M_n.$
We denote by $\mu \colon M_d \to \OA{d}$
the injective unital \hm\  determined by $\mu (e_{j, k}) = s_j s_k^*$
for $j, k = 1, 2, \ldots, d.$
We further denote by $\mu_0 \colon M_d \oplus \C \to E_d$
the injective unital \hm\  determined by
$\mu_0 \big( (e_{j, k}, \, 0) \big) = r_j r_k^*$
for $j, k = 1, 2, \ldots, d$
and $\mu_0 (0, 1) = 1 - \sum_{j = 1}^d r_j r_j^*.$
\end{ntn}

Recall (Notation~\ref{N:UGp}) that $U (A)$ is the unitary group of~$A.$

\begin{ntn}\label{N-AdRep}
Let $A$ be a \uca,
and let $u \in U (A).$
We denote by $\Ad (u)$ the automorphism
$\Ad (u) (a) = u a u^*$ for $a \in A.$
Further let $G$ be a topological group,
and let $\rh \colon G \to U (A)$ be a \ct\  \hm.
We denote by $\Ad (\rh)$ the action $\aGA$
given by $\Ad (\rh)_g = \Ad (\rh (g))$
for $g \in G.$
For $\rh \colon G \to U (M_d),$
we also write $\Ad (\rh \oplus 1)$ for the action
$g \mapsto \Ad (\rh (g), \, 1)$ on $M_n \oplus \C.$
We always take $M_d \oplus \C$ to have this action.
\end{ntn}

We now give the basic properties of quasifree actions on~$E_d.$

\begin{lem}\label{L:ExtQFree}
Let $G$ be a topological group,
let $d \in \N,$
and let $\rh \colon G \to U (M_d)$
be a unitary representation of~$G$ on~$\C^d.$
Then there exists a unique action $\af^{\rh} \colon G \to \Aut (E_d)$
such that,
with $\mu_0$ as in Notation~\ref{N-MdToOd},
for $j = 1, 2, \ldots, d$ and $g \in G$ we have
\[
\af^{\rh}_g (r_j) = \mu_0 (\rh (g), \, 1) r_j.
\]
Moreover, this action has the following properties:
\begin{enumerate}
\item\label{L:ExtQFree-0}
For all $g \in G,$ if we write
\[
\rh (g)
 = \sum_{j, k = 1}^d \rh_{j, k} (g) e_{j, k}
 = \left( \begin{matrix}
\rh_{1, 1} (g) & \rh_{1, 2} (g) & \cdots & \rh_{1, d} (g)  \\
\rh_{2, 1} (g) & \rh_{2, 2} (g) & \cdots & \rh_{2, d} (g)  \\
\vdots         & \vdots         & \ddots &  \vdots         \\
\rh_{d, 1} (g) & \rh_{d, 2} (g) & \cdots & \rh_{d, d} (g)
\end{matrix} \right),
\]
then
\[
\af^{\rh}_g (r_k) = \sum_{j = 1}^d \rh_{j, k} (g) r_j
\]
for $k = 1, 2, \ldots, d.$
\item\label{L:ExtQFree-1}
Following Notation~\ref{N-SLambda},
for every $\ld \in \C^d$ and $g \in G,$
we have $\af^{\rh}_g (r_{\ld}) = r_{\rh (g) \ld}.$
\item\label{L:ExtQFree-2}
The \hm\  $\mu_0$ is equivariant.
(Recall the action on $M_d \oplus \C$ from Notation~\ref{N-AdRep}.)
\item\label{L:ExtQFree-3}
The \pj\  $1 - \sum_{j = 1}^d r_j r_j^* \in E_d$
is $G$-invariant.
\end{enumerate}
\end{lem}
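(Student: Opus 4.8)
The plan is to establish existence and uniqueness of $\af^{\rh}$ via the universal property of $E_d$, and then verify the four listed properties by direct computation. First I would define, for each $g \in G$, the elements $t_k^{(g)} = \mu_0(\rh(g), 1) r_k = \sum_{j=1}^d \rh_{j,k}(g) r_j$ for $k = 1, 2, \ldots, d$, where the second equality is immediate from the definition of $\mu_0$ in Notation~\ref{N-MdToOd} once one expands $\mu_0(\rh(g),1) = \sum_{i,l} \rh_{i,l}(g) r_i r_l^*$ and uses $r_l^* r_k = \dt_{l,k} 1$. One then checks that these elements satisfy the defining relations of $E_d$: since $\rh(g)$ is unitary, $\big(t_k^{(g)}\big)^* t_{k'}^{(g)} = \sum_{j,j'} \overline{\rh_{j,k}(g)} \rh_{j',k'}(g) r_j^* r_{j'} = \sum_j \overline{\rh_{j,k}(g)}\rh_{j,k'}(g) = (\rh(g)^*\rh(g))_{k,k'} = \dt_{k,k'}$, which gives both $\big(t_k^{(g)}\big)^* t_k^{(g)} = 1$ and the orthogonality of ranges after a parallel short computation (or simply by noting $\mu_0(\rh(g),1)$ is unitary and conjugation by a unitary preserves the relations). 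By the universal property of $E_d$ there is a unique unital \hm\ $\af^{\rh}_g \colon E_d \to E_d$ with $\af^{\rh}_g(r_k) = t_k^{(g)}$.

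Next I would verify that $g \mapsto \af^{\rh}_g$ is an action, i.e.\ $\af^{\rh}_g \circ \af^{\rh}_h = \af^{\rh}_{gh}$ and $\af^{\rh}_e = \id$; since both sides are \hm s it suffices to check agreement on the generators $r_k$, and this reduces to the matrix identity $\rh(g)\rh(h) = \rh(gh)$ applied entrywise. This also shows $\af^{\rh}_g$ is invertible with inverse $\af^{\rh}_{g^{-1}}$, hence an \am. Continuity of $g \mapsto \af^{\rh}_g$ (in the point-norm topology) follows from continuity of $\rh$ together with the fact that $\af^{\rh}_g$ acts by a polynomial formula in the matrix entries on the dense $*$-subalgebra generated by the $r_k$, plus the uniform bound $\|\af^{\rh}_g\| = 1$. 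Uniqueness of the action with the stated property on generators is again immediate from the universal property.

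For the enumerated properties: (\ref{L:ExtQFree-0}) is exactly the expansion of $\mu_0(\rh(g),1)r_k$ noted above, so there is nothing further to do. For (\ref{L:ExtQFree-1}), I would compute $\af^{\rh}_g(r_{\ld}) = \sum_k \ld_k \af^{\rh}_g(r_k) = \sum_k \ld_k \sum_j \rh_{j,k}(g) r_j = \sum_j (\rh(g)\ld)_j r_j = r_{\rh(g)\ld}$, using Notation~\ref{N-SLambda} and linearity. For (\ref{L:ExtQFree-2}), I must check $\af^{\rh}_g \circ \mu_0 = \mu_0 \circ \Ad(\rh(g),1)$ on a generating set of $M_d \oplus \C$; on $(e_{j,k},0)$ the left side is $\af^{\rh}_g(r_j r_k^*) = t_j^{(g)}\big(t_k^{(g)}\big)^* = \mu_0(\rh(g),1) r_j r_k^* \mu_0(\rh(g),1)^*$, which equals $\mu_0\big((\rh(g),1)(e_{j,k},0)(\rh(g),1)^*\big)$ since $\mu_0$ is a \hm, and this is the right side by definition of $\Ad$. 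On $(0,1)$ both sides fix the unit of $E_d$, and since $\mu_0(0,1) = 1 - \sum_j r_j r_j^*$ and $\sum_j r_j r_j^*$ is carried to itself by $\Ad(\rh(g),1)$ (the argument of the next item), equivariance there follows too. For (\ref{L:ExtQFree-3}), I would show $\af^{\rh}_g\big(\sum_j r_j r_j^*\big) = \sum_j r_j r_j^*$: indeed $\af^{\rh}_g\big(\sum_j r_j r_j^*\big) = \sum_j t_j^{(g)}\big(t_j^{(g)}\big)^* = \mu_0(\rh(g),1)\big(\sum_j r_j r_j^*\big)\mu_0(\rh(g),1)^* = \mu_0\big((\rh(g),1)(1_{M_d},0)(\rh(g),1)^*\big) = \mu_0(1_{M_d},0) = \sum_j r_j r_j^*$, where the last equalities use that $\mu_0(1_{M_d},0) = \sum_j r_j r_j^*$ and that $(1_{M_d},0)$ is central in $M_d \oplus \C$. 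None of the steps presents a real obstacle; the only mild care needed is in organizing the verifications so that properties (\ref{L:ExtQFree-2}) and (\ref{L:ExtQFree-3}) are deduced from the single observation that $\af^{\rh}_g$ restricted to the copy of $M_d \oplus \C$ inside $E_d$ is conjugation by the unitary $\mu_0(\rh(g),1)$, which is itself $\mu_0$ applied to the corresponding inner automorphism of $M_d \oplus \C$.
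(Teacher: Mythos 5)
Your proposal is correct and follows essentially the same route as the paper's proof: verify that the elements $\mu_0 (\rh (g), \, 1) r_k$ satisfy the defining relations of $E_d$ (using unitarity of $\mu_0 (\rh (g), \, 1)$), invoke the universal property to get each $\af^{\rh}_g,$ check the group law on generators, and deduce the four listed properties by the same direct computations, with equivariance of $\mu_0$ on $(0,1)$ reduced to invariance of $1 - \sum_j r_j r_j^*.$ The only cosmetic slip is writing $\mu_0 (\rh (g), \, 1) = \sum_{i, l} \rh_{i, l} (g) r_i r_l^*$ without the summand $1 - \sum_j r_j r_j^*$; since that summand annihilates each $r_k,$ your conclusion is unaffected.
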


\begin{proof}
For $g \in G,$
we claim that the elements
$\mu_0 (\rh (g), \, 1) r_j$ satisfy the relations defining~$E_d.$
Because $\mu_0 (\rh (g), \, 1)$ is unitary,
we have
\[
\big( \mu_0 (\rh (g), \, 1) r_j \big)^*
  \big( \mu_0 (\rh (g), \, 1) r_k \big) = r_j^* r_k
\]
for $j, k = 1, 2, \ldots, d.$
Using the definition of~$\mu_0$ at the second step,
we also get
\begin{align}\label{Eq:FixDefect}
\lefteqn{\sum_{j = 1}^d \big[ \mu_0 (\rh (g), \, 1) r_j \big]
          \big[ \mu_0 (\rh (g), \, 1) r_j \big]^*}
          \\
& \hspace*{2em} {\mbox{}}
  = \mu_0 (\rh (g), \, 1) \left( \sssum{j = 1}{d} r_j r_j^* \right)
           \mu_0 (\rh (g), \, 1)
  = \sum_{j = 1}^d r_j r_j^*.
\notag
\end{align}
It is now easy to prove the claim.

It follows that there is a unique \hm\  %
$\af^{\rh}_g \colon E_d \to E_d$
such that $\af^{\rh}_g (r_j) = \mu_0 (\rh (g), \, 1) r_j$
for $j = 1, 2, \ldots, d.$
Part~(\ref{L:ExtQFree-3}) follows from~(\ref{Eq:FixDefect}).
Part~(\ref{L:ExtQFree-0}) is just a calculation,
and implies part~(\ref{L:ExtQFree-1})
when $\ld \in \C^d$ is a standard basis vector.
The general case of part~(\ref{L:ExtQFree-1})
follows by linearity.

Part~(\ref{L:ExtQFree-1}) implies that
$\af^{\rh}_{g h} (s_{\ld})
   = \af^{\rh}_{g} \circ \af^{\rh}_{h} (s_{\ld})$
for $g, h \in G$ and $\ld \in \C,$
and also $\af^{\rh}_1 = \id_{E_d},$
so $g \mapsto \af^{\rh}_g$ is a \hm\  to $\Aut (E_d).$
Continuity of $g \mapsto \af_g (a)$
for $a \in E_d$ follows from the fact that it holds
whenever $a = r_j$ for some~$j.$

It remains to prove~(\ref{L:ExtQFree-2}).
For $j, k = 1, 2, \ldots, d,$
we have
\begin{align*}
\af^{\rh}_g \big( \mu_0 (e_{j, k}, \, 0 ) \big)
& = \af^{\rh}_g (r_j) \af^{\rh}_g (r_k^*)
  = \mu_0 (\rh (g), \, 1) r_j r_k^* \mu_0 (\rh (g), \, 1)^*
        \\
& = \mu_0 \big( (\rh (g), \, 1) (e_{j, k}, \, 0 )
                  (\rh (g), \, 1)^* \big)
  = \mu_0 \big( \Ad (\rh \oplus 1)_g (e_{j, k}, \, 0 ) \big),
\end{align*}
as desired.
We must also to check the analogous equation with $(0, 1)$ in place
of $(e_{j, k}, 0 ),$
but this is immediate from part~(\ref{L:ExtQFree-3}).
\end{proof}

Here are the corresponding properties for quasifree actions
on~$\OA{d}.$
These are mostly well known, and are stated for reference
and to establish notation.
They also follow from \Lem{L:ExtQFree}.

\begin{lem}\label{L:OdQFree}
Let $G$ be a topological group,
let $d \in \N,$
and let $\rh \colon G \to U (M_d)$
be a unitary representation of~$G$ on~$\C^d.$
Then there exists a unique action $\bt^{\rh} \colon G \to \Aut (\OA{d})$
such that,
with $\mu$ as in Notation~\ref{N-MdToOd},
for $j = 1, 2, \ldots, d$ and $g \in G$ we have
\[
\bt^{\rh}_g (s_j) = \mu (\rh (g), \, 1) s_j.
\]
Moreover, this action has the following properties:
\begin{enumerate}
\item\label{L:OdQFree-0}
For all $g \in G,$ if we write
\[
\rh (g) = \sum_{j, k = 1}^d \rh_{j, k} (g) e_{j, k},
\]
then
\[
\bt^{\rh}_g (s_k) = \sum_{j = 1}^d \rh_{j, k} (g) s_j
\]
for $k = 1, 2, \ldots, d.$
\item\label{L:OdQFree-1}
Following Notation~\ref{N-SLambda},
for every $\ld \in \C^d$ and $g \in G,$
we have $\bt^{\rh}_g (s_{\ld}) = s_{\rh (g) \ld}.$
\item\label{L:OdQFree-2}
When $M_d$ is equipped with the action
$\Ad (\rh),$
the \hm\  $\mu$ is equivariant.
\item\label{L:OdQFree-3}
The quotient map $\et$ from $(G, E_d, \af^{\rh})$
to $(G, \OA{d}, \bt^{\rh})$ is equivariant.
\end{enumerate}
\end{lem}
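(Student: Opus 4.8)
The plan is to deduce everything from Lemma~\ref{L:ExtQFree} via the quotient map $\et \colon E_d \to \OA{d}$, rather than repeating the cocycle-free computations. First I would observe that the formula $\et(r_j) = s_j$ is surjective and intertwines the would-be defining formulas: since $\mu = \et \circ \mu_0(\,\cdot\,,0)$ on the $M_d$ summand (both send $e_{j,k}$ to $s_j s_k^*$), and since $\et \circ \mu_0(0,1) = 1 - \sum_j s_j s_j^* = 0$ in $\OA{d}$, the element $\mu_0(\rh(g),1) \in E_d$ maps under $\et$ to $\mu(\rh(g)) \in \OA{d}$. Hence if the action $\af^{\rh}$ on $E_d$ exists (which it does by Lemma~\ref{L:ExtQFree}), then for each $g$ the automorphism $\af^{\rh}_g$ descends through $\et$ to an automorphism $\bt^{\rh}_g$ of $\OA{d}$ satisfying $\bt^{\rh}_g(s_j) = \et(\af^{\rh}_g(r_j)) = \et(\mu_0(\rh(g),1)r_j) = \mu(\rh(g))s_j$. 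To see it descends, note that the relations defining $\OA{d}$ are preserved: the isometry relations $s_j^* s_j = 1$ are clear, and $\sum_j (\mu(\rh(g))s_j)(\mu(\rh(g))s_j)^* = \mu(\rh(g))(\sum_j s_j s_j^*)\mu(\rh(g))^* = 1$ by the same computation as~(\ref{Eq:FixDefect}). Uniqueness is immediate because $s_1,\dots,s_d$ generate $\OA{d}$.

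Next I would verify the four listed properties, each a one-line consequence of its counterpart in Lemma~\ref{L:ExtQFree} together with equivariance of $\et$. Property~(\ref{L:OdQFree-0}) is obtained by applying $\et$ to the formula in Lemma~\ref{L:ExtQFree}(\ref{L:ExtQFree-0}), or directly by a matrix-unit calculation; property~(\ref{L:OdQFree-1}) follows by linearity from~(\ref{L:OdQFree-0}) exactly as in the proof of Lemma~\ref{L:ExtQFree}(\ref{L:ExtQFree-1}), or simply by applying $\et$ to $\af^{\rh}_g(r_{\ld}) = r_{\rh(g)\ld}$. For property~(\ref{L:OdQFree-2}), I would compute $\bt^{\rh}_g(\mu(e_{j,k})) = \bt^{\rh}_g(s_j s_k^*) = \mu(\rh(g))s_j s_k^* \mu(\rh(g))^* = \mu(\rh(g)e_{j,k}\rh(g)^*) = \mu(\Ad(\rh)_g(e_{j,k}))$, which gives equivariance of $\mu$ with respect to the $\Ad(\rh)$ action on $M_d$; alternatively this is the image under $\et$ of Lemma~\ref{L:ExtQFree}(\ref{L:ExtQFree-2}) after noting that $\Ad(\rh\oplus 1)$ on $M_d \oplus \C$ restricts to $\Ad(\rh)$ on the $M_d$ summand. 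Property~(\ref{L:OdQFree-3}) is precisely the statement, established in the first paragraph, that $\et \circ \af^{\rh}_g = \bt^{\rh}_g \circ \et$ for all $g$.

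Finally I would check the remaining structural requirements: that $g \mapsto \bt^{\rh}_g$ is a group homomorphism into $\Aut(\OA{d})$ and that $g \mapsto \bt^{\rh}_g(a)$ is continuous for each $a \in \OA{d}$. Both are inherited from $\af^{\rh}$ through the surjection $\et$: the homomorphism property follows since $\et$ is surjective and $\et \circ \af^{\rh}_{gh} = \et \circ \af^{\rh}_g \circ \af^{\rh}_h$, while continuity reduces (by an $\ep$-argument using that $\et$ is norm-decreasing and that polynomials in the $s_j, s_j^*$ are dense) to continuity of $g \mapsto \bt^{\rh}_g(s_j) = \mu(\rh(g))s_j$, which holds because $\rh$ is continuous and $\mu$ is a $\ast$-homomorphism. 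There is essentially no obstacle here; the only point requiring minor care is confirming that the defect projection $1 - \sum_j r_j r_j^*$ lies in $\ker \et$, so that the action genuinely descends, and this is immediate from the definition of $\et$. I would therefore present the proof compactly, spending most of the words on the descent argument and stating the four properties as corollaries of Lemma~\ref{L:ExtQFree}.
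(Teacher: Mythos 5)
Your proposal is correct and follows essentially the same route as the paper: everything is deduced from Lemma~\ref{L:ExtQFree} by passing to the quotient through $\et$, using that the defect projection is $G$-invariant (equivalently, that the elements $\mu(\rh(g))s_j$ satisfy the Cuntz relations) so that the action descends, and then reading off properties (\ref{L:OdQFree-0})--(\ref{L:OdQFree-2}) from their counterparts in Lemma~\ref{L:ExtQFree}. One small imprecision: the point requiring care is not that $1-\sum_j r_j r_j^*$ lies in $\ker\et$ (that is automatic), but that the ideal it generates is carried into itself by each $\af^{\rh}_g$, which is exactly Lemma~\ref{L:ExtQFree}(\ref{L:ExtQFree-3}); your universal-property computation covers this in any case.
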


\begin{proof}
\Lem{L:ExtQFree}(\ref{L:ExtQFree-3})
implies that the ideal in $E_d$ generated by
$\sum_{j = 1}^d r_j r_j^*$ is invariant.
Therefore the quotient is a \ga.
It is well known that we may identify
$\et \colon E_d \to \OA{d}$ with this quotient map.
So we have an action $\bt^{\rh} \colon G \to \Aut (\OA{d}).$
It is clear from the construction and the fact
that $\et (\mu_0 (e_{j, k}, \, 0)) = \mu (e_{j, k})$
for $j, k = 1, 2, \ldots, d$
that
this action satisfies
$\bt^{\rh}_g (s_j) = \mu (\rh (g), \, 1) s_j$
for $j = 1, 2, \ldots, d$ and $g \in G.$
Uniqueness of $\bt^{\rh}$ is clear.
Similarly, parts (\ref{L:OdQFree-0}), (\ref{L:OdQFree-1}),
and~(\ref{L:OdQFree-2})
follow from the corresponding formulas in \Lem{L:ExtQFree}.
\end{proof}

The algebraic computations we need for equivariant semiprojectivity
of quasifree actions are contained in the following lemma.

\begin{lem}\label{L-MakeCcyEd}
Let $G$ be a topological group.
Let $d \in \N,$
let $\rh \colon G \to U (M_d)$
be a unitary representation of~$G$ on~$\C^d,$
and let $\af^{\rh} \colon G \to \Aut (E_d)$ be the quasifree action
of \Lem{L:ExtQFree}.
Let $\mu_0 \colon M_d \oplus \C \to E_d$
be as in Notation~\ref{N-MdToOd},
and recall (Notation~\ref{N-AdRep})
the action $\Ad (\rh \oplus 1)$ on $M_d \oplus \C.$
Let $(G, C, \gm)$ be a unital \ga,
and let $\ph \colon E_d \to C$
be a unital \hm\  such that $\ph \circ \mu_0$ is \eqv.
For $g \in G,$ define
\[
w (g) = \ph ( \af^{\rh}_g (r_1))^* \gm_g (\ph (r_1)).
\]
Then:
\begin{enumerate}
\item\label{L-MakeCcyEd-1}
$g \mapsto w (g)$ is a \cfn\  from $G$ to $U (C).$
\item\label{L-MakeCcyEd-2}
For $j = 1, 2, \ldots, d$ and $g \in G,$
we have $\ph ( \af^{\rh}_g (r_{j})) w (g) = \gm_g (\ph (r_{j})).$
\item\label{L-MakeCcyEd-3}
For every $g, h \in G,$
we have $w (g h) = w (g) \gm_g (w (h)).$
\item\label{L-MakeCcyEd-4}
For every $g \in G,$
we have
$\| w (g) - 1 \|
   = \big\| \ph ( \af^{\rh}_g (r_1)) - \gm_g (\ph (r_1)) \big\|.$
\item\label{L-MakeCcyEd-5}
If $v \in U (C)$ satisfies $v \gm_g (v)^* = w (g)$
for all $g \in G,$
then there is a unique unital \hm\  $\ps \colon E_d \to C$
such that $\ps (r_j) = \ph (r_j) v$ for $j = 1, 2, \ldots, d.$
Moreover, $\ps$ is \eqv\  and $\ps \circ \mu_0 = \ph \circ \mu_0.$
\item\label{L-MakeCcyEd-6}
If $\kp \colon C \to D$ is an \eqv\  \hm\  %
from $C$ to some other \ga~$D,$
and $\kp \circ \ph$ is \eqv,
then $\kp (w (g)) = 1$ for all $g \in G.$
\end{enumerate}
\end{lem}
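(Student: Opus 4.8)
The plan is to reduce all six assertions to computations involving only the single generator $r_1$, using the hypothesis that $\ph \circ \mu_0$ is equivariant to carry the ``matrix-unit'' part of the twisting through the action $\gm$. Write $S_j = \ph(r_j)$ and $M_{j,k} = \ph(\mu_0(e_{j,k}, 0)) = S_j S_k^*$, and put $U(g) = \ph(\mu_0(\rh(g), 1))$; since $g \mapsto (\rh(g), 1)$ is a continuous homomorphism into $U(M_d \oplus \C)$ and $\mu_0, \ph$ are unital homomorphisms, $U \colon G \to U(C)$ is a continuous homomorphism. From the defining relation $\af^{\rh}_g (r_1) = \mu_0(\rh(g), 1) r_1$ we get $\ph(\af^{\rh}_g(r_1)) = U(g) S_1$, an isometry. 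Combining equivariance of $\mu_0$ (\Lem{L:ExtQFree}(\ref{L:ExtQFree-2})) with equivariance of $\ph \circ \mu_0$ gives $\ph(\af^{\rh}_g(\mu_0(x))) = \gm_g(\ph(\mu_0(x)))$ for all $x \in M_d \oplus \C$; applying this together with $r_j = \mu_0(e_{j,1},0) r_1$, I record the identities $\ph(\af^{\rh}_g(r_j)) = \gm_g(M_{j,1}) \ph(\af^{\rh}_g(r_1))$, then $\ph(\af^{\rh}_g(r_1)) \ph(\af^{\rh}_g(r_1))^* = \gm_g(M_{1,1}) = U(g) M_{1,1} U(g)^*$, and $\gm_g(U(h)) = U(g) U(h) U(g)^*$. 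These, with $S_j^* S_j = 1$, $S_j S_j^* S_k S_k^* = 0$ for $j \neq k$, and $M_{1,1} S_1 = S_1$, are all that is used below.

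Next I prove (2) for $j = 1$: $\ph(\af^{\rh}_g(r_1)) w(g) = \ph(\af^{\rh}_g(r_1))\ph(\af^{\rh}_g(r_1))^* \gm_g(S_1) = \gm_g(M_{1,1}) \gm_g(S_1) = \gm_g(M_{1,1} S_1) = \gm_g(S_1)$. Assertion (1) is then immediate: $w(g)^* w(g) = \gm_g(S_1)^* \gm_g(M_{1,1}) \gm_g(S_1) = \gm_g(S_1^* S_1 S_1^* S_1) = 1$, and $w(g) w(g)^* = \ph(\af^{\rh}_g(r_1))^* \gm_g(M_{1,1}) \ph(\af^{\rh}_g(r_1)) = \ph(\af^{\rh}_g(r_1))^* \ph(\af^{\rh}_g(r_1)) \ph(\af^{\rh}_g(r_1))^* \ph(\af^{\rh}_g(r_1)) = 1$, while continuity of $w$ is clear from continuity of $\gm$ and of $\ph$. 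Assertion (2) in general follows by multiplying $\ph(\af^{\rh}_g(r_j)) = \gm_g(M_{j,1}) \ph(\af^{\rh}_g(r_1))$ on the right by $w(g)$ and using the $j=1$ case together with $\gm_g(M_{j,1}) \gm_g(S_1) = \gm_g(S_j S_1^* S_1) = \gm_g(S_j)$. For (4), $w(g) - 1 = \ph(\af^{\rh}_g(r_1))^* \bigl(\gm_g(S_1) - \ph(\af^{\rh}_g(r_1))\bigr)$ gives ``$\leq$'' since $\ph(\af^{\rh}_g(r_1))^*$ is a coisometry, while rearranging the $j=1$ case of (2) gives $\gm_g(S_1) - \ph(\af^{\rh}_g(r_1)) = \ph(\af^{\rh}_g(r_1))(w(g) - 1)$, yielding ``$\geq$'' since $\ph(\af^{\rh}_g(r_1))$ is an isometry.

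For the cocycle identity (3), apply $\gm_g$ to the $j=1$ case of (2) evaluated at $h$: $\gm_{gh}(S_1) = \gm_g\bigl(\ph(\af^{\rh}_h(r_1))\bigr) \gm_g(w(h))$. Now $\gm_g\bigl(\ph(\af^{\rh}_h(r_1))\bigr) = \gm_g(U(h)) \gm_g(S_1) = U(g) U(h) U(g)^* \ph(\af^{\rh}_g(r_1)) w(g)$, and since $U(g) U(h) U(g)^* \ph(\af^{\rh}_g(r_1)) = U(g) U(h) S_1 = U(gh) S_1 = \ph(\af^{\rh}_{gh}(r_1))$, we obtain $\gm_{gh}(S_1) = \ph(\af^{\rh}_{gh}(r_1)) w(g) \gm_g(w(h))$; comparing with the $j=1$ case of (2) at $gh$ and cancelling the isometry $\ph(\af^{\rh}_{gh}(r_1))$ gives $w(gh) = w(g) \gm_g(w(h))$. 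For (5), the elements $S_j v$ are isometries with pairwise orthogonal ranges because $v$ is unitary, so the universal property of $E_d$ provides a unique unital homomorphism $\ps$ with $\ps(r_j) = S_j v$; then $\ps(r_j r_k^*) = S_j v v^* S_k^* = S_j S_k^* = \ph(r_j r_k^*)$ and $\ps(1 - \sum_{j=1}^d r_j r_j^*) = \ph(1 - \sum_{j=1}^d r_j r_j^*)$, so $\ps \circ \mu_0 = \ph \circ \mu_0$; and $\ps$ is equivariant because, by \Lem{L:ExtQFree}(\ref{L:ExtQFree-0}), $\ps(\af^{\rh}_g(r_j)) = \ph(\af^{\rh}_g(r_j)) v$, while $\gm_g(\ps(r_j)) = \gm_g(S_j) \gm_g(v) = \ph(\af^{\rh}_g(r_j)) w(g) \gm_g(v) = \ph(\af^{\rh}_g(r_j)) v$ using $w(g) \gm_g(v) = v$, which is the hypothesis $v \gm_g(v)^* = w(g)$. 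Finally, for (6), writing $\dt$ for the action on $D$, equivariance of $\kp$ gives $\kp(\gm_g(\ph(r_1))) = \dt_g((\kp \circ \ph)(r_1))$, and equivariance of $\kp \circ \ph$ gives $\kp(\ph(\af^{\rh}_g(r_1))) = \dt_g((\kp \circ \ph)(r_1))$ as well, so $\kp(w(g)) = \dt_g\bigl((\kp\circ\ph)(r_1)^* (\kp\circ\ph)(r_1)\bigr) = \dt_g(1) = 1$.

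The one step requiring an idea is the opening reduction: realizing that the matrix-unit portion of the twist is automatically intertwined by $\gm$ as soon as $\ph \circ \mu_0$ is equivariant, so that everything can be expressed through the single isometry $\ph(\af^{\rh}_g(r_1)) = U(g)S_1$. After that, each assertion is short. The most error-prone point is the cocycle identity (3), where the order of the conjugations $\gm_g(U(h)) = U(g)U(h)U(g)^*$ and $\gm_g(M_{1,1}) = U(g)M_{1,1}U(g)^*$ must be tracked carefully.
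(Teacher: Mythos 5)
Your proposal is correct and follows essentially the same route as the paper: the same auxiliary unitary $U(g) = (\ph \circ \mu_0)(\rh(g),1)$ (the paper's $u(g)$), the same use of equivariance of $\mu_0$ and of $\ph \circ \mu_0$ to push matrix units through $\gm_g$, and the same verifications for (3)--(6). The only cosmetic differences are that you prove (2) for $j=1$ first and deduce (1) by computing $w(g)^*w(g)$ and $w(g)w(g)^*$ directly, whereas the paper proves (1) by observing the two isometries share a range projection; these are equivalent.
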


\begin{proof}
We use the usual notation for matrix units,
as in Notation~\ref{N-MdToOd}.
We also recall (\Lem{L:ExtQFree}(\ref{L:ExtQFree-2}))
that $\mu_0$ is \eqv.

We prove~(\ref{L-MakeCcyEd-1})
by showing that $\ph ( \af^{\rh}_g (r_1))$ and $\gm_g (\ph (r_1))$
are isometries with the same range \pj.
It is clear that both are isometries.
The range \pj s are
\[
\ph ( \af^{\rh}_g (r_1)) \ph ( \af^{\rh}_g (r_1))^*
  = \ph ( \af^{\rh}_g (r_1 r_1^*) )
  = (\ph \circ \mu_0)
     \big( \Ad (\rh \oplus 1)_g ( e_{1, 1}, \, 0 ) \big)
\]
and
\[
\gm_g (\ph (r_1)) \gm_g (\ph (r_1))^*
  = \gm_g (\ph (r_1 r_1^*))
  = \gm_g \big( (\ph \circ \mu_0) (e_{1, 1}, \, 0) \big).
\]
These are equal because $\ph \circ \mu_0$ is \eqv.
So (\ref{L-MakeCcyEd-1}) follows.

For~(\ref{L-MakeCcyEd-2}),
we have,
using equivariance of both~$\mu_0$
and $\ph \circ \mu_0$
at the third step,
\begin{align*}
\ph ( \af^{\rh}_g (r_j)) w_g
  & = \ph ( \af^{\rh}_g (r_j))
        \ph ( \af^{\rh}_g (r_1))^* \gm_g (\ph (r_1))
    = (\ph \circ \af^{\rh}_g \circ \mu_0) (e_{j, 1}, \, 0)
       (\gm_g \circ \ph) (r_1)
     \\
  & = (\gm_g \circ \ph \circ \mu_0) (e_{j, 1}, \, 0)
       (\gm_g \circ \ph) (r_1)
    = (\gm_g \circ \ph) (r_j r_1^* r_1)
    = (\gm_g \circ \ph) (r_j),
\end{align*}
as desired.

For~(\ref{L-MakeCcyEd-3}),
we simplify the notation by defining
\begin{equation}\label{Eq:ugDef}
u (g) = (\ph \circ \mu_0) ( \rh (g), 1)
\end{equation}
for $g \in G.$
Then $u (g)$ is unitary.
By the definition of $\af^{\rh},$
we have
\begin{equation}\label{Eq:ugphi}
\ph ( \af^{\rh}_g (r_{j}))
  = u (g) \ph (r_{j})
\end{equation}
for $g \in G$ and $j = 1, 2, \ldots, d.$
By equivariance of $\ph \circ \mu_0,$
we have
\begin{equation}\label{Eq:ugmu0}
(\gm_g \circ \ph \circ \mu_0) (x)
 = \big( \Ad (u (g)) \circ \ph \circ \mu_0 \big) (x)
\end{equation}
for $g \in G$
and $x \in M_d \oplus \C.$
Using~(\ref{Eq:ugphi}) at the first step,
(\ref{Eq:ugDef})~and (\ref{Eq:ugmu0}) at the second step,
$\ph (r_1 r_1^*) = \mu_0 (e_{1, 1}, \, 0)$ and~(\ref{Eq:ugmu0})
at the third step,
and $r_1 r_1^* r_1 = r_1,$
(\ref{Eq:ugphi}), and $u (g) u (h) = u (g h)$ at the last step,
for $g, h \in G$ we get
\begin{align*}
w (g) \gm_g (w (h))
& = \big[ \ph (r_1)^* u (g)^* \gm_g (\ph (r_1)) \big]
      \gm_g \big( \ph (r_1)^* u (h)^* \gm_h (\ph (r_1)) \big)
    \\
& = \ph (r_1)^* u (g)^* \gm_g (\ph (r_1 r_1^*))
      \big[ u (g) u (h)^* u (g)^* \big] \gm_{g h} (\ph (r_1))
    \\
& = \ph (r_1)^* \ph (r_1 r_1^*)
      u (h)^* u (g)^* \gm_{g h} (\ph (r_1))
  = w (g h).
\end{align*}
This proves~(\ref{L-MakeCcyEd-3}).

For~(\ref{L-MakeCcyEd-4}),
use the fact that
$\ph ( \af^{\rh}_g (r_1))$ is an isometry at the first step
and part~(\ref{L-MakeCcyEd-2}) at the second step
to write
\[
\| w (g) - 1 \|
   = \big\| \ph ( \af^{\rh}_g (r_1)) w (g)
         - \ph ( \af^{\rh}_g (r_1)) \big\|
  = \big\| \gm_g (\ph (r_1)) - \ph ( \af^{\rh}_g (r_1)) \big\|.
\]

We prove~(\ref{L-MakeCcyEd-5}).
Existence and uniqueness of $\ps$ are true for any unitary~$v,$
because the elements $\ph (r_j) v$ are isometries with
orthogonal ranges.
For $j, k = 1, 2, \ldots, d,$ we have
\[
(\ps \circ \mu_0) (e_{j, k}, \, 0)
  = \ps (r_j) \ps (r_k^*)
  = \big( \ph (r_j) v \big) \big( v^* \ps (r_k^*) \big)
  = (\ph \circ \mu_0) (e_{j, k}, \, 0).
\]
Since also $(\ps \circ \mu_0) (1) = (\ph \circ \mu_0) (1),$
it follows that $\ps \circ \mu_0 = \ph \circ \mu_0.$

It remains to prove that $\ps$ is \eqv.
In the following calculation,
we let $u (g)$ be as in~(\ref{Eq:ugDef}).
We use (\ref{L-MakeCcyEd-2}) and~(\ref{Eq:ugphi}) at step~3,
and (\ref{Eq:ugDef})~and
$\ps \circ \mu_0 = \ph \circ \mu_0$ at step~5,
to get,
for $g \in G$ and $j = 1, 2, \ldots, d,$
\begin{align*}
\gm_g (\ps (r_j))
& = \gm_g (\ph (r_j)) \gm_g (v)
  = \gm_g (\ph (r_j)) w (g)^* v
   \\
& = u (g) \ph (r_j) v
  = u (g) \ps (r_j)
  = \ps ( \mu_0 ( \rh (g), \, 1) r_j)
  = \ps ( \af^{\rh}_g ( r_j)).
\end{align*}
Equivariance of~$\ps$ follows.

Part~(\ref{L-MakeCcyEd-6}) is immediate.
\end{proof}

\Lem{L-MakeCcyEd}
will be used to produce cocycles which are close to~$1.$
To deal with them,
we need results similar to \Lem{L:OneStep}
and \Lem{L:A7}.

\begin{lem}\label{L:1StepCcy}
Let $G$ be a compact group with normalized Haar measure~$\mu.$
Let $\GAa$ be a unital \ga,
and let $w \colon G \to U (A)$
be a \cfn\  such that for all $g, h \in G$ we have
$w (g h) = w (g) \af_g (w (h)).$
Suppose $r \in \big[ 0, \tfrac{1}{5} \big],$
and let $v \in U (A)$
satisfy
\[
\| v \af_g (v)^* - w (g) \| \leq r
\]
for all $g \in G.$
Define
\[
z = v \exp \left( \int_{G}
    \log \Big( v^* \af_h^{-1} \big( w (h)^* v \big) \Big) \, d \mu (h)
              \right).
\]
Then $z \in U (A)$
and satisfies
\[
\| z \af_g (z)^* - w (g) \| \leq 10 r^2
\]
for all $g \in G$
and
\[
\| z - v \| \leq 2 r.
\]
\end{lem}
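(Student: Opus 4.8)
The plan is to imitate the proof of \Lem{L:OneStep}: pass to a suitable averaged element, use the cocycle identity for~$w$ there, and then invoke \Lem{L-IntegralEst} to compare that average with the exponential appearing in the definition of~$z.$ First I would fix notation. For $h \in G$ set $p (h) = \af_h^{-1} ( w (h)^* v )$ and $y (h) = v^* p (h),$ so that $z = v \exp \big( \int_G \log ( y (h) ) \, d \mu (h) \big).$ Multiplying by the unitaries $v,$ $w (h),$ $\af_h (v)$ and applying the automorphism~$\af_h,$ one checks $\| y (h) - 1 \| = \| p (h) - v \| = \| v \af_h (v)^* - w (h) \| \leq r \leq \tfrac{1}{5} < 1$ for every~$h.$ Hence each $\log ( y (h) )$ is defined by holomorphic functional calculus, and, $y (h)$ being unitary with spectrum near~$1,$ we have $\log ( y (h) )^* = \log ( y (h)^{-1} ) = - \log ( y (h) ),$ so $\log ( y (h) )$ is skew-adjoint; therefore $a := \int_G \log ( y (h) ) \, d \mu (h)$ is skew-adjoint, $\exp (a)$ is unitary, and $z = v \exp (a) \in U (A),$ which settles the first assertion. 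For $\| z - v \| \leq 2 r$ I would bound $\| a \| \leq r + \frac{r^2}{2 (1 - r)}$ from~(\ref{Eq:LogEst}) and then $\| z - v \| = \| \exp (a) - 1 \| \leq \| a \| + \frac{\| a \|^2}{2 (1 - \| a \|)}$ from~(\ref{Eq:ExpEst}); for $r \leq \tfrac{1}{5}$ this is comfortably below~$2 r.$

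The main step concerns the averaged element $z_0 = v \int_G y (h) \, d \mu (h) = \int_G \af_h^{-1} ( w (h)^* v ) \, d \mu (h),$ which satisfies $\| z_0 \| \leq 1.$ This is where the cocycle condition on~$w$ enters, and --- in contrast with \Lem{L:OneStep}, where the averaged element is only approximately multiplicative --- it produces an \emph{exact} identity: $\af_g ( z_0 ) = w (g)^* z_0$ for all $g \in G.$ To prove it, write $\af_g ( z_0 ) = \int_G \af_{g h^{-1}} ( w (h)^* v ) \, d \mu (h),$ substitute $h \mapsto k g$ (right invariance of Haar measure), insert $w (k g) = w (k) \af_k ( w (g) )$ in the integrand, and cancel $\af_{k^{-1}} \circ \af_k = \id$; the factor $w (g)^*$ then pulls out of the integral, leaving $\int_G p (k) \, d \mu (k) = z_0.$ Consequently $z_0 \af_g ( z_0 )^* = z_0 z_0^* w (g),$ so $\| z_0 \af_g ( z_0 )^* - w (g) \| = \| z_0 z_0^* - 1 \|.$ To bound the latter by $2 r^2$ I would write $z_0 = v + X$ with $X = \int_G ( p (h) - v ) \, d \mu (h),$ so that $\| X \| \leq r$ and $z_0 z_0^* - 1 = X v^* + v X^* + X X^*$; here $\| X X^* \| = \| X \|^2 \leq r^2,$ while $X v^* = \int_G ( p (h) v^* - 1 ) \, d \mu (h)$ lies within $\tfrac{1}{2} r^2$ of a skew-adjoint element, since for any unitary~$u$ with $\| u - 1 \| \leq r$ the self-adjoint part of $u - 1$ equals $- \tfrac{1}{2} ( u - 1 ) ( u - 1 )^*,$ of norm $\leq \tfrac{1}{2} r^2.$ Thus $\| X v^* + v X^* \| \leq r^2$ and $\| z_0 z_0^* - 1 \| \leq 2 r^2.$

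Finally I would compare $z$ with $z_0.$ Since $z - z_0 = v \big( \exp ( \int_G \log ( y (h) ) \, d \mu (h) ) - \int_G y (h) \, d \mu (h) \big),$ \Lem{L-IntegralEst} applied to $h \mapsto y (h)$ (legitimate because $r \leq \tfrac{1}{2}$) bounds $\| z - z_0 \|$ by $O ( r^2 ),$ and then, using $\| z \| = 1$ and $\| z_0 \| \leq 1,$
\begin{align*}
\| z \af_g ( z )^* - w (g) \|
  & \leq \| z \af_g ( ( z - z_0 )^* ) \| + \| ( z - z_0 ) \af_g ( z_0 )^* \| + \| z_0 \af_g ( z_0 )^* - w (g) \|
  \\
  & \leq 2 \| z - z_0 \| + 2 r^2 .
\end{align*}
I expect the only genuinely fiddly point to be forcing the last constant down to exactly~$10 r^2$: the crude bound $\| z - z_0 \| \leq \frac{5 r^2}{2 ( 1 - 2 r )}$ from \Lem{L-IntegralEst} only gives about $\tfrac{31}{3} r^2$ at $r = \tfrac{1}{5},$ so one should instead use the sharper two-term estimate that appears inside the proof of \Lem{L-IntegralEst}, combined with $\| a \| \leq r + \frac{r^2}{2 ( 1 - r )}$ in place of the lossy $\| a \| \leq 2 r$; this yields $\| z - z_0 \| \leq 4 r^2$ for $r \leq \tfrac{1}{5}$ and hence the claimed bound. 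The structural ingredients --- unitarity of~$z,$ the exact identity $\af_g ( z_0 ) = w (g)^* z_0,$ and the estimate $\| z_0 z_0^* - 1 \| \leq 2 r^2$ --- are all routine.
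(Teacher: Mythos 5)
Your proposal is correct and follows the same core strategy as the paper: the averaged element $z_0 = \int_G \af_h^{-1} ( w (h)^* v ) \, d \mu (h),$ the exact intertwining identity $\af_g (z_0) = w (g)^* z_0$ obtained from the cocycle condition, and \Lem{L-IntegralEst} to compare $z$ with $z_0.$ The one place you diverge is the endgame, and there the paper's route is cleaner: since $z,$ $\af_g(z),$ and $w(g)$ are unitary, $\| z \af_g (z)^* - w (g) \| = \| \af_g (z) - w (g)^* z \|,$ and substituting the \emph{exact} identity $\af_g (z_0) = w (g)^* z_0$ into the latter gives $\| \af_g(z) - w(g)^* z \| = \| \af_g(z - z_0) - w(g)^* (z - z_0) \| \leq 2 \| z - z_0 \| \leq 2 \cdot 5 r^2 = 10 r^2$ directly from the crude bound $\| z - z_0 \| \leq \tfrac{5 r^2}{2 (1 - 2 r)} \leq 5 r^2.$ Your version instead estimates $\| z_0 \af_g (z_0)^* - w (g) \| = \| z_0 z_0^* - 1 \| \leq 2 r^2$ (a nice self-contained observation, using that the self-adjoint part of $u - 1$ for a unitary $u$ near $1$ is quadratically small), but this extra $2 r^2$ forces you to sharpen $\| z - z_0 \|$ below $4 r^2,$ which, as you correctly note, the stated form of \Lem{L-IntegralEst} does not quite give; your proposed fix (reusing the two-term estimate inside its proof together with $\| a \| \leq r + \tfrac{r^2}{2 (1 - r)}$) does work and in fact yields roughly $\tfrac{3}{2} r^2,$ but it is extra bookkeeping that the unitary-conjugation trick renders unnecessary. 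Your arguments for $z \in U (A)$ (skew-adjointness of the integrated logarithm) and for $\| z - v \| \leq 2 r$ are fine; for the latter the paper simply uses $\| z_0 - v \| \leq r$ and $\| z - z_0 \| \leq 5 r^2 \leq r.$
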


\begin{proof}
For every $h \in G,$
we have
\begin{equation}\label{Eq:1StOrigEst}
\big\| v^* \af_h^{-1} \big( w (h)^* v \big) - 1 \big\|
  = \big\| \big[ w (h) - v \af_h (v)^* \big]^* \big\|
  \leq r.
\end{equation}
Since $r < 1,$ the logarithm in the formula for $v$ exists,
so $v$ is well defined.
Moreover,
\[
\log \Big( v^* \af_h^{-1} \big( w (h)^* v \big) \Big)
   \in i A_{\sa}
\]
for $h \in G,$
so $v \in U (A)$
implies $z \in U (A).$

Define
\[
z_0 = \int_{G} \af_h^{-1} \big( w (h)^* v \big) \, d \mu (h).
\]
Using $\| v \af_h (v)^* - w (h) \| \leq r$
at the third step,
we get
\[
\| z_0 - v \|
  \leq \int_{G}
      \big\| \af_h^{-1} \big( w (h)^* v \big) - v \big\| \, d \mu (h)
  = \int_{G} \big\| w (h) - \af_h (v) v^* \big\| \, d \mu (h)
  \leq r.
\]
Then,
making the change of variables $h$ to $h g$ at the first step
and
using $w (h g) = w (h) \af_h (w (g))$ at the second step,
for $g \in G$ we have
\[
\af_g (z_0)
  = \int_{G} \af_{h}^{-1} \big( w (h)^* v \big) \, d \mu (h)
  = \int_{G} w (g)^* \af_{h}^{-1} \big( w (h)^* v \big) \, d \mu (h)
  = w (g)^* z_0.
\]
Rewriting
\[
z_0 = v \int_{G} v^* \af_h^{-1} \big( w (h)^* v \big) \, d \mu (h)
\]
and using~(\ref{Eq:1StOrigEst}),
we can apply \Lem{L-IntegralEst}
to get
\[
\| z - z_0 \| \leq \frac{5 r^2}{2 (1 - 2 r)}
              \leq 5 r^2.
\]
It follows from the equation $\af_g (z_0) = w (g)^* z_0$
that $\| \af_g (z) - w (g)^* z \| \leq 10 r^2.$
The first estimate in the conclusion follows.

Since $r \leq \frac{1}{5}$ and we already know $\| z_0 - v \| \leq r,$
we also get $\| z - v \| \leq 2 r.$
This is the second estimate in the conclusion.
\end{proof}

\begin{lem}\label{L-A7ForCocycle}
Let $G$ be a compact group with normalized Haar measure~$\mu.$
Let $\GAa$ and $\GBb$ be unital \ga s,
and let $\kp \colon A \to B$ be a unital \eqv\ \hm.
Let $w \colon G \to U (A)$
be a \cfn\  such that for all $g, h \in G$ we have
$w (g h) = w (g) \af_g (w (h)).$
Suppose $0 \leq r < \tfrac{1}{10},$
and let $v_0 \in U (A)$
satisfy
\[
\| v_0 \af_g (v_0)^* - w (g) \| \leq r
\andeqn
\kp (v_0) \bt_g ( \kp (v_0) )^* = \kp (w (g))
\]
for all $g \in G.$
Inductively define $v_m \in U (A)$
by (following \Lem{L:1StepCcy})
\[
v_{m + 1} = v_m \exp \left( \int_{G}
 \log \Big( v_m^* \af_h^{-1} \big( w (h)^* v_m \big) \Big) \, d \mu (h)
              \right).
\]
Then for every $m \in \N$
the element $v_m$ is a well defined unitary in~$A$
such that $\kp (v_m) = \kp (v).$
Moreover, $v = \limi{m} v_m$ exists and satisfies
$v \af_g (v)^* = w (g)$ for all $g \in G,$
and also
\[
\| v - v_0 \| \leq \frac{2 r}{1 - 10 r}
\andeqn
\kp (v) = \kp (v_0).
\]
\end{lem}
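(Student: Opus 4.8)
The plan is to mirror the structure of the proof of Lemma~\ref{L:A7}, using Lemma~\ref{L:1StepCcy} in place of Lemma~\ref{L:OneStep}. The key is an induction on~$m$ establishing simultaneously that $v_m$ is a well-defined unitary, that $\kp (v_m) = \kp (v_0),$ and that the two quantitative estimates
\[
\| v_m \af_g (v_m)^* - w (g) \| \leq r (10 r)^m
\andeqn
\| v_m - v_{m - 1} \| \leq 2 r (10 r)^{m - 1}
\]
hold for all $g \in G.$ The base case $m = 1$ is immediate from \Lem{L:1StepCcy} applied to $v_0,$ using $r \leq \tfrac{1}{10} \leq \tfrac{1}{5}.$ For the inductive step, since $r (10 r)^m \leq r \leq \tfrac{1}{5},$ \Lem{L:1StepCcy} (applied with $v_m$ and $r (10 r)^m$ in place of $v$ and~$r$) shows $v_{m + 1}$ is a well-defined unitary satisfying $\| v_{m + 1} - v_m \| \leq 2 r (10 r)^m$ and
\[
\| v_{m + 1} \af_g (v_{m + 1})^* - w (g) \|
   \leq 10 \big( r (10 r)^m \big)^2
   = r (10 r)^{m + 1} (10 r)^m
   < r (10 r)^{m + 1},
\]
using $10 r < 1$ at the last step.

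The remaining point in the inductive step is that $\kp (v_{m + 1}) = \kp (v_0).$ As in Lemma~\ref{L:A7}, this follows because $\kp$ commutes with holomorphic functional calculus: applying $\kp$ to the exponential factor in the definition of $v_{m+1},$ using $\kp \circ \af_h^{-1} = \bt_h^{-1} \circ \kp,$ the inductive hypothesis $\kp (v_m) = \kp (v_0),$ and the hypothesis $\kp (v_0) \bt_g (\kp (v_0))^* = \kp (w (g)),$ one computes that the argument of the logarithm is sent to
\[
\kp (v_0)^* \bt_h^{-1} \big( \kp (w (h))^* \kp (v_0) \big)
  = \kp (v_0)^* \bt_h^{-1} \big( \bt_h (\kp (v_0)) \kp (v_0)^* \kp (v_0) \big)
  = 1,
\]
so $\kp$ sends the exponential factor to~$1$ and hence $\kp (v_{m + 1}) = \kp (v_m) = \kp (v_0).$

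Finally, the estimate $\| v_m - v_{m-1} \| \leq 2 r (10 r)^{m - 1}$ shows $(v_m)$ is uniformly Cauchy, so $v = \limi{m} v_m$ exists in $U (A)$ with
\[
\| v - v_0 \| \leq \sum_{m = 1}^{\infty} 2 r (10 r)^{m - 1} = \frac{2 r}{1 - 10 r}.
\]
The estimate on $\| v_m \af_g (v_m)^* - w (g) \|$ tends to~$0,$ so by continuity $v \af_g (v)^* = w (g)$ for all $g \in G$; and continuity of $\kp$ gives $\kp (v) = \kp (v_0).$ I do not anticipate a serious obstacle: the only mildly delicate point is the bookkeeping with the contraction factor $10 r$ (to ensure each successive defect lies in the range where \Lem{L:1StepCcy} applies and that the geometric series converges), and the verification that $\kp$ kills the correction terms, both of which are routine adaptations of the argument already given for Lemma~\ref{L:A7}.
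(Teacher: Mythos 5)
Your proof is correct and follows exactly the route the paper takes: the paper's own proof consists precisely of the remark that one argues as in Lemma~\ref{L:A7}, proving by induction the same two estimates $\| v_m \af_g (v_m)^* - w (g) \| \leq r (10 r)^m$ and $\| v_m - v_{m - 1} \| \leq 2 r (10 r)^{m - 1}$ together with $\kp (v_m) = \kp (v_0),$ with details omitted. Your write-up simply supplies those omitted details (including the correct verification that $\kp$ sends the argument of the logarithm to~$1$), so there is nothing to add.
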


\begin{proof}
The proof is essentially the same
as the proof of \Lem{L:A7}.
One proves by induction that for all $m \in \Nz,$
the element $v_m$ is well defined, in $U (A),$
and satisfies
\[
\kp (v_m) = \kp (v)
\andeqn
\| v_m - v_{m - 1} \| \leq 2 r (10 r)^{m - 1},
\]
and that for $g \in G$ we have
\[
\| v_m \af_g (v_m)^* - w (g) \| \leq r (10 r)^m.
\]
We omit further details.
\end{proof}

\begin{thm}\label{T-QFEqSj}
Let $G$ be a compact group,
let $d \in \N,$
and let $\rh \colon G \to U (M_d)$
be a unitary representation of~$G$ on~$\C^d.$
Then the quasifree action
$\af^{\rh} \colon G \to \Aut (E_d)$ of \Lem{L:ExtQFree}
is \eqsj.
\end{thm}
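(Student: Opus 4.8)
The plan is to lift $\ph$ in two stages: first lift the copy of $M_d \oplus \C$ sitting inside $E_d$ equivariantly, using equivariant semiprojectivity of \fd\ \ca s; then handle the remaining generator $r_1,$ which reduces to lifting a single isometry and then correcting a cocycle with the machinery of \Lem{L-MakeCcyEd}, \Lem{L:1StepCcy}, and \Lem{L-A7ForCocycle} (the last two being the cocycle analogs of the iteration in Theorem~\ref{T:FDEqSj}). Let the notation be as in \Def{D:EqSj} and Remark~\ref{R:EqSj}(\ref{R:EqSj:Nt}), and let $\ph \colon E_d \to C / J$ be a unital \ehm. Recall from \Lem{L:ExtQFree}(\ref{L:ExtQFree-2}) that $\mu_0 \colon M_d \oplus \C \to E_d$ is equivariant when $M_d \oplus \C$ carries the action $\Ad (\rh \oplus 1).$ Since $M_d \oplus \C$ is \fd, Theorem~\ref{T:FDEqSj} and Corollary~\ref{P:SjDSum} imply that $\big( G, \, M_d \oplus \C, \, \Ad (\rh \oplus 1) \big)$ is \eqsj; applying this to $\ph \circ \mu_0$ produces $n_0$ and a unital \ehm\ $\ld_0 \colon M_d \oplus \C \to C / J_{n_0}$ with $\pi_{n_0} \circ \ld_0 = \ph \circ \mu_0.$

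Next I would build a (not necessarily equivariant) unital \hm\ $\ph_1 \colon E_d \to C / J_{n_1},$ for some $n_1 \geq n_0,$ with $\pi_{n_1} \circ \ph_1 = \ph$ and $\ph_1 \circ \mu_0 = \pi_{n_1, n_0} \circ \ld_0$ (so that $\ph_1 \circ \mu_0$ is equivariant). Since $E_d$ is generated by $\mu_0 (M_d \oplus \C)$ together with $r_1,$ with $r_j = \mu_0 (e_{j, 1}, \, 0) r_1,$ $r_1^* r_1 = 1,$ and $r_1 r_1^* = \mu_0 (e_{1, 1}, \, 0),$ it suffices to lift the isometry $\ph (r_1),$ whose range \pj\ is $(\ph \circ \mu_0) (e_{1, 1}, \, 0)$: a routine semiprojectivity argument---polar-decompose an arbitrary lift to get an isometry, then conjugate it by a unitary restricting to $1$ so that its range \pj\ becomes exactly $\pi_{n_1, n_0} (\ld_0 (e_{1, 1}, \, 0))$---yields, for some $n_1 \geq n_0,$ an isometry $x_1 \in C / J_{n_1}$ with $x_1 x_1^* = \pi_{n_1, n_0} (\ld_0 (e_{1, 1}, \, 0))$ and $\pi_{n_1} (x_1) = \ph (r_1).$ Then $\ph_1 (r_j) = \pi_{n_1, n_0} (\ld_0 (e_{j, 1}, \, 0)) x_1$ defines the required \hm. (Alternatively, one could quote semiprojectivity of $E_d$ and, at a later stage, conjugate the resulting lift so that its restriction to $\mu_0 (M_d \oplus \C)$ agrees with $\ld_0.$)

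For $n \geq n_1$ write $\ph_1^{(n)} = \pi_{n, n_1} \circ \ph_1,$ and fix $r \in \big( 0, \tfrac{1}{10} \big).$ As in the Dini argument of Theorem~\ref{T:FDEqSj}, the functions $g \mapsto \big\| \ph_1^{(n)} ( \af^{\rh}_g (r_1)) - \gm_g^{(n)} ( \ph_1^{(n)} (r_1)) \big\|$ are \ct\ on $G,$ nonincreasing in $n,$ and converge pointwise to $\| \ph ( \af^{\rh}_g (r_1)) - \gm_g ( \ph (r_1)) \| = 0$ by equivariance of $\ph,$ so by Dini's theorem the convergence is uniform; fix $n \geq n_1$ for which this quantity is $\leq r$ for all $g \in G.$ Apply \Lem{L-MakeCcyEd} to $\ph_1^{(n)} \colon E_d \to C / J_n$ (whose composite with $\mu_0$ is equivariant), obtaining the cocycle $w \colon G \to U (C / J_n),$ $w (g) = \ph_1^{(n)} ( \af^{\rh}_g (r_1))^* \gm_g^{(n)} ( \ph_1^{(n)} (r_1)),$ which by part~(\ref{L-MakeCcyEd-4}) satisfies $\| w (g) - 1 \| \leq r$ for all $g,$ and by part~(\ref{L-MakeCcyEd-6}) (with $\kp = \pi_n,$ since $\pi_n \circ \ph_1^{(n)} = \ph$ is equivariant) satisfies $\pi_n (w (g)) = 1$ for all $g.$

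Finally, \Lem{L-A7ForCocycle}, applied with $C / J_n$ for $A,$ $C / J$ for $B,$ $\pi_n$ for $\kp,$ the cocycle $w,$ and $v_0 = 1$ (the hypotheses hold since $\| 1 \cdot \gm_g^{(n)} (1)^* - w (g) \| = \| 1 - w (g) \| \leq r < \tfrac{1}{10}$ and $\pi_n (1) \gm_g^{(\infty)} ( \pi_n (1))^* = 1 = \pi_n (w (g))$), yields $v \in U (C / J_n)$ with $v \gm_g^{(n)} (v)^* = w (g)$ for all $g$ and $\pi_n (v) = 1.$ Now \Lem{L-MakeCcyEd}(\ref{L-MakeCcyEd-5}) applied to this $v$ gives a unital \ehm\ $\ps \colon E_d \to C / J_n$ with $\ps (r_j) = \ph_1^{(n)} (r_j) v$ for all $j$; since $\pi_n ( \ps (r_j)) = \pi_n (\ph_1^{(n)} (r_j)) \pi_n (v) = \ph (r_j),$ we get $\pi_n \circ \ps = \ph,$ so $\ps$ equivariantly lifts $\ph.$ The step requiring real care is the construction of $\ph_1$: the analysis is all packaged in the cited lemmas, and the burden is to arrange a genuine finite-stage \hm\ lift of $\ph$ whose restriction to $\mu_0 (M_d \oplus \C)$ is exactly the equivariant lift $\ld_0,$ so that \Lem{L-MakeCcyEd} can be invoked with the $\mu_0$-part already equivariant.
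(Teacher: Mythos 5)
Your proof is correct and follows essentially the same route as the paper's: an equivariant lift of the copy of $M_d \oplus \C$ coming from Theorem~\ref{T:FDEqSj}, a compatible nonequivariant lift of $E_d$, a Dini argument making the equivariance defect of $r_1$ small, and then the cocycle correction via \Lem{L-MakeCcyEd} and \Lem{L-A7ForCocycle} with $v_0 = 1.$ The only difference is organizational: the paper packages the argument as equivariant conditional semiprojectivity of $\mu_0$ (invoking \Lem{L:SjTrtve}) and produces the compatible intermediate lift by conjugating a semiprojective lift of $E_d$ using Proposition~\ref{P:CloseImpUE}, whereas you assemble it by hand from a lifted isometry --- precisely the alternative you yourself mention.
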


\begin{proof}
Let $\mu_0 \colon M_d \oplus \C \to E_d$
be as in Notation~\ref{N-MdToOd}.
Recall (Notation~\ref{N-AdRep})
the action $\Ad (\rh \oplus 1)$ on $M_d \oplus \C.$
Then $\mu_0$ is \eqv\  %
by \Lem{L:ExtQFree}(\ref{L:ExtQFree-2}).
The action $\Ad (\rh \oplus 1)$ is \eqsj\  by Theorem~\ref{T:FDEqSj}.
By \Lem{L:SjTrtve}, it therefore it suffices to prove
that $\mu_0$ is equivariantly conditionally semiprojective
in the sense of \Def{D:EqsjForHom}.

We adopt the notation of
\Def{D:EqsjForHom} and Remark~\ref{R:EqSj}.
Thus,
assume that $\ld \colon M_d \oplus \C \to C$
and $\ph \colon B \to C / J$
are unital equivariant \hm s
such that $\kp \circ \ld = \ph \circ \om.$
Since $E_d$ is \sj\  without the group,
there exists $n_0 \in \N$ and a unital \hm\  %
$\nu_0 \colon E_d \to C / J_{n_0}$ such that
$\pi_{n_0} \circ \nu_0 = \ph.$
In particular,
\[
\pi_{n_0} \circ \nu_0 \circ \mu_0
  = \pi_{n_0} \circ \kp_{n_0} \circ \ld.
\]

For $k \geq n_0$ define $f_k \colon U (M_d \oplus \C) \to [0, \infty)$
by
\[
f_k (x) = \big\| \pi_{k, n_0} \big( ( \nu_0 \circ \mu_0 ) (x)
  - ( \kp_{n_0} \circ \ld ) (x) \big) \big\|.
\]
for $x \in U (M_d \oplus \C).$
The functions $f_n$ are \ct, and satisfy
\[
f_{n_0} \geq f_{n_0 + 1} \geq f_{n_0 + 2} \geq \cdots
\]
and $f_k \to 0$ pointwise.
Since $U (M_d \oplus \C)$ is compact,
Dini's Theorem (Proposition~11 in Chapter~9 of~\cite{Ry})
implies that $f_k \to 0$ uniformly.
Therefore there exists $n_1 \geq n_0$ such that
for all $x \in U (A),$
we have
\[
\big\| \pi_{n_1, n_0} \big( ( \nu_0 \circ \mu_0 ) (x)
  - ( \kp_{n_0} \circ \ld ) (x) \big) \big\|
  < \tfrac{1}{2}.
\]
Proposition~\ref{P:CloseImpUE}
provides a unitary $u \in U (C / J_{n_1})$
such that $\pi_{n_1} (u) = 1$ and
\[
u \big( \pi_{n_1, n_0} \circ \nu_0 \circ \mu_0 \big) (x) u^*
  = \big( \pi_{n_1, n_0} \circ \kp_{n_0} \circ \ld \big) (x)
\]
for all $x \in U (M_d \oplus \C).$
Define $\nu_1 \colon E_d \to C / J_{n_1}$
by $\nu_1 = \Ad (u) \circ \pi_{n_1, n_0} \circ \nu_0.$
Then $\pi_{n_1} \circ \nu_1 = \ph$
and $\kp_{n_1} \circ \ld = \nu_1 \circ \mu_0.$

For $k \geq n_1$, the functions
\[
g \mapsto \big\| \pi_{k, n_1} \big( ( \gm^{(k)}_g \circ \nu_1 ) (r_1)
  - ( \nu_1 \circ \af^{\rh}_g ) (r_1) \big) \big\|
\]
are \ct\  and pointwise nonincreasing as $k \to \infty.$
Since $\pi_{n_1}$ and $\pi_{n_1} \circ \nu_1 = \ph$
are \eqv,
these functions converge pointwise to zero.
Another application of Dini's Theorem provides
$n \geq n_1$ such that,
with $\nu = \pi_{n, n_1} \circ \nu_1$
and using equivariance of $\pi_{n, n_1},$
we have
\[
\sup_{g \in G}
 \big\| ( \gm^{(n)}_g \circ \nu ) (r_1)
      - ( \nu \circ \af^{\rh}_g ) (r_1) \big\|
  < \frac{1}{20}.
\]

Now let $w (g)$ be as in \Lem{L-MakeCcyEd},
with $C / J_n$ in place of~$C$ and $\nu$ in place of~$\ph.$
Then $\sup_{g \in G} \| w (g) - 1 \| < \frac{1}{20}$
by \Lem{L-MakeCcyEd}(\ref{L-MakeCcyEd-4})
and $\pi_n (w (g)) = 1$ for all $g \in G$
by \Lem{L-MakeCcyEd}(\ref{L-MakeCcyEd-6}).
Using these facts,
\Lem{L-MakeCcyEd}(\ref{L-MakeCcyEd-1}),
and the cocycle condition of
\Lem{L-MakeCcyEd}(\ref{L-MakeCcyEd-3}),
we can apply
\Lem{L-A7ForCocycle}
with $v_0 = 1$
to find $v \in U (C / J_n)$
such that $\pi_n (v) = 1$
and $v \af_g (v)^* = w (g)$ for all $g \in G.$
Let $\ps \colon E_d \to C / J_n$ be as in
\Lem{L-MakeCcyEd}(\ref{L-MakeCcyEd-5})
with this choice of~$v.$
Then $\ps$ is \eqv\  and $\ps \circ \mu_0 = \nu \circ \mu_0$
by \Lem{L-MakeCcyEd}(\ref{L-MakeCcyEd-5}).
Since $\nu_1 \circ \mu_0 = \kp_{n_1} \circ \ld,$
we get $\ps \circ \mu_0 =  \kp_n \circ \ld.$
Since $\pi_n (v) = 1,$
we get $\pi_n (\ps (r_j)) = \pi_n (\nu (r_j)) = \ph (r_j)$
for $j = 1, 2, \ldots, d.$
Therefore
$\pi_n \circ \ps = \ph.$
This completes the proof that $\mu_0$
is equivariantly conditionally semiprojective.
\end{proof}

\begin{cor}\label{C-OdQFEqSj}
Let $G$ be a compact group,
let $d \in \N,$
and let $\rh \colon G \to U (M_d)$
be a unitary representation of~$G$ on~$\C^d.$
Then the quasifree action
$\bt^{\rh} \colon G \to \Aut (\OA{d})$ of \Lem{L:OdQFree}
is \eqsj.
\end{cor}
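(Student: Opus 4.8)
The plan is to deduce this from Theorem~\ref{T-QFEqSj} using the presentation of $\OA{d}$ as a quotient of $E_d$. Recall from the proof of \Lem{L:OdQFree} that $\et \colon E_d \to \OA{d}$ is the quotient map by the ideal generated by the defect projection $e = 1 - \sum_{j = 1}^d r_j r_j^*,$ which is $G$-invariant by \Lem{L:ExtQFree}(\ref{L:ExtQFree-3}), and that $\et$ is equivariant by \Lem{L:OdQFree}(\ref{L:OdQFree-3}). Thus a unital \hm\ out of $E_d$ factors through $\et$ precisely when it annihilates~$e,$ and the factoring \hm\ of an equivariant one is again equivariant because $\et$ is surjective. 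Now adopt the notation of \Def{D:EqSj} and Remark~\ref{R:EqSj}(\ref{R:EqSj:Nt}), let $\ph \colon \OA{d} \to C / J$ be a unital \ehm, and apply Theorem~\ref{T-QFEqSj} to the unital \ehm\ $\ph \circ \et \colon E_d \to C / J$ to obtain $n_0$ and a unital \ehm\ $\ps_0 \colon E_d \to C / J_{n_0}$ with $\pi_{n_0} \circ \ps_0 = \ph \circ \et.$ Set $p_0 = \ps_0 (e)$; by \Lem{L:ExtQFree}(\ref{L:ExtQFree-3}) and equivariance of $\ps_0$ this is a $G$-invariant projection in $C / J_{n_0},$ and $\pi_{n_0} (p_0) = \ph \big( 1 - \sum_{j = 1}^d s_j s_j^* \big) = 0,$ so $p_0$ lies in the $G$-invariant ideal $J / J_{n_0} \subset C / J_{n_0}.$

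The crux is to absorb $p_0,$ which vanishes only in the limit $C / J,$ into a projection that is literally zero at a finite stage. Since $J / J_{n_0} = {\overline{\bigcup_{m \geq n_0} J_m / J_{n_0}}},$ applying \Lem{L-FixedPtLim} to the increasing sequence of $G$-invariant subalgebras $J_m / J_{n_0}$ of $J / J_{n_0}$ gives, for any prescribed small $\dt > 0,$ an index $m \geq n_0$ and a self-adjoint element $a \in (J_m / J_{n_0})^G$ with $\| a - p_0 \| < \dt.$ For $\dt$ small, functional calculus produces a $G$-invariant projection $q = \chi_{(1/2, \infty)} (a) \in (J_m / J_{n_0})^G$ with $\| q - p_0 \|$ small (this uses that $\spec (a)$ lies within $\dt$ of $\{ 0, 1 \},$ that functional calculus commutes with the $G$-action, and that a function vanishing at $0$ maps the ideal into itself).

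Next I would form the ``canonical'' unitary in $C / J_{n_0}$ comparing the two close projections: $z = q p_0 + (1 - q)(1 - p_0).$ It is $G$-invariant because $p_0$ and $q$ are; it satisfies $z p_0 = q z$ and $\| z - 1 \| \leq \| q - p_0 \| < 1$; and $\pi_{n_0} (z) = 1$ because $\pi_{n_0} (p_0) = \pi_{n_0} (q) = 0.$ Hence $u = z (z^* z)^{-1/2}$ is a $G$-invariant unitary in $C / J_{n_0}$ with $u p_0 u^* = q$ and $\pi_{n_0} (u) = 1.$ Put $\ps_0' = \Ad (u) \circ \ps_0 \colon E_d \to C / J_{n_0}.$ Then $\ps_0'$ is a unital \ehm, $\pi_{n_0} \circ \ps_0' = \ph \circ \et$ (since $\pi_{n_0} (u) = 1$), and $\ps_0' (e) = q.$

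Finally, the unital \ehm\ $\pi_{m, n_0} \circ \ps_0' \colon E_d \to C / J_m$ annihilates $e,$ because $q \in \Ker \pi_{m, n_0}$; hence it factors as $\ps \circ \et$ for a unital \ehm\ $\ps \colon \OA{d} \to C / J_m.$ Since $\pi_m \circ \pi_{m, n_0} = \pi_{n_0},$ we get $\pi_m \circ \ps \circ \et = \pi_{n_0} \circ \ps_0' = \ph \circ \et,$ and surjectivity of $\et$ yields $\pi_m \circ \ps = \ph,$ so $\ps$ equivariantly lifts~$\ph.$ The only step that is not routine is the second one: the conjugating unitary must be simultaneously $G$-invariant and trivial modulo~$J,$ which is exactly what \Lem{L-FixedPtLim} together with the explicit form of $z$ is designed to deliver; the remaining verifications are straightforward diagram chasing.
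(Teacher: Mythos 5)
Your argument is correct, and it follows the same overall strategy as the paper (reduce to Theorem~\ref{T-QFEqSj} for $E_d$ and then kill the defect \pj), but the two proofs execute the second step differently, and yours is considerably more elaborate than it needs to be. The paper proves instead that $\et \colon E_d \to \OA{d}$ is \eqrsj\  and invokes \Lem{L:SjTrtve}; in that setting the lift $\ld$ of $E_d$ lands in $C$ itself, so the defect \pj\  $f = \ld\big(1 - \sum_j r_j r_j^*\big)$ lives at the top of the tower, $\kp (f) = 0,$ and one simply observes that $\| \kp_n (f) \| \to \| \kp (f) \| = 0,$ so that $\kp_n (f),$ being a \pj\  of norm less than~$1,$ is exactly zero for some~$n.$ The same one-line observation disposes of your $p_0$: since $\pi_{n_0} (p_0) = 0$ and $J / J_{n_0} = {\overline{\bigcup_m J_m / J_{n_0}}},$ one has $\| \pi_{m, n_0} (p_0) \| \to 0,$ and $\pi_{m, n_0} (p_0)$ is a \pj, hence vanishes for large~$m$; thus $\pi_{m, n_0} \circ \ps_0$ already annihilates the defect \pj\  and factors through~$\et,$ with no need for the approximating element $a,$ the \pj\  $q,$ or the conjugating unitary $u.$ That said, every step of your detour does check out: $a$ can indeed be taken self-adjoint in $(J_m / J_{n_0})^G$ by \Lem{L-FixedPtLim} (after averaging with its adjoint), $q = \chi_{(1/2, \infty)} (a)$ lies in $J_m / J_{n_0}$ because the function vanishes at~$0$ and the ideal is hereditary, the identity $z = 1 + (2 q - 1)(p_0 - q)$ gives $\| z - 1 \| \leq \| q - p_0 \|,$ and the intertwining and equivariance claims for $u = z (z^* z)^{-1/2}$ are as you state. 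So the proof is valid; it just rediscovers by hand a fact (\pj s of norm less than~$1$ are zero, applied along the tower) that makes the whole conjugation apparatus unnecessary.
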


\begin{proof}
By Theorem~\ref{T-QFEqSj} and \Lem{L:SjTrtve},
it suffices to prove
that the quotient map $\et \colon E_d \to \OA{d}$
is equivariantly conditionally semiprojective
in the sense of \Def{D:EqsjForHom}.

Let the notation be as in
\Def{D:EqsjForHom},
except that the map called $\om$ there is~$\et.$
Set $f = \ld \left( 1 - \sum_{j = 1}^d r_j r_j^* \right),$
which is a \pj\  in~$C.$
Then
\[
\kp (f) = \ph \left( 1 - \sssum{j = 1}{d} s_j s_j^* \right) = 0.
\]
Therefore there is $n \in \N$ such that $\| \kp_n (f) \| < 1.$
Since $\kp_n (f)$ is a \pj,
this means that $\kp_n (f) = 0,$
that is,
$(\kp_n \circ \ld) \left( 1 - \sum_{j = 1}^d r_j r_j^* \right) = 0.$
Therefore there is $\ps \colon \OA{d} \to C / J_n$
such that $\kp_n \circ \ld = \ps \circ \et.$
Since $\et$ is surjective,
equivariance of~$\ps$
follows from equivariance of $\et$ and $\kp_n \circ \ld.$
Similarly,
from $\pi_n \circ \ps \circ \et = \kp \circ \ld = \ph \circ \et$
we get $\pi_n \circ \ps = \ph.$
\end{proof}

\begin{rmk}\label{R-RegRep}
An important example of a quasifree action is
the one coming from the regular representation of a finite group.
In this case, one can prove equivariant semiprojectivity without
using any of the machinery developed in this section.

Let $d = \card (G).$
We discuss only~$E_d,$
but the result for $\OA{d}$ can be treated the same way,
or reduced to the result for~$E_d$ as in Corollary~\ref{C-OdQFEqSj}.
Label the generators of $E_d$ as $r_g$ for $g \in G,$
and consider the action $\af \colon G \to \Aut (E_d)$
determined by $\af_g (r_h) = r_{g h}$ for $g, h \in G.$
Following the beginning of the proof of
Theorem~\ref{T-QFEqSj},
we reduce to the situation in which we have
a nonequivariant lifting $\nu \colon E_d \to C / J_n$
of~$\ph$
and an isometry $v_1 \in C / J_n$
such that $v_1 v_1^* = (\kp_n \circ \mu_0) (e_{1, 1}, \, 0)$
and $\| \pi_n (v_1) - \ph (r_1) \|$ is small.
Functional calculus arguments can be used to
replace $v_1$ by a nearby partial isometry $w_1$
such that $w_1 w_1^* = (\kp_n \circ \mu_0) (e_{1, 1}, \, 0)$
and $\pi_n (w_1) = \ph (r_1).$
Then there is a unique \hm\  $\ps \colon E_d \to C / J_n$
such that $\ps (r_g) = \gm^{(n)}_g (r_1),$
and this \hm\  is easily seen to be an \eqv\  lifting of~$\ph$
which satisfies $\kp_n \circ \ld = \ps \circ \mu_0.$
\end{rmk}

\begin{rmk}\label{R-CS1}
We describe what happens when $d = 1.$
In this case, $\OA{d}$ becomes $C (S^1)$
and $E_d$ becomes the \ca\  $C^* (s)$ of the unilateral shift~$s.$
Quasifree actions are those that factor through
the action of $S^1$ on $C (S^1)$ coming from the translation action
of $S^1$ on $S^1,$
and those that factor through
the action of $S^1$ on $C^* (s)$ coming from the automorphisms
determined by $\bt_{\zt} (s) = \zt s$ for $\zt \in S^1.$

Thus, for example,
we conclude that the translation action of $S^1$
on $C (S^1)$ is \eqsj.
This, however, is easy to prove directly.
A unital \eqv\  \hm\  from $C (S^1)$ with translation to
$C / J$ with the action $\gm^{(\infty)}$ is
just a unitary $u \in C / J$
such that $\gm^{(\infty)}_{\zt} (u) = \zt u$
for all $\zt \in S^1.$
This unitary can be partially lifted to a unitary $v$ in
some $C / J_n$
such that $\big\| \gm^{(n)}_{\zt} (v) - \zt v \big\|$ is small
for all $\zt \in S^1.$
To get an exactly equivariant lift~$w,$
set
\[
a = \int_{S^1} \zt^{-1} \gm^{(n)}_{\zt} (v) \, d \zt
\]
(using normalized Haar measure on~$S^1$),
and take $w = a (a^* a)^{- 1/2}.$
\end{rmk}

\section{Quasifree actions on~${\mathcal{O}}_{\infty}$}\label{Sec:QFOI}

\indent
The purpose of this section is to prove that quasifree actions
of finite groups on the Cuntz algebras~$\OI$
are \eqsj.
We begin with a discussion of quasifree actions on~$\OI.$
We will need to include a point of view
different from that of \Lem{L:ExtQFree} and \Lem{L:OdQFree},
primarily to take advantage
of the KK-theory computations in~\cite{Pm}.

\begin{ntn}\label{N-ForOI}
For $d \in \N,$ we make $\C^d$ into a Hilbert space
in the standard way.
For $d = \infty,$ we take $\C^d = l^2 (\N).$
We let $\dt_j \in \C^d$ (for $j = 1, 2, \ldots, d$ or for $d \in \N$)
denote the $j$th standard basis vector,
and we let $U_d$ be the group of unitary operators on~$\C^d,$
with the strong operator topology.
For convenience, we also define $E_{\infty} = \OI,$
and denote its generators by $r_j^{(\infty)}$ as well as by~$s_j.$
\end{ntn}

Of course, when $d$ is finite, the topology on~$U_d$
is the same as the norm topology.
We warn that the notation $\C^{\infty}$ conflicts with notation
often used for the product or the algebraic direct sum
(we are using the Hilbert direct sum),
and that $U_{\infty}$ conflicts with notation sometimes used
for the (much smaller) algebraic direct limit of the groups~$U_d.$

We summarize various results from~\cite{Pm},
and relate them to the viewpoint
of \Lem{L:ExtQFree}
and \Lem{L:OdQFree}.
For a \ca~$A,$
a Hilbert $A$--$A$ bimodule $F$ is as described
at the beginning of Section~1 of~\cite{Pm}:
$F$~is a right Hilbert $A$-module,
with $A$-valued scalar product which is conjugate linear
in the first variable,
together with an injective \hm\  $\ph \colon A \to L (F).$

\begin{thm}[Pimsner]\label{T-PmOne}
Let $d \in \N \cup \{ \infty \},$
and follow Notation~\ref{N-ForOI}.
Make $\C^d$ into a Hilbert $\C$--$\C$ bimodule~$F_d,$
as described above in the obvious way,
with $\ph ( \ld) = \ld \cdot 1_{L (F_d)}$ for $\ld \in \C.$
Let ${\mathcal{T}}_d$ be the associated Toeplitz algebra
${\mathcal{T}}_{F_d}$ as described
in Definition~1.1 of~\cite{Pm},
and call its generators $T_{\xi}$ as there.
Then:
\begin{enumerate}
\item\label{T-PmOne-1}
There is a unique \ct\  action
$\gm^{(d)} \colon U_d \to \Aut ( {\mathcal{T}}_d )$
which satisfies $\gm^{(d)}_u (T_{\xi}) = T_{u \xi}$
for all $\xi \in \C^d.$
\item\label{T-PmOne-2}
There is a unique isomorphism $\sm_d \colon E_d \to {\mathcal{T}}_d$
such that $\sm_d \big( r_j^{(d)} \big) = T_{\dt_j}$ for all~$j.$
(Recall that for $d = \infty$ this means
$\sm_{\infty} \colon \OI \to {\mathcal{T}}_{\infty}.$)
\item\label{T-PmOne-3}
If $d < \infty,$
then $\sm_d$ is equivariant for the action of $U_d$ on $E_d$
gotten by taking $\rh = \id_{U_d}$ in \Lem{L:ExtQFree}
and the action of part~(\ref{T-PmOne-1}).
\item\label{T-PmOne-4}
If $d = \infty,$
when $u \in U_{\infty}$ is written as a matrix
$u = (u_{j, k})_{j, k = 1}^{\infty},$
we have
\[
\big( \sm_d^{-1} \circ \gm^{(d)}_u \circ \sm_d \big)
         \big( r_k^{(d)} \big)
    = \sum_{j = 1}^d u_{j, k} r_j^{(d)}
\]
for all~$k \in \N,$
with convergence in the norm topology on the right.
\item\label{T-PmOne-5}
Let $d_1 \in \N$ and $d_2 \in \N \cup \{ \infty \}$
satisfy $d_1 \leq d_2,$
and set $G = U_{d_1} \times U_{d_2 - d_1}.$
Let $G$ act on $E_{d_1}$ by projection to the first factor
followed by the action on $E_{d_1}$ corresponding to~$\gm^{(d)},$
and let $G$ act on $E_{d_2}$ by the inclusion of $G$ in $U_{d_2}$
as block diagonal matrices
followed by the action on $E_{d_2}$ corresponding to~$\gm^{(d)}.$
Then the standard inclusion of $E_{d_1}$ in~$E_{d_2}$
is equivariant.
\end{enumerate}
\end{thm}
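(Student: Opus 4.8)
The plan is to take parts~(\ref{T-PmOne-1}) and~(\ref{T-PmOne-2}) essentially from Pimsner's analysis in~\cite{Pm}, and then to deduce parts~(\ref{T-PmOne-3})--(\ref{T-PmOne-5}) by checking the relevant intertwining identities on the generating isometries. For~(\ref{T-PmOne-1}): since the coefficient algebra is $\C,$ the left action on $F_d$ is by scalars, so every unitary $u$ on $\C^d$ is an automorphism of the bimodule~$F_d,$ and by functoriality of Pimsner's Toeplitz construction it induces an automorphism $\gm^{(d)}_u$ of $\mathcal{T}_d$ with $\gm^{(d)}_u (T_\xi) = T_{u \xi};$ then $u \mapsto \gm^{(d)}_u$ is a \hm\  to $\Aut (\mathcal{T}_d),$ unique because the $T_\xi$ generate $\mathcal{T}_d.$ Continuity of $u \mapsto \gm^{(d)}_u (a)$ for fixed~$a$ reduces to $\| T_\xi - T_\eta \| = \| \xi - \eta \|$ for creation operators on the full Fock space: this yields norm continuity when $a$ is a $*$-polynomial in the $T_\xi,$ and the general case follows using the uniform bound $\| \gm^{(d)}_u \| = 1.$ For~(\ref{T-PmOne-2}): the operators $T_{\dt_1}, T_{\dt_2}, \ldots$ are isometries with mutually orthogonal ranges, so the universal property of $E_d$ gives a \hm\  $\sm_d \colon E_d \to \mathcal{T}_d$ with $\sm_d \big( r_j^{(d)} \big) = T_{\dt_j};$ it is surjective because $T_\xi = \sum_j \xi_j T_{\dt_j}$ (a norm-convergent series, again by $\| T_\xi - T_\eta \| = \| \xi - \eta \|$) and the $T_\xi$ generate $\mathcal{T}_d,$ and injective because the Fock representation of $E_d$ is faithful, which is part of Pimsner's analysis. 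Uniqueness of $\sm_d$ is automatic from its values on generators.

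Parts~(\ref{T-PmOne-3}) and~(\ref{T-PmOne-4}) are then computations on the generators. For~(\ref{T-PmOne-3}) (with $d < \infty$), taking $\rh = \id_{U_d}$ in \Lem{L:ExtQFree} gives $\af^{\rh}_u \big( r_k^{(d)} \big) = \sum_{j = 1}^d u_{j, k} r_j^{(d)}$ by \Lem{L:ExtQFree}(\ref{L:ExtQFree-0}), so $\sm_d \big( \af^{\rh}_u ( r_k^{(d)} ) \big) = \sum_{j = 1}^d u_{j, k} T_{\dt_j} = T_{u \dt_k} = \gm^{(d)}_u \big( \sm_d ( r_k^{(d)} ) \big);$ since both sides are \hm s agreeing on the generators of $E_d,$ the map $\sm_d$ is \eqv. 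For~(\ref{T-PmOne-4}), the same computation gives $\big( \sm_d^{-1} \circ \gm^{(d)}_u \circ \sm_d \big) \big( r_k^{(d)} \big) = \sm_d^{-1} ( T_{u \dt_k} ),$ and since $u \dt_k \in l^2 (\N)$ and $\big\| T_{u \dt_k} - \sum_{j \leq n} u_{j, k} T_{\dt_j} \big\| = \big\| u \dt_k - \sum_{j \leq n} u_{j, k} \dt_j \big\| \to 0$ as $n \to \infty,$ this equals $\sum_{j = 1}^{\infty} u_{j, k} r_j^{(d)}$ with convergence in norm.

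For part~(\ref{T-PmOne-5}), write $g = (u, v) \in U_{d_1} \times U_{d_2 - d_1}$ and let $\tilde{g} \in U_{d_2}$ be the corresponding block-diagonal unitary. For $k \leq d_1$ the $k$th column of $\tilde{g}$ is the $k$th column of~$u$ followed by zeros, so by part~(\ref{T-PmOne-3}) (when $d_2 < \infty$) or part~(\ref{T-PmOne-4}) (when $d_2 = \infty$) the action of~$g$ on $E_{d_2}$ sends $r_k^{(d_2)}$ to $\sum_{j = 1}^{d_1} u_{j, k} r_j^{(d_2)},$ which is exactly the image, under the standard inclusion of $E_{d_1}$ in $E_{d_2},$ of $\sum_{j = 1}^{d_1} u_{j, k} r_j^{(d_1)},$ i.e., of the image of $r_k^{(d_1)}$ under the action of~$g$ on $E_{d_1}.$ Since $E_{d_1}$ is generated by $r_1^{(d_1)}, \ldots, r_{d_1}^{(d_1)}$ and both composites are \hm s, the standard inclusion is \eqv; continuity of the two actions is part~(\ref{T-PmOne-1}).

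The step I expect to be the main obstacle is the case $d = \infty.$ \Lem{L:ExtQFree} produces quasifree actions only from norm-continuous representations on a finite-dimensional space, and no such reformulation is available for $U_{\infty}$ with the strong operator topology, so one genuinely needs the Fock-space realization of~\cite{Pm}: there norm continuity of $\gm^{(d)}$ in part~(\ref{T-PmOne-1}) and the norm-convergent formula in part~(\ref{T-PmOne-4}) both follow directly from the isometry $\xi \mapsto T_\xi.$ The only other point that needs care is the faithfulness of the Fock representation underlying the isomorphism $\sm_d$ in part~(\ref{T-PmOne-2}), which we take from~\cite{Pm}.
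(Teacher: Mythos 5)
Your proposal is correct and follows essentially the same route as the paper: parts (\ref{T-PmOne-1}) and (\ref{T-PmOne-2}) are taken from Pimsner's analysis in~\cite{Pm}, and parts (\ref{T-PmOne-3})--(\ref{T-PmOne-5}) are verified by computing on the generating isometries, with the norm convergence in~(\ref{T-PmOne-4}) coming from the isometry $\xi \mapsto T_{\xi}$ (equivalently, from orthogonality of the ranges of the $s_j,$ which is how the paper phrases it). The only minor variation is in~(\ref{T-PmOne-2}), where you deduce injectivity of $\sm_d$ from faithfulness of the Fock representation of $E_d$ rather than by directly matching the universal relations for ${\mathcal{T}}_d$ from Section~3 of~\cite{Pm} with those for $E_d$; both amount to the same citation.
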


\begin{proof}
The group action of~(\ref{T-PmOne-1}) is obtained as in
Remark 1.2(2) of~\cite{Pm}.
In~\cite{Pm}, for a general Hilbert bimodule~$F,$
only the action on the quotient $\OA{F}$
of ${\mathcal{T}}_F$ is described,
but the same reasoning also gives an action on ${\mathcal{T}}_F.$
Continuity of the action is easily checked on the
generators $T_{\xi}$ for $\xi \in \C^d,$
and continuity on the algebra follows by a standard argument.

For part~(\ref{T-PmOne-2}),
relations giving~${\mathcal{T}}_d$
as a universal \ca\  are described at the
beginning of Section~3 of~\cite{Pm}.
By comparing these relations with those for~$E_d,$
one sees that the maps $\sm_d$ exist and are are isomorphisms.

Part~(\ref{T-PmOne-3}) is a computation.
For part~(\ref{T-PmOne-4}),
orthogonality of the ranges of the~$s_j$
shows that if $\ld \in \C^{\infty}$
satisfies $\ld_j = 0$ for all but finitely many $j \in \N,$
then
\[
\left\| \sssum{j = 1}{\infty} \ld_j s_j \right\| = \| \ld \|_2.
\]
Since for all $k \in \N,$
\[
\sum_{j = 1}^{\infty} | u_{j, k} |^2 < \infty,
\]
this implies convergence on the right in the formula
in~(\ref{T-PmOne-4}).
The validity of the formula is now a computation
like that for part~(\ref{T-PmOne-3}).

Part~(\ref{T-PmOne-5}) now follows by comparing
the formulas for the actions from parts (\ref{T-PmOne-3})
and~(\ref{T-PmOne-4})
with the definitions of~$\sm_{d_1}$ and $\sm_{d_2}.$
\end{proof}

\begin{rmk}\label{R-IndentOI}
Following Theorem~\ref{T-PmOne}(\ref{T-PmOne-2}),
we will identify ${\mathcal{T}}_{\infty}$ with $E_{\infty} = \OI,$
and
for finite $d$ we will identify ${\mathcal{T}}_d$ with~$E_d.$
We then write the action $\gm^{(d)}$
of Theorem~\ref{T-PmOne}(\ref{T-PmOne-1})
as an action on~$E_d.$
\end{rmk}

\begin{dfn}\label{D-QFI}
Let $G$ be a topological group.
A {\emph{quasifree action}} of $G$ on~$\OI$
is an action of the form $\gm^{(\infty)} \circ \rh$
with $\gm^{(\infty)}$ as in Remark~\ref{R-IndentOI}
and for some \ct\  \hm\  $\rh \colon G \to U_{\infty}$
(that is, for some unitary representation~$\rh$
of $G$ on $l^2 (\N)$).
\end{dfn}

\begin{rmk}\label{R-IffFactor}
Theorem~\ref{T-PmOne}(\ref{T-PmOne-3})
implies that for $d \in \N,$
an action of a topological group $G$ on~$E_d$
is quasifree in the sense of \Lem{L:ExtQFree}
\ifo\  it factors through $\gm^{(d)} \colon U_d \to \Aut ( E_d )$
in a similar way.
\end{rmk}

\begin{prp}\label{P-UEImpConj}
Let $G$ be a topological group,
and let $\rh_1, \rh_2 \colon G \to U_{\infty}$ be unitarily equivalent
representations.
Then the corresponding quasifree actions of $G$ on~$\OI$
are conjugate.
\end{prp}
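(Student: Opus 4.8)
The plan is to reduce the statement to the fact that $\gm^{(\infty)} \colon U_{\infty} \to \Aut(\OI)$ is a group homomorphism, which is part of Theorem~\ref{T-PmOne}(\ref{T-PmOne-1}): a homomorphism automatically carries inner conjugation in $U_{\infty}$ to conjugation by a single automorphism of $\OI$.

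Concretely, since $\rh_1$ and $\rh_2$ are unitarily equivalent, I would first fix $u \in U_{\infty}$ with $u \rh_1 (g) u^* = \rh_2 (g)$ for all $g \in G$, and set $\te = \gm^{(\infty)}_u$, an automorphism of $\OI$. Then for every $g \in G$,
\[
\te \circ \big( \gm^{(\infty)} \circ \rh_1 \big)_g
  = \gm^{(\infty)}_{u} \circ \gm^{(\infty)}_{\rh_1 (g)}
  = \gm^{(\infty)}_{u \rh_1 (g)}
  = \gm^{(\infty)}_{\rh_2 (g) u}
  = \gm^{(\infty)}_{\rh_2 (g)} \circ \gm^{(\infty)}_{u}
  = \big( \gm^{(\infty)} \circ \rh_2 \big)_g \circ \te ,
\]
where the homomorphism property of $\gm^{(\infty)}$ is used three times and the relation $u \rh_1 (g) = \rh_2 (g) u$ in the middle. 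Thus $\te$ implements a conjugacy between the quasifree actions $\gm^{(\infty)} \circ \rh_1$ and $\gm^{(\infty)} \circ \rh_2$, which is exactly the assertion.

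There is no real obstacle here; the only point worth flagging is that $u$ need not lie in the image of either $\rh_i$, but this is harmless because $\gm^{(\infty)}$ is defined on all of $U_{\infty}$ and $\te$ is a genuine automorphism of $\OI$ by Theorem~\ref{T-PmOne}(\ref{T-PmOne-1}). I would not expect the argument to need anything about continuity of $\te$ in a parameter, since conjugacy of $G$-actions only asks for a single intertwining automorphism of the algebra.
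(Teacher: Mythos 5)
Your argument is correct and is exactly the content behind the paper's one-line proof, which cites parts (\ref{T-PmOne-1}) and~(\ref{T-PmOne-2}) of Theorem~\ref{T-PmOne}: since $\gm^{(\infty)}$ is a group homomorphism from $U_{\infty}$ to $\Aut(\OI)$, conjugating the representations by $u$ translates into conjugating the actions by $\gm^{(\infty)}_u$. Your computation simply makes the word ``immediate'' explicit, and your closing remark about continuity is also right.
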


\begin{proof}
This is immediate from
parts (\ref{T-PmOne-1}) and~(\ref{T-PmOne-2}) of Theorem~\ref{T-PmOne}.
\end{proof}

\begin{thm}[Pimsner]\label{T-PmKKG}
Let $d \in \N \cup \{ \infty \},$
let $G$ be a second countable locally compact group,
and let $\rh \colon G \to U_d$ be a unitary representation.
With the action of $G$ on~$E_d$ as in Remark~\ref{R-IndentOI},
the inclusion of $\C$ in $E_d$ via $\ld \mapsto \ld \cdot 1$
is a $KK^G$-equivalence.
\end{thm}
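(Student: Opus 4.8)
The plan is to obtain this as the $G$-equivariant refinement of Pimsner's theorem that, for any $C^*$-correspondence $F$ over a $C^*$-algebra $A,$ the canonical inclusion $A \hookrightarrow {\mathcal{T}}_F$ of $A$ into the Toeplitz algebra ${\mathcal{T}}_F$ is a $KK$-equivalence (Theorem~4.4 of~\cite{Pm}), specialized to $A = \C$ and $F = F_d.$ By parts (\ref{T-PmOne-1}) and~(\ref{T-PmOne-2}) of Theorem~\ref{T-PmOne} together with Remark~\ref{R-IndentOI}, we have already identified $E_d,$ with the action $\gm^{(d)} \circ \rh,$ with the Toeplitz algebra ${\mathcal{T}}_{F_d}$ carrying the action induced by the unitary representation $\rh$ of $G$ on $\C^d = F_d.$ So it suffices to show that Pimsner's $KK$-equivalence is in fact a $KK^G$-equivalence.

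First I would install the $G$-equivariant structure on all the data of Pimsner's construction. The representation $\rh$ makes $F_d$ a $G$-equivariant Hilbert $\C$--$\C$ bimodule (the left action, by scalars, is automatically equivariant), and this induces compatible $G$-actions on every tensor power $F_d^{\otimes n},$ on the Fock module ${\mathcal{F}}(F_d) = \bigoplus_{n = 0}^{\infty} F_d^{\otimes n},$ on $L({\mathcal{F}}(F_d)),$ and hence on ${\mathcal{T}}_{F_d};$ by construction the last one is $\gm^{(d)} \circ \rh.$ Continuity is automatic when $d$ is finite; for $d = \infty$ one uses that $\rh$ is strongly continuous (Definition~\ref{D-QFI}) to see, as in the proof of Theorem~\ref{T-PmOne}(\ref{T-PmOne-1}), that the induced action on ${\mathcal{F}}(F_{\infty})$ is point-norm continuous on ${\mathcal{T}}_{F_{\infty}}.$ The key structural point is that $G$ acts diagonally with respect to the Fock grading, i.e.\ it preserves each summand $F_d^{\otimes n},$ so that any operator built only from this grading is $G$-invariant. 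With this in place, I would go through Pimsner's proof of the $KK$-equivalence: it produces an inverse class $[\bt] \in KK({\mathcal{T}}_F, A),$ modelled on the Fock module as a $({\mathcal{T}}_F, A)$-correspondence, together with homotopies realizing $[\io] \otimes_{{\mathcal{T}}_F} [\bt] = 1_A$ and $[\bt] \otimes_A [\io] = 1_{{\mathcal{T}}_F}.$ Every Hilbert module, $*$-homomorphism, operator, and homotopy occurring in that argument is natural in the pair $(A, F)$ and assembled from the Fock grading and the module operations, hence carries the $G$-action above and intertwines the structure maps in question. Therefore $[\bt]$ and the two homotopies lift to $KK^G,$ so $[\io] \in KK^G(\C, E_d)$ is invertible.

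The one place where there is genuine work, as opposed to merely recording equivariance, is checking that Pimsner's homotopies --- in particular the ``rotation of the Fock space'' that witnesses $[\bt] \otimes_A [\io] = 1_{{\mathcal{T}}_F}$ --- commute with an \emph{arbitrary} continuous $G$-action, and that in the $d = \infty$ case the operators involved depend suitably continuously on the group variable, so that they really do define classes in $KK^G.$ The hard part will be to verify that the Toeplitz--Fock construction is functorial on $G$-equivariant $C^*$-correspondences and that the proof in~\cite{Pm} takes place inside that functoriality; I expect this to be routine precisely because $G$ respects the Fock grading, but it does require reworking the argument of~\cite{Pm} rather than merely quoting its statement.
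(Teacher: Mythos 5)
Your proposal is correct and is essentially the argument the paper relies on: the paper's proof is simply a citation to Theorem~4.4 and Remark~4.10(2) of~\cite{Pm}, and that remark records precisely what you set out to verify, namely that all the constructions in Pimsner's proof of the $KK$-equivalence (the Fock-module class, the homotopies, etc.) are natural and respect the $G$-action induced by a unitary representation on the bimodule, so the equivalence holds in $KK^G.$ In other words, the ``reworking of the argument of~\cite{Pm}'' that you anticipate is exactly the content of the cited remark, and your outline of why it works (the $G$-action preserves the Fock grading) is the right one.
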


\begin{proof}
See Theorem~4.4 and Remark 4.10(2) in~\cite{Pm}.
\end{proof}

\begin{dfn}\label{D-Filtered}
Let $G$ be a topological group,
and let $\rh \colon G \to U_{\infty}$
be a unitary representation of~$G$ on $l^2 (\N).$
We say that $\rh$ is {\emph{filtered}}
if there are $d (1) < d (2) < \cdots$ in $\N$ such that
for each~$k,$
the \pj\  $p_k$
on the span of the first $d (k)$ standard basis vectors in $l^2 (\N)$
is $G$-invariant.
We call the $d (k)$-dimensional representations $\rh_k$
given by $g \mapsto \rh (g) |_{p_k l^2 (\N)}$
the {\emph{filtering representations}}.
(They are, of course, not uniquely determined by~$\rh.$)

We say that a quasifree action
of $G$ on~$\OI$ {\emph{filtered}} if the corresponding
representation $\rh$ as in \Def{D-QFI} is filtered.
\end{dfn}

\begin{rmk}\label{R-FiltLim}
Let $G$ be a topological group,
and let $\af \colon G \to \Aut (\OI)$ be a quasifree action
of $G$ on~$\OI$
coming from a filtered representation $\rh \colon G \to U_{\infty}.$
Let the notation for a sequence of filtering representations
be as in \Def{D-Filtered}.
Let $\af^{(n)} \colon G \to \Aut (E_{d (n)})$
be the quasifree action of \Lem{L:ExtQFree}
coming from the representation $g \mapsto \rh (g) |_{p_k l^2 (\N)}.$
Then $\OI$ is the equivariant direct limit $\dirlim E_{d (n)}.$
This follows from Theorem~\ref{T-PmOne}
(using all its parts).
\end{rmk}

The following special version of a filtered representation
is introduced for technical convenience.

\begin{dfn}\label{D-EvenFilt}
Let $G$ be a topological group,
and let $\rh \colon G \to U_{\infty}$
be a filtered unitary representation of~$G$ on $l^2 (\N).$
We say that a collection $(\rh_k)_{n \in \N}$
of filtering representations is {\emph{almost even}}
if there exist $N_0, N \in \N$
and representations $\sm_0 \colon G \to U_{N_0}$
and $\sm \colon G \to U_{N}$ such that,
following the notation of \Def{D-Filtered},
\begin{enumerate}
\item\label{D-EvenFilt-1}
$d (n) = N_0 + n N$ for all $n \in \N.$
\item\label{D-EvenFilt-2}
$\rh_1 = \sm_0 \oplus \sm.$
\item\label{D-EvenFilt-3}
$\rh_{n + 1} = \rh_n \oplus \sm$ for all $n \in \N.$
\end{enumerate}
\end{dfn}

It is important that we have equality in
parts (\ref{D-EvenFilt-2}) and~(\ref{D-EvenFilt-3})
of \Def{D-EvenFilt},
not merely unitary equivalence.

\begin{lem}\label{L-ConjToFilt}
Let $G$ be a compact group,
and let $\af \colon G \to \Aut (\OI)$ be a quasifree action
of $G$ on~$\OI.$
Then:
\begin{enumerate}
\item\label{L-ConjToFilt-Cpt}
The action $\af$ is conjugate to a filtered quasifree action.
\item\label{L-ConjToFilt-Finite}
If $G$ is in fact finite,
then $\af$ is conjugate to the quasifree action
coming from a representation with an almost even filtration.
\end{enumerate}
\end{lem}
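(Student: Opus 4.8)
The plan is to reduce both parts to standard facts about unitary representations of compact groups on separable Hilbert spaces, combined with Proposition~\ref{P-UEImpConj}. For part~(\ref{L-ConjToFilt-Cpt}), let $\rh \colon G \to U_\infty$ be the representation giving $\af$ as in \Def{D-QFI}. Since $G$ is compact and $l^2 (\N)$ is separable, $\rh$ decomposes as a Hilbert-space direct sum of finite-dimensional irreducible subrepresentations $\bigoplus_{i = 1}^\infty \rh^{(i)}$. Set $d (k) = \sum_{i = 1}^k \dim (\rh^{(i)})$ (passing to a strictly increasing subsequence and grouping consecutive blocks if some $\dim (\rh^{(i)}) = 0$ cannot happen, but if the sum is finite we have $\OI$ with a finite-dimensional-support quasifree action, which is already filtered trivially by padding with the trivial representation); then the projection $p_k$ onto the span of the first $d (k)$ basis vectors of the reordered basis is $G$-invariant, so the reordered representation $\rh'$ is filtered. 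Since $\rh'$ is unitarily equivalent to $\rh$, Proposition~\ref{P-UEImpConj} gives that $\af$ is conjugate to the quasifree action coming from $\rh'$, which is filtered.

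For part~(\ref{L-ConjToFilt-Finite}), I would use the extra structure available for a finite group: the regular representation $\sm_{\mathrm{reg}}$ of $G$ contains every irreducible representation of $G$, so $\sm := \sm_{\mathrm{reg}}$ (or any fixed multiple of it that contains every irreducible) has the property that for any finite-dimensional representation $\te$ there is $m$ with $\te$ a subrepresentation of $m \cdot \sm$. Now, decompose $\rh = \bigoplus_{i=1}^\infty \rh^{(i)}$ into irreducibles as before. If only finitely many irreducibles occur (with finite total multiplicity), then by padding with infinitely many copies of $\sm$ and invoking Proposition~\ref{P-UEImpConj} we may assume every irreducible occurs infinitely often; so \wolog\ infinitely many summands occur. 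Then I would reorganize the summands of $\rh$ as follows: choose $N_0$ to be the dimension of a finite initial block $\sm_0$ of the decomposition, and then greedily partition the remaining summands into consecutive finite blocks $\theta_1, \theta_2, \ldots$ each of which is a subrepresentation of a single fixed $\sm = m \cdot \sm_{\mathrm{reg}}$ (possible because every irreducible of $G$ appears in $\sm_{\mathrm{reg}}$, and with infinitely many summands we can keep consuming them); then, writing $\sm = \theta_n \oplus (\sm \ominus \theta_n)$ and absorbing the complements $\sm \ominus \theta_n$ into the running tail, one arranges (after a unitary rearrangement of the basis) that $\rh_1 = \sm_0 \oplus \sm$ and $\rh_{n+1} = \rh_n \oplus \sm$ with \emph{equality}, giving $d(n) = N_0 + nN$ where $N = \dim(\sm)$. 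Again Proposition~\ref{P-UEImpConj} lets us replace $\af$ by the quasifree action of this rearranged representation up to conjugacy.

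The main obstacle is getting \emph{equality} (not just unitary equivalence) in conditions (\ref{D-EvenFilt-2}) and~(\ref{D-EvenFilt-3}) of \Def{D-EvenFilt}, as emphasized in the remark following that definition. The clean way to achieve this is to fix, once and for all, a single representation space $W$ for $\sm = m \cdot \sm_{\mathrm{reg}}$, then build the representation space of $\rh$ as the \emph{external} Hilbert direct sum $W_0 \oplus W \oplus W \oplus \cdots$ with $W_0$ carrying $\sm_0$ and each $W$ carrying $\sm$, and exhibit a unitary from this space onto $l^2(\N)$ intertwining the rearranged representation with $\rh$; then on the model space the filtering projections are literally the coordinate projections and conditions (\ref{D-EvenFilt-2})--(\ref{D-EvenFilt-3}) hold on the nose. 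The only real content to check is the combinatorial claim that the irreducible summands of $\rh$ (with the harmless normalization that each occurs infinitely often) can be grouped into consecutive finite blocks each isomorphic to a fixed $\sm$; this is where finiteness of $G$ is used in an essential way, via $\sm_{\mathrm{reg}}$ containing all irreducibles, and it is a routine bookkeeping argument once stated correctly.
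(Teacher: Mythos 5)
Part~(\ref{L-ConjToFilt-Cpt}) of your argument is essentially the paper's: decompose $\rho$ into finite dimensional subrepresentations, regroup, and invoke Proposition~\ref{P-UEImpConj}. Your point that one should realize the rearranged representation on an external direct sum $W_0 \oplus W \oplus W \oplus \cdots$ so that the filtration conditions hold with literal equality rather than up to unitary equivalence is also exactly how the paper's construction works.

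There is, however, a genuine error in your part~(\ref{L-ConjToFilt-Finite}): the choice of $\sigma$ as (a multiple of) the regular representation cannot work. An almost even filtration forces $\rho \cong \sigma_0 \oplus \sigma \oplus \sigma \oplus \cdots,$ so every irreducible constituent of $\sigma$ must occur in $\rho$ with infinite multiplicity. If some irreducible $\tau$ of $G$ occurs in $\rho$ with only finite multiplicity --- for instance $\rho = \infty \cdot \mathrm{triv}$ for $G = \Z / 2 \Z,$ where the sign representation does not occur at all --- then no representation unitarily equivalent to $\rho$ has the form $\sigma_0 \oplus \sigma^{\infty}$ with $\tau$ a subrepresentation of $\sigma.$ Concretely, your step of ``absorbing the complements $\sigma \ominus \theta_n$ into the running tail'' breaks down: the tail eventually contains no further copies of $\tau$ with which to complete a block to a full copy of $\sigma.$ Your fallback of ``padding with infinitely many copies of $\sigma$'' is not available either, since padding changes the unitary equivalence class of $\rho$ and Proposition~\ref{P-UEImpConj} applies only to unitarily equivalent representations. (That degenerate case in fact never arises: $l^2 (\N)$ is infinite dimensional and $G$ has only finitely many irreducibles, so at least one occurs with infinite multiplicity.) The correct choice, which is what the paper makes, is to take $\sigma_0$ to be the direct sum of the irreducibles occurring in $\rho$ with finite multiplicity (counted with those multiplicities) and $\sigma$ to be the direct sum of one copy of each irreducible occurring with infinite multiplicity; then $\rho \cong \sigma_0 \oplus \sigma \oplus \sigma \oplus \cdots$ by comparing multiplicities, and the regular representation plays no role.
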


Part~(\ref{L-ConjToFilt-Finite}) can fail if the group
is not finite.
The regular representation of a second countable
infinite compact group does not have an almost even filtration.

\begin{proof}[Proof of \Lem{L-ConjToFilt}]
For both parts,
we use Proposition~\ref{P-UEImpConj}.

Part~(\ref{L-ConjToFilt-Cpt})
is immediate from the fact that every unitary representation of
a compact group is a direct sum of \fd\  representations.

For part~(\ref{L-ConjToFilt-Finite}),
we need to show that every representation $\pi \colon G \to U_{\infty}$
is unitarily equivalent to a representation~$\rh$
with an almost even filtration.
Let $\ta_1, \ta_2, \ldots, \ta_l$
be a set of representatives of the unitary equivalence classes
of irreducible representations of~$G.$
We may assume that $\ta_1, \ta_2, \ldots, \ta_{l_0}$
occur in $\pi$ with finite multiplicities
$m_1, m_2, \ldots, m_{l_0} \in \Nz,$
and that $\ta_{l_0 + 1}, \ta_{l_0 + 2}, \ldots, \ta_l$
occur with infinite multiplicity.
Then $l_0 < l.$
Take
\[
\sm_0 = \bigoplus_{k = 1}^{l_0} m_k \cdot \ta_k,
\,\,\,\,\,\,
\sm = \bigoplus_{k = l_0 + 1}^l \ta_k,
\andeqn
\rh = \sm_0 \oplus \sm \oplus \sm \oplus \cdots.
\]
This completes the proof.
\end{proof}

\begin{prp}\label{P:OIIsOuter}
Let $G$ be a topological group,
and let $\rh \colon G \to L (l^2 (\N))$
be an injective filtered unitary representation of~$G.$
Then the corresponding quasifree action
$\af \colon G \to \Aut ( \OI )$
is pointwise outer,
that is, $\af_g$ is outer for all $g \in G \setminus \{ 1 \}.$
\end{prp}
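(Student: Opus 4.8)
The plan is to argue by contradiction. Suppose $\af_g = \Ad (u)$ for some unitary $u \in \OI$ and some $g \in G \setminus \{ 1 \}$; since $\rh$ is injective, $\rh (g) \neq 1$, and throughout I use $\af_g (s_k) = \sum_j \rh (g)_{j, k} s_j$ as in Theorem~\ref{T-PmOne}(\ref{T-PmOne-4}). The first step is to pin down the behaviour of $u$ under the gauge action $\gm \colon S^1 \to \Aut (\OI)$ (the quasifree action of the scalar representation, so $\gm_\zt (s_j) = \zt s_j$): a check on generators shows $\gm_\zt$ commutes with every quasifree automorphism, so $\Ad (\gm_\zt (u)) = \gm_\zt \af_g \gm_\zt^{-1} = \af_g = \Ad (u)$; as $\OI$ is simple, $\gm_\zt (u) u^* \in \C \cdot 1$, and the homomorphism and continuity properties force $\gm_\zt (u) = \zt^n u$ for a fixed $n \in \Z$. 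Replacing $(u, g)$ by $(u^*, g^{-1})$ if necessary, I may assume $n \geq 0$; then $u$ lies in the closed linear span of the monomials $s_I s_J^*$ (multi-indices $I, J$) with $|I| - |J| = n$.

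Next I would use that $\rh$ is filtered. The subspace $V = \spn (\dt_1, \ldots, \dt_N)$ with $N = d (1)$ is $G$-invariant, so $\rh (g)|_V$ is a unitary on a finite-dimensional space and has a unit eigenvector $f = \sum_{j = 1}^N f_j \dt_j$ with $\rh (g) f = \mu f$, $\mu \in S^1$. Put $t = \sum_{j = 1}^N f_j s_j \in \OI$. Then $t^* t = \| f \|^2 = 1$, so $t$ is an isometry; writing $p = \sum_{j = 1}^N s_j s_j^*$ we have $p t = t$, so the range projection satisfies $t t^* = p (t t^*) p \leq p < 1$ and $t$ is \emph{not} unitary --- this is the one essential use of $\OI$ rather than a finitely generated Cuntz algebra. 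From $\af_g (s_k) = \sum_j \rh (g)_{j, k} s_j$ and $\rh (g) f = \mu f$ one gets $\af_g (t) = \mu t$, hence $u t = \mu t u$, hence $t^* u t = \mu u$, hence $(t^*)^k u \, t^k = \mu^k u$ for all $k \geq 0$.

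The crux is the identity: for each monomial $s_I s_J^*$ with $|I| - |J| = n$ and $|I| \leq m$,
\[
(t^*)^{m + 1} s_I s_J^* \, t^{m + 1} \in \C \cdot t^n ,
\]
because $(t^*)^{m + 1} s_I = \big( \prod_l \overline{f_{i_l}} \big) (t^*)^{m + 1 - |I|}$ and $s_J^* \, t^{m + 1} = \big( \prod_l f_{j_l} \big) t^{m + 1 - |J|}$ --- each $t^*$ absorbed into the front of $s_I$, and each $t$ into the back of $s_J^*$, contributes a scalar via $s_a^* s_b = \dt_{a, b}$ --- while the leftover $(t^*)^{m + 1 - |I|} t^{m + 1 - |J|}$ equals $t^n$ since $t$ is an isometry and $(m + 1 - |J|) - (m + 1 - |I|) = n \geq 0$. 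Given $\ep > 0$, choose a finite linear combination $u'$ of such monomials with $\| u - u' \| < \ep$ (all with $|I| \leq m$, say); then, using $\| t \| = 1$, $|\mu| = 1$, and $(t^*)^{m + 1} u \, t^{m + 1} = \mu^{m + 1} u$,
\[
\dist \big( u, \, \C \cdot t^n \big) = \dist \big( \mu^{m + 1} u, \, \C \cdot t^n \big) \leq \big\| (t^*)^{m + 1} (u - u') \, t^{m + 1} \big\| < \ep .
\]
As $\ep$ is arbitrary and $\C \cdot t^n$ is closed, $u = \zt \, t^n$ for some $\zt \in \C$, with $|\zt| = 1$ as $u$ is unitary. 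If $n \geq 1$ then $u u^* = t^n (t^*)^n \leq t t^* \leq p < 1$, contradicting $u u^* = 1$; so $n = 0$, $u = \zt \cdot 1$, and $\af_g = \id$. But $\af_g (s_k) = s_k$ for all $k$ forces $\rh (g) = 1$, contradicting $g \neq 1$. Hence $\af_g$ is outer.

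I expect the only real work to be in the crux identity --- keeping the index arithmetic on $|I|$ and $|J|$ and the peeled-off scalars straight so that the leftover is exactly $t^n$, and confirming that the approximation genuinely pins $u$ down to the one-dimensional space $\C \cdot t^n$. Everything else --- commutation with the gauge action, the construction of the eigenvector isometry, and its failure to be unitary in $\OI$ --- is routine.
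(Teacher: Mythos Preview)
Your argument is correct and takes a genuinely different route from the paper's. The paper, following~\cite{ETW}, first conjugates $\rho$ so that $\delta_1$ becomes an eigenvector with eigenvalue $\zeta \neq 1$, then passes to a concrete representation $\pi \colon \OI \to L(l^2(\N))$ built from explicit shift-like isometries and derives a contradiction by showing that all coordinates of $\pi(v)\delta_1$ must vanish. You instead work entirely inside $\OI$: the gauge action pins $u$ to a spectral subspace of fixed degree~$n$, the filtered hypothesis supplies an eigenvector isometry~$t$, and compression by powers of~$t$ forces $u \in \C \cdot t^n$, which is incompatible with unitarity unless $n = 0$ and $\alpha_g = \id$. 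Your method is more intrinsic---no auxiliary Hilbert-space representation, and no need to arrange the eigenvalue to be $\neq 1$ (your contradiction in the $n=0$ case uses only injectivity of~$\rho$)---at the cost of the spectral-subspace and crux-identity bookkeeping; the paper's route is quicker to state once the machinery of~\cite{ETW} is imported.
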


\begin{proof}
Adopt the notation of \Def{D-Filtered}.
Also let $\dt_1, \dt_2, \ldots$ be the standard basis vectors
of $l^2 (\N).$
Let $g \in G \setminus \{ 1 \}$;
we will show that $\af_g$ is outer.
Choose $k$ so large that $\rh (g) |_{p_k l^2 (\N)}$ is nontrivial.
Replacing $d (1), \, d (2), \ldots$ by $d (k), \, d (k + 1), \ldots,$
we may assume that $k = 1.$
Since $\rh (g) |_{p_1 l^2 (\N)}$ is unitary and nontrivial,
and since $p_1 l^2 (\N)$ is finite dimensional,
there exists a unitary $u \in L (l^2 (\N)),$
of the form $u = u_0 + (1 - p_1)$ with
$u_0$ a unitary in $L (p_1 l^2 (\N)),$
such that $\dt_1$ is an eigenvector of $u \rh (g) u^*$
with eigenvalue $\zt \neq 1.$
Let $\sm \colon G \to L (l^2 (\N))$ be the representation
$\sm (g) = u \rh (g) u^*,$
and let $\bt \colon G \to \Aut (\OI)$ be the corresponding
quasifree action.
It follows from Proposition~\ref{P-UEImpConj} that $\bt$
is conjugate to~$\af.$
Therefore it suffices to show that $\bt_g$ is outer.
Note that $\bt_g (s_1) = \zt s_1.$

We follow the proof of Theorem~4 of~\cite{ETW}.
Suppose $\bt_g$ is inner,
and let $v \in \OI$ be a unitary such that $\bt_g = \Ad (v).$
Define $f \colon \N \to \N \times \N$
by $f (j, l) = 2^{j - 1} (2 l - 1)$ for $j, l \in \N.$
Define isometries $t_j \in L (l^2 (\N))$ by
$t_j \dt_l = \dt_{f (j, l)}$ for $j, l \in \N.$
Since $f$ is injective,
there is a unital representation
$\pi \colon \OI \to L (l^2 (\N))$ such that
$\pi (s_j) = t_j$ for all $j \in \N.$
Since $\pi (v) \dt_1 \in l^2 (\N)$ and has norm~$1,$
we can write
$\pi (v) \dt_1 = \sum_{k = 1}^{\infty} \ld_k \dt_k$ with
$\sum_{k = 1}^{\infty} | \ld_k |^2 = 1.$
Computations similar to those in the proof of Theorem~4 of~\cite{ETW}
show that
\[
\sum_{k = 1}^{\infty} \ld_k \dt_k
  = \pi \big( \bt_g (s_1) \big) \pi (v) \dt_1
  = \sum_{k = 1}^{\infty} \zt \ld_k \dt_{2 k - 1}.
\]
Compare coefficients.
For $k = 1,$ we get $\ld_1 = 0$ since $\zt \neq 1.$
For $k > 1,$
we get
\[
\ld_k = \zt^{-1} \ld_{2 k - 1}
      = \zt^{-2} \ld_{2 (2 k - 1) - 1}
      = \cdots.
\]
Since $| \zt | = 1$ and $\sum_{l = 1}^{\infty} | \ld_l |^2 < \infty,$
this implies $\ld_k = 0.$
But then $\pi (v) \dt_1 = 0,$ a contradiction.
\end{proof}

\begin{lem}\label{L-CsqOfOuter}
Let $G$ be a topological group,
let $\rh \colon G \to L (l^2 (\N))$
be an injective filtered unitary representation of~$G,$
and let $\af \colon G \to \Aut ( \OI )$
be the corresponding quasifree action.
Then $(\OI)^G$ is purely infinite and simple,
and $K_1 \big( (\OI)^G \big) = 0.$
\end{lem}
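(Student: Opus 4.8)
The plan is to identify $(\OI)^G$, up to Morita equivalence, with the crossed product $\OI \rtimes_{\af} G$, and to read the three assertions off from that. I would give the argument for $G$ finite, which is the case needed in the applications; for a general topological group one first reduces to the compact case by replacing $G$ with the compact group $\overline{\rh (G)}$ of unitaries of $l^2 (\N)$ respecting the filtration --- this does not change the fixed point algebra --- and the compact case is then handled by the same ideas, with minor extra care since $\OI \rtimes_{\af} G$ need no longer be unital. So assume $G$ is finite. By Proposition~\ref{P:OIIsOuter}, $\af$ is pointwise outer. Let $e = |G|^{-1} \sum_{g \in G} u_g \in \OI \rtimes_{\af} G$; this is a projection, and $a \mapsto e a e$ identifies $(\OI)^G$ with the corner $e ( \OI \rtimes_{\af} G ) e$.

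For simplicity: since $\OI$ is simple and $\af$ is pointwise outer, Kishimoto's theorem on outer actions of discrete groups on simple $C^*$-algebras shows that $\OI \rtimes_{\af} G$ is simple. Hence every nonzero corner of it is full, so $(\OI)^G \cong e ( \OI \rtimes_{\af} G ) e$ is a full corner, and therefore Morita equivalent to $\OI \rtimes_{\af} G$; in particular $(\OI)^G$ is simple.

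For $K_1$: by Morita invariance of $K$-theory, $K_* \big( (\OI)^G \big) \cong K_* \big( \OI \rtimes_{\af} G \big)$, and by Julg's theorem the right hand side is $KK^G_* ( \C, \OI )$. By Theorem~\ref{T-PmKKG}, the unital inclusion $\C \to \OI$, $\ld \mapsto \ld \cdot 1$, is a $KK^G$-equivalence, so $KK^G_* ( \C, \OI ) \cong KK^G_* ( \C, \C )$, which vanishes in odd degree and in even degree is the representation ring $R (G)$, free abelian of rank the number $r$ of irreducible representations of~$G$. Hence $K_1 \big( (\OI)^G \big) = 0$ (and $K_0 \big( (\OI)^G \big) \cong R (G) \cong \Z^r$).

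For pure infiniteness --- the step I expect to be the main obstacle --- it suffices, by the Morita equivalence above, to show that $\OI \rtimes_{\af} G$ is purely infinite. It is simple, separable, and nuclear, and it contains $\OI$ as a unital subalgebra; since $\OI$ has an infinite projection, $\OI \rtimes_{\af} G$ has one too, and so is not stably finite. One then invokes the fact that a crossed product of a Kirchberg algebra by an outer action of a finite group is again a Kirchberg algebra --- in particular purely infinite --- whence $\OI \rtimes_{\af} G$, and therefore $(\OI)^G$, is purely infinite. The part requiring the most care is to quote (or reprove) precisely this pure-infiniteness statement at the right level of generality; alternatively one can argue directly inside $(\OI)^G$, producing a properly infinite projection there from the range projections $\sum_i s_{\xi_i} s_{\xi_i}^*$ of $G$-invariant finite dimensional subspaces of $l^2 (\N)$ together with the fact that some irreducible representation of $G$ occurs in $\rh$ with infinite multiplicity --- but the crossed product route seems cleaner.
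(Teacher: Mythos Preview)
Your approach is essentially the same as the paper's: identify $(\OI)^G$ with a full corner of the crossed product via the averaging projection (the paper cites Rosenberg for this), use Kishimoto for simplicity, Jeong--Osaka (Corollary~4.6 of~\cite{JO}) for pure infiniteness of the crossed product, and Julg together with Pimsner's $KK^G$-equivalence for $K_1 = 0$. The only real differences are cosmetic: you are explicit about restricting to $G$ finite (which is indeed the only case used later, and the paper's own cited results are a mix of discrete-group and compact-group statements, so your caution is well placed), and where you hedge on the pure-infiniteness step the paper simply cites~\cite{JO} directly, which is the clean reference you were looking for.
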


\begin{proof}
Since $\af$ is pointwise outer
(Proposition~\ref{P:OIIsOuter}),
it follows from Theorem~3.1 of~\cite{Ks2-E1}
that $C^* (G, \OI, \af)$ is simple,
and from Corollary~4.6 of~\cite{JO}
that $C^* (G, \OI, \af)$ is purely infinite.
The Proposition in~\cite{Rs2} and its proof
imply that $(\OI)^G$ is isomorphic to
a corner in $C^* (G, \OI, \af),$
necessarily full.
Therefore $(\OI)^G$ is purely infinite and simple.

Theorem~\ref{T-PmKKG} implies that $K_1^G (\OI) = 0.$
{}From~\cite{Jg} or
Theorem 2.8.3(7) of~\cite{PhThs},
we get $K_1 \big( C^* (G, \OI, \af) \big) = 0.$
Since $(\OI)^G$ is a full corner in $C^* (G, \OI, \af),$
it follows from Proposition~1.2 of~\cite{Pk}
that $K_1 \big( (\OI)^G \big) = 0.$
\end{proof}

\begin{lem}\label{L-OrthInvIso}
Let $G$ be a finite group.
Let $\rh \colon G \to U_{\infty}$ be an injective representation with
an almost even filtration,
for which we use the notation of \Def{D-EvenFilt},
and let $\af^{(n)} \colon G \to \Aut (E_{d (n)})$
be as in Remark~\ref{R-FiltLim}.
For $m \in \N,$
set
\[
e_m = \sum_{j = N_0 + (m - 1) N + 1}^{N_0 + m N} s_j s_j^*.
\]
Then there exists $M \in \N$ such that for all $n \geq M,$
there are two isometries in $e_n (E_{d (n)})^G e_n$
with orthogonal ranges.
\end{lem}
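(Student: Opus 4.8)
The plan is to reduce the statement, by means of a $G$-invariant partial isometry identifying the $n$-th $\sm$-block with the first one, to the assertion that $e_1$ is a properly infinite projection in $(E_{d (1)})^G \subset (E_{d (n)})^G$, and then to deduce the latter by transporting proper infiniteness from $(\OI)^G$, which is purely infinite and simple by \Lem{L-CsqOfOuter}. The point of passing to $e_1$ rather than working with $e_n$ directly is that, unlike $(\OI)^G$, the algebra $(E_{d (n)})^G$ is not purely infinite (it contains the $G$-fixed points of the AF ideal of $E_{d (n)}$ generated by the defect projection), so proper infiniteness of a projection there is not automatic; but $e_1$ lives in the small fixed algebra $(E_{d (1)})^G$, and because the filtration is almost even, the $n$-th block of $E_{d (n)}$ is a literal copy of the first one.

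Precisely, adopt the notation of \Def{D-EvenFilt}, so that $\af^{(n)}$ is the quasifree action on $E_{d (n)}$ coming from $\rh_n = \sm_0 \oplus \sm \oplus \dots \oplus \sm$ ($n$ copies of $\sm$). Since \Def{D-EvenFilt} asks for equality of the filtering representations and not merely unitary equivalence, the generators $s_{N_0 + (m - 1) N + 1}, \dots, s_{N_0 + m N}$ transform under $\af^{(n)}$ exactly via $\sm$ for every $m \in \{ 1, 2, \dots, n \}$. Setting
\[
V = \sum_{i = 1}^N s_{N_0 + (n - 1) N + i} \, s_{N_0 + i}^* \in E_{d (n)},
\]
a short computation using $\sm (g) \sm (g)^* = 1$ shows that $V$ is $G$-invariant and is a partial isometry with $V^* V = e_1$ and $V V^* = e_n.$

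Next I would show that there is $M \in \N$ (depending only on the given data) such that $e_1$ is properly infinite in $(E_{d (n)})^G$ for every $n \geq M,$ equivalently that there are two isometries in $e_1 (E_{d (n)})^G e_1$ with orthogonal ranges. By \Lem{L-CsqOfOuter} the algebra $(\OI)^G$ is purely infinite and simple, hence so is its corner $e_1 (\OI)^G e_1$ (note $e_1 \neq 0$), so there are $w_1, w_2 \in e_1 (\OI)^G e_1$ with $w_j^* w_j = e_1$ and $w_1 w_1^* \perp w_2 w_2^*.$ By Remark~\ref{R-FiltLim} and \Lem{L-FixedPtLim}, $(\OI)^G = {\overline{\bigcup_m (E_{d (m)})^G}},$ so for a suitable $M$ there are $G$-invariant $y_1, y_2 \in e_1 (E_{d (M)})^G e_1$ with $\| y_j - w_j \|$ as small as we wish. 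A routine perturbation — take polar parts $z_j = y_j ( y_j^* y_j )^{-1/2}$ so that $z_j^* z_j = e_1,$ then replace $z_2$ by the normalized part of $(e_1 - z_1 z_1^*) z_2$ to make the ranges exactly orthogonal — carried out by functional calculus applied to $G$-invariant elements of the corner $e_1 ( \cdot ) e_1,$ produces $w_1', w_2' \in e_1 (E_{d (M)})^G e_1$ with $(w_j')^* w_j' = e_1$ and orthogonal ranges. For $n \geq M$ the equivariant inclusion $E_{d (M)} \subset E_{d (n)}$ of Remark~\ref{R-FiltLim} places $w_1', w_2'$ in $e_1 (E_{d (n)})^G e_1.$

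Finally, for $n \geq M$ the elements $\widetilde{w}_j = V w_j' V^*$ lie in $(E_{d (n)})^G,$ satisfy $\widetilde{w}_j = e_n \widetilde{w}_j e_n$ and $\widetilde{w}_j^* \widetilde{w}_j = V e_1 V^* = e_n,$ and have $\widetilde{w}_1 \widetilde{w}_1^* \perp \widetilde{w}_2 \widetilde{w}_2^*$ (using $w_1' (w_1')^* \perp w_2' (w_2')^*$ together with $V^* V = e_1$); these are the two required isometries in $e_n (E_{d (n)})^G e_n$ with orthogonal ranges. The only step calling for genuine care is the passage through the direct limit: one must turn approximate isometries with approximately orthogonal ranges into exact ones while keeping everything $G$-invariant and inside the corner cut down by $e_1,$ and one must be sure that a single finite level $M$ serves all larger $n$ — which is exactly what the equalities in an almost even filtration provide, through the partial isometry $V.$
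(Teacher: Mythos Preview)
Your argument is correct and follows essentially the same route as the paper: reduce to proper infiniteness of $e_1$ in some $(E_{d(M)})^G$ via pure infiniteness of $(\OI)^G$ and the direct-limit description of fixed points, then transport to $e_n$ using the almost-even structure. The only cosmetic difference is in the transport step: the paper conjugates by the quasifree \emph{automorphism} of $E_{d(n)}$ induced by the permutation unitary in $U_{d(n)}$ swapping the first and $n$-th copies of~$\sm$, whereas you conjugate by the $G$-invariant partial isometry $V = \sum_{i=1}^N s_{N_0+(n-1)N+i}\, s_{N_0+i}^*$; these accomplish the same thing.
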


\begin{proof}
Let $\af \colon G \to \Aut (\OI)$ be the
corresponding quasifree action
of $G$ on~$\OI.$
Following Remark~\ref{R-FiltLim},
we regard $E_{d (n)}$ as a subalgebra of~$\OI.$
\Lem{L-CsqOfOuter} implies
that $e_2 (\OI)^G e_2$ is purely infinite and simple.
It follows from \Lem{L-FixedPtLim}
that
\[
e_1 (\OI)^G e_1
 = {\overline{\bigcup_{n = 0}^{\infty} e_1 (E_{d (n)})^G e_1}}.
\]
Therefore there is $M \in \N$
such that there are isometries $t_1, t_2 \in e_1 (E_{d (M)})^G e_1$
with orthogonal ranges.

Now let $n \geq M.$
Recall from \Def{D-EvenFilt} that $\rh_n$ is the direct
sum of $\sm_0$ and $n$ copies of~$\sm.$
Let $u \in U_{N_0 + m N}$ be the permutation unitary
which exchanges the first and last copies of~$\sm.$
Then $u$ commutes with $\rh_n (g)$ for all $g \in G.$
Applying \Lem{L:ExtQFree} to the group $\Z \times G,$
we see that $u$ induces a quasifree automorphism $\ps$ of $E_{d (n)}$
which commutes with the action $\af^{(n)}.$
Moreover, $\ps (e_1) = e_n.$
Since $E_{d (M)} \subset E_{d (n)},$
the elements $\ps (t_1)$ and $\ps (t_2)$
are defined and are $G$-invariant isometries in $e_n (E_{d (n)})^G e_n$
with orthogonal ranges.
\end{proof}

The following result is the equivariant analog of
(a special case of) Lemma~3.3 of~\cite{Bl7}.
Our statement is more abstract;
the concrete version, analogous to that given in~\cite{Bl7},
is rather long.

\begin{lem}\label{L:OISjLem}
Let $G$ be a finite group,
let $\rh \colon G \to U_{\infty}$ be an injective representation with
an almost even filtration,
and let $\af \colon G \to \Aut (\OI)$ be the
corresponding quasifree action
of $G$ on~$\OI.$
Let the notation be as in
\Def{D-EvenFilt} and Remark~\ref{R-FiltLim}.
In particular, $\OI = \dirlim_n E_{d (n)}$;
call the maps of the system
\[
\io_{n, m} \colon E_{d (m)} \to E_{d (n)}
\andeqn
\io_{\infty, m} \colon E_{d (m)} \to \OI.
\]
Let $\GAa$ be a unital \ga,
and let $\pi \colon A \to \OI$ be a surjective \ehm.
Then there exists $M \in \N$ such that for all $n \geq M,$
the following holds.
Let $\ph \colon E_{d (n)} \to A$ be a unital \ehm\  such that
$\pi \circ \ph = \io_{\infty, n}.$
Then there exists a unital \ehm\  $\ps \colon E_{d (n + 1)} \to A$
such that
\[
\pi \circ \ps = \io_{\infty, \, n + 1}
\andeqn
\ps \circ \io_{n + 1, \, n - 1} = \ph \circ \io_{n, \, n - 1}.
\]
\end{lem}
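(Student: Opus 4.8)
The statement is the equivariant analog of the key inductive step in Blackadar's proof that $\OI$ is semiprojective, so I would follow the structure of Lemma~3.3 of~\cite{Bl7} while keeping everything $G$-equivariant. The essential point is this: $E_{d(n+1)}$ is obtained from $E_{d(n)}$ by adjoining $N$ more isometries $r_{d(n)+1}, \ldots, r_{d(n)+N}$ whose range projections are orthogonal to each other and to $\sum_{j=1}^{d(n)} r_j r_j^*$, and (because the filtration is almost even) the new block of isometries carries a copy of the representation $\sm$ that is \emph{equal}, not merely unitarily equivalent, to the block carried by $r_{d(n-1)+1}, \ldots, r_{d(n-1)+N}$ — i.e.\ to the range of $e_1$ in the notation of \Lem{L-OrthInvIso}. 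So to extend $\ph \colon E_{d(n)} \to A$ to $E_{d(n+1)}$, it suffices to produce, inside $A$, $N$ new isometries with mutually orthogonal ranges, orthogonal to $\ph\big(\sum_{j=1}^{d(n)} r_j r_j^*\big)$, transforming under $\af$ exactly like $\sm$, and lifting $\io_{\infty, n+1}$ on those generators. Equivalently — packaging the $N$ new isometries into a single isometry $v$ from $e_n$-type space — it suffices to produce one $G$-invariant isometry $v \in (1 - \ph(p))A(1-\ph(p))$ (where $p = \sum_{j=1}^{d(n)} r_j r_j^*$) with the correct equivariance and with $\pi(v)$ equal to the image of the corresponding element of $\OI$.

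**Key steps, in order.**
First I would fix $M$: apply \Lem{L-OrthInvIso} to get $M_0$ such that for $n \ge M_0$ there exist two $G$-invariant isometries with orthogonal ranges inside $e_n(E_{d(n)})^G e_n \subset (\OI)^G$; more precisely I want that the full corner of $(\OI)^G$ cut down by the relevant invariant projection is purely infinite and simple with vanishing $K_1$, which is \Lem{L-CsqOfOuter}. Second, working in $A$: the element $q = 1 - \ph\big(\et^{-1}(\text{defect projection})\big)$ — more carefully, the complement in $A$ of $\ph$ applied to the range projection of the first $d(n)$ generators together with the defect projection of $E_{d(n)}$ — is a $G$-invariant projection, and $\pi$ maps $qAq$ onto the corresponding corner of $\OI$. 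Third, I lift: the target element is a $G$-invariant isometry in the corner of $\OI$ whose range is a specified $G$-invariant subprojection; using that $\pi^G \colon (qAq)^G \to (q_\infty \OI q_\infty)^G$ is surjective (by \Lem{L:FixedPtQuot}-type reasoning applied to $\pi$, or by the averaging argument of \Lem{L-FixedPtLim}), and that $K_1$ of the relevant corner vanishes so any partial isometry can be adjusted to a prescribed lift, I can produce a $G$-invariant element $a \in qAq$ with $\pi(a)$ the desired isometry and with $\|a^*a - (\text{target range proj lifted})\|$ small; then a functional calculus correction inside the corner $qAq$ — staying $G$-equivariant because the group fixes the relevant projections and commutes with functional calculus — yields the genuine $G$-invariant isometry $v$. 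Fourth, I define $\ps$ on $E_{d(n+1)}$ by $\ps|_{E_{d(n)}} = \ph$ on the old generators and $\ps$ on the $N$ new generators given by decomposing $v$ according to the matrix units that identify the new $\sm$-block with the old one; equivariance of $\ps$ on the new generators is exactly the statement that $v$ transforms like $\sm$. Fifth, I check $\pi \circ \ps = \io_{\infty, n+1}$ (on old generators this is the hypothesis; on new ones it is the lifting property of $v$) and $\ps \circ \io_{n+1, n-1} = \ph \circ \io_{n, n-1}$ (both sides agree on $E_{d(n-1)} \subset E_{d(n)}$ since $\ps$ extends $\ph$ there).

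**The main obstacle.**
The hard part is manufacturing the lift $v$ with \emph{exact} $G$-invariance and with \emph{exactly} the prescribed range projection and \emph{exactly} the prescribed image under $\pi$, all at once. Ordinary Blackadar-style semiprojectivity arguments get approximate versions of each of these and then clean up with functional calculus; here each cleanup must respect the $G$-action. The averaging trick (integrate over the finite group, i.e.\ $\frac{1}{|G|}\sum_g \af_g$) handles invariance but degrades the isometry/range relations, and re-correcting must be done inside a $G$-invariant corner where the $K_1 = 0$ and pure infiniteness from \Lem{L-CsqOfOuter} guarantee the needed stable-rank-one-type flexibility for partial isometries. Threading invariance through all three requirements simultaneously — and verifying that the permutation automorphism $\ps$ of \Lem{L-OrthInvIso}, which makes the new $\sm$-block genuinely interchangeable with the old one, interacts correctly with $\ph$ — is where essentially all the work lies; the rest is bookkeeping with matrix units and the direct-limit maps $\io_{n,m}$.
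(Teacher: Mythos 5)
Your high-level frame is right (this is the equivariant version of Lemma~3.3 of Blackadar, with \Lem{L-OrthInvIso} and \Lem{L-CsqOfOuter} as the key inputs), but the construction you propose has a genuine gap at its center. You define $\ps|_{E_{d(n)}} = \ph$ and then try to adjoin $N$ new isometries whose ranges sit under $1 - \ph\big(\sum_{j=1}^{d(n)} r_j r_j^*\big)$ in~$A$. There is no reason such isometries exist: $A$ is an arbitrary unital \ga\ with an \eqv\ surjection onto $\OI$, and the corner of $A$ cut by that complementary projection need not contain any isometry of the required "size," let alone a $G$-invariant one with prescribed image under $\pi$. This is exactly why the lemma only asserts $\ps \circ \io_{n+1,\,n-1} = \ph \circ \io_{n,\,n-1}$ (agreement two steps back) rather than agreement on all of $E_{d(n)}$: the correct construction \emph{redefines} $\ps$ on the $n$-th $\sm$-block, sending it into the range of $t_1$ inside $\ph(e_n)$, and sends the new $(n+1)$-st block into the range of $t_2$, also inside $\ph(e_n)$. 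That is the whole reason \Lem{L-OrthInvIso} supplies \emph{two} orthogonal invariant isometries $t_1, t_2 \in e_n (E_{d(n)})^G e_n$ --- you need to fit two copies of the block where one used to be, using elements that are already in the image of $\ph$.

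The second problem is your lifting step. A proper isometry (or a partial isometry with prescribed source and range) does not lift through a general surjection, even approximately-then-corrected; surjectivity of $\pi^G$ only gives you \emph{some} preimage $a$ with no control on $a^*a$, and there is no smallness to feed into functional calculus. The paper's proof never lifts an isometry. It assembles the partial isometries $v_1 = \io_{\infty,n}(t_1)^*$ and $v_2 = c_{n+1,n}\,\io_{\infty,n}(t_2)^*$ downstairs in $(\OI)^G$, completes $v_1 + v_2$ to a unitary $w$ of the corner $(1-q_{n-1})(\OI)^G(1-q_{n-1})$ using a $K_0$-class match plus pure infiniteness of $(\OI)^G$ to find the missing piece $v_3$, and then lifts only that \emph{unitary}: since $K_1$ of the corner vanishes and the corner is purely infinite simple, Cuntz's theorem makes its unitary group connected, so $w$ is a product of exponentials and lifts through the surjection $A^G \to (\OI)^G$ to a unitary $y$. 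The new generators are then defined as $u\,\ph(t_i)\,\ph\big(r_j^{(d(n))}\big)$ with $u = p + y$ --- every ingredient except the single lifted unitary already lives in the image of $\ph$. Your sketch gestures at "$K_1 = 0$ so partial isometries can be adjusted," which is the right mechanism but applies only after the reduction to a unitary in an invariant corner; without the completion-to-a-unitary step and without relocating the $n$-th block via $t_1$, the argument does not close.
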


Here is the diagram:
\[
\xymatrix{
& E_{d (n + 1)} \ar@{-->}[rd]_{\ps} \ar[rrrd]^{\io_{\infty, \, n + 1}}
                                    & & & \\
E_{d (n - 1)} \ar[ru]^{\io_{n + 1, \, n - 1}}
     \ar[rd]_{\io_{n, \, n - 1}} & & A \ar[rr]^(0.3){\pi} & & {\OI}. \\
& E_{d (n)} \ar[ru]^{\ph} \ar[rrru]_{\io_{\infty, \, n }} & & &
}
\]
The solid arrows are given,
and $\ps$ is supposed to exist which makes the
diagram commute.

\begin{proof}[Proof of \Lem{L:OISjLem}]
We use the names $r^{(d)}_j$
in Notation~\ref{N:ExtCuntz} for the generators
of~$E_d,$
and we denote the standard generators of $\OI$ by $s_1, s_2, \ldots.$
Also recall that $d (m) = N_0 + m N$ for $m \in \N.$

Choose $M$ as in \Lem{L-OrthInvIso}.

Define $e_0 = \sum_{j = 1}^{N_0} s_j s_j^*,$
which is the \pj\  in $\OI$ associated with the representation
$\sm_0 \colon G \to U_{N_0}.$
For $m \in \N,$
set
\[
e_m = \sum_{j = N_0 + (m - 1) N + 1}^{N_0 + m N} s_j s_j^*
\andeqn
q_m = \sum_{j = 1}^{N_0 + m N} s_j s_j^*.
\]
Thus, $e_m$ is the \pj\  in $\OI$ associated with the
$m$th copy of $\sm$ in the direct sum decomposition
\[
\rh = \sm_0 \oplus \sm \oplus \sm \oplus \cdots,
\]
and $q_m = \sum_{k = 0}^m e_k$ is similarly associated with~$\rh_m.$

For $k, l \in \N,$
define
\[
c_{k, l} = \sum_{j = 1}^N s_{d (k - 1) + j} (s_{d (l - 1) + j})^*.
\]
One easily checks that
\[
c_{k, l} c_{l, k} = e_k
\andeqn
c_{k, l} = c_{l, k}^*
\]
for $k, l \in \N.$
We claim that $c_{k, l}$ is $G$-invariant.
To prove the claim,
for $m \in \N$
let $\big( e_{j, k}^{(m)} \big)_{j, k = 1}^{d (m)}$
be the standard system of matrix units in $M_{d (m)},$
and let $\mu_m \colon M_{d (m)} \oplus \C \to E_{d (m)}$
be the \hm\  called $\mu_0$ in Notation~\ref{N-MdToOd}.
Recall that $E_{d (m)}$ has the action $\af^{(m)} = \af^{\rh_m},$
and equip $M_{d (m)} \oplus \C$ with the action
$\Ad ( \rh_m \oplus 1)$ (Notation~\ref{N-AdRep}).
Then $\mu_m$ is \eqv\  by \Lem{L:ExtQFree}(\ref{L:ExtQFree-2}).
Now let $g \in G.$
Set
\[
m = \max (k, l)
\andeqn
w = \sum_{j = 1}^N e_{d (k - 1) + j, \, d (l - 1) + j}
     \in M_{d (m)} \oplus \C.
\]
Then
$w$ is $G$-invariant since it is a partial isometry
which intertwines the $k$th and $l$th copies
of~$\sm$ in the direct sum decomposition of~$\rh_m.$
Therefore $c_{k, l} = (\io_{\infty, m} \circ \mu_m) (w)$
is also $G$-invariant.
The claim is proved.

Let $n \in \N$ satisfy $n \geq M.$
By the choice of~$M$ using \Lem{L-OrthInvIso},
there exist isometries
$t_1, t_2 \in e_n (E_{d (n)})^G e_n$ with orthogonal ranges.
Define partial isometries in $(\OI)^G$ by
\[
v_1 = \io_{\infty, n} (t_1)^*
\andeqn
v_2 = c_{n + 1, \, n} \io_{\infty, n} (t_2)^*.
\]
(For $G$-invariance of~$v_2,$
use the claim above.)
We now follow the proof of Lemma~3.3 of~\cite{Bl7}.
One checks that
\[
v_1 v_1^* = e_n,
\,\,\,\,\,\,
v_1^* v_1 = t_1 t_1^*,
\,\,\,\,\,\,
v_2 v_2^* = e_{n + 1},
\andeqn
v_2^* v_2 = t_2 t_2^*.
\]
Thus,
\[
q_{n - 1}, \,\, v_1^* v_1, \,\, v_2^* v_2
\andeqn
q_{n - 1}, \,\, v_1 v_1^*, \,\, v_2 v_2^*
\]
are two sets of \mops\  in~$(\OI)^G,$
and the \pj s
\[
1 - q_{n - 1} - v_1^* v_1 - v_2^* v_2
\andeqn
1 - q_{n - 1} - v_1 v_1^* - v_2 v_2^*
\]
are both nonzero and
have the same class in $K_0 \big( (\OI)^G \big).$
Therefore, by \Lem{L-CsqOfOuter},
we can find $v_3 \in (\OI)^G$
such that
\[
v_3^* v_3 = 1 - q_{n - 1} - v_1^* v_1 - v_2^* v_2
\andeqn
v_3 v_3^* = 1 - q_{n - 1} - v_1 v_1^* - v_2 v_2^*.
\]

Set
\[
w = v_1 + v_2 + v_3
\andeqn
v = q + v_1 + v_2 + v_3.
\]
Then $w$ is a unitary in $(1 - q_{n - 1}) (\OI)^G (1 - q_{n - 1}).$
Define
\[
p = \sum_{j = 1}^{d (n - 1)}
    \ph \big( r_{j}^{(d (n))} \big) \ph \big( r_{j}^{(d (n))} \big)^*,
\]
which is a $G$-invariant \pj\  in $A$
such that $\pi (p) = q_{n - 1}.$
Proposition~1.2 of~\cite{Pk} and \Lem{L-CsqOfOuter} imply that
\[
K_1 \big( (1 - q_{n - 1}) (\OI)^G (1 - q_{n - 1}) \big) = 0.
\]
Theorem~1.9 of~\cite{Cu2}
now implies that
$U \big( (1 - q_{n - 1}) (\OI)^G (1 - q_{n - 1}) \big)$ is connected.
The map $A^G \to (\OI)^G$ is surjective by \Lem{L:FixedPtQuot}.
So there exists a unitary $y \in (1 - p) A^G (1 - p)$
such that $\pi (y) = w.$
Set $u = p + y,$
which is a unitary in $A^G$ such that $\pi (u) = v.$

We have
$u \ph \big( r_{j}^{(d (n))} \big) = \ph \big( r_{j}^{(d (n))} \big)$
for $j = 1, 2, \ldots, d (n - 1).$
It is then easy to check that there is a unital \hm\  %
$\ps \colon E_{d (n + 1)} \to A$ satisfying
\[
\ps \big( r_{j}^{(d (n + 1))} \big) = \begin{cases}
   \ph \big( r_{j}^{(d (n))} \big)   &  j \leq d (n - 1) \\
   u \ph (t_1) \ph \big( r_{j}^{(d (n))} \big)
           &  d (n - 1) + 1 \leq j \leq d (n)
                                             \rule{0em}{3ex} \\
   u \ph (t_2) \ph \big( r_{j - N}^{(d (n))} \big)
           &  d (n) + 1 \leq j \leq d (n + 1).
                                             \rule{0em}{3ex}
\end{cases}
\]

Clearly
$\ps \circ \io_{n + 1, \, n - 1} = \ph \circ \io_{n + 1, \, n - 1}.$

For $j = 1, 2, \ldots, d (n),$
it is easily checked that
\[
(\pi \circ \ps) \big( r_{j}^{(d (n + 1))} \big)
  = \io_{\infty, \, n + 1} \big( r_{j}^{(d (n + 1))} \big).
\]
For $j = d (n) + 1, \, d (n) + 2, \, \ldots, \, d (n + 1),$
we have
\[
\pi \big( u \ph (t_2) \ph \big( r_{j - N}^{(d (n))} \big) \big)
  = v \io_{\infty, n} (t_2) s_{j - N}
  = c_{n + 1, \, n} \io_{\infty, n} (t_2^* t_2) s_{j - N}
  = c_{n + 1, \, n} s_{j - N}
  = s_j.
\]
It follows that $\pi \circ \ps = \io_{\infty, \, n + 1}.$

To finish the proof,
we must check that $\ps$ is equivariant.
It is enough to check equivariance on the generators.
Since $u$ is $G$-invariant and $\ph$ is \eqv,
it is enough to check equivariance of
the \hm\  $\ps_0 \colon E_{d (n + 1)} \to E_{d (n)}$
determined by
\[
\ps_0 \big( r_{j}^{(d (n + 1))} \big) = \begin{cases}
   r_{j}^{(d (n))}   &  j \leq d (n - 1) \\
   t_1 r_{j}^{(d (n))}
           &  d (n - 1) + 1 \leq j \leq d (n)
                                             \rule{0em}{3ex} \\
   t_2 r_{j - N}^{(d (n))}
           &  d (n) + 1 \leq j \leq d (n + 1).
                                             \rule{0em}{3ex}
\end{cases}
\]
Define $b, f \in E_{(d (n + 1))}$ by
\[
b = \sum_{j = 1}^N r^{(d (n + 1))}_{d (n) + j}
        \big( r^{(d (n + 1))}_{d (n - 1) + j} \big)^*
\andeqn
f = \sum_{j = 1}^{d (n - 1)} r^{(d (n + 1))}_{j}
        \big( r^{(d (n + 1))}_{j} \big)^*.
\]
Then $\io_{\infty, \, n + 1} (b) = c_{n + 1, \, n}$
and $\io_{\infty, \, n + 1}$ is injective and equivariant,
so $b$ is $G$-invariant.
Similarly, $\io_{\infty, \, n + 1} (f) = q_{n - 1},$
so $f$ is $G$-invariant.
Also, $b r^{(d (n + 1))}_{j - N} = r^{(d (n + 1))}_{j}$
for $j = d (n) + 1, \, d (n) + 2, \, \ldots, \, d (n + 1).$
Thus
\[
(\io_{n + 1, \, n} \circ \ps_0) \big( r_{j}^{(d (n + 1))} \big)
  = (f + t_1 + t_2 b) r_{j}^{(d (n + 1))}
\]
for $j = 1, \, 2, \, \ldots, \, d (n + 1).$
Since $f + t_1 + t_2 b$ is $G$-invariant,
and since $\io_{n + 1, \, n}$ is injective and \eqv,
it follows that $\ps_0$ is \eqv,
as desired.
This completes the proof.
\end{proof}

\begin{thm}\label{T:OIEsj}
Let $G$ be a finite group.
Let $\af \colon G \to \Aut (\OI)$ be a quasifree action.
Then $\af$ is \eqsj.
\end{thm}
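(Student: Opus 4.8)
The plan is to reduce to a faithful representation with an almost even filtration and then to iterate \Lem{L:OISjLem}, the one new ingredient being that the tower of ideals in \Def{D:EqSj} is repackaged as a single $G$-equivariant surjection onto $\OI$ by means of a pullback.

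First I would reduce to the faithful case using \Lem{L-TrOnSbgp}. Writing $\af$ as the quasifree action coming from a representation $\rh \colon G \to U_{\infty}$ and putting $N = \Ker (\rh),$ the representation factors as $\rh = \overline{\rh} \circ q$ through the quotient $q \colon G \to G / N$ with $\overline{\rh}$ faithful, and $\af = \overline{\af} \circ q$ where $\overline{\af}$ is the quasifree action of $G / N$ attached to $\overline{\rh};$ by \Lem{L-TrOnSbgp} it suffices to prove that $\overline{\af}$ is \eqsj. Replacing $(G, \rh)$ by $(G / N, \overline{\rh}),$ we may assume $\rh$ is faithful. Since equivariant semiprojectivity is invariant under conjugacy of actions, \Lem{L-ConjToFilt}(\ref{L-ConjToFilt-Finite}) lets us assume in addition that $\rh$ has an almost even filtration; the construction in the proof of that lemma preserves faithfulness, since a faithful representation of a finite group contains every irreducible representation. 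With $\rh$ now fixed, I adopt the notation of \Def{D-EvenFilt} and Remark~\ref{R-FiltLim}, so that $\OI = \dirlim_n E_{d (n)}$ equivariantly with connecting maps $\io_{n, m} \colon E_{d (m)} \to E_{d (n)}$ and $\io_{\infty, m} \colon E_{d (m)} \to \OI,$ and I fix $M \in \N$ as in \Lem{L:OISjLem}; its proof (through \Lem{L-OrthInvIso}) shows that $M$ may be chosen depending only on $\rh.$

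Now let the notation be as in \Def{D:EqSj} and Remark~\ref{R:EqSj}(\ref{R:EqSj:Nt}), and let $\ph \colon \OI \to C / J$ be a unital \ehm. The quasifree action on the finite-dimensional block $E_{d (M)}$ is \eqsj\  by Theorem~\ref{T-QFEqSj}, so applying this to $\ph \circ \io_{\infty, M} \colon E_{d (M)} \to C / J$ gives $m \in \Nz$ and a unital \ehm\  $\ld_M \colon E_{d (M)} \to C / J_m$ with $\pi_m \circ \ld_M = \ph \circ \io_{\infty, M}.$ Form the pullback
\[
P = \big\{ (c, x) \in (C / J_m) \oplus \OI \colon \pi_m (c) = \ph (x) \big\},
\]
a unital $G$-algebra for the diagonal action ($G$-invariance of $P$ uses equivariance of $\pi_m$ and $\ph$), with equivariant unital coordinate \hm s $\te_1 \colon P \to C / J_m$ and $\te_2 \colon P \to \OI$; surjectivity of $\pi_m$ makes $\te_2$ surjective, and $y \mapsto \big( \ld_M (y), \io_{\infty, M} (y) \big)$ defines a unital \ehm\  $\ph_M \colon E_{d (M)} \to P$ with $\te_2 \circ \ph_M = \io_{\infty, M}.$ Iterating \Lem{L:OISjLem} with $A = P$ and $\pi = \te_2,$ starting from $\ph_M,$ produces unital \ehm s $\ph_n \colon E_{d (n)} \to P$ for $n \geq M$ with $\te_2 \circ \ph_n = \io_{\infty, n}$ and $\ph_{n + 1} \circ \io_{n + 1, n - 1} = \ph_n \circ \io_{n, n - 1}.$ The last identity forces $\ph_{n + 1} ( \io_{n + 1, j} (x)) = \ph_n ( \io_{n, j} (x))$ for all large $n$ (each fixed $j$ and $x \in E_{d (j)}$), so the $\ph_n$ patch together to a well defined unital \ehm\  on the dense subalgebra $\bigcup_j \io_{\infty, j} (E_{d (j)})$ of $\OI,$ carrying $\io_{\infty, j} (x)$ to the stable value of $\ph_n ( \io_{n, j} (x)),$ and it extends to a unital \ehm\  $\Phi \colon \OI \to P$ with $\te_2 \circ \Phi = \id_{\OI}.$ Setting $\ps = \te_1 \circ \Phi$ and using $\pi_m \circ \te_1 = \ph \circ \te_2$ on $P,$ one gets $\pi_m \circ \ps = \ph,$ so $\ps \colon \OI \to C / J_m$ is the required equivariant unital lift of $\ph.$

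The substantive work is entirely in the cited lemmas; the only subtlety in the assembly is the apparent circular dependence between the integer $M$ from \Lem{L:OISjLem} and the level $m$ at which $E_{d (M)}$ is lifted, which is broken because $M$ depends only on $\rh.$ I expect the conceptual crux to be the realization that \Lem{L:OISjLem}, whose two-steps-back compatibility clause makes the maps $\ph_n$ cohere into an honest homomorphism on $\OI,$ together with equivariant semiprojectivity of the finite-dimensional blocks $E_{d (M)},$ is all that is needed, the pullback $P$ being merely the mechanism that feeds the ideal tower of \Def{D:EqSj} into that lemma.
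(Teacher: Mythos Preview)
Your proof is correct and follows essentially the same strategy as the paper: reduce to a faithful representation with an almost even filtration, lift $E_{d(M)}$ using Theorem~\ref{T-QFEqSj}, and iterate \Lem{L:OISjLem} to build the map on all of~$\OI$. The one place you diverge is in how you produce the surjective \ehm\ onto $\OI$ that \Lem{L:OISjLem} requires. The paper first treats the case where $\ph$ is an isomorphism (so that $\ph^{-1} \circ \pi_n \colon C / J_n \to \OI$ is available) and then reduces the general case to this by using simplicity of $\OI$ to pass to the image $\ph(\OI)$ and its preimage in~$C$. You instead form the pullback $P$ of $\pi_m$ and $\ph$, which gives the surjection $\te_2 \colon P \to \OI$ directly without any case split or appeal to simplicity. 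Your route is arguably cleaner, and you are more explicit than the paper about the point that $M$ in \Lem{L:OISjLem} can be chosen depending only on~$\rh$ (via \Lem{L-OrthInvIso}); the paper's proof also needs this fact but leaves it implicit when it fixes $M$ before $n$ is known.
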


\begin{proof}
We follow the proof of Theorem~3.2 of~\cite{Bl7}.
Let $\rh \colon G \to U_{\infty}$ be the representation which
gives rise to~$\af.$
Using \Lem{L-TrOnSbgp},
we may reduce to the case in which $\rh$ is injective.
By \Lem{L-ConjToFilt}(\ref{L-ConjToFilt-Finite}),
we may assume that $\rh$ has an almost even filtration
as in \Def{D-EvenFilt}.
Let the notation be as in \Lem{L:OISjLem},
and choose $M$ as there.

We follow the notation in Remark~\ref{R:EqSj}(\ref{R:EqSj:Nt}):
$C$ is a unital \ga\  with
an increasing sequence of invariant ideals $J_n,$
and $J = {\overline{\bigcup_{n = 1}^{\infty} J_n}}.$
The map $\pi_n \colon C / J_n \to C / J$ is the quotient map.

Let $\ph \colon \OI \to C / J$ be a unital \ehm.
First suppose that $\ph$ is an isomorphism.
{}From Theorem~\ref{T-QFEqSj} we get $n \in \N$
and a unital \ehm\  $\ps_M \colon E_{d (M)} \to C / J_n$
such that $\pi_n \circ \ps_M = \ph \circ \io_{\infty, M}.$
Applying \Lem{L:OISjLem} to
$\pi = \ph^{-1} \circ \pi_n \colon C / J_n \to \OI,$
for $m \geq M$
we inductively construct unital \ehm s
$\ps_m \colon E_{d (m)} \to C / J_n$
such that
\[
\pi \circ \ps_{m + 1} = \ph \circ \io_{\infty, m + 1}
\andeqn
\ps_{m + 1} \circ \io_{m + 1, \, m - 1} = \ps_m \circ \io_{m, m - 1}.
\]
Then $r_{j} = \limi{m} \ps_m \big( r^{d (m)}_j \big)$
exists for all $g \in G$ and $j \in \N,$
because when $d (m) \geq j$
it is equal to $\ps_{m + 2} \big( r^{d (m + 2)}_j \big).$
So there is a unital \ehm\  $\ps \colon \OI \to C / J_n$
such that $\ps (s_j) = r_j$ for all~$j \in \N.$
Clearly $\pi \circ \ps = \ph.$

For the general case,
set $Q = \ph (\OI) \subset C / J,$
let $D \subset C$ be the inverse image of~$Q,$
set $I_n = D \cap J_n$ for $n \in \N,$
and set $I = D \cap J.$
Then $I = {\overline{\bigcup_{n = 1}^{\infty} I_n}}.$
(In~\cite{Lr}, see Proposition 13.1.4, Lemma 13.1.5,
and the discussion afterwards.)
So $Q = D / I.$
Since $\OI$ is simple,
the corestriction $\ph_0 \colon \OI \to D / I$ of $\ph$
is an isomorphism.
The result follows by applying the special case above
with $D$ in place of~$C,$
with $I_n$ in place of~$J_n,$
and with $\ph_0$ in place of~$\ph.$
\end{proof}

\begin{pbm}\label{P-CptGpOI}
Let $G$ be an infinite compact group.
Is a quasifree action of $G$ on~${\mathcal{O}}_{\infty}$
necessarily \eqsj?
\end{pbm}

As a test case,
consider the quasifree action coming from the left regular
representation of~$S^1.$

\section{Equivariantly stable relations}\label{Sec:QqSR}

\indent
We relate equivariant semiprojectivity
to equivariant stability of relations
because, in the applications we have in mind~\cite{PhX},
equivariant stability of relations is what we actually use.

Weak stability of relations (Definition 4.1.1 of~\cite{Lr})
also has an equivariant version.
Since equivariant stability holds for the examples we care about,
we only consider equivariant stability.

We follow Section 13.2 of~\cite{Lr}
for our definition of generators and relations.

For reference,
we give the version of the definition without the group action,
except that we give a version for unital \ca s.
This is a variant of Definition 13.2.1 of~\cite{Lr}.

\begin{dfn}\label{D-NoGpRel}
Let $S$ be a set.
We denote by $F_S$
the universal \uca\  generated by the elements of $S$
subject to the relations $\| s \| \leq 2$ for all $s \in S.$
A {\emph{set of relations}} on $S$ is a subset $R \subset F_S.$
We refer to $(S, R)$ as a
{\emph{set of generators and relations}}.
We say that $(S, R)$ is {\emph{finite}} if $S$ and $R$ are finite.
We define $I_R \subset F_S$ to be the ideal
in $F_S$ generated by~$R.$
\end{dfn}

Since we are asking for unital algebras and \hm s,
we make the following definition.

\begin{dfn}\label{D-NoGpAdmi}
A set $(S, R)$ of generators and relations as in
\Def{D-NoGpRel} is {\emph{admissible}} if $I_R \neq F_S.$
When $(S, R)$ is admissible,
we let $\ta_R \colon F_S \to F_S / I_R$
be the quotient map.
The {\emph{C*-algebra on the generators and relations $(S, R)$}},
which we write $C^* (S, R),$
is by definition $F_S / I_R.$
We say that $(S, R)$ is {\emph{bounded}}
if for every $s \in S,$ we have $\| \ta_R (s) \| \leq 1.$
\end{dfn}

The choices $\| s \| \leq 2$ and $\| \ta_R (s) \| \leq 1$
are convenient normalizations.
By scaling, every set of generators and relations can
be fit in this framework.

The following is essentially Definition 13.2.2 of~\cite{Lr},
but for the unital situation.
By convention, we declare
(except in a few places where we explicitly allow it)
that the zero \ca\  is not unital.

\begin{dfn}\label{D-NoGpGen}
Let the notation be as in \Def{D-NoGpRel},
let $A$ be a unital \ca,
and let $\rh \colon S \to A$
be a function such that $\| \rh (s) \| \leq 2$
for all $s \in S.$
In this situation,
we write $\ph^{\rh} \colon F_S \to A$
for the corresponding \hm.
We say that $\rh$ is a {\emph{representation of $(S, R)$ in~$A$}}
if $\ph^{\rh} (x) = 0$ for all $x \in R.$
For $\dt \in [0, 1),$
we say that $\rh$ is a {\emph{$\dt$-representation}} of $(S, R)$
in~$A$
if $\| \ph^{\rh} (x) \| \leq \dt$
for all $x \in R.$
(Sometimes,
we will also allow the map to the zero \ca\  as a representation.)
If $(S, R)$ is admissible,
then the {\emph{universal representation}} $\rh_R$ is obtained by
taking $A = C^* (S, R)$ and $\rh_R = \ta_R |_S.$
\end{dfn}

\begin{rmk}\label{R-NoGpU}
It is clear that the universal representation,
as defined above,
really has the appropriate universal property.
\end{rmk}

\begin{lem}\label{L-D-NoGpAdmi}
Let $(S, R)$ be a set of generators and relations as in
\Def{D-NoGpRel}.
Then $(S, R)$ is admissible \ifo\  there exists
a representation in a (nonzero) unital \ca.
\end{lem}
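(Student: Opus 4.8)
The plan is to prove both implications directly from the definitions; essentially no computation is involved.

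For the ``only if'' direction, I would argue as follows. Suppose $(S, R)$ is admissible, that is, $I_R \neq F_S.$ Then $C^* (S, R) = F_S / I_R$ is a nonzero \ca, and it is unital because $F_S$ is: the image of $1 \in F_S$ under $\ta_R$ is a unit for the quotient, and this image is nonzero precisely because $1 \notin I_R.$ The universal representation $\rh_R = \ta_R |_S$ then has $\ph^{\rh_R} = \ta_R,$ which annihilates $I_R$ and in particular every $x \in R$; hence $\rh_R$ is a representation of $(S, R)$ in the nonzero \uca\  $C^* (S, R).$

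For the ``if'' direction, suppose $\rh \colon S \to A$ is a representation of $(S, R)$ in some nonzero \uca~$A.$ By \Def{D-NoGpGen} the induced unital \hm\  $\ph^{\rh} \colon F_S \to A$ satisfies $\ph^{\rh} (x) = 0$ for all $x \in R,$ so $R \subset \Ker (\ph^{\rh}).$ Since $\Ker (\ph^{\rh})$ is a (closed, two-sided) ideal containing $R,$ it contains $I_R.$ Because $A \neq 0$ and $\ph^{\rh}$ is unital, $\ph^{\rh} (1) = 1_A \neq 0,$ so $1 \notin \Ker (\ph^{\rh})$; therefore $1 \notin I_R,$ and hence $I_R \neq F_S.$ Thus $(S, R)$ is admissible.

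There is no real obstacle here. The single point used in both directions is that, for an ideal $I \subset F_S,$ the condition $I \neq F_S$ is equivalent to $1 \notin I$ — immediate since $1$ is invertible and no proper ideal contains an invertible element — together with the bookkeeping that $F_S / I_R$ is a nonzero unital algebra exactly when $(S, R)$ is admissible.
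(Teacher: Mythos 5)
Your proof is correct and is exactly the argument the paper has in mind; the paper simply declares the lemma ``immediate,'' and your write-up supplies the routine details (the universal representation in $C^*(S,R)$ for one direction, and $I_R \subset \Ker(\ph^{\rh})$ together with $1 \notin \Ker(\ph^{\rh})$ for the other).
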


\begin{proof}
This is immediate.
\end{proof}

\begin{rmk}\label{R-GeneralRel}
We make some general remarks.
\begin{enumerate}
\item\label{R-GeneralRel-1}
The relation corresponding to an element $x \in F_S$
is really just the statement $x = 0.$
Here $x$ could be any *-polynomial in the noncommuting
variables~$S,$
but in fact we are allowing arbitrary elements of the \ca~$F_S.$
The framework we describe in fact allows much more general
relations.
For example,
suppose $R_0 \subset F_S,$ $M \colon R_0 \to [0, \infty)$ is
a function, and we want the relations to
say $\| x \| \leq M (x)$
for all $x \in R_0.$
We simply take the intersection $I \subset F_S$ of the kernels of
all unital \hm s $\ph \colon F_S \to A,$
for arbitrary unital \ca s~$A,$
such that $\| \ph (x) \| \leq M (x)$ for all $x \in R_0.$
Then we take as relations all elements of~$I,$
that is, we take $R = I.$

Positivity conditions on elements of $F_S$ can be handled the same way.
\item\label{R-GeneralRel-2}
If $S$ is countable, we may always take $R$ to be finite.
Choose a countable subset $\{ x_1, x_2, \ldots \}$
of the unit ball of~$I_R$ whose span is dense in~$I_R.$
Then we can take the relations to consist of the single element
\[
a = \sum_{n = 1}^{\infty} 2^{- n} x_n^* x_n.
\]
(This change does, however, change the meaning
of a $\dt$-representation.)
\item\label{R-GeneralRel-3}
It follows from (\ref{R-GeneralRel-1})
and~(\ref{R-GeneralRel-2})
that if $(S, R)$ is finite and bounded,
and $\dt \in [0, 1),$
then the universal \ca\  generated by a $\dt$-representation
of $(R, S)$ is again the universal \ca\  on a finite and bounded
set of generators and relations.
\item\label{R-GeneralRel-4}
We have made a choice in the definition of a $\dt$-representation:
we still require $\| \rh (s) \| \leq 1$ for all $s \in S.$
By suitable scaling and application of (\ref{R-GeneralRel-1}) above,
it is also possible to get a version in which we merely require
$\| \rh (s) \| \leq 1 + \dt$ for all $s \in S.$
\end{enumerate}
\end{rmk}

We now give \eqv\  versions of these definitions.
We restrict to discrete groups,
and to finite groups in practice.
If $G$ is not discrete,
but the universal \ca\  is supposed to carry a \ct\  action
of~$G,$
then the relations must demand that the action of~$G$
on each generator defines a \cfn\  from $G$ to the universal \ca.
There are many kinds of conditions on elements of a \ca\  %
which can be made into relations which determine a universal \ca,
but continuity of functions from the set of generators isn't
one of them.
The universal algebra will in general only be an inverse limit of \ca s.
See Definition 1.3.4 and Proposition 1.3.6 of~\cite{Ph11}.
There do exist examples of universal $G$-algebras on generators and
relations when $G$ is not discrete.
See Example~\ref{E:NotCpt} and Example~\ref{E:NotCptMn} below.
However, we leave the development of the
appropriate theory for elsewhere.

\begin{ntn}\label{N-ActionOnFS}
Let $S$ be a set,
let $G$ be a discrete group,
and let $\sm$ be an action of $G$ on~$S,$
written $(g, s) \mapsto \sm_g (s).$
We denote by $\mu^{\sm}$ the action of $G$ on $F_S$
induced by~$\sm.$
\end{ntn}

\begin{dfn}\label{D:G-Rel}
Let $G$ be a discrete group.
A {\emph{$G$-equivariant set of generators and relations}}
is a triple $(S, \sm, R)$
in which $(S, R)$ is a set of generators and relations
as in \Def{D-NoGpRel},
$\sm$ is
an action of $G$ on $S$ (just as a set),
and $R$
is invariant under the action $\mu^{\sm}$
of Notation~\ref{N-ActionOnFS}.
We say that $(S, \sm, R)$ is {\emph{admissible}}
if $(S, R)$ is admissible
in the sense of \Def{D-NoGpAdmi}.
We say that $(S, \sm, R)$ is {\emph{bounded}} if $(S, R)$ is,
and is {\emph{finite}} if $G$ and $(S, R)$ are finite.
\end{dfn}

It may seem better to omit~$\sm$ and the requirement of
$G$-invariance,
and to allow the group action in the relations.
We address this formulation starting with
\Def{D:NoActR} below.
However, doing so does not give anything new,
and the version we have given above is technically more convenient.

\begin{dfn}\label{D:GRep}
Let $G$ be a discrete group.
Let $(S, \sm, R)$
be a $G$-equivariant set of generators and relations
in the sense of \Def{D:G-Rel}.
Let $\aGA$ be an action of $G$ on a \uca~$A.$
An {\emph{equivariant representation of $(S, \sm, R)$ in~$A$}}
is a representation
of $(S, R)$ in the sense of \Def{D-NoGpGen}
such that for every $g \in G$ and $s \in S,$
we have $\rh (\sm_g (s)) = \af_g (\rh (s)).$
For $\dt_1, \dt_2 \in [0, 1),$
a
{\emph{$\dt_1$-equivariant $\dt_2$-representation
of $(S, \sm, R)$ in~$A$}}
is a $\dt_2$-representation $\rh$ of $(S, R)$
such that $\| \rh (\sm_g (s)) - \af_g (\rh (s)) \| \leq \dt_1$
for all $g \in G$ and $s \in S.$
When $\dt_1 = 0,$
we speak of an {\emph{equivariant $\dt_2$-representation
of $(S, \sm, R)$ in~$A$}}.

If $(S, R)$ is admissible,
then the {\emph{universal equivariant representation}} $\rh_R$
is obtained by
taking $A = C^* (S, R),$
with the action ${\overline{\mu}}^{\sm} \colon G \to \Aut (C^* (S, R))$
coming from the fact that
$I_R$ is an invariant ideal
for $\mu^{\sm} \colon G \to \Aut (F_S),$
and taking $\rh_R = \ta_R |_S.$
We write $C^* (S, \sm, R)$
for the algebra equipped with this action.
\end{dfn}

We show that we have the right definition of admissibility.

\begin{lem}\label{L-GpAdmiN}
Let $G$ be a discrete group,
and let $(S, \sm, R)$ be
a $G$-equivariant set of generators and relations.
Then $(S, \sm, R)$ is admissible \ifo\  there exists
an \eqv\  representation in a (nonzero) unital \ga.
\end{lem}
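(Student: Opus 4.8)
The plan is to derive this immediately from the non-equivariant version, \Lem{L-D-NoGpAdmi}, since by \Def{D:G-Rel} admissibility of $(S, \sm, R)$ means, by definition, nothing more than admissibility of the underlying pair $(S, R)$. So all that is really needed is to match up ``representation in a nonzero unital \ca'' with ``\eqv\ representation in a nonzero unital \ga.''

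For the ``if'' direction, suppose $\rh$ is an \eqv\ representation of $(S, \sm, R)$ in a nonzero unital \ga~$\GAa$. Forgetting the action, $\rh$ is in particular a representation of $(S, R)$ in the nonzero unital \ca~$A$, so \Lem{L-D-NoGpAdmi} shows that $(S, R)$ is admissible, which by \Def{D:G-Rel} is exactly the assertion that $(S, \sm, R)$ is admissible. For the ``only if'' direction, suppose $(S, \sm, R)$ is admissible, so $I_R \neq F_S$ and $C^* (S, R) = F_S / I_R$ is a nonzero \uca. The key point — already implicit in \Def{D:GRep} — is that $I_R$ is invariant under the action $\mu^{\sm}$ of $G$ on $F_S$: since $R$ is $\mu^{\sm}$-invariant and $I_R$ is the ideal generated by~$R$, each \am~$\mu^{\sm}_g$ carries a generating set of $I_R$ onto itself, hence carries $I_R$ onto itself. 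Thus $\mu^{\sm}$ descends to an action ${\overline{\mu}}^{\sm} \colon G \to \Aut (C^* (S, R))$, making $C^* (S, \sm, R)$ a nonzero unital \ga, and the universal \eqv\ representation $\rh_R = \ta_R |_S$ of \Def{D:GRep} is an \eqv\ representation in it; equivariance is just the identity $\ta_R (\mu^{\sm}_g (s)) = {\overline{\mu}}^{\sm}_g (\ta_R (s))$, which holds by the construction of~${\overline{\mu}}^{\sm}$.

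I do not anticipate any genuine obstacle: the content is entirely bookkeeping that has been arranged in \Def{D:G-Rel} and \Def{D:GRep}, and the only thing requiring an actual (one-line) verification is the $\mu^{\sm}$-invariance of $I_R$, which in turn is the standard fact that an \am\ of a \ca\ fixing a subset setwise fixes the ideal generated by that subset.
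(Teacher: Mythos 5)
Your proof is correct and follows exactly the same route as the paper: the ``if'' direction is the non-equivariant Lemma~\ref{L-D-NoGpAdmi} after forgetting the action, and the ``only if'' direction exhibits the universal equivariant representation from Definition~\ref{D:GRep}. The extra verification you supply (that $I_R$ is $\mu^{\sm}$-invariant) is the point the paper delegates to Definition~\ref{D:GRep} itself, so nothing is missing.
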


\begin{proof}
If there is an \eqv\  representation,
then \Lem{L-D-NoGpAdmi} implies that $(S, R)$ is admissible,
so that $(S, \sm, R)$ is admissible.

For the reverse, since $I_R \neq F_S,$
the universal equivariant representation of \Def{D:GRep}
is an \eqv\  representation in a unital \ga.
\end{proof}

The universal equivariant representation,
as in \Def{D:GRep},
really is universal.

\begin{lem}\label{L-ReallyU}
Let $G$ be a discrete group,
and let $(S, \sm, R)$ be
a $G$-equivariant set of generators and relations.
Let $\aGA$ be an action of $G$ on a \uca~$A,$
and let $\rh \colon S \to A$
be an equivariant representation of $(S, \sm, R)$ in~$A.$
Then there exists a unique \ehm\  $\ph \colon C^* (S, \sm, R) \to A$
such that $\ph \circ \rh_R = \rh.$
\end{lem}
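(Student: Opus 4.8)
The plan is to reduce everything to the universal property of $F_S$ together with the quotient $\ta_R \colon F_S \to F_S / I_R$, and then to check equivariance only on the generators.

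First I would invoke the universal property of $F_S$ from \Def{D-NoGpRel}: since $\rh$ is a representation of $(S, R)$ in the sense of \Def{D-NoGpGen}, in particular $\| \rh (s) \| \leq 2$ for all $s \in S,$ so there is a (unique) unital \hm\  $\ph^{\rh} \colon F_S \to A$ with $\ph^{\rh} |_S = \rh.$ Because $\rh$ is a representation, $\ph^{\rh} (x) = 0$ for all $x \in R,$ hence $\ph^{\rh}$ vanishes on the ideal $I_R$ generated by~$R,$ and therefore factors as $\ph^{\rh} = \ph \circ \ta_R$ for a unique unital \hm\  $\ph \colon C^* (S, \sm, R) = F_S / I_R \to A.$ Restricting to $S$ gives $\ph \circ \rh_R = \ph \circ \ta_R |_S = \rh,$ as required.

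Next I would verify that $\ph$ is equivariant. Fix $g \in G.$ Both $\ph \circ {\overline{\mu}}^{\sm}_g$ and $\af_g \circ \ph$ are unital \hm s from $C^* (S, \sm, R)$ to~$A,$ so it suffices to check that they agree on the generating set $\rh_R (S).$ Using that ${\overline{\mu}}^{\sm}_g \circ \ta_R = \ta_R \circ \mu^{\sm}_g$ (this is how ${\overline{\mu}}^{\sm}$ was defined in \Def{D:GRep}), that $\mu^{\sm}_g (s) = \sm_g (s)$ for $s \in S$ (Notation~\ref{N-ActionOnFS}), the identity $\ph \circ \rh_R = \rh,$ and finally equivariance of the representation~$\rh,$ we get
\[
(\ph \circ {\overline{\mu}}^{\sm}_g)(\rh_R (s))
 = \ph\big( \rh_R (\sm_g (s)) \big)
 = \rh (\sm_g (s))
 = \af_g (\rh (s))
 = (\af_g \circ \ph)(\rh_R (s))
\]
for every $s \in S.$ Since $\rh_R (S)$ generates $C^* (S, \sm, R)$ as a \ca, the two \hm s coincide, so $\ph$ is an \ehm.

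Finally, uniqueness is immediate: any \ehm\  $\ph' \colon C^* (S, \sm, R) \to A$ with $\ph' \circ \rh_R = \rh$ agrees with $\ph$ on $\rh_R (S),$ hence on all of $C^* (S, \sm, R).$ I do not expect any genuine obstacle here; the only point that needs a little care is keeping straight the three actions in play --- $\sm$ on the set $S,$ its induced action $\mu^{\sm}$ on $F_S,$ and the descended action ${\overline{\mu}}^{\sm}$ on $C^* (S, \sm, R)$ --- and that bookkeeping is already encoded in the definitions we may cite.
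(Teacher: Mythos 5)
Your proof is correct and follows essentially the same route as the paper: obtain the nonequivariant homomorphism $\ph$ from the universal property (which the paper cites as Remark~\ref{R-NoGpU} and you re-derive by factoring $\ph^{\rh}$ through $I_R$), then verify equivariance on the generating set $\ta_R(S)$ by exactly the computation the paper uses. No issues.
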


\begin{proof}
As an algebra, we have $C^* (S, \sm, R) = C^* (S, R).$
So Remark~\ref{R-NoGpU} provides
a unique \hm\  $\ph \colon C^* (S, \sm, R) \to A$
such that $\ph \circ \rh_R = \rh.$
Let ${\overline{\mu}}^{\sm} \colon G \to \Aut (C^* (S, \sm, R))$
be as in \Def{D:GRep}.
Since $\rh$ is \eqv,
for all $g \in G$ and $s \in S$ we have
\[
\ph \big( {\overline{\mu}}^{\sm}_g (\ta_R (s)) \big)
 = \ph (\ta_R (\mu^{\sm}_g (s)))
 = \rh (\sm_g (s))
 = \af_g (\rh (s))
 = \af_g (\ph \big( \ta_R (s) ) ).
\]
Since $\ta_R (S)$ generates $C^* (S, \sm, R),$
equivariance of $\ph$ follows.
\end{proof}

We have \eqv\  analogs of
the first two parts of Remark~\ref{R-GeneralRel}.

\begin{rmk}\label{R-GeneralEQRel}
\begin{enumerate}
\item\label{R-GeneralEqRel-1}
Let $G$ be a discrete group,
let $S$ be a set, and let $\sm$ be an action of $G$ on~$S.$
For any proper $G$-invariant ideal $I \subset F_S,$
we can get $F_S / I$ as a universal $G$-algebra
$C^* (S, \sm, R)$
simply by taking $R = I.$

As an example, let $R \subset F_S$ be $G$-invariant,
and let $M \colon R \to [0, \infty)$ be
a function such that $M (\sm_g (s)) = M (s)$ for all $g \in G$
and $s \in S.$
We take $I \subset F_S$ to be the intersection of the kernels of
all unital \ehm s $\ph \colon F_S \to A,$
for arbitrary unital \ga s $\GAa,$
such that $\| \ph (x) \| \leq M (x)$ for all $x \in R.$
\item\label{R-GeneralEqRel-2}
If $S$ is countable and $G$ is finite,
we always take $R$ to be finite.
Choose a countable subset $\{ x_1, x_2, \ldots \}$
of the unit ball of~$I_R$ whose span is dense in~$I_R.$
Then we can take the relations to consist of
the single $G$-invariant element
\[
a = \sum_{n = 1}^{\infty} \sum_{g \in G}
                 2^{- n} \mu^{\sm}_g (x_n^* x_n).
\]
\item\label{R-GeneralEqRel-3}
If $(S, \sm, R)$ is finite and bounded,
and $\dt \in [0, 1),$
then the universal \ca\  generated by an \eqv\  $\dt$-representation
of $(S, \sm, R)$ is again the universal \ca\  on a finite and bounded
set of generators and relations.
However,
for $\dt_0 > 0,$
there is no obvious action of~$G$
on the universal \ca\  generated by
a $\dt_0$-\eqv\  $\dt$-representation
of $(S, \sm, R).$
\end{enumerate}
\end{rmk}

If we want to allow the action of~$G$ to appear in the relations,
we can use the following alternate definition.
We omit the word ``equivariant'' in the name.

\begin{dfn}\label{D:NoActR}
Let $G$ be a discrete group.
A {\emph{set of generators and relations for a $G$-algebra}}
is a pair $(S, R)$
in which $S$ is a set
and $R$ is a subset of $F_{G \times S}.$
Define an action $\sm$ of $G$ on $G \times S$
by $\sm_g (h, s) = (g h, s)$ for $g, h \in G$ and $s \in S,$
and let $\mu^{\sm} \colon G \to \Aut (F_{G \times S} )$
be as in Notation~\ref{N-ActionOnFS}.
The {\emph{associated $G$-equivariant set of generators and relations}}
to $(S, R)$ is then
\[
\left( G \times S, \,\, \sm, \,\,
   {\ts{ {\ds{\bigcup}}_{g \in G} }} \mu^{\sm}_g (R) \right).
\]
We let $I_{G, R} \subset F_{G \times S}$
be the ideal generated by $\bigcup_{g \in G} \mu^{\sm}_g (R).$
We say that $(S, R)$ is {\emph{admissible}}
if $I_{G, R} \neq F_{G \times S},$
and in this case we define the
{\emph{universal $G$-algebra generated by $(S, R)$}}
to be $C^* (S, R) = F_{G \times S} / I_{G, R},$
with the action ${\overline{\mu}} \colon G \to \Aut (C^* (S, R))$
induced by the action $\mu^{\sm} \colon G \to \Aut (F_{G \times S}).$
Let $\ta_{G, R} \colon F_{G \times S} \to C^* (S, R)$
be quotient map.
We say that $(S, R)$ is {\emph{bounded}}
if for every $s \in S,$ we have $\| \ta_{G, R} (1, s) \| \leq 1.$
We say that $(S, R)$ is {\emph{finite}} if $G,$ $S,$ and~$R$
are all finite.
\end{dfn}

\begin{dfn}\label{D:RepNoAct}
Let $G$ be a discrete group,
and let $(S, R)$
be a set of generators and relations for a $G$-algebra
in the sense of \Def{D:NoActR}.
Let $\aGA$ be an action of $G$ on a \uca~$A.$
A {\emph{representation of $(S, R)$ in~$A$}}
is a function $\rh \colon S \to A$ such that
the function $\pi \colon G \times S \to A,$
defined by $\pi (g, s) = \af_g (\rh (s))$ for $g \in G$ and $s \in S,$
is an equivariant representation,
in the sense of \Def{D:GRep},
of the associated $G$-equivariant set of generators and relations.
For $\dt \in [0, 1),$
we say that $\rh$ is a {\emph{$\dt$-representation}} of $(S, R)$
in~$A$ if,
using the notation of \Def{D-NoGpGen},
we have $\| \ph^{\pi} (x) \| \leq \dt$
for all $x \in R.$
\end{dfn}

\begin{rmk}\label{R-CmpOfGpNoGp}
Let $G$ be a discrete group,
let $(S, R)$
be a set of generators and relations for a $G$-algebra
in the sense of \Def{D:NoActR},
and let the notation be as there.
Set $Q = \bigcup_{g \in G} \mu^{\sm}_g (R).$
Then:
\begin{enumerate}
\item\label{R-CmpOfGpNoGp-1}
$(S, R)$ is admissible \ifo\  $(G \times S, \, \sm, \, Q)$
is admissible in the sense of \Def{D:G-Rel}.
\item\label{R-CmpOfGpNoGp-2}
$(S, R)$ is bounded \ifo\  $(G \times S, \, \sm, \, Q)$
is bounded in the sense of \Def{D:G-Rel}.
(Use the fact that for $g \in G$ and $s \in S,$
we have $\ta_{G, R} (g, s) = {\overline{\mu}}_g ( \ta_{G, R} (1, s) ).$)
\item\label{R-CmpOfGpNoGp-3}
$(S, R)$ is finite \ifo\  $(G \times S, \, \sm, \, Q)$
is finite in the sense of \Def{D:G-Rel}.
\item\label{R-CmpOfGpNoGp-4}
There is a unique \eqv\  isomorphism
$\ps \colon C^* (S, R) \to C^* (G \times S, \, \sm, \, Q)$
such that $\ps ( \ta_{G, R} (g, s)) = \ta_R (g, s)$
for all $g \in G$ and $s \in S.$
\end{enumerate}
\end{rmk}

We then get the following universal property
for $C^* (S, R).$
The proof is clear,
and is omitted.

\begin{lem}\label{L-NoActUniv}
Let $G$ be a discrete group,
and let $(S, R)$ be
a set of generators and relations for a $G$-algebra
in the sense of \Def{D:NoActR}.
Let $\aGA$ be an action of $G$ on a \uca~$A,$
and let $\rh \colon S \to A$
be a representation of $(S, R)$ in~$A.$
Then there exists a unique \ehm\  $\ph \colon C^* (S, R) \to A$
such that $\ph \circ \rh_R = \rh.$
\end{lem}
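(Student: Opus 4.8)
The plan is to deduce this from the universal property already established for explicitly \eqv\ sets of generators and relations, namely \Lem{L-ReallyU}, by transporting everything across the identification of Remark~\ref{R-CmpOfGpNoGp}. Write $Q = \bigcup_{g \in G} \mu^{\sm}_g (R) \subset F_{G \times S},$ so that $(G \times S, \, \sm, \, Q)$ is the $G$-\eqv\ set of generators and relations associated to $(S, R)$ in \Def{D:NoActR}; here $\rh_R \colon S \to C^* (S, R)$ denotes the universal representation, $\rh_R (s) = \ta_{G, R} (1, s),$ and we write $\rh_Q$ for the universal \eqv\ representation of $(G \times S, \, \sm, \, Q)$ in the sense of \Def{D:GRep}. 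By Remark~\ref{R-CmpOfGpNoGp}(\ref{R-CmpOfGpNoGp-1}), $(S, R)$ is admissible because $(G \times S, \, \sm, \, Q)$ is, and by Remark~\ref{R-CmpOfGpNoGp}(\ref{R-CmpOfGpNoGp-4}) there is a unique \eqv\ isomorphism $\ps \colon C^* (S, R) \to C^* (G \times S, \, \sm, \, Q)$ sending each generator $\ta_{G, R} (g, s)$ to the corresponding generator $\rh_Q (g, s);$ in particular $\ps (\rh_R (s)) = \rh_Q (1, s)$ for all $s \in S.$

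Next I would unwind \Def{D:RepNoAct}: since $\rh \colon S \to A$ is a representation of $(S, R)$ in~$A,$ the map $\pi \colon G \times S \to A$ defined by $\pi (g, s) = \af_g (\rh (s))$ is, by definition, an \eqv\ representation of $(G \times S, \, \sm, \, Q)$ in~$A.$ Then \Lem{L-ReallyU} provides a unique \ehm\ $\ph' \colon C^* (G \times S, \, \sm, \, Q) \to A$ with $\ph' \circ \rh_Q = \pi.$ Put $\ph = \ph' \circ \ps,$ a composite of \ehm s and hence an \ehm\ from $C^* (S, R)$ to~$A.$ For $s \in S$ we compute
\[
\ph (\rh_R (s)) = \ph' (\ps (\rh_R (s))) = \ph' (\rh_Q (1, s)) = \pi (1, s) = \af_1 (\rh (s)) = \rh (s),
\]
so $\ph \circ \rh_R = \rh,$ as required.

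For uniqueness, observe that $\ta_{G, R} (G \times S)$ generates $C^* (S, R)$ as a \ca, and that $\ta_{G, R} (g, s) = {\overline{\mu}}_g (\ta_{G, R} (1, s)) = {\overline{\mu}}_g (\rh_R (s))$ for all $g \in G$ and $s \in S;$ hence any \ehm\ out of $C^* (S, R)$ is determined by its restriction to $\rh_R (S).$ Consequently an \ehm\ agreeing with $\ph$ on $\rh_R (S)$ agrees with it on all of $\ta_{G, R} (G \times S),$ and therefore everywhere, which gives uniqueness. There is no analytic content in any of this; the only point requiring care — and it is purely bookkeeping — is keeping the two layers of identification straight, namely the passage from $(S, R)$ to $(G \times S, \, \sm, \, Q)$ and then the appeal to \Lem{L-ReallyU}. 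This is why it is reasonable to state the lemma with the proof omitted.
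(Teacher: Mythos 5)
Your proof is correct, and it is exactly the argument the paper has in mind when it declares the proof ``clear'' and omits it: transport the problem across the isomorphism of Remark~\ref{R-CmpOfGpNoGp}(\ref{R-CmpOfGpNoGp-4}), apply \Lem{L-ReallyU} to the equivariant representation $(g, s) \mapsto \af_g (\rh (s))$ supplied by \Def{D:RepNoAct}, and get uniqueness from the identity $\ta_{G, R} (g, s) = {\overline{\mu}}_g (\ta_{G, R} (1, s))$ together with equivariance. Nothing is missing.
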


\begin{rmk}\label{R-NoActGenR}
Analogously to
Remark~\ref{R-GeneralRel}(\ref{R-GeneralRel-1})
and Remark~\ref{R-GeneralEQRel}(\ref{R-GeneralEqRel-1}),
we can now speak of the universal \ga\  generated by a set~$S$
with relations given by norm bounds and positivity conditions
on *-polynomials in the noncommuting
variables $\coprod_{g \in G} \sm_g (S),$
that is, polynomials in the noncommuting
variables consisting of the generators, their formal adjoints,
and the formal images of all these under an action of~$G.$
\end{rmk}

We now present examples to show that there are
some cases in which there is a reasonable universal \ca,
with continuous action of~$G,$
even with $G$ not discrete.

\begin{exa}\label{E:NotCpt}
Let $G$ be any topological group,
and let $\GAa$ be any \ga.
Take the generating set $S$ to be the closed unit ball of~$A,$
take $R$ to be the collection of all algebraic relations
that hold among elements of $S$ and their adjoints,
and take $\sm = \af |_S.$
Then the universal \ca\  generated by $(S, \sm, R)$
is just $A,$
with the representation being the identity map
and the action of~$G$ being~$\af.$
The algebra~$A$ is universal when $G$ is given the discrete topology,
but the action is in fact \ct\  %
when $G$ is given its original topology.
\end{exa}

One can make a slightly more interesting example as follows.

\begin{exa}\label{E:NotCptMn}
Take $A = M_n,$
and take $\af$ to be any action of $G$ on~$M_n.$
Let $( e_{j, k} )_{j, k = 1}^n$ be the standard system
of matrix units in~$M_n.$
Take the generators to consist of elements
$v_{g, j, k}$ for $g \in G$ and $j, k \in \{ 1, 2, \ldots, n \}.$
Set $\sm_g (v_{h, j, k}) = v_{g h, j, k}.$
The universal representation is intended to be
$\rh (v_{g, j, k}) = \af_g (e_{j, k}).$
To make this happen,
take the relations to say that for each $g \in G,$
the collection $( v_{g, j, k} )_{j, k = 1}^n$
is a system of matrix units,
and also to include,
for all $g, h \in G$ and $j, k \in \{ 1, 2, \ldots, n \},$
the relation corresponding
to the (unique) expression of
$\af_g ( \af_h (e_{j, k}) )$ as a linear combination
of the matrix units $\af_h (e_{l, m}).$
\end{exa}

The following is the equivariant analog
of Definition 14.1.1 of~\cite{Lr}.
Following~\cite{Lr},
we restrict to finite sets of generators and relations.
Accordingly, we take the group to be finite.

\begin{dfn}\label{D:GStable}
Let $G$ be a finite group,
and let $(S, \sm, R)$
be a finite admissible $G$-equivariant set of generators and relations.
Then we say that $(S, \sm, R)$ is {\emph{stable}}
if for every $\ep > 0$ there is $\dt > 0$ such that the
following holds.
Suppose that $\GAa$ and $\GBb$ are unital \ga s
(except that we allow $B = 0$),
that $\om \colon A \to B$ is an \ehm,
that $\rh_0 \colon S \to A$
is a $\dt$-\eqv\  $\dt$-representation of $(S, \sm, R)$
(in the sense of \Def{D:GRep}),
and that $\om \circ \rh_0$
is an \eqv\  representation of $(S, \sm, R).$
Then there exists an \eqv\  representation $\rh \colon S \to A$
of $(S, \sm, R)$
such that $\om \circ \rh = \om \circ \rh_0$
and such that for all $s \in S$ we have
$\| \rh (s) - \rh_0 (s) \| < \ep.$
\end{dfn}

We allow $B = 0$ to incorporate the possibility that
we are merely given
a $\dt$-\eqv\  $\dt$-representation of $(S, \sm, R)$
but no \hm\  $\om$ such that $\om \circ \rh_0$ is an equivariant
representation.

\begin{lem}\label{L-MakeRepEq}
Let $G$ be a finite group,
and let $(S, \sm, R)$
be a bounded finite admissible
$G$-equivariant set of generators and relations.
Then for every $\et > 0$ there is $\dt > 0$
such that whenever $\GAa$ and $\GBb$ are unital \ga s
(with possibly $B = 0$),
$\om \colon A \to B$ is \eqv,
and $\rh_0 \colon S \to A$
is a $\dt$-\eqv\  $\dt$-representation of $(S, \sm, R)$
such that $\om \circ \rh_0$
is an \eqv\  representation of $(S, \sm, R),$
then there exists an (exactly) \eqv\  $\et$-representation
$\rh \colon S \to A$
such that $\om \circ \rh = \om \circ \rh_0$
and $\| \rh (s) - \rh_0 (s) \| < \et$
for all $s \in S.$
\end{lem}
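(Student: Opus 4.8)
The plan is to produce $\rh$ from $\rh_0$ by averaging the $G$-translates of $\rh_0$. Before choosing $\dt$, I would record a uniform continuity estimate for the finite relation set~$R$: each $x \in R$ lies in $F_S$ and is a norm limit of $*$-polynomials in the generators, and for a monomial of degree~$k$ in variables of norm at most~$2$ a telescoping estimate bounds $\| \ph^{\rh}(m) - \ph^{\rh'}(m) \|$ by $k \cdot 2^{k - 1} \max_{s \in S} \| \rh(s) - \rh'(s) \|$. Summing over a $*$-polynomial approximant of~$x$ and using that $\ph^{\rh}$ is contractive shows that, for each $x \in R$, there is a constant $C_x$ with $\| \ph^{\rh}(x) - \ph^{\rh'}(x) \| \le C_x \max_{s \in S} \| \rh(s) - \rh'(s) \|$ whenever $\| \rh(s) \|, \| \rh'(s) \| \le 2$ for all $s \in S$. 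Since $R$ is finite, there is $\dt_0 > 0$ such that $\max_{s \in S} \| \rh(s) - \rh'(s) \| < \dt_0$ (together with those norm bounds) forces $\| \ph^{\rh}(x) - \ph^{\rh'}(x) \| < \tfrac{1}{2} \et$ for every $x \in R$. I would then set $\dt = \min \big( \dt_0, \, \tfrac{1}{2} \et \big)$.

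Now suppose $\GAa$, $\GBb$, an \ehm\ $\om \colon A \to B$, and a $\dt$-\eqv\ $\dt$-representation $\rh_0 \colon S \to A$ are given such that $\om \circ \rh_0$ is an \eqv\ representation of $(S, \sm, R)$. Define $\rh \colon S \to A$ by
\[
\rh(s) = \frac{1}{\card (G)} \sum_{g \in G} \af_g \big( \rh_0 ( \sm_g^{-1}(s) ) \big)
\]
for $s \in S$. Since each $\| \rh_0(t) \| \le 2$ and each $\af_g$ is isometric, $\| \rh(s) \| \le 2$, so $\rh$ induces a \hm\ $\ph^{\rh} \colon F_S \to A$. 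Reindexing the summation by $g' = h g$ and using that $\sm$ is a group action gives $\af_h(\rh(s)) = \rh(\sm_h(s))$ for all $h \in G$ and $s \in S$, so $\rh$ is exactly \eqv. Equivariance of $\om \circ \rh_0$ gives $\bt_g \big( (\om \circ \rh_0)(\sm_g^{-1}(s)) \big) = (\om \circ \rh_0)(s)$ for all $g \in G$ and $s \in S$, so every term of $\om(\rh(s)) = \tfrac{1}{\card (G)} \sum_{g \in G} \bt_g \big( \om( \rh_0(\sm_g^{-1}(s))) \big)$ equals $\om(\rh_0(s))$, whence $\om \circ \rh = \om \circ \rh_0$. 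Finally, $\dt$-equivariance of $\rh_0$ says $\| \rh_0(\sm_g(t)) - \af_g(\rh_0(t)) \| \le \dt$ for all $g, t$; taking $t = \sm_g^{-1}(s)$ (so $\sm_g(t) = s$) in each summand and averaging yields $\| \rh(s) - \rh_0(s) \| \le \dt \le \tfrac{1}{2} \et < \et$.

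It remains to check that $\rh$ is an $\et$-representation. Fix $x \in R$. Since $\dt \le \dt_0$ and the norm bounds hold, the continuity estimate gives $\| \ph^{\rh}(x) - \ph^{\rh_0}(x) \| < \tfrac{1}{2} \et$, and since $\rh_0$ is a $\dt$-representation we have $\| \ph^{\rh_0}(x) \| \le \dt \le \tfrac{1}{2} \et$; hence $\| \ph^{\rh}(x) \| < \et$. Thus $\rh$ is an \eqv\ $\et$-representation of $(S, \sm, R)$ with $\om \circ \rh = \om \circ \rh_0$ and $\| \rh(s) - \rh_0(s) \| < \et$ for all $s \in S$. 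The case $B = 0$ goes through verbatim, the hypothesis on $\om \circ \rh_0$ then being empty. I expect the only point needing care to be the uniform continuity estimate of the first paragraph, where finiteness of~$R$ and the universal property of~$F_S$ enter; everything else is routine bookkeeping with the averaging operator.
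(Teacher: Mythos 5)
Your proof is correct and follows essentially the same route as the paper's: choose $\dt_0$ using finiteness of $R$ and continuity, set $\dt = \min\big(\dt_0, \tfrac{1}{2}\et\big)$, average the $G$-translates of $\rh_0$, and verify exact equivariance, the compatibility with $\om$, and the $\et$-representation estimate exactly as the paper does. One small overstatement in your first paragraph: approximating $x \in R$ by $*$-polynomials does not yield a Lipschitz bound $\| \ph^{\rh}(x) - \ph^{\rh'}(x) \| \le C_x \max_{s} \| \rh(s) - \rh'(s) \|$ (the Lipschitz constants of the approximants can blow up), but the weaker uniform-continuity statement you actually invoke afterwards --- for each $\et$ there is $\dt_0$ --- does follow from the approximation argument together with contractivity of $\ph^{\rh}$, and that is all the proof needs.
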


\begin{proof}
Since $S$ and $R$ are finite,
there is $\dt_0 > 0$ such that whenever $C$ is a \ca\  %
and $\ps_1, \ps_2 \colon F_S \to C$ are two unital \hm s
such that $\| \ps_1 (s) - \ps_2 (s) \| < \dt_0$
for all $s \in S,$
then $\| \ps_1 (r) - \ps_2 (r) \| < \tfrac{1}{2} \et$
for all $r \in R.$
Set $\dt = \min \big( \dt_0, \tfrac{1}{2} \et \big).$

Now let $\rh_0$ be as in the hypotheses.
For $s \in S,$ define
\[
\rh (s) = \frac{1}{\card (G)} \sum_{g \in G}
            ( \af_g \circ \rh_0 \circ \sm_g^{-1} ) (s).
\]
Then $\rh$ is exactly \eqv.
Also, for all $s \in S,$
we have
\[
\| \rh (s) \|
 \leq \max \big( \big\{ \| \rh_0 (t) \| \colon t \in S \big\} \big)
 \leq 2
\]
and,
since $\rh_0$ is $\dt$-\eqv,
$\| \rh (s) - \rh_0 (s) \| \leq \dt \leq \dt_0.$
Therefore, in the notation of \Def{D-NoGpGen},
for all $r \in R$
we have
$\big\| \ph^{\rh} (r) - \ph^{\rh_0} (r) \big\| \leq \tfrac{1}{2} \et,$
whence
\[
\| \ph^{\rh} (r) \|
  \leq \tfrac{1}{2} \et + \big\| \ph^{\rh_0} (r) \big\|
  \leq \tfrac{1}{2} \et + \dt
  \leq \et.
\]
Thus $\rh$ is an $\et$-representation.
{}From
\[
\om \circ \af_g \circ \rh_0 \circ \sm_g^{-1}
 = \bt_g \circ \om \circ \rh_0 \circ \sm_g^{-1}
 = \om \circ \rh_0,
\]
we get $\om \circ \rh = \om \circ \rh_0,$
completing the proof.
\end{proof}

\begin{thm}\label{T:StabSj}
Let $G$ be a finite group,
and let $(S, \sm, R)$
be a bounded finite admissible
$G$-equivariant set of generators and relations.
Then $(S, \sm, R)$ is stable \ifo\  $C^* (S, \sm, R)$ is \eqsj.
\end{thm}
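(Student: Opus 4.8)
The plan is to prove the two implications separately. Write $A = C^* (S, \sm, R)$ and let $\rh_R = \ta_R |_S$ be the universal equivariant representation; since $(S, \sm, R)$ is bounded, $\| \rh_R (s) \| \leq 1$ for every $s \in S,$ and $\rh_R (S)$ generates $A$ as a unital \ca. I shall use two elementary facts about an increasing sequence of ideals $J_0 \subset J_1 \subset \cdots$ with $J = \overline{\bigcup_n J_n}$ in a \ca~$C$: every element of $C / J$ lifts to $C / J_n$ for each~$n,$ and $\| z + J \| = \lim_m \| z + J_m \|$ for every~$z$; throughout, $\pi_{n, m}$ is as in Remark~\ref{R:EqSj}(\ref{R:EqSj:Nt}).

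For ``stable $\Rightarrow$ \eqsj'': adopt the notation of \Def{D:EqSj}, with $\ph \colon A \to C / J$ a unital \ehm, and apply stability with $\ep = 1$ to get $\dt > 0.$ Lift each $\ph (\rh_R (s)),$ $s \in S,$ to an element of~$C$ and project it into $C / J_{n_0}$; by the norm fact these images $y_s$ have norm $< 2$ once $n_0$ is large enough. Since $G$ is finite, replace $y_s$ by the average $\rh_0 (s) = \card (G)^{-1} \sum_{g \in G} \gm^{(n_0)}_g \big( y_{\sm_g^{-1} (s)} \big)$; a direct computation using equivariance of $\ph$ and of $\pi_{n_0}$ shows $\rh_0$ is exactly equivariant (that is, $\gm^{(n_0)}_g (\rh_0 (s)) = \rh_0 (\sm_g (s))$) and still satisfies $\pi_{n_0} \circ \rh_0 = \ph \circ \rh_R.$ Since $\ph \circ \rh_R$ is an equivariant representation of $(S, \sm, R),$ evaluating the finitely many relations of~$R$ on $\rh_0$ yields elements of $C / J_{n_0}$ killed by $\pi_{n_0},$ hence of norm $< \dt$ after one more enlargement of $n_0$; so $\rh_0$ is a $\dt$-equivariant $\dt$-representation of $(S, \sm, R)$ in $C / J_{n_0}$ whose composition with $\om := \pi_{n_0}$ is an equivariant representation. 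Stability produces an equivariant representation $\rh \colon S \to C / J_{n_0}$ of $(S, \sm, R)$ with $\pi_{n_0} \circ \rh = \ph \circ \rh_R,$ and \Lem{L-ReallyU} promotes $\rh$ to a unital \ehm\  $\ps \colon A \to C / J_{n_0}$; as $\pi_{n_0} \circ \ps$ and $\ph$ agree on the generating set $\rh_R (S),$ they coincide, so $\ps$ equivariantly lifts~$\ph.$

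For ``\eqsj $\Rightarrow$ stable'' I argue by contraposition. If $(S, \sm, R)$ is not stable, fix $\ep > 0$ such that for each $k \geq 2$ there are unital \ga s $\big( G, A_k, \af^{(k)} \big),$ $\big( G, B_k, \bt^{(k)} \big)$ (possibly $B_k = 0$), an \ehm\  $\om_k \colon A_k \to B_k,$ and a $\tfrac{1}{k}$-equivariant $\tfrac{1}{k}$-representation $\rh^{(0)}_k \colon S \to A_k$ such that $\om_k \circ \rh^{(0)}_k$ is an equivariant representation of $(S, \sm, R)$ but no equivariant representation $\rh \colon S \to A_k$ satisfies both $\om_k \circ \rh = \om_k \circ \rh^{(0)}_k$ and $\| \rh (s) - \rh^{(0)}_k (s) \| < \ep$ for all~$s.$ Let $C = \big\{ (a_k)_k : a_k \in A_k, \ \sup_k \| a_k \| < \infty \big\}$ with the coordinatewise $G$-action, and set $J_M = \big\{ (a_k)_k \in C : a_k \in \Ker (\om_k) \text{ for all } k, \ a_k = 0 \text{ for } k > M \big\}$; these are increasing $G$-invariant ideals whose union is dense in $J = \big\{ (a_k)_k \in C : a_k \in \Ker (\om_k), \ \| a_k \| \to 0 \big\} = \bigoplus_k \Ker (\om_k).$ Modulo $J$ the sequence $\big( \rh^{(0)}_k (s) \big)_k$ defines an equivariant representation of $(S, \sm, R)$ in $C / J$: in each coordinate the relation- and equivariance-defects lie in $\Ker (\om_k)$ because $\om_k \circ \rh^{(0)}_k$ is an equivariant representation, and they go to~$0$ in norm because $\tfrac{1}{k} \to 0.$ By \Lem{L-ReallyU} this gives a unital \ehm\  $\ph \colon A \to C / J,$ and equivariant semiprojectivity of~$A$ yields $M$ and a unital \ehm\  $\ps \colon A \to C / J_M$ with $\pi_M \circ \ps = \ph.$ For $m > M$ the coordinate \hm\  $C \to A_m$ kills $J_M,$ so composing with $\ps$ gives a unital \ehm\  $\ps_m \colon A \to A_m,$ whence $\rh_m := \ps_m \circ \rh_R$ is an equivariant representation of $(S, \sm, R)$ in $A_m.$ Lifting $\ps (\rh_R (s))$ to $(a^{(s)}_k)_k \in C$ and using $\pi_M \circ \ps = \ph$ forces $a^{(s)}_k - \rh^{(0)}_k (s) \in \Ker (\om_k)$ for every~$k$ and $a^{(s)}_k - \rh^{(0)}_k (s) \to 0$; since $\rh_m (s) = a^{(s)}_m,$ this gives $\om_m \circ \rh_m = \om_m \circ \rh^{(0)}_m$ for all $m > M,$ and $\| \rh_m (s) - \rh^{(0)}_m (s) \| < \ep$ for all $s$ once $m$ is large --- contradicting the choice of $\big( A_m, B_m, \om_m, \rh^{(0)}_m \big)$ for such~$m.$

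I expect the crux to be the final step of the converse: equivariant semiprojectivity only forces $\ps$ to agree with $\ph$ \emph{modulo}~$J,$ so everything hinges on choosing the ideals $J_M$ so that $\overline{\bigcup_M J_M}$ is the $c_0$-direct sum $\bigoplus_k \Ker (\om_k)$ rather than the full product $\prod_k \Ker (\om_k)$ --- which is precisely why the bounds $\tfrac{1}{k} \to 0$ are used --- since this is what upgrades ``agreement modulo $J$ on generators'' to coordinatewise membership in $\Ker (\om_k),$ and hence to the \emph{exact} relation $\om_m \circ \rh_m = \om_m \circ \rh^{(0)}_m$ required by \Def{D:GStable}. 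In the forward direction the only point needing care is the manufacture of an exactly equivariant partial lift, which finiteness of~$G$ makes routine via averaging.
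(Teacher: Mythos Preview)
Your proof is correct. The forward direction is essentially the paper's argument (both amount to the equivariant version of Loring's Proposition~13.2.5, with the averaging step you spell out being exactly the content of \Lem{L-MakeRepEq}).

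For the converse you take a genuinely different route. The paper works \emph{constructively}: inside $F_S$ it forms the invariant ideals $J_n$ given by the intersection of the kernels of all equivariant $2^{-n}$-representations, so that $F_S / J_n$ is the universal algebra on such a representation; equivariant semiprojectivity is applied once, to lift $\id_{C^*(S,\sm,R)}$ to some $F_S / J_{n_0},$ and this single lift is then composed with the map coming from any given approximate representation to produce the required exact one. Your argument is instead by contraposition: you assemble a sequence of putative counterexamples into an $\ell^\infty$-product, use the $c_0$-sum of the kernels $\Ker(\om_k)$ as~$J,$ and let semiprojectivity of $A$ manufacture the forbidden exact representations in the tail coordinates. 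Your approach is more elementary---it avoids introducing the universal $\dt$-representation algebras---while the paper's is more explicit, in that it shows how $\dt$ depends on~$\ep$ through the fixed lift $\ps_0$ rather than through an unspecified ``large enough~$m$.'' The observation you flag at the end, that $J$ must be $\bigoplus_k \Ker(\om_k)$ rather than $\prod_k \Ker(\om_k),$ is indeed the point on which the contraposition turns, and you handle it correctly.
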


\begin{proof}
Proposition 13.2.5 of~\cite{Lr} holds equally well,
and with the same proof,
for unital algebras,
for a bounded finite admissible
$G$-equivariant set $(S, \sm, R)$ of generators and relations
(with, in particular, $G$ finite),
for an \eqv\  direct system of unital \ga s
with unital maps,
and for a $\dt$-\eqv\  $\dt$-representation of $(S, \sm, R).$
Therefore stability of $(S, \sm, R)$
implies equivariant semiprojectivity of $C^* (S, \sm, R).$

The prooof of the reverse implication
roughly follows the proof
for the non\eqv\  case,
as, for example, in the proof of Theorem 14.1.4 of~\cite{Lr}.
For $n \in \N$ let $J_n \subset F_S$
be the intersection of the kernels
of the \hm s $\ph^{\rh}$ as $\rh$ runs through
all \eqv\  $2^{-n}$-representations of $(S, \sm, R).$
Then $J_n$ is a $G$-invariant ideal in~$F_S,$
\[
J_1 \subset J_2 \subset \cdots,
\andeqn
{\overline{\bigcup_{n = 1}^{\infty} J_n}} = I_R.
\]
The quotient $F_S / J_n$ is the universal $G$-algebra
generated by an \eqv\  $2^{-n}$-representation of $(S, \sm, R).$
We will apply the definition of equivariant semiprojectivity
to $C^* (S, \sm, R),$
with $C = F_S,$
with $J_n$ as given,
with $J = I_R,$
and with $\ph = \id_{C^* (S, \sm, R)}.$
We use the same names
$\kp \colon F_S \to F_S / I_R,$
$\kp_n \colon F_S \to F_S / J_n,$
$\pi_n \colon F_S / J_n \to F_S / I_R,$
etc.\  %
for the maps as in Definition~\ref{D:EqSj}
and Remark~\ref{R:EqSj}(\ref{R:EqSj:Nt}).
By equivariant semiprojectivity, we can choose $n_0 \in \N$
and a unital \ehm\  %
$\ps_0 \colon C^* (S, \sm, R) \to F_S / J_{n_0}$
such that $\pi_{n_0} \circ \ps_0 = \id_{C^* (S, \sm, R)}.$

For $s \in S$ we have
\[
(\pi_{n_0} \circ \ps_0 \circ \kp) (s)
  = (\pi_{n_0} \circ \ps_0 \circ \pi_{n_0} \circ \kp_{n_0}) (s)
  = (\pi_{n_0} \circ \kp_{n_0}) (s).
\]
Since $S$ is finite,
there is $n \geq n_0$ such that
for all $s \in S$ we have
\[
\big\| (\pi_{n, n_0} \circ \ps_0 \circ \kp) (s)
        - \kp_{n} (s) \big\|
   = \big\| (\pi_{n, n_0} \circ \ps_0 \circ \kp) (s)
        - (\pi_{n, n_0} \circ \kp_{n_0}) (s) \big\|
   < \tfrac{1}{2} \ep.
\]
We may also require that $2^{- n} < \tfrac{1}{2} \ep.$
Define $\ps = \pi_{n, n_0} \circ \ps_0,$
getting
$\pi_{n} \circ \ps = \id_{C^* (S, \sm, R)}$
and
\begin{equation}\label{Eq:PsKpStab}
\| (\ps \circ \kp) (s) - \kp_{n} (s) \| < \tfrac{1}{2} \ep
\end{equation}
for all $s \in S.$

Choose $\dt > 0$ as in \Lem{L-MakeRepEq} for $\et = 2^{- n}.$
Let $\GAa$ and $\GBb$ be unital \ga s
(with possibly $B = 0$),
let $\om \colon A \to B$ be \eqv,
and let $\rh_0 \colon S \to A$
be a $\dt$-\eqv\  $\dt$-representation of $(S, \sm, R)$
such that $\om \circ \rh_0$
is an \eqv\  representation of $(S, \sm, R).$
By the choice of~$\dt,$
there is an \eqv\  $2^{-n}$-representation
$\rh_1 \colon S \to A$
such that $\om \circ \rh_1 = \om \circ \rh_0$
and
\begin{equation}\label{Eq:Rh0Rh1Stab}
\| \rh_1 (s) - \rh_0 (s) \| < 2^{-n}
\end{equation}
for all $s \in S.$

The following diagram (in which the triangle and the square
will be shown to commute,
and we already know that $\pi_n \circ \kp_n = \kp$)
shows some of the maps we have or which will be constructed:
\[
\xymatrix{
S \ar[r] \ar[rrd]_{\rh_1}
    & F_S \ar[r]^-{\kp_n} \ar@/^2.5pc/[rr]_{\kp}
    & F_S / J_n \ar[r]_{\pi_n} \ar[d]_{\ph}
    & F_S / I_R \ar[d]^{\ld} \ar@/_1.0pc/@{-->}[l]_(0.6){\ps}   \\
& & A \ar[r]^{\om}& B.
}
\]

By the definition of~$J_n,$
there is a unital \ehm\  $\ph \colon F_S / J_n \to A$
such that $\ph (\kp_n (s)) = \rh_1 (s)$ for all $s \in S.$
Define $\rh (s) = (\ph \circ \ps \circ \kp) (s)$
for $s \in S.$
Then $\rh$ is an \eqv\  representation of $(S, \sm, R).$
Moreover, there is an \ehm\  $\ld \colon F_S / I_R \to B$
such that $\ld ( \kp (s)) = \om (\rh_1 (s))$
for all $s \in S.$
By construction,
for $s \in S$ we have
\[
(\om \circ \ph \circ \kp_n) (s)
  = ( \om \circ \rh_1) (s)
  = (\ld \circ \pi_n \circ \kp_n) (s).
\]
Since $\kp_n$ is surjective and $S$ generates $F_S,$
we get
$\om \circ \ph = \ld \circ \pi_n.$
For $s \in S$ we now have
\begin{align*}
(\om \circ \rh) (s)
& = (\om \circ \ph \circ \ps \circ \kp) (s)
  = (\ld \circ \pi_n \circ \ps \circ \kp) (s)
         \\
& = (\ld \circ \kp) (s)
  = (\om \circ \rh_1) (s)
  = (\om \circ \rh_0) (s).
\end{align*}
That is, $\om \circ \rh = \om \circ \rh_0.$

It remains only to show that $\| \rh (s) - \rh_0 (s) \| < \ep$
for $s \in S.$
Using~(\ref{Eq:Rh0Rh1Stab}) and $2^{-n} < \tfrac{1}{2} \ep$
at the second step,
we have
\[
\| \rh (s) - \rh_0 (s) \|
  \leq \| \rh (s) - \rh_1 (s) \| + \| \rh_1 (s) - \rh_0 (s) \|
  < \| \rh (s) - \rh_1 (s) \| + \tfrac{1}{2} \ep,
\]
and, by~(\ref{Eq:PsKpStab}),
\[
\| \rh (s) - \rh_1 (s) \|
  = \| (\ph \circ \ps \circ \kp) (s) - (\ph \circ \kp_n) (s) \|
  \leq \| (\ps \circ \kp) (s) - \kp_n (s) \|
  < \tfrac{1}{2} \ep.
\]
The required estimate follows,
and the theorem is proved.
\end{proof}

We now consider the version of stability
in which the group action is allowed
in the relations.

\begin{dfn}\label{D-NoActStab}
Let $G$ be a finite group,
and let $(S, R)$ be a finite admissible
set of generators and relations for a $G$-algebra,
in the sense of \Def{D:NoActR}.
We say that $(S, R)$ is {\emph{stable}}
if for every $\ep > 0$ there is $\dt > 0$ such that the
following holds.
Suppose that $\GAa$ and $\GBb$ are unital \ga s
(except that we allow $B = 0$),
that $\om \colon A \to B$ is an \ehm,
that $\rh_0 \colon S \to A$
is a $\dt$-representation of $(S, R)$
(in the sense of \Def{D:RepNoAct}),
and that $\om \circ \rh_0$
is a representation of $(S, R).$
Then there exists a representation $\rh \colon S \to A$
of $(S, R)$
such that $\om \circ \rh = \om \circ \rh_0$
and such that for all $s \in S$ we have
$\| \rh (s) - \rh_0 (s) \| < \ep.$
\end{dfn}

\begin{lem}\label{L-EqVsNEdtRep}
Let $G$ be a finite group,
and let $(S, R)$
be a finite bounded admissible
set of generators and relations for a $G$-algebra
in the sense of \Def{D:NoActR}.
Let the action $\sm$ of $G$ on $G \times S$
be as there, and set
$Q = \bigcup_{g \in G} \mu^{\sm}_g (R),$
so that the associated $G$-equivariant set of generators and relations
is $(G \times S, \, \sm, \, Q).$
Then:
\begin{enumerate}
\item\label{L-EqVsNEdtRep-1}
For every $\et > 0$ there is $\dt > 0$ such that whenever
$\GAa$ is a unital \ga\  and $\ld \colon G \times S \to A$
is a $\dt$-equivariant $\dt$-representation of
$(G \times S, \, \sm, \, Q),$
then the function $s \mapsto \ld (1, s)$ is an
$\et$-representation of $(S, R).$
\item\label{L-EqVsNEdtRep-2}
For every $\et > 0$ there is $\dt > 0$ such that whenever
$\GAa$ is a unital \ga\  and $\rh \colon S \to A$
is a $\dt$-representation of $(S, R),$
then the function $(g, s) \mapsto \af_g (\rh (s))$
is an equivariant $\et$-representation of $(G \times S, \, \sm, \, Q).$
\end{enumerate}
\end{lem}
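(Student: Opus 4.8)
The plan is to deduce both parts directly from the definitions, using only the elementary observation that unital homomorphisms out of $F_{G \times S}$ which are close on the generators are close on any fixed finite subset, together with the fact that each $\af_g$ preserves norms. First I would record the standard perturbation fact, exactly as at the start of the proof of \Lem{L-MakeRepEq}: since $R$ is finite and each of its elements lies in $F_{G \times S}$, hence is a norm limit of $*$-polynomials in the generating set $G \times S$, for every $\zt > 0$ there is $\dt_0 > 0$ such that whenever $C$ is a \uca\ and $\ps_1, \ps_2 \colon F_{G \times S} \to C$ are unital \hm s with $\| \ps_1 (t) - \ps_2 (t) \| < \dt_0$ for all $t \in G \times S$, then $\| \ps_1 (r) - \ps_2 (r) \| < \zt$ for all $r \in R$.

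For part~(\ref{L-EqVsNEdtRep-1}), given $\et > 0$ I would take $\dt_0$ from the preliminary fact with $\zt = \tfrac{1}{2} \et$ and set $\dt = \min \big( \dt_0, \tfrac{1}{2} \et \big)$. If $\ld \colon G \times S \to A$ is a $\dt$-\eqv\ $\dt$-representation of $(G \times S, \sm, Q)$, put $\rh (s) = \ld (1, s)$ and let $\pi \colon G \times S \to A$ be $\pi (g, s) = \af_g (\rh (s))$ as in \Def{D:RepNoAct} (legitimate since $\| \rh (s) \| = \| \ld (1, s) \| \leq 2$). The $\dt$-equivariance of $\ld$ says precisely that
\[
\| \ld (g, s) - \pi (g, s) \|
 = \big\| \ld (\sm_g (1, s)) - \af_g (\ld (1, s)) \big\|
 \leq \dt \leq \dt_0
\]
for all $(g, s) \in G \times S$, so the perturbation fact gives $\| \ph^{\ld} (r) - \ph^{\pi} (r) \| < \tfrac{1}{2} \et$ for every $r \in R$. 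Since $R \subset Q$ and $\ld$ is in particular a $\dt$-representation of $(G \times S, Q)$, also $\| \ph^{\ld} (r) \| \leq \dt \leq \tfrac{1}{2} \et$; adding, $\| \ph^{\pi} (r) \| < \et$ for all $r \in R$, which is exactly the statement that $\rh$ is an $\et$-representation of $(S, R)$.

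For part~(\ref{L-EqVsNEdtRep-2}) no perturbation is needed, and I expect $\dt = \et$ to work. Given a $\dt$-representation $\rh \colon S \to A$ of $(S, R)$, set $\ld = \pi \colon G \times S \to A$, $\ld (g, s) = \af_g (\rh (s))$; from $\ld (\sm_g (h, s)) = \af_{g h} (\rh (s)) = \af_g (\ld (h, s))$ it is exactly equivariant. The one identity to check is $\ph^{\ld} \circ \mu^{\sm}_g = \af_g \circ \ph^{\ld}$: both sides are unital \hm s $F_{G \times S} \to A$ and they agree on each generator $(h, s)$, each sending it to $\af_{g h} (\rh (s))$, so they coincide. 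Consequently, for $r \in R$ and $g \in G$,
\[
\big\| \ph^{\ld} \big( \mu^{\sm}_g (r) \big) \big\|
 = \big\| \af_g \big( \ph^{\ld} (r) \big) \big\|
 = \big\| \ph^{\pi} (r) \big\|
 \leq \dt = \et ,
\]
since $\af_g$ is isometric; as $Q = \bigcup_{g \in G} \mu^{\sm}_g (R)$, this says $\ld$ is an equivariant $\et$-representation of $(G \times S, \sm, Q)$.

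I do not anticipate a real obstacle. Part~(\ref{L-EqVsNEdtRep-2}) is immediate once the identity $\ph^{\ld} \circ \mu^{\sm}_g = \af_g \circ \ph^{\ld}$ is observed, and part~(\ref{L-EqVsNEdtRep-1}) is the standard finiteness-and-continuity perturbation argument already used in \Lem{L-MakeRepEq}. The only point needing care is keeping \Def{D:RepNoAct} and \Def{D:GRep} straight: the function $\pi$ attached to any $\rh$ is always \emph{exactly} equivariant, so in part~(\ref{L-EqVsNEdtRep-1}) all of the approximation is carried by the comparison of $\ld$ with $\pi$ on the generating set, not by any residual failure of equivariance.
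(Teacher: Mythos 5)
Your proposal is correct, and both parts go through exactly as you describe; but your route is genuinely different from the paper's. The paper proves both statements by contradiction: assuming the conclusion fails, it extracts for each $n$ a unital \ga\ $\big( G, A_n, \af^{(n)} \big)$ and a $\tfrac{1}{n}$-(equivariant) representation violating the conclusion for one fixed relation $x$ and one fixed $\et$ (here finiteness of $R,$ resp.\ of $Q,$ is used to pin down a single~$x$), assembles these into the quotient $\prod_{n} A_n \big/ \bigoplus_{n} A_n$ with the coordinatewise action, observes that the limiting map is an exact (equivariant) representation killing~$x,$ and contradicts $\| \ph^{\pi_n} (x) \| > \et.$ That argument is soft and produces no explicit~$\dt.$ Your argument is direct and quantitative: part~(\ref{L-EqVsNEdtRep-2}) with $\dt = \et$ via the intertwining identity $\ph^{\ld} \circ \mu^{\sm}_g = \af_g \circ \ph^{\ld}$ (checked on generators) and isometry of~$\af_g$; part~(\ref{L-EqVsNEdtRep-1}) by comparing $\ph^{\ld}$ with $\ph^{\pi}$ on the finite set~$R$ using the same perturbation fact the paper already invokes at the start of the proof of \Lem{L-MakeRepEq}, together with $R = \mu^{\sm}_1 (R) \subset Q$ so that $\| \ph^{\ld} (r) \| \leq \dt$ is available. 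Your key structural observation --- that the function $\pi (g, s) = \af_g (\rh (s))$ attached to any $\rh$ is exactly equivariant, so all the error in part~(\ref{L-EqVsNEdtRep-1}) is carried by the comparison of $\ld (g, s)$ with $\af_g (\ld (1, s))$ on the generators --- is exactly what makes the direct estimate work, and it buys explicit constants that the paper's compactness argument does not provide. The only cosmetic point is the strict versus non-strict inequality when you feed $\| \ld (g, s) - \pi (g, s) \| \leq \dt$ into a perturbation statement phrased with $< \dt_0$; shrinking $\dt$ or restating the perturbation fact with $\leq$ fixes this trivially.
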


\begin{proof}
We prove part~(\ref{L-EqVsNEdtRep-1}).
Suppose the conclusion fails.
Apply \Def{D:RepNoAct} and use finiteness of $R$
to find $x \in R,$ $\et > 0,$
and for each $n \in \N$ a unital \ga\  $\big( G, A_n, \af^{(n)} \big)$
and a $\frac{1}{n}$-equivariant $\frac{1}{n}$-representation
$\ld_n \colon G \times S \to A_n$
such that,
if we define $\rh_n (s) = \ld_n (1, s)$ for $s \in S$
and $\pi_n (g, s) = \af^{(n)}_g ( \rh_n (s))$
for $g \in G$ and $s \in S,$
then, following the notation of \Def{D-NoGpGen},
we have $\| \ph^{\pi_n} (x) \| > \et.$

Let $\prod_{n = 1}^{\infty} A_n$ be the C*-algebraic product
(the set of sequences $(a_n)_{n \in \N}$ in the algebraic
product such that $\sup_{n \in \N} \| a_n \|$ is finite),
and define
\[
A = \left. \prod_{n = 1}^{\infty} A_n \middle/
    \bigoplus_{n = 1}^{\infty} A_n. \right.
\]
The obvious coordinatewise definitions,
followed by the quotient map,
give an action $\af \colon G \to \Aut (A)$
and functions
\[
\ld \colon G \times S \to A,
\,\,\,\,\,\,
\rh \colon S \to A,
\andeqn
\pi \colon G \times S \to A.
\]
One checks that $\ld$ is an equivariant representation of
$(G \times S, \, \sm, \, Q).$
Clearly $\rh (s) = \ld (1, s)$ for $s \in S$
and $\pi (g, s) = \af_g ( \rh (s))$ for $g \in G$ and $s \in S.$
Therefore $\pi = \ld.$
Since $x \in Q,$ we have $\ph^{\pi} (x) = 0.$
This contradicts the fact that $\| \ph^{\pi_n} (x) \| > \et$
for all $n \in \N.$
Part~(\ref{L-EqVsNEdtRep-1}) is proved.

Now suppose part~(\ref{L-EqVsNEdtRep-2}) is false.
Since $Q$ is finite,
there exist $x \in Q,$ $\et > 0,$
and for each $n \in \N$ a unital \ga\  $\big( G, A_n, \af^{(n)} \big)$
and a $\frac{1}{n}$-representation
$\rh_n \colon S \to A_n$
such that,
if we define $\pi_n (g, s) = \af^{(n)}_g ( \rh_n (s))$
for $g \in G$ and $s \in S,$
then $\| \ph^{\pi_n} (x) \| > \et.$
The functions $\pi_n$ are equivariant.
Define $A,$ $\af,$ $\rh,$ and $\pi$ as in the proof
of part~(\ref{L-EqVsNEdtRep-1}).
Then $\rh$ is a representation of $(S, R),$
$\pi$ is an equivariant representation of
$(G \times S, \, \sm, \, Q),$
and $\pi (g, s) = \af_g ( \rh (s))$ for all $g \in G$ and $s \in S.$
Therefore $\ph^{\pi} (x) = 0,$
contradicting $\| \ph^{\pi_n} (x) \| > \et$
for all $n \in \N.$
\end{proof}

\begin{thm}\label{T-StSjNoAction}
Let $G$ be a finite group,
and let $(S, R)$
be a bounded finite admissible
set of generators and relations for a $G$-algebra.
Then $C^* (S, R)$ is \eqsj\  %
\ifo\   $(S, R)$ is stable in the sense of Definition~\ref{D-NoActStab}.
\end{thm}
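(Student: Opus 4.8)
The plan is to reduce the statement to Theorem~\ref{T:StabSj}, which already handles the version of stability with an explicitly given invariant generating set. Write $Q = \bigcup_{g \in G} \mu^{\sm}_g(R)$, so that the associated $G$-equivariant set of generators and relations is $(G \times S, \, \sm, \, Q)$. By parts (1), (2), and~(3) of Remark~\ref{R-CmpOfGpNoGp}, $(G \times S, \, \sm, \, Q)$ is bounded, finite, and admissible, so Theorem~\ref{T:StabSj} applies to it; and by Remark~\ref{R-CmpOfGpNoGp}(4) there is an equivariant isomorphism $C^*(S, R) \cong C^*(G \times S, \, \sm, \, Q)$. Since equivariant semiprojectivity is plainly invariant under equivariant isomorphism (transport any lifting problem along the isomorphism, solve it on the other side, and transport the solution back), $C^*(S, R)$ is equivariantly semiprojective if and only if $C^*(G \times S, \, \sm, \, Q)$ is, which by Theorem~\ref{T:StabSj} holds if and only if $(G \times S, \, \sm, \, Q)$ is stable in the sense of Definition~\ref{D:GStable}. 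So the whole statement reduces to showing that $(S, R)$ is stable in the sense of Definition~\ref{D-NoActStab} if and only if $(G \times S, \, \sm, \, Q)$ is stable in the sense of Definition~\ref{D:GStable}, and I would establish the two implications using the two parts of Lemma~\ref{L-EqVsNEdtRep}.

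Suppose first that $(G \times S, \, \sm, \, Q)$ is stable, and let $\ep > 0$. Take $\dt_1 > 0$ as provided by Definition~\ref{D:GStable} for this $\ep$, and then use Lemma~\ref{L-EqVsNEdtRep}(2) to choose $\dt > 0$ so that for every unital \ga\ $\GAa$, every $\dt$-representation $\rh_0$ of $(S, R)$ in $A$ gives an exactly equivariant $\dt_1$-representation $\pi_0 \colon G \times S \to A$, $\pi_0(g, s) = \af_g(\rh_0(s))$, of $(G \times S, \, \sm, \, Q)$; being equivariant, $\pi_0$ is in particular a $\dt_1$-equivariant $\dt_1$-representation. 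If $\om \colon A \to B$ is an \ehm\ (possibly $B = 0$) and $\om \circ \rh_0$ is a representation of $(S, R)$, then by Definition~\ref{D:RepNoAct} the function $(g,s) \mapsto \bt_g(\om(\rh_0(s))) = \om(\pi_0(g,s))$ is an equivariant representation of $(G \times S, \, \sm, \, Q)$, that is, $\om \circ \pi_0$ is one. Definition~\ref{D:GStable} then supplies an equivariant representation $\pi$ of $(G \times S, \, \sm, \, Q)$ with $\om \circ \pi = \om \circ \pi_0$ and $\|\pi(g,s) - \pi_0(g,s)\| < \ep$ for all $g, s$. Setting $\rh(s) = \pi(1, s)$ and using equivariance of $\pi$ to get $\af_g(\rh(s)) = \pi(g, s)$, one checks that $\rh$ is a representation of $(S, R)$, that $\om \circ \rh = \om \circ \rh_0$, and that $\|\rh(s) - \rh_0(s)\| = \|\pi(1,s) - \rh_0(s)\| < \ep$. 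Hence $(S, R)$ is stable.

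For the converse, suppose $(S, R)$ is stable and let $\ep > 0$. Apply Definition~\ref{D-NoActStab} with $\tfrac{1}{2}\ep$ in place of $\ep$ to obtain $\dt_1 > 0$; use Lemma~\ref{L-EqVsNEdtRep}(1) to obtain $\dt' > 0$ such that the restriction $s \mapsto \ld_0(1,s)$ of any $\dt'$-equivariant $\dt'$-representation $\ld_0$ of $(G \times S, \, \sm, \, Q)$ is a $\dt_1$-representation of $(S, R)$; and set $\dt = \min\big(\dt', \tfrac{1}{2}\ep\big)$. Given a $\dt$-equivariant $\dt$-representation $\ld_0$ of $(G \times S, \, \sm, \, Q)$ in $A$, an \ehm\ $\om \colon A \to B$ (possibly $B=0$), and assuming $\om \circ \ld_0$ is an equivariant representation, put $\rh_0(s) = \ld_0(1, s)$, which is a $\dt_1$-representation of $(S, R)$. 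The point to notice is that $\om \circ \rh_0$ is an exact representation of $(S, R)$: even though $\ld_0$ is only approximately equivariant, one has $\bt_g(\om(\rh_0(s))) = \om(\af_g(\ld_0(1,s)))$, and exact equivariance of $\om \circ \ld_0$ gives $\om(\af_g(\ld_0(1,s))) = (\om \circ \ld_0)(g,s)$, so $(g,s) \mapsto \bt_g(\om(\rh_0(s)))$ is the equivariant representation $\om \circ \ld_0$. Stability of $(S, R)$ now yields a representation $\rh$ of $(S, R)$ with $\om \circ \rh = \om \circ \rh_0$ and $\|\rh(s) - \rh_0(s)\| < \tfrac{1}{2}\ep$. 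Put $\pi(g,s) = \af_g(\rh(s))$, which is an equivariant representation of $(G \times S, \, \sm, \, Q)$; then $\om \circ \pi = \om \circ \ld_0$ by the same identity as above, and, using that $\af_g$ is isometric and that $\ld_0$ is $\dt$-equivariant, $\|\pi(g,s) - \ld_0(g,s)\| \le \|\rh(s) - \rh_0(s)\| + \|\af_g(\ld_0(1,s)) - \ld_0(g,s)\| < \tfrac{1}{2}\ep + \dt \le \ep$. Hence $(G \times S, \, \sm, \, Q)$ is stable, and the proof is complete.

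I expect the only delicate point to be the constant bookkeeping in the converse direction: one has to split $\ep$ between the correction error $\|\rh(s) - \rh_0(s)\|$ and the approximate-equivariance error $\|\af_g(\ld_0(1,s)) - \ld_0(g,s)\|$, and one has to observe that the passage from ``$\om \circ \ld_0$ is an equivariant representation'' to ``$\om \circ \rh_0$ is a representation of $(S, R)$'' uses the exact equivariance of $\om \circ \ld_0$ rather than only the approximate equivariance of $\ld_0$. Everything else is a routine translation between the two formulations of stability afforded by Remark~\ref{R-CmpOfGpNoGp} and the two parts of Lemma~\ref{L-EqVsNEdtRep}.
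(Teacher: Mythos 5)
Your proof is correct and follows essentially the same route as the paper: reduce via Remark~\ref{R-CmpOfGpNoGp} and Theorem~\ref{T:StabSj} to the equivalence of the two notions of stability, and then prove that equivalence using the two parts of Lemma~\ref{L-EqVsNEdtRep}. The only (harmless) difference is that in the direction where $(S, R)$ is assumed stable, the paper first invokes Lemma~\ref{L-MakeRepEq} to replace the $\dt$-equivariant $\ld_0$ by an exactly equivariant $\ld_1$ before restricting to $\{1\} \times S$, whereas you restrict $\ld_0$ directly and absorb the approximate-equivariance error into the final estimate.
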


\begin{proof}
Define $\sm,$ $\mu,$ and~$Q$ as in \Lem{L-EqVsNEdtRep}.
It follows from Remark~\ref{R-CmpOfGpNoGp}
that $(G \times S, \, \sm, \, Q)$ is bounded, finite, and admissible.
By Theorem~\ref{T:StabSj}
and Remark~\ref{R-CmpOfGpNoGp}(\ref{R-CmpOfGpNoGp-4}),
it therefore suffices to prove that $(S, R)$ is stable \ifo\  %
$(G \times S, \, \sm, \, Q)$ is stable in the sense of \Def{D:GStable}.

Assume that $(S, R)$ is stable.
Let $\ep > 0.$
Choose $\et > 0$ as in \Def{D-NoActStab}
(where the number is called~$\dt$),
for $\tfrac{1}{2} \ep$ in place of~$\ep.$
Choose $\dt_0 > 0$
following \Lem{L-EqVsNEdtRep}(\ref{L-EqVsNEdtRep-1})
(where the number is called~$\dt$).
We may also require that $\dt_0 \leq \tfrac{1}{2} \ep.$
Apply \Lem{L-MakeRepEq},
with $\dt_0$ in place of~$\et,$
to get a number $\dt > 0.$

Let $\GAa$ and $\GBb$ be unital \ga s
(except that we allow $B = 0$),
and let $\om \colon A \to B$ be an \ehm.
Let $\ld_0 \colon G \times S \to A$ be
a $\dt$-\eqv\  $\dt$-representation of $(G \times S, \, \sm, \, Q)$
such that $\om \circ \ld_0$
is an \eqv\  representation of $(S, \sm, R).$
By the choice of $\dt,$
there is an \eqv\  $\dt_0$-representation
$\ld_1 \colon G \times S \to A$
such that $\om \circ \ld_1 = \om \circ \ld_0$
and $\| \ld_1 (g, s) - \ld_0 (g, s) \| < \dt_0$
for all $g \in G$ and $s \in S.$

Define $\rh_1 \colon S \to A$
by $\rh_1 (s) = \ld_1 (1, s)$ for $s \in S.$
Since $\ld_1$ is \eqv,
$\rh_1$ is a $\dt_0$-representation of $(S, R).$
Clearly $\om \circ \rh_1$
is a representation of $(S, R).$
By the choice of~$\dt_0,$
there exists a representation $\rh \colon S \to A$
of $(S, R)$
such that $\om \circ \rh = \om \circ \rh_1$
and such that for all $s \in S$ we have
$\| \rh (s) - \rh_1 (s) \| < \tfrac{1}{2} \ep.$
Define $\ld \colon G \times S \to A$ by
$\ld (g, s) = \af_g ( \rh (s))$ for $g \in G$ and $s \in S.$
Then, using equivariance of $\om$ at the first step,
we have
$\om \circ \ld = \om \circ \ld_1 = \om \circ \ld_0.$
Moreover, for $g \in G$ and $s \in S,$
by equivariance of $\ld$ and~$\ld_1,$
we have
$\ld (g, s) = \af_g (\rh (s))$ and $\ld_1 (g, s) = \af_g (\rh_1 (s)).$
Therefore
\[
\| \ld (g, s) - \ld_0 (g, s) \|
 \leq \| \af_g ( \rh (s)) - \af_g ( \rh_1 (s)) \|
        + \| \ld_1 (g, s) - \ld_0 (g, s) \|
 < \tfrac{1}{2} \ep + \dt_0
 \leq \ep.
\]
This completes the proof that
$(G \times S, \, \sm, \, Q)$ is stable.

For the reverse, assume that $(G \times S, \, \sm, \, Q)$ is stable.
We prove that $(S, R)$ is stable.
Let $\ep > 0.$
Choose $\et > 0$ as in \Def{D:GStable}
(where the number is called~$\dt$).
Choose $\dt > 0$ as in \Lem{L-EqVsNEdtRep}(\ref{L-EqVsNEdtRep-2}).
Let $\GAa$ and $\GBb$ be unital \ga s
(except that we allow $B = 0$),
and let $\om \colon A \to B$ be an \ehm.
Let $\rh_0 \colon S \to A$
be a $\dt$-representation of $(S, R)$
such that $\om \circ \rh_0$
is a representation of $(S, R).$
Define $\pi_0 \colon G \times S \to A$ by
$\pi_0 (g, s) = \af_g (\rh (s))$ for $g \in G$ and $s \in S.$
Then $\pi_0$
is an equivariant $\dt$-representation of $(G \times S, \, \sm, \, Q).$
Therefore there exists an \eqv\  representation
$\pi \colon G \times S \to A$
of $(S, \sm, R)$
such that $\om \circ \pi = \om \circ \pi_0$
and such that for all $g \in G$ and $s \in S$ we have
$\| \pi (g, s) - \pi_0 (g, s) \| < \ep.$
Define $\rh \colon S \to A$
by $\rh (s) = \pi (1, s)$ for $s \in S.$
Then $\| \rh (s) - \rh_0 (s) \| < \ep$
for all $s \in S.$
Also clearly $\om \circ \rh = \om \circ \rh_0.$
This completes the proof of the theorem.
\end{proof}

As an immediate application,
we can derive stronger versions of the Rokhlin property
for actions of finite groups
(Definition~3.1 of~\cite{Iz};
formulated without the central sequence algebra in
Definition~1.1 of~\cite{PhT1})
and the tracial Rokhlin property
(Definition~1.2 of~\cite{PhT1}).

\begin{prp}\label{P-RkP}
Let $A$ be a separable unital \ca,
and let $\af \colon G \to \Aut (A)$
be an action of a finite group $G$ on $A.$
Then $\af$ has the
Rokhlin property \ifo\  for every finite set
$F \subset A$ and every $\ep > 0,$
there are \mops\  $e_g \in A$ for $g \in G$ such that:
\begin{enumerate}
\item\label{P-RkP-1}
$\af_g (e_h) = e_{g h}$ for all $g, h \in G.$
\item\label{P-RkP-2}
$\| e_g a - a e_g \| < \ep$ for all $g \in G$ and all $a \in F.$
\item\label{P-RkP-3}
$\sum_{g \in G} e_g = 1.$
\end{enumerate}
\end{prp}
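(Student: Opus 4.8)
The implication from the displayed conclusion back to the Rokhlin property is immediate, since an exact equivariance relation $\af_g(e_h) = e_{gh}$ is in particular an approximate one; so all the content is in the converse. Thus I would assume that $\af$ has the (ordinary) Rokhlin property, in which the Rokhlin projections are only approximately permuted by the group, and show that for any prescribed finite set $F \subset A$ and $\ep > 0$ one can produce honest \mops\ summing to $1$ which are \emph{exactly} permuted and still approximately commute with $F$. The mechanism will be equivariant stability of relations, applied to the finite dimensional \ca\ $C (G)$ with the translation action.

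Concretely, let $S = \{ e_g : g \in G \}$, let $\sm$ be the $G$-action on $S$ given by $\sm_g (e_h) = e_{g h}$, and let $R$ be the finite set of relations expressing that each $e_g$ is a projection, that the $e_g$ are mutually orthogonal, and that $\sum_{g \in G} e_g = 1$. This $R$ is $\mu^{\sm}$-invariant, and $(S, \sm, R)$ is a bounded finite admissible $G$-equivariant set of generators and relations in the sense of \Def{D:G-Rel}; as a \ga, the algebra $C^* (S, \sm, R)$ is just $C (G)$ with the translation action (cf.\ Proposition~\ref{P:TransSj}). Since $C (G)$ is finite dimensional, Theorem~\ref{T:FDEqSj} shows that $C^* (S, \sm, R)$ is \eqsj, and then Theorem~\ref{T:StabSj} shows that $(S, \sm, R)$ is stable.

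Now fix $F$ and $\ep$, set $M = 1 + \max_{a \in F} \| a \|$, and choose $\ep' > 0$ with $3 M \ep' < \ep$. Let $\dt > 0$ be the number provided by stability of $(S, \sm, R)$ for $\ep'$, and put $\dt' = \min (\dt, \ep')$. Apply the Rokhlin property of $\af$ with the finite set $F$ and tolerance $\dt'$ to obtain \mops\ $e_g \in A$, $g \in G$, with $\sum_{g \in G} e_g = 1$, with $\| \af_g (e_h) - e_{g h} \| < \dt'$ for all $g, h \in G$, and with $\| e_g a - a e_g \| < \dt'$ for all $g \in G$ and $a \in F$. Then $\rh_0 \colon S \to A$ given by $\rh_0 (e_g) = e_g$ is an exact representation of $(S, R)$ which is $\dt'$-equivariant, hence a $\dt$-equivariant $\dt$-representation of $(S, \sm, R)$ in the sense of \Def{D:GRep}; applying stability with $B = 0$ (as permitted in \Def{D:GStable}) yields an equivariant representation $\rh \colon S \to A$ of $(S, \sm, R)$ with $\| \rh (e_g) - e_g \| < \ep'$ for all $g$. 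Setting $f_g = \rh (e_g)$, the $f_g$ are \mops\ with $\sum_{g \in G} f_g = 1$ and $\af_g (f_h) = f_{g h}$, giving (1) and (3), and for $g \in G$ and $a \in F$,
\[
\| f_g a - a f_g \|
   \leq \| e_g a - a e_g \| + 2 \| f_g - e_g \| \, \| a \|
   < \dt' + 2 \ep' (M - 1)
   < 3 M \ep' < \ep ,
\]
which is (2).

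The genuine content has already been proved: equivariant semiprojectivity of finite dimensional \ca s (Theorem~\ref{T:FDEqSj}) and its equivalence with equivariant stability of relations (Theorem~\ref{T:StabSj}). Consequently there is no serious obstacle; the only points requiring care are the order in which the tolerances $\ep'$, $\dt$, $\dt'$ are chosen, and the observation that the usual Rokhlin projections, being honest \mops\ summing to $1$, violate the relations of $(S, \sm, R)$ only through the equivariance condition, so that they literally constitute a $\dt$-equivariant representation to which \Def{D:GStable} applies verbatim.
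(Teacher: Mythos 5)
Your proof is correct and follows essentially the same route the paper intends: the paper omits this proof, saying it is ``very similar to, but simpler than'' the proof of Proposition~\ref{P-TRP}, and your argument is exactly that proof with the relation $\sum_{g \in G} e_g = 1$ added so that the universal algebra is $C(G)$ with translation rather than $C(G) \oplus \C,$ followed by the same appeal to Theorem~\ref{T:FDEqSj} and Theorem~\ref{T:StabSj} and the same perturbation estimates. The only cosmetic issue is the reuse of the symbol $e_g$ for both the abstract generators and the approximate Rokhlin projections in $A.$
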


The definition of the Rokhlin property differs in that
in condition~(\ref{P-RkP-1}),
one merely requires
$\| \af_g (e_h) - e_{g h} \| < \ep$ for all $g, h \in G.$

The proof is very similar to, but simpler than,
the proof of Proposition~\ref{P-TRP},
and is omitted.

\begin{prp}\label{P-TRP}
Let $A$ be an infinite dimensional simple separable
unital \ca,
and let $\af \colon G \to \Aut (A)$
be an action of a finite group $G$ on $A.$
Then $\af$ has the
tracial Rokhlin property \ifo\  for every finite set
$F \subset A,$ every $\ep > 0,$
and every positive element $x \in A$ with $\| x \| = 1,$
there are \mops\  $e_g \in A$ for $g \in G$ such that:
\begin{enumerate}
\item\label{P-TRP-1}
$\af_g (e_h) = e_{g h}$ for all $g, h \in G.$
\item\label{P-TRP-2}
$\| e_g a - a e_g \| < \ep$ for all $g \in G$ and all $a \in F.$
\item\label{P-TRP-3}
With $e = \sum_{g \in G} e_g,$ the \pj\  $1 - e$ is \mvnt\  to a
\pj\  in the \hsa\  of $A$ generated by~$x.$
\item\label{P-TRP-4}
With $e$ as in~(\ref{P-TRP-3}), we have $\| e x e \| > 1 - \ep.$
\setcounter{TmpEnumi}{\value{enumi}}
\end{enumerate}
\end{prp}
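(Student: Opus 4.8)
The plan is to realize the conditions on the Rokhlin projections as an equivariant set of generators and relations and then apply the equivalence between equivariant semiprojectivity and stability, Theorem~\ref{T:StabSj}.

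First I would set up the relations: take $S = \{ v_g \colon g \in G \}$, let $\sm$ be the $G$-action on $S$ given by $\sm_g (v_h) = v_{g h}$, and let $R$ be the finite $G$-invariant set of relations expressing that the $v_g$ are mutually orthogonal projections (by Remark~\ref{R-GeneralRel}, the conditions $v_g = v_g^* = v_g^2$ and $v_g v_h = 0$ for $g \neq h$ can be packaged as genuine relations). Then $(S, \sm, R)$ is a bounded finite admissible $G$-equivariant set of generators and relations, and $C^* (S, \sm, R)$ is the finite dimensional algebra $C (G) \oplus \C$, with $G$ acting by translation on $C (G)$ and trivially on~$\C$: the universal representation sends $v_g$ to the indicator of $\{ g \}$ in the $C (G)$ summand, and the complementary projection $1 - \sum_{g \in G} v_g$ (which is automatically $G$-invariant in any equivariant representation) to the unit of the $\C$ summand. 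By Theorem~\ref{T:FDEqSj} (or Corollary~\ref{P:SjDSum}), $C^* (S, \sm, R)$ is equivariantly semiprojective, so Theorem~\ref{T:StabSj} shows that $(S, \sm, R)$ is stable. Taking $B = 0$ in \Def{D:GStable}, this gives the statement I will use: for every $\ep_1 > 0$ there is $\dt > 0$ such that any mutually orthogonal projections $(e_g)_{g \in G}$ in a unital \ga\ with $\| \af_g (e_h) - e_{g h} \| \leq \dt$ for all $g, h \in G$ admit mutually orthogonal projections $(e_g')_{g \in G}$ with $\af_g (e_h') = e_{g h}'$ for all $g, h$ and $\| e_g' - e_g \| < \ep_1$ for all~$g$.

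The ``only if'' direction of the proposition is immediate, since a system with $\af_g (e_h) = e_{g h}$ a fortiori satisfies $\| \af_g (e_h) - e_{g h} \| < \ep$. For the converse, suppose $\af$ has the tracial Rokhlin property and fix a finite set $F \subset A$, a number $\ep > 0$, and a positive $x \in A$ with $\| x \| = 1$. Set $M = \max \big( 1, \, \max_{a \in F} \| a \| \big)$, choose $\ep_1 > 0$ with $\ep_1 < \min \big( \tfrac{\ep}{4 M}, \, \tfrac{\ep}{4 \card (G)}, \, \tfrac{1}{2 \card (G)} \big)$, and let $\dt > 0$ be as above for this~$\ep_1$. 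Apply the tracial Rokhlin property to $F$, $x$, and the tolerance $\min \big( \dt, \tfrac{1}{2} \ep \big)$ to obtain mutually orthogonal projections $(e_g)_{g \in G}$ with $\| \af_g (e_h) - e_{g h} \| < \dt$, with $\| e_g a - a e_g \| < \tfrac{1}{2} \ep$ for $a \in F$, with $1 - e$ Murray-von Neumann equivalent to a projection in the hereditary subalgebra of $A$ generated by~$x$ (where $e = \sum_{g \in G} e_g$), and with $\| e x e \| > 1 - \tfrac{1}{2} \ep$. Then the stability statement produces mutually orthogonal projections $(e_g')_{g \in G}$ with $\af_g (e_h') = e_{g h}'$ and $\| e_g' - e_g \| < \ep_1$.

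Finally one checks that $(e_g')$ still satisfies conditions (\ref{P-TRP-2})--(\ref{P-TRP-4}); this is routine, the only point being that the tolerances were chosen in the right order so the perturbation from stability cannot undo them. With $e' = \sum_{g \in G} e_g'$ one has $\| e - e' \| \leq \card (G) \ep_1 < \tfrac{1}{2}$, so (\ref{P-TRP-2}) follows from $\| e_g' a - a e_g' \| \leq \| e_g a - a e_g \| + 2 \ep_1 M < \ep$; condition~(\ref{P-TRP-4}) follows from $\| e' x e' - e x e \| \leq 2 \| e - e' \| < \tfrac{1}{2} \ep$; and condition~(\ref{P-TRP-3}) survives with no loss of tolerance because $\| e - e' \| < 1$ forces $e$ and $e'$, and hence also $1 - e$ and $1 - e'$, to be Murray-von Neumann equivalent, so $1 - e'$ is Murray-von Neumann equivalent to the same projection in the hereditary subalgebra generated by~$x$. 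The substantive steps are the identification of the relations and the invocation of Theorem~\ref{T:StabSj}; I expect no real obstacle, the only mild care needed being the bookkeeping just described, with condition~(\ref{P-TRP-3}), the one about Murray-von Neumann equivalence, behaving best under the perturbation.
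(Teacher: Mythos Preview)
Your proposal is correct and follows essentially the same approach as the paper: set up the equivariant generators and relations for mutually orthogonal projections, identify the universal algebra as $C(G)\oplus\C$ with the translation action, invoke Theorem~\ref{T:FDEqSj} and then Theorem~\ref{T:StabSj} to get stability, and perturb the approximate Rokhlin projections to an exactly permuted family. The only cosmetic differences are in the bookkeeping of constants (the paper normalizes so that $\|a\|\leq 1$ for $a\in F$ rather than carrying your~$M$, and uses a single $\ep_0$), but the substance is identical.
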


The definition of the tracial Rokhlin property differs in that
in condition~(\ref{P-TRP-1})
one merely requires
$\| \af_g (e_h) - e_{g h} \| < \ep$ for all $g, h \in G.$

We give the details of the proof to demonstrate how our machinery works,
and in particular to show why
we do not want to require our $\dt$-representations to
be exactly \eqv.

\begin{proof}[Proof of Proposition~\ref{P-TRP}]
Let $F \subset A$ be finite and let $\ep > 0.$
By scaling, \wolog\  $\| a \| \leq 1$ for all $a \in F.$
Set $n = \card (G)$ and
\[
\ep_0 = \min \left( \frac{1}{n}, \, \frac{\ep}{2 n + 1} \right).
\]

Let $S$ consist of distinct elements $p_g$ for $g \in G.$
Define an action $\sm$ of $G$ on~$S$
by $\sm_g (p_h) = p_{g h}$ for $g, h \in G.$
Define
\[
R = \big\{ p_g p_h - \dt_{g, h} p_g \colon g, h \in G \big\}
      \cup \big\{ p_g^* - p_g \colon g \in G \big\}.
\]
Then $(S, \sm, R)$ is an equivariant
set of generators and relations,
and $C^* (S, \sm, R)$ is \eqv ly isomorphic to $C (G) \oplus \C,$
with the action on $C (G)$
coming from the translation action of~$G$ on itself
and the trivial action on~$\C,$
in such a way that $p_g$ is sent to $(\ch_{\{ g \}}, 0).$
This action is \eqsj\  by Theorem~\ref{T:FDEqSj}.
So $(S, \sm, R)$ is stable by Theorem~\ref{T:StabSj}.
Choose $\dt_0 > 0$ as in the definition of stability
for $\ep_0$ in place of~$\ep.$
Set $\dt = \min ( \dt_0, \ep_0 ).$
Apply the tracial Rokhlin property with $\dt$ in place of~$\ep,$
obtaining \mops\  $e^{(0)}_g \in A$ for $g \in G$ such that:
\begin{enumerate}
\setcounter{enumi}{\value{TmpEnumi}}
\item\label{P-PfRkP-1}
$\big\| \af_g \big( e^{(0)}_h \big) - e^{(0)}_{g h} \big\| < \dt$
for all $g, h \in G.$
\item\label{P-PfRkP-2}
$\big\| e_g^{(0)} a - a e_g^{(0)} \big\| < \dt$
for all $g \in G$ and all $a \in F.$
\item\label{P-PfRkP-3}
With $e^{(0)} = \sum_{g \in G} e_g^{(0)},$ the \pj\  $1 - e^{(0)}$
is \mvnt\  to a
\pj\  in the \hsa\  of $A$ generated by~$x.$
\item\label{P-PfRkP-4}
With $e^{(0)}$ as in~(\ref{P-PfRkP-3}),
we have $\big\| e^{(0)} x e^{(0)} \big\| > 1 - \ep_0.$
\end{enumerate}

Define $\rh_0 \colon S \to A$
by $\rh_0 (p_g) = e^{(0)}_g$ for $g \in G.$
Then $\rh_0$ is a
$\dt$-\eqv\  $\dt$-representation of $(S, \sm, R).$
Therefore there is an \eqv\  representation $\rh$ of $(S, \sm, R)$
such that $\big\| \rh (p_g) - e^{(0)}_g \big\| < \ep_0$
for all $g \in G.$
Set $e_g = \rh (p_g)$ for $g \in G.$
By the definition of an \eqv\  representation,
the $e_g$ are \mops\  %
satisfying condition~(\ref{P-TRP-1}).
Condition~(\ref{P-TRP-2})
follows from the estimates
\[
\| a \| \leq 1,
\,\,\,\,\,
\big\| e_g - e^{(0)}_g \big\| < \ep_0 \leq \tfrac{1}{3} \ep,
\andeqn
\big\| e_g^{(0)} a - a e_g^{(0)} \big\|
 < \dt \leq \ep_0 \leq \tfrac{1}{3} \ep
\]
for $g \in G$ and $a \in F.$

It remains to prove conditions (\ref{P-TRP-3}) and~(\ref{P-TRP-4}).
Set $e = \sum_{g \in G} e_g.$
First,
we have
\[
\big\| e - e^{(0)} \big\| < n \ep_0.
\]
Since $n \ep_0 \leq 1,$
the \pj\  $e$ is \mvnt\  to~$e^{(0)},$
and therefore also to a
\pj\  in the \hsa\  of $A$ generated by~$x.$
This is~(\ref{P-TRP-3}).
Moreover,
\[
\| e x e \|
 \geq \big\| e^{(0)} x e^{(0)} \big\| - 2 \big\| e - e^{(0)} \big\|
 > 1 - \ep_0 - 2 n \ep_0
 \geq 1 - \ep.
\]
This is~(\ref{P-TRP-4}).
\end{proof}

\section{Graded semiprojectivity of the C*-algebra
  of a finite group}\label{Sec:GdSj}

\indent
In this section,
we show that if $G$ is a finite group
then $C^* (G),$ with its natural $G$-grading,
is \sj\  in the graded sense.
This is an application of \Lem{L:A7},
the same result that played a key role in the proof that
\fd\  \ca s are \eqsj.

Presumably much more general results are possible.
Indeed, the appropriate setting may be
actions of \fd\  Hopf algebras or compact quantum groups on \fd\  \ca s.

The following definition is a special case of Definitions 3.1 and~3.4
of~\cite{Ex},
of a \ca\  (topologically) graded by a discrete group~$G.$
In~\cite{Ex},
the group is not necessarily finite,
and one only requires that $\bigoplus_{g \in G} A_g$
be dense in~$A.$
Continuity of the projection to~$A_1$
(as in Definition~3.4 of~\cite{Ex})
is automatic when the group is finite
and $\bigoplus_{g \in G} A_g = A.$

\begin{dfn}\label{D:Graded}
Let $G$ be a finite group,
and let $A$ be a \ca.
A {\emph{$G$-grading}} on~$A$ is a direct sum decomposition as
Banach spaces
\[
A = \bigoplus_{g \in G} A_g
\]
such that if $g, h \in G,$ $a \in A_g,$ and $b \in A_h,$
then $a b \in A_{g h}$ and $a^* \in A_{g^{-1}}.$
(We do not say anything about the direct sum norm except that
it is equivalent to the usual norm on~$A.$)

A subspace $E \subset A$ is {\emph{graded}} if
$E = \sum_{g \in G} (E \cap A_g).$

We denote by $P_g,$ or $P_g^A,$
the projection map from $A$ to $A_g$ associated with this
direct sum decomposition.
\end{dfn}

To put this definition in context,
we make three remarks.
First,
when $G$ is finite,
a $G$-grading of~$A$ is the same as an identification
of~$A$ with the \ca\  of a Fell bundle over~$G.$
The basic correspondence is given in VIII.16.11 and VIII.16.12
of~\cite{FD2},
but in general it is not bijective.
It is bijective for Fell bundles over discrete groups
which are amenable
in the sense of Definition~4.1 of~\cite{Ex},
and in particular for all Fell bundles and topological gradings when
$G$ is amenable.
(This follows from Theorem~4.7 of~\cite{Ex}.)
Since our groups are finite,
the correspondence is bijective in our case.

Second, for discrete groups~$G,$
a normal coaction on a \ca~$A$
(as defined before Definition~1.1 of~\cite{Qg})
is the same as an identification
of~$A$ with the \ca\  of a Fell bundle over~$G.$
See Proposition~3.3 and Theorem~3.8 of~\cite{Qg}.

Finally,
if $G$ is abelian,
then a $G$-grading on~$A$ is the same as an action
$\af \colon {\widehat{G}} \to \Aut (A).$
Given a $G$-grading on~$A$
and $\ta \in {\widehat{G}},$
we define $\af_{\ta} \in \Aut (A)$
by $\af_{\ta} (a) = \ta (g) a$ for $a \in A_g.$
Given~$\af,$
for $g \in G$ we set
\[
A_g = \big\{ a \in A \colon
 {\mbox{$\af_{\ta} (a) = \ta (g) a$
        for all $\ta \in {\widehat{G}}$}} \big\},
\]
that is, $A_g$ is the spectral subspace for $g$
when $g$ is regarded as an element of the second dual of~$G.$

\begin{rmk}\label{R:PjRmk}
Let $G$ be a finite group,
and let $A = \bigoplus_{g \in G} A_g$ be a $G$-grading of~$A.$
Then the summand $A_1$ is a \ca.
(This is clear.)
Let $P_g \colon A \to A_g$ be as in Definition~\ref{D:Graded}.
Then $P_1$ is a conditional expectation onto~$A_1,$
and $\| P_g \| \leq 1$ for all $g \in G.$
(See Theorem~3.3 and Corollary~3.5 of~\cite{Ex}.)
\end{rmk}

\begin{dfn}\label{D:GradedHom}
Let $G$ be a finite group,
let $A$ and $B$ be a \ca s
with $G$-gradings $A = \bigoplus_{g \in G} A_g$
and $B = \bigoplus_{g \in G} B_g,$
and let $\ph \colon A \to B$ be a \hm.
We say that $\ph$ is {\emph{graded}}
if for every $g \in G$ we have $\ph (A_g) \subset B_g.$
\end{dfn}

\begin{rmk}\label{R:GradedQuot}
Let $G$ be a finite group,
let $A$ be a \ca\  %
with $G$-grading $A = \bigoplus_{g \in G} A_g,$
and let $I \subset A$ be a graded ideal.
Then $A / I$ becomes a graded \ca\  with the grading
\[
(A / I)_g = A_g / (A_g \cap I) = (A_g + I) / I,
\]
and the quotient map $A \to A / I$ is a graded \hm.
\end{rmk}

\begin{rmk}\label{R:GradedLim}
Let $G$ be a finite group.
Then the direct limit of a direct system of $G$-graded \ca s
with graded maps is a $G$-graded \ca\  in an obvious way.
\end{rmk}

\begin{rmk}\label{R:GradedCrPrd}
Let $G$ be a finite group,
let $A$ be a \ca,
and let $\af \colon G \to \Aut (A)$ be an action of $G$ on~$A.$
Then the crossed product $C^* (G, A, \af)$
is graded in the following way.
Let $u_g \in C^* (G, A, \af)$
(or in $M ( C^* (G, A, \af) )$ if $A$ is not unital)
be the standard unitary corresponding to $g \in G.$
Then
\[
C^* (G, A, \af)_g = \{ a u_g \colon a \in A \}.
\]
(This is the dual coaction.)
\end{rmk}

\begin{rmk}\label{R:GradedGpCst}
In Remark~\ref{R:GradedCrPrd},
take $A = \C$ and take $\af$ to be the trivial action.
This gives a canonical $G$-grading on $C^* (G).$
If $u_g \in C^* (G)$
is the unitary corresponding to $g \in G,$
then $C^* (G)_g = \C u_g.$
\end{rmk}

The following definition is the analog of
Definition 14.1.3 of~\cite{Lr}.

\begin{dfn}\label{D:GdSj}
Let $G$ be a finite group,
and let $A$ be a \ca\  %
with $G$-grading $A = \bigoplus_{g \in G} A_g.$
We say that the grading is {\emph{graded semiprojective}}
if whenever $C$ is a a \ca\  %
with $G$-grading $C = \bigoplus_{g \in G} C_g,$
$J_0 \subset J_1 \subset \cdots$ are graded ideals in~$C,$
$J = {\overline{\bigcup_{n = 0}^{\infty} J_n}},$
and $\ph \colon A \to C / J$
is a graded \hm,
then there exists~$n$
and a graded \hm\  $\ps \colon A \to C / J_n$
such that the composition
\[
A \stackrel{\ps}{\longrightarrow} C / J_n \longrightarrow C / J
\]
is equal to~$\ph.$

When no confusion can arise, we say that $A$ is graded semiprojective.
\end{dfn}

Here is the diagram:
\[
\xymatrix{
& C \ar@{-->}[d] \ar@/^2pc/[dd] \\
& C / J_n \ar@{-->}[d] \\
A \ar[r]_-{\ph} \ar@{-->}[ru]^{\ps} & C / J.
}
\]

\begin{thm}\label{T:CStG}
Let $G$ be a finite group.
Then $C^* (G),$
with the $G$-grading in Remark~\ref{R:GradedGpCst},
is graded \sj.
\end{thm}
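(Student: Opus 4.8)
The plan is to imitate the proof of Theorem~\ref{T:FDEqSj}, but with the compact group appearing in \Lem{L:A7} taken to be $G$ itself, using that $C^* (G)$ is, as a $G$-graded \ca, generated by the unitary representation $g \mapsto u_g$ with $C^* (G)_g = \C u_g$ (Remark~\ref{R:GradedGpCst}); thus a graded unital \hm\ out of $C^* (G)$ is the same as a unitary representation $g \mapsto v_g$ with each $v_g$ in the corresponding graded summand of the target. First I would fix notation as in \Def{D:GdSj}: let $C = \bigoplus_{g \in G} C_g$ be $G$-graded, let $J_0 \subset J_1 \subset \cdots$ be graded ideals, $J = {\overline{\bigcup_{n = 0}^{\infty} J_n}}$, and let $\ph \colon C^* (G) \to C / J$ be graded. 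By Remark~\ref{R:GradedQuot} each $C / J_n$ and $C / J$ is $G$-graded and the quotient maps, which I denote $\pi_n$ and $\pi_{n, m}$ as in Remark~\ref{R:EqSj}(\ref{R:EqSj:Nt}), are graded; write $P_g$ for the grading projection (of whichever quotient is in play), recalling from Remark~\ref{R:PjRmk} that $\| P_g \| \leq 1$ and that $(C / J_n)_1$ is a unital C*-subalgebra, and noting that $P_g$ commutes with the quotient maps in the obvious sense. Since $C^* (G)$ is \fd\ it is \sj, so there are $n_0$ and a unital \hm\ $\ps_0 \colon C^* (G) \to C / J_{n_0}$ (not assumed graded) with $\pi_{n_0} \circ \ps_0 = \ph$.

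Next I would correct $\ps_0$ componentwise. For $n \geq n_0$ put $\ps_1 = \pi_{n, n_0} \circ \ps_0$ and $w_g = P_g ( \ps_1 (u_g) ) \in (C / J_n)_g$. Since $\ph (u_g) \in (C / J)_g$, the element $\ps_0 (u_g) - P_g ( \ps_0 (u_g) )$ lies in the kernel of the map to $C / J$, hence in ${\overline{\bigcup_{m \geq n_0} J_m / J_{n_0}}}$; because $G$ is finite, one can therefore choose $n$ so large that $\| \ps_1 (u_g) - w_g \|$ is as small as desired for all $g \in G$. Then $w_g^* w_g \in (C / J_n)_{g^{-1}} (C / J_n)_g \subset (C / J_n)_1$ is close to~$1$, so the polar part $\rh_0 (g) = w_g ( w_g^* w_g )^{-1/2}$ is a well defined unitary, and $\rh_0 (g) \in (C / J_n)_g$ since it is a product of an element of $(C / J_n)_g$ with an element of the C*-subalgebra $(C / J_n)_1$. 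Taking $n$ large enough, one checks: $\rh_0 (1) = 1$; $\pi_n ( \rh_0 (g) ) = \ph (u_g)$ (because $\pi_n (w_g) = P_g ( \ph (u_g) ) = \ph (u_g)$ is unitary); $\| \rh_0 (g h) - \rh_0 (g) \rh_0 (h) \| < \tfrac{1}{17}$ for all $g, h \in G$ (since $\rh_0$ is uniformly close to the \hm\ $g \mapsto \ps_1 (u_g)$); and $\pi_n \circ \rh_0 \colon g \mapsto \ph (u_g)$ is a \hm.

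Finally, I would apply \Lem{L:A7} with $\Gm = G$, with $C / J_n$ in place of $A$, with $C / J$ in place of $B$, with $\pi_n$ in place of $\kp$, and with this $\rh_0$, obtaining a \hm\ $\rh \colon G \to U (C / J_n)$ with $\pi_n \circ \rh = \pi_n \circ \rh_0$, so $\pi_n ( \rh (g) ) = \ph (u_g)$ for all $g$. The step I expect to be the main obstacle is verifying that the iteration producing $\rh$ stays inside the grading: each $\rh_{m + 1}$ is built from $\rh_m$ by integration over $G$ and holomorphic functional calculus applied to the elements $\rh_m (k)^* \rh_m (k g) \rh_m (g)^*$, and these lie in the closed subalgebra $(C / J_n)_1$ exactly because each $\rh_m (g)$ lies in the closed subspace $(C / J_n)_g$; an induction then gives $\rh_m (g) \in (C / J_n)_g$ for every $m$, hence $\rh (g) \in (C / J_n)_g$. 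This is the graded analogue of the observation, used for Theorem~\ref{T:FDEqSj}, that the iteration preserves equivariance. Once $\rh$ is in hand, the universal property of $C^* (G)$ provides a unital \hm\ $\ps \colon C^* (G) \to C / J_n$ with $\ps (u_g) = \rh (g)$; it is graded because $\ps ( C^* (G)_g ) = \C \rh (g) \subset (C / J_n)_g$, and $\pi_n \circ \ps = \ph$ because both are \hm s agreeing on the generators $u_g$. This completes the argument.
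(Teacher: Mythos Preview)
Your proposal is correct and follows essentially the same route as the paper's proof: lift nonequivariantly using ordinary semiprojectivity of the finite dimensional algebra $C^*(G)$, project each $\ps_1(u_g)$ into $(C/J_n)_g$ and take the polar part to get an approximate representation $\rh_0$ with $\rh_0(g)$ in the correct graded piece, then apply Lemma~\ref{L:A7} with $\Gm = G$ and verify by induction that the iterates $\rh_m$ remain graded because $\rh_m(k)^*\rh_m(kg)\rh_m(g)^* \in (C/J_n)_1$. The only cosmetic difference is that the paper tracks explicit constants ($\ep_0 = \tfrac{1}{6\cdot 34}$, the bound $6\ep_0$ on the multiplicative defect) rather than just asserting they can be made small enough, but the structure of the argument is identical.
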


\begin{proof}
Set $\ep_0 = \frac{1}{6 \cdot 34},$
and choose $\ep > 0$ such that $\ep \leq \ep_0$ and
such that whenever $A$ is a \uca,
$u \in U (A),$
and $a \in A$ satisfies $\| a - u \| < \ep,$
then we have
$\big\| a (a^* a)^{-1/2} - u \big\| < \ep_0.$

Let the notation be as in Definition~\ref{D:GdSj}
and Remark~\ref{R:EqSj}(\ref{R:EqSj:Nt}).
Further, for  $g \in G$ let $P_g^{(n)} \colon C/J_n \to (C/J_n)_g$
and $P_g \colon C/J \to (C/J)_g$ be the \pj\  maps
associated to the gradings, as in \Def{D:Graded}.
Also let $u_g \in C^* (G)$ be the unitary associated with the
group element $g \in G,$ as in Remark~\ref{R:GradedGpCst}.

Let $\ph \colon A \to C / J$ be a unital graded \hm.
Since \fd\  \ca s are \sj,
there exist $n_0$ and a unital \hm\  %
(not necessarily graded) $\ps_0 \colon A \to C / J_{n_0}$
which lifts~$\ph.$
Since $\pi_{n_0}$ and $\ph$ are graded, for $g \in G$ we have
\[
\pi_{n_0} \big( \ps_0 ( u_g) - P_g^{(n_0)} ( \ps_0 (u_g)) \big)
  = \ph (u_g) - P_g (\ph (u_g))
  = 0.
\]
Therefore there exists $n \geq n_0$ such that
for all $g \in G$
we have
\begin{equation}\label{Eq:TGdSj-1}
\big\| \pi_{n, n_0} \big( \ps_0 ( u_g)
              - P_g^{(n_0)} ( \ps_0 (u_g)) \big) \big\|
  < \ep.
\end{equation}
Set $\ps_1 = \pi_{n, n_0} \circ \ps_0$
and for $g \in G$ set $c_g = P_g^{(n)} ( \ps_1 (u_g)),$
which is in $(C / J)_g.$
Then~(\ref{Eq:TGdSj-1}) becomes
\[
\| \ps_1 ( u_g) - c_g \| < \ep
\]
for all $g \in G.$

Since $\ep < 1,$
we can
define $\rh_0 \colon G \to U (C / J_n)$
by $\rh_0 (g) = c_g (c_g^* c_g)^{- 1/2}.$
Since $c_g \in (C / J)_g,$
we have $c_g^* c_g \in (C / J)_1,$
whence $(c_g^* c_g)^{- 1/2} \in (C / J)_1,$
so that $\rh_0 (g) \in (C / J)_g.$
Moreover, the choice of $\ep$ ensures that
$\| c_g - \rh_0 (g) \|
  < \ep_0.$
Therefore
\[
\| \rh_0 (g) - \ps_1 (u_g) \|
 \leq \| \rh_0 (g) - c_g \| + \| c_g - \ps_1 (u_g) \|
 < \ep_0 + \ep
 \leq 2 \ep_0.
\]
Let $g, h \in G.$
Since
\[
\ps_1 (u_g), \, \ps_1 (u_h) \in U (C / J_n)
\andeqn
\ps_1 (u_{g h}) = \ps_1 (u_g) \ps_1 (u_h),
\]
it follows that
\[
\| \rh_0 (g h) - \rh_0 (g) \rh_0 (h) \| < 6 \ep_0.
\]
Since $\pi_n (c_g) = \ph (u_g)$ is unitary,
we also get
\[
\pi_n (\rh_0 (g)) = \pi_n (c_g) = \ph (u_g).
\]

Inductively define functions $\rh_m \colon G \to U (C / J_n)$
by (following \Lem{L:A7})
\[
\rh_{m + 1} (g)
 = \exp \left( \frac{1}{\card (G)} \sum_{h \in G}
      \log \Big( \rh_m (h)^* \rh_m (h g) \rh_m (g)^* \Big)
       \right) \rh_m (g)
\]
for $g \in G.$
Since $6 \ep_0 = \frac{1}{34} < \frac{1}{17},$
Lemma~\ref{L:A7} implies that the functions $\rh_m$
are well defined maps $\rh_m \colon G \to U (C / J_n)$
such that $\rh (g) = \limi{m} \rh_m (g)$
defines a \hm\  $\rh \colon G \to U (C / J_n)$
satisfying
\[
\| \rh (g) - \rh_0 (g) \|
 \leq \frac{2 \cdot 6 \ep_0}{1 - 17 \cdot 6 \ep_0}
\andeqn
\pi_n ( \rh (g)) = \ph (u_g)
\]
for all $g \in G.$

We claim that $\rh_m (g) \in (C / J_n)_g$
for all $g \in G$ and $m \in \Nz.$
The proof is by induction on~$m.$
We know this is true for $m = 0.$
Assume it is true for~$m.$
For all $g, h \in G$ we have
\[
\rh_m (h)^* \rh_m (h g) \rh_m (g)^*
 \in (C / J_n)_h^* (C / J_n)_{h g} (C / J_n)_g^*
 \subset (C / J_n)_1.
\]
Therefore also
\[
\exp \left( \frac{1}{\card (G)} \sum_{h \in G}
      \log \Big( \rh_m (h)^* \rh_m (h g) \rh_m (g)^* \Big)
       \right)
   \in (C / J_n)_1,
\]
and the induction step follows.
This proves the claim.
Taking limits, we get $\rh (g) \in (C / J_n)_g$ for all $g \in G.$

By the universal property of $C^* (G),$ there is a unital
\hm\  $\ps \colon C^* (G) \to C / J$
such that $\ps (u_g) = \rh (g)$ for all $g \in G.$
By construction, $\ps$ is graded.
Moreover, $\pi_n \circ \ps (u_g) = \ph (u_g)$ for all $g \in G,$
so the universal property of $C^* (G)$
implies that $\pi_n \circ \ps = \ph.$
Thus $\ps$ lifts~$\ph.$
\end{proof}

\end{document}